\edef\Gin@extensions{\Gin@extensions,.mps}
\tikzset{
	column sep/.code=\def\pgfmatrixcolumnsep{\pgf@matrix@xscale*(#1)},
	row sep/.code   =\def\pgfmatrixrowsep{\pgf@matrix@yscale*(#1)},
	matrix xscale/.code=%
	\pgfmathsetmacro\pgf@matrix@xscale{\pgf@matrix@xscale*(#1)},
	matrix yscale/.code=%
	\pgfmathsetmacro\pgf@matrix@yscale{\pgf@matrix@yscale*(#1)},
	matrix scale/.style={/tikz/matrix xscale={#1},/tikz/matrix yscale={#1}}}
\def\pgf@matrix@xscale{1}
\def\pgf@matrix@yscale{1}
\newtheorem{theorem}{Theorem}
\newtheorem{lemma}{Lemma}[section]
\newtheorem{corollary}{Corollary}
\newtheorem{proposition}{Proposition}[section]
\newtheorem*{claim*}{Claim}
\newtheorem{conjecture}{Conjecture}
\newtheorem{problem}{Problem}%
\newtheorem{assumption}{Assumption}%
\newtheorem*{theorem*}{Theorem}
\newtheorem*{corollary*}{Corollary}
\theoremstyle{definition}
\newtheorem{definition}{Definition}[section]
\newtheorem{example}{Example}
\newtheorem{notation}{Notation}
\algnewcommand\algorithmicinput{\textbf{Input:}}
\algnewcommand\INPUT{\item[\algorithmicinput]}
\algnewcommand\algorithmicoutput{\textbf{Output:}}
\algnewcommand\OUTPUT{\item[\algorithmicoutput]}
\algnewcommand\algorithmicproc{\textbf{Procedure:}}
\algnewcommand\PROCEDURE{\item[\algorithmicproc]}
\algnewcommand\algorithmiccomplexity{\textbf{Complexity:}}
\algnewcommand\COMPLEXITY{\item[\algorithmiccomplexity]}
\newlength{\continueindent}
\newcommand*{\ALG@customparshape}{\parshape 2 \leftmargin \linewidth \dimexpr\ALG@tlm+\continueindent\relax \dimexpr\linewidth+\leftmargin-\ALG@tlm-\continueindent\relax}
\apptocmd{\ALG@beginblock}{\ALG@customparshape}{}{\errmessage{failed to patch}}
\newcommand{\PO}{$\texttt{PO}$}
\newcommand{\SDO}{$\texttt{SDO}$}
\newcommand{\LO}{$\texttt{LO}$}
\newcommand{\NO}{$\texttt{NO}$}
\theoremstyle{remark}
\newtheorem{remark}{Remark}
\theoremstyle{observation}
\definecolor{DarkBlue}{rgb}{0,0.1,0.55}
\numberwithin{equation}{section}
\newcommand {\hide}[1]{}
\newcommand {\sign} {\mbox{\bf sign}}
\newcommand{\zero}{\mbox{\bf zero}}
\newcommand {\junk}[1]{}
\newcommand {\R} {\mathrm{R}}
\newcommand {\C}     {\mathrm{C}}
\newcommand {\grad}     {\mbox{\rm Grad}}
\newcommand {\hess}     {\mbox{\rm Hes}}
\newcommand {\Der} {{\rm Der}}
\newcommand {\la}   {{\langle}}
\newcommand {\ra}   {{\rangle}}
\newcommand {\eps} {{\varepsilon}}
\newcommand {\Sing}      {\mbox{\rm Sing}}
\newcommand {\Ireg}      {\mbox{\rm Ireg}}
\newcommand {\PP}     {\mathbb{P}} 
\newcommand{\card}{\mathrm{card}}
\newcommand{\rank}{\mathrm{rank}}
\def\addots{\mathinner{\mkern1mu
		\raise1pt\vbox{\kern7pt\hbox{.}}
		\mkern2mu\raise4pt\hbox{.}\mkern2mu
		\raise7pt\hbox{.}\mkern1mu}}
\DeclareMathOperator{\trace}{Tr}
\DeclareMathOperator{\order} {ord}
\newcommand{\Crit}{\mathrm{Crit}}
\newcommand{\interior}{\mathrm{int}}
	\title[On the convergence of critical points on real algebraic sets]
	{
		On the convergence of critical points on real algebraic sets and applications to optimization
	}
        \author{Saugata Basu}
	\address{Department of Mathematics, 
		Purdue University, West Lafayette, IN 47905, U.S.A.}
	\email{sbasu@purdue.edu}
	\author{Ali Mohammad-Nezhad}
	\address{Department of Statistics and Operations Research, University of North Carolina at Chapel Hill, Chapel Hill, NC 27599, U.S.A.}
	\email{alimn@unc.edu}
\begin{document}
\begin{abstract}
Let $F \in \R[X_1,\ldots,X_n]$ and the zero set $V=\zero(\mathcal{P},\R^n)$, where $\mathcal{P}:=\{P_1,\ldots,P_s\} \subset \R[X_1,\ldots,X_n]$ is a finite set of polynomials. We investigate existence of critical points of $F$ on an infinitesimal perturbation $V_{\xi} = \zero(\{P_1-\xi_1,\ldots,P_s-\xi_s\},\R^n)$. Our main motivation is to understand the limiting behavior of local minimizers of the log-barrier function (and central paths) in polynomial optimization, whose existence plays a fundamental role, in theory and practice, for modern interior point methods. We establish different sets of conditions that ensure existence, finiteness, boundedness, and non-degeneracy of critical points of $F$ on $V_{\xi}$, respectively. These lead to new conditions for the existence, convergence, and smoothness of central paths of polynomial optimization and its extension to non-linear optimization problems involving definable sets and functions in an o-minimal structure. In particular, for non-linear programs defined by real globally analytic functions, our extension provides a stronger form of the convergence result obtained by Drummond and Peterzil.   

\end{abstract}
\subjclass[2020]{Primary 14P10; Secondary 90C23, 90C51}

\keywords{Real algebraic set, critical point, polynomial optimization, central path, stratified Morse theory, o-minimal structure}

\maketitle
\tableofcontents

\section{Introduction}\label{intro}
Let $\R$ be a real closed field, $F \in \R[X_1,\ldots,X_n]$ a polynomial, and $V$ a real 
algebraic set
in $\R^n$, defined by
\begin{align}\label{variety}
V:=\zero(\mathcal{P},\R^n):=\Bigg\{x \in \R^{n} \mid \bigwedge_{P \in \mathcal{P}} P(x)=0\Bigg\},
\end{align}
where $\mathcal{P}:=\{P_1,\ldots,P_s\} \subset \R[X_1,\ldots,X_n]$ is a finite family of polynomials. The study of critical points of a smooth function on a smooth manifold or a singular space is a well-established topic in differential topology~\cite{BR88,GM88,M63,Morse,Morse2}. A basic question at the intersection of real algebraic geometry and optimization is the existence and boundedness of critical points (to be defined in Section~\ref{KKT_Conditions_Lagrange}) of $F$ on an infinitesimal perturbation
\begin{align}\label{perturbed_variety}
    V_{\xi} = \zero(\{P_1-\xi_1,\ldots,P_s-\xi_s\},\R^n), \quad \xi := (\xi_1,\ldots,\xi_s),
\end{align}
where $\xi_1,\ldots,\xi_s > 0$ are infinitesimally small (see Notation~\ref{not:Puiseux} for precise meaning of ``infinitesimal''). If the Jacobian of the polynomials is non-singular at every point of $V$, then critical points of $F$ on $V$ are defined in the sense of Morse Theory~\cite{M63}. Otherwise, we assume that $V$ is canonically Whitney stratified~\cite{Wh65} (to be defined in Section~\ref{stratified_Morse}), and we define critical points of $F$ on $V$ in the sense of the stratified Morse theory~\cite[Section~2.1]{GM} (see also Definition~\ref{Generalized_Morse_Func}). 

\vspace{5px}
\noindent
Infinitesimal deformation of real algebraic sets has been a key tool for derivation of effective bounds and algorithms in computational algebraic geometry~\cite{Basu1,BPR10,BRi18,Milnor2,BPR95,BPR06,HHS23,BPR99,BR2014,BRMS10,LSW20,RRS00,SS16}. This body of work focuses on applying Morse theoretic and infinitesimal perturbation techniques within algorithmic real algebraic geometry to count the number of critical points of a Morse polynomial on a bounded non-singular algebraic hypersurface. For instance, effective bounds on 
quantifier elimination in the theory of the reals~\cite{BPR95}, 
topological complexity~\cite{Basu1,BPR10,BRi18,Milnor2}, and computation of roadmaps of a semi-algebraic set~\cite{BPR99,BRMS10,BR2014} leverage computation of critical points of a Morse polynomial on an infinitesimally perturbed algebraic hypersurface (which is non-singular and bounded over $\R$ (see also~\cite{B17} and~\cite[Chapters 12-16]{BPR06})). These techniques have been also applied for the computation of smooth points of a real/complex 
algebraic set~\cite{HHS23,LSW20,RRS00,SS16}.     

\vspace{5px}
\noindent
The analysis of critical points 
of $F$
on the infinitesimally perturbed algebraic set $V_{\xi}$ can also be connected to the notions of ``log-barrier function" and ``central path" (to be defined in Section~\ref{appl_to_polynom_optim}) in polynomial optimization (\PO)~\cite{FM90}. This connection is explained in Section~\ref{central_path_manifold_correspondence}.
\subsection{Importance of the log-barrier function}
Although the log-barrier function of \PO~(which is generally non-convex) has received considerably less attention in the optimization literature compared to its convex counterpart (e.g.,~\cite{Kl02,GP02,Guler93,KMNY91,MZ98,NN94,Ren01,CRT06}), the existence of its local minimizers remains a problem of central importance for both the theoretical foundations and practical performance of modern interior point methods (IPMs) (see e.g.,~\cite{FM90,FGW02,GOST01,NW06,VS99,WO2002}). For instance, globally convergent IPMs (such as trust-region IPMs~\cite[Section~19.5]{NW06}) depend on the existence of local minimizers of the log-barrier function. In particular, in the absence of a central path (which is a special trajectory of local minimizers of the log-barrier function), the search directions in primal-dual IPMs may be ill-defined (see~\cite[Page~569]{NW06} and Remark~\ref{finiteness_condition}), and the superlinear convergence of these methods fail without convergence of the derivatives of the central path~\cite{GOST01}. This problem has been widely studied under convexity assumptions (see e.g.,~\cite{Kl02,Guler93,KMNY91,NN94,CRT06}) but not specifically for non-convex~\PO.  In convex optimization, the existence of a central path typically relies on Slater’s condition (see e.g.,~\cite[Section~5.4]{CRT06},~\cite[Section~3.1]{Kl02}, and~\cite[Chapter~3]{NN94}). However, this condition does not generally ensure the existence of a central path in the non-convex case~\PO~(see Example~\ref{non_existence}).  

\vspace{5px}
\noindent
It is worth noting that only very few papers have explored existential conditions on the central path of non-convex non-linear optimization (\NO). In~\cite[Section~7]{FGW02}, the authors state that

\vspace{5px}

``\textit{The theoretical results for general nonconvex problems are weaker and mostly
asymptotic ... In spite of this potential concern, interior methods successfully and efficiently solve large nonconvex non-linear
programming problems every day, but the possibility of strange or even pathological
behavior should not be ignored}".

\vspace{5px}
On the other hand, the symbolic computation of optimal solutions to \PO~ has long been a classical topic in algorithmic real algebraic geometry (see e.g.,~\cite[Algorithm~14.9]{BPR06}). However, this line of work is purely algebraic and does not take the central path into account. 

\subsection{Main problems}
Motivated by the importance of the log-barrier function and central paths to~\PO, we would like to address the following problems, which are of independent interest in algorithmic real algebraic geometry as well.

\begin{problem}[Existence]\label{existence_critical_path}
What are the conditions (necessary or sufficient) for the existence of critical points of $F$ on $V_{\xi}$? 
\end{problem}
\begin{problem}[Boundedness]\label{boundedness_critical_path}
What are the conditions (necessary or sufficient) for the boundedness of critical points of $F$ on $V_{\xi}$, if there exists any? 
\end{problem}
\noindent
When it comes to central paths of \PO, it is important that a central path is smooth and behaves smoothly in a neighborhood of its limit point. The smoothness of central paths at the limit point plays a central role in the convergence analysis of primal-dual IPMs. 
\begin{problem}[Existence, convergence, and smoothness]\label{smoothness_critical_path}
What are the conditions (necessary or sufficient) for the existence, convergence, and smoothness of a central path at the limit point? 
\end{problem}

To the best of our knowledge, our paper tackles Problems~\ref{existence_critical_path}-\ref{smoothness_critical_path} listed above,
in the context of \PO~without any added convexity assumptions 
for the first time. 
\begin{remark}
One early usage of standard algebraic geometry techniques for the convergence of a central path can be found in the work of Kojima et al.~\cite{KMNY91}. They proved the convergence of a bounded central path of linear complementarity problems using a result of Hironaka~\cite{H75} on triangulation of real algebraic sets. Subsequent work by~\cite{HKR02} and~\cite{GP02} leveraged the Curve Selection Lemma~\cite{Milnor68} and the Monotonicity Theorem~\cite{Dries} in the context of o-minimal geometry (see Sections~\ref{central_path_background}-\ref{o-minimal_optimization}) to prove the convergence of a bounded central path of semi-definite optimization (\SDO)~and convex \SDO, respectively. However, compared to our paper, the scope of problems and algebro-geometric techniques considered in these studies is too limited to fully address Problems~\ref{existence_critical_path}-\ref{smoothness_critical_path}. For instance, the problems considered in~\cite{GP02,HKR02,KMNY91} are confined to convex optimization, and the techniques they employ do not appear applicable to establishing the existence or analyticity of a central path in non-convex settings.  
\end{remark}

\hide{
It is worth noting that the existential properties of the central path in non-convex \NO~have received little attention in the literature. In contrast, the symbolic computation of optimal solutions to \PO~ has been a classical topic in algorithmic real algebraic geometry (see, e.g.,~\cite[Algorithm 14.9]{BPR06}). However,
the notion of central path does not arise in the algebraic approach.
To the best of our knowledge, our work is the first to address 
these problems 
in the context of general \PO without requiring any convexity assumptions.
}

\section{Main results}
Our main results are the following. In Section~\ref{sec:critical_points_on_V_xi}, we establish conditions under which the set of critical points of $F$ on $V_{\xi}$ is non-empty, finite, and the critical points are bounded and non-degenerate. We also characterize the limit of critical points. In Section~\ref{appl_to_polynom_optim}, we formulate local minimizers of the log-barrier function as a special case of the problem in Section~\ref{sec:critical_points_on_V_xi} and derive sufficient conditions for the existence, convergence, and smoothness of a central path of \PO. In Section~\ref{definable_central_path}, we extend our results for central paths to \NO~problems defined by definable functions in an (polynomially bounded) o-minimal structure. 

\begin{remark}
Although the central path is a specific trajectory among local minimizers of the log-barrier function, we state existence and convergence conditions in terms of the central path for simplicity. The conclusions, however, remain valid for all local minimizers of the log-barrier function.
\end{remark}

\subsection{Critical points of $F$ on $V_{\xi}$}\label{sec:critical_points_on_V_xi}
\subsubsection{Existence of critical points}\label{sec:existence_critical}
First, we prove conditions that ensure boundedness of $V_{\xi}$ for sufficiently small $\xi > 0$. 
In order to make the notion of 
``sufficiently small'' more precise, we will make use of 
certain non-archimedean extensions of the given real closed field $\R$
containing the coefficients of the given polynomials. In this non-archimedean extension, the various $\xi_i$'s will be infinitesimal elements, which are positive but smaller than all positive elements of $\R$ (we write $0 < \eps \ll 1$ to denote that $\eps$ is a positive infinitesimal over the ground field $\R$). It follows from the Tarski-Seidenberg Transfer Principle (see for example~\cite[Chapter 2]{BPR06}) 
that properties that we state about $V_\xi$ (as long as they are expressible by a first-order formula in the language of ordered fields), for $\xi$ infinitesimal,
also holds if we substitute sufficiently small positive values for
$\xi$.

\begin{notation}[Algebraic Puiseux series and $\lim_\xi$ map]
\label{not:Puiseux}
For any real closed field $\R$, we denote by $\R\la\eps\ra$ the real closed field of algebraic Puiseux series in $\eps$ with coefficients in $\R$. In the unique ordering of the real closed field $\R\la\eps\ra$, $0 < \eps \ll 1$. In other words, $\eps$ is positive but smaller than all positive elements of $\R$ (precise definitions appear in Section~\ref{sec:Puiseux_series}).

We say that $x \in \R\la \eps\ra$ is bounded over $\R$, if there exists an element $a \in \R, a > 0$, such that $|x| < a$. We denote the set of 
elements of $\R\la\eps\ra$ which are bounded over $\R$ by $\R\la\eps\ra_b$.
It is easy to see that $\R\la\eps\ra_b$ is a sub-ring of $\R\la\eps\ra$,
called the valuation ring of $\R\la\eps\ra$, and there is a well-defined
ring homomorphism $\lim_\eps: \R\la\eps\ra_b \rightarrow \R$, defined by
setting $\eps$ to $0$ in the corresponding Puiseux series.

    More generally, we denote by $\R\la\xi_1,\ldots,\xi_s\ra$ the field
    $\R\la\xi_1\ra\cdots\la\xi_s\ra$. Notice that in the ordering of the real closed field $\R\la\xi_1,\ldots,\xi_s\ra$,
    $0 < \xi_s \ll \cdots \ll \xi_1 \ll 1$ and we say that
    $\xi_{i+1}$ is \emph{infinitesimal} with respect to the elements of
    $\R\la\xi_1,\ldots,\xi_i\ra$.

We denote by $\R\la\xi\ra_b$ the sub-ring of elements of 
$\R\la\xi\ra := \R\la\xi_1,\ldots,\xi_s\ra$ which are bounded over $\R$,
and by $\lim_\xi: \R\la\xi\ra_b \rightarrow \R$ the map defined 
as the composition of 
of $\lim_{\xi_1}, \ldots, \lim_{\xi_s}$ 
restricted to $\R\la\xi\ra_b$.
\end{notation}

In our theorems (e.g., Theorems~\ref{bounded_fibers}, ~\ref{constrained_real_isolated_critical_points}-\ref{constrained_real_non-degenerate_critical_points}), we will often need to impose conditions on projective zeros, ``non-singularity" of projective
algebraic sets (see Definition~\ref{non_singular_point}), or non-degeneracy of ``projective critical points" (see Definition~\ref{projective_KKT_point}) to establish boundedness or finiteness results. For this purpose, we need to introduce some more notation -- namely that of projective space over $\R$ as well as
$\C:=\R[i]$, the algebraic closure of $\R$. 

\begin{notation}
We denote by $\mathbb{P}_{n}(\mathrm{K})$ the $n$-dimensional \textit{projective space} over a field $\mathrm{K}$, where $\mathrm{K}$ could be $\R$ and $\C$.  
\end{notation}
\begin{theorem}\label{bounded_fibers}
Let $V \subset \R^n$ and $V_{\xi} \subset \R\la\xi\ra^n$,
(where $\xi = (\xi_1,\ldots,\xi_s)$)
be defined by Eqns.~\eqref{variety} and~\eqref{perturbed_variety}.
 Let $V^H=\zero\big(\mathcal{P}^H,\mathbb{P}_n(\R)\big)$, where 
$\mathcal{P}^H = \{P_1^H,\ldots, P_s^H\}$, and 
$P_i^H \in \R[X_0,\ldots,X_n]$ is the homogenization of $P_i$.
Suppose that the $P_i$'s have no common real zero at infinity, i.e., 
\begin{align}\label{no_point_at_infinity}
V^H \cap \zero(X_0,\mathbb{P}_n(\R)) = \emptyset.
\end{align}
Then $V_{\xi}$ is bounded over $\R$. 
\end{theorem}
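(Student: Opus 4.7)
The plan is to argue by contradiction using a projective normalization. If some point of $V_\xi$ has a coordinate ``infinitely large'' over $\R$, then rescaling by its sup-norm turns that coordinate into a genuine nonzero real direction while collapsing the affine-chart coordinate to zero, producing a real projective zero of $\mathcal{P}^H$ at infinity and violating~\eqref{no_point_at_infinity}.

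Concretely, suppose for contradiction that some $x = (x_1,\ldots,x_n) \in V_\xi$ has $M := \max_{1 \leq i \leq n} |x_i|$ not bounded over $\R$, so $M > a$ for every $a \in \R$; in particular $M \geq 1$, $1/M \in \R\la\xi\ra_b$, and $\lim_\xi (1/M) = 0$. Set $y_0 := 1/M$ and $y_i := x_i/M$ for $i \geq 1$; then every $|y_i| \leq 1$, so all $y_i \in \R\la\xi\ra_b$, while $|y_{i^*}| = 1$ for some $i^* \in \{1,\ldots,n\}$. Using homogeneity of $P_j^H$ of degree $d_j$ together with the defining identity $P_j(x) = \xi_j$,
\[
P_j^H(y_0, y_1, \ldots, y_n) \;=\; M^{-d_j}\,P_j^H(1, x_1, \ldots, x_n) \;=\; \xi_j/M^{d_j},
\]
which is infinitesimal since $|\xi_j/M^{d_j}| \leq \xi_j$. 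Applying the ring homomorphism $\lim_\xi$ coordinate-wise yields $\bar y = (\bar y_0,\ldots,\bar y_n) \in \R^{n+1}$ with $\bar y_0 = 0$, $|\bar y_{i^*}| = 1$ (so $\bar y \neq 0$), and $P_j^H(\bar y) = 0$ for every $j$. Thus $[0 : \bar y_1 : \cdots : \bar y_n] \in V^H \cap \zero(X_0, \mathbb{P}_n(\R))$, contradicting~\eqref{no_point_at_infinity}.

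To upgrade this pointwise conclusion (each $x \in V_\xi$ has $\|x\|_\infty \in \R\la\xi\ra_b$) to a uniform bound by a single element of $\R$, note that $\{\|x\|_\infty : x \in V_\xi\}$ is a semi-algebraic subset of $\R\la\xi\ra_{\geq 0}$, hence a finite union of points and intervals; if any such interval had a supremum outside $\R\la\xi\ra_b$, it would contain unbounded elements (e.g.\ $\sup - 1$), contradicting the pointwise statement. Therefore $s := \sup_{x \in V_\xi} \|x\|_\infty \in \R\la\xi\ra_b$, and any $a \in \R$ with $a > \lim_\xi s$ satisfies $V_\xi \subset [-a,a]^n$. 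The only subtle point in this plan is guaranteeing simultaneously $\bar y \neq 0$ and $\bar y_0 = 0$, so that the limit is a true point at infinity; this is built into the choice of $M$ as the sup-norm over the \emph{affine} coordinates only, which forces some affine entry of $y$ to have unit size while the $X_0$-entry becomes infinitesimal.
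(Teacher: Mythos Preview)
Your proof is correct and follows essentially the same approach as the paper: normalize an unbounded point by its norm, apply $\lim_\xi$, and obtain a common zero of the $P_i^H$ on the hyperplane at infinity, contradicting~\eqref{no_point_at_infinity}. The only cosmetic differences are that you use the sup-norm and evaluate $P_j^H$ directly on the rescaled vector, whereas the paper uses the Euclidean norm and decomposes each $P_i$ into its homogeneous parts; your final paragraph on passing from pointwise to uniform boundedness over $\R$ is an extra bit of care that the paper leaves implicit (the ``e.g.\ $\sup-1$'' is a little sloppy---use the midpoint of the offending interval instead---but the argument is sound).
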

\noindent
\begin{remark}\label{no_point_infinity_condition}
In particular, the condition ~\eqref{no_point_at_infinity} implies that $V$ is bounded. However, we should indicate that~\eqref{no_point_at_infinity} is an integral part of Theorem~\ref{bounded_fibers} and cannot be replaced by the weaker condition ``$V$ being bounded". For instance, $V=\zero(X^2 + Y^2 + (XY-1)^2,\mathbb{R}^2)$ is obviously empty and thus bounded, but $V_{\xi}$ is unbounded over $\mathbb{R}$. It is easy to check $V^H \cap \zero(X_0,\mathbb{P}_n(\mathbb{R}))$ is non-empty for this example. 
\end{remark}
\noindent
Given a polynomial $F \in \R[X_1,\ldots,X_n]$, Theorem~\ref{bounded_fibers} gives rise to a sufficient condition for existence and boundedness of critical points of $F$ on $V_{\xi}$ (see Corollary~\ref{bounded_critical_points}). To that end, we need a ``non-singularity" condition, defined below, that ensures that $V_{\xi}$ remains non-empty. We also introduce its projective counterpart that will be needed later for Theorem~\ref{constrained_real_non-degenerate_critical_points}. 
\begin{definition}[Non-singular point]\label{non_singular_point}
Let $\mathcal{P}:=\{P_1,\ldots,P_s\} \subset \mathrm{K}[X_1,\ldots,X_n]$ where $\mathrm{K} = \R$ or $\mathrm{K} =\C$, and $V=\zero(\mathcal{P},\mathrm{K}^n)$. A point $x=(x_1,\ldots,x_n) \in V$ is called a \textit{non-singular} zero of $\mathcal{P}$ if the Jacobian matrix 
\begin{align*}
J(\{P_1,\ldots,P_s\})(x):=\Bigg[\frac{\partial P_i}{\partial X_j}(x)\Bigg]_{\substack{i=1,\ldots,s\\ j=1,\ldots,n}}
\end{align*}
at $x$ has the rank of $s$. Otherwise, $x$ is called a \textit{singular} zero of $\mathcal{P}$. The real algebraic set $V$ is called non-singular if all zeros of $\mathcal{P}$ are non-singular. The set of all singular zeros of $\mathcal{P}$ is denoted by $\Sing(\mathcal{P})$. 

\vspace{5px}
\noindent
Now, consider the projective algebraic set $V=\zero(\mathcal{P},\mathbb{P}_n(\mathrm{K}))$, where  $\mathcal{P}:=\{P_1,\ldots,P_n\} \subset \mathrm{K}[X_0,\ldots,X_n]$ is a set of homogeneous polynomials. A point $x=(x_0:\ldots:x_n) \in V$ is called a non-singular projective zero of $\mathcal{P}$ if 
\begin{align*}
\rank\Bigg(\Big[\cfrac{\partial P_i}{\partial X_j}(x)\Big]_{\substack{i=1,\ldots,n\\ j=0,\ldots,n}}\Bigg) = n.
\end{align*}
\end{definition} 
\noindent
The notion of a critical point will be frequently used in this paper (and also in the context of central paths of~\PO). Therefore, for the ease of exposition, we introduce the following notation. 

\begin{notation}
Given a canonically Whitney stratified real algebraic set $V \subset \R^n$ and a polynomial $F \subset \R[X_1,\ldots,X_n]$, we will denote by $\Crit(V,F)$ the set of critical points of $F$ on $V$ in the sense of stratified Morse theory (see Section~\ref{stratified_Morse}). 
\end{notation}
\noindent

\begin{corollary}\label{bounded_critical_points}
Let $F \in \R[X_1,\ldots,X_n]$
and suppose that $V$ satisfies the conditions of Theorem~\ref{bounded_fibers} and contains a non-singular zero of $\mathcal{P}$. Then $\Crit(V_\xi,F) \neq \emptyset$ and $\Crit(V_\xi,F)\subset \R\la\xi\ra_b^n$
(i.e. the set of critical points of $F$ on $V_\xi$ is non-empty and 
bounded over $\R$).
\end{corollary}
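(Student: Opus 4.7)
The boundedness conclusion is essentially free. Theorem~\ref{bounded_fibers} already asserts $V_{\xi} \subset \R\la\xi\ra_b^n$, so the inclusion $\Crit(V_{\xi},F) \subset V_{\xi}$ immediately yields $\Crit(V_{\xi},F) \subset \R\la\xi\ra_b^n$. All the work therefore goes into showing that this set is non-empty.

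My plan is to first produce at least one bounded Puiseux series point of $V_{\xi}$ using the non-singular zero, and then argue that $F$ must attain its extrema on $V_{\xi}$, and that any such extremum is automatically a stratified critical point. For step one, let $x_0 \in V$ be a non-singular zero of $\mathcal{P}$, so that $J(\mathcal{P})(x_0)$ has rank $s$. By the semialgebraic implicit function theorem applied to the map $x \mapsto (P_1(x),\ldots,P_s(x))$ at $x_0$, there is an open neighborhood $U$ of $0 \in \R^s$ and a semialgebraic map $\phi: U \to \R^n$ with $\phi(0)=x_0$ and $P_i(\phi(y))=y_i$ for $i=1,\ldots,s$. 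The statement ``$\phi$ is defined on all $y$ with $|y|<r$'' for a suitable $r>0$ is a first-order statement, so by Tarski--Seidenberg the extension of $\phi$ to $\R\la\xi\ra$ is defined at $\xi$ (which is infinitesimal, hence bounded by any $r \in \R_{>0}$). Thus $\phi(\xi) \in V_{\xi}$, and $\lim_\xi \phi(\xi)=x_0$ shows $\phi(\xi) \in \R\la\xi\ra_b^n$. In particular $V_{\xi} \neq \emptyset$.

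For step two, $V_{\xi}$ is defined by polynomial equations, hence closed, and it is bounded by Theorem~\ref{bounded_fibers}. The first-order statement ``every polynomial attains its maximum and minimum on a closed bounded semialgebraic set'' holds over $\R$ by the classical extreme value theorem, so by transfer it also holds over $\R\la\xi\ra$. Hence there exists $p \in V_{\xi}$ at which $F$ attains, say, its minimum on $V_{\xi}$.

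It remains to verify that such a $p$ is a critical point of $F$ on $V_{\xi}$ in the stratified Morse sense. Let $S$ denote the unique stratum of the canonical Whitney stratification of $V_{\xi}$ containing $p$; $S$ is a smooth semialgebraic submanifold of $\R\la\xi\ra^n$. Since $S \subset V_{\xi}$ and $p$ is a global (hence local) minimizer of $F$ on $V_{\xi}$, it is in particular a local minimizer of $F|_S$, so $d(F|_S)(p)=0$. By the definition of a stratified critical point (see Section~\ref{stratified_Morse} and Definition~\ref{Generalized_Morse_Func}) this gives $p \in \Crit(V_{\xi},F)$, completing the proof.

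The only delicate point is step one: since the perturbed system $P_i(x)=\xi_i$ need not have a zero over $\R$ for any positive real $\xi$, we must genuinely pass through the implicit function theorem in the Puiseux series setting (or equivalently invoke Tarski--Seidenberg transfer) to manufacture an element of $V_{\xi}$ from the non-singular zero $x_0 \in V$. Everything else is a combination of Theorem~\ref{bounded_fibers}, the extreme value theorem over real closed fields, and the definition of a stratified critical point.
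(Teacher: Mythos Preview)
Your proof is correct and follows essentially the same approach as the paper. The paper's proof invokes Proposition~\ref{convergence_of_varieties} (whose proof is exactly your implicit-function-theorem argument at the non-singular zero) to get $V_\xi \neq \emptyset$, then cites Theorem~\ref{bounded_fibers} for boundedness and the closed-and-bounded image result \cite[Theorem~3.20]{BPR06} for the existence of an extremum; you simply unpack these steps in place and make the ``minimizer $\Rightarrow$ stratified critical point'' implication explicit.
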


\hide{
\begin{notation}[Sign condition]
Let $\R$ be a real closed field. Given a finite family $\mathcal{P} \subset \R[X_1,\ldots,X_{n}]$, a \textit{sign condition} on $\mathcal{P}$ is an element of $\{-1,0,1\}^{\mathcal{P}}$, i.e., a mapping $\mathcal{P} \to \{-1,0,1\}$. A \textit{weak sign condition} on $\mathcal{P}$ is an element of $\{\{0\},\{0,-1\},\{0,1\}\}^{\mathcal{P}}$. 
\end{notation}}

\hide{
\begin{theorem}\label{bounded_connected_component}
Let $\mathcal{P} \subset \R[X_1,\ldots,X_n]$ be a finite set and
$S\subset \R^n$ be a $\mathcal{P}$-closed semi-algebraic set defined by 
a $\mathcal{P}$-closed formula $\Phi$.
Let $\xi = (\xi_1,\ldots,\xi_s)$ and 
$S_{\xi} \subset \R\langle \xi \rangle^n$ be the $\mathcal{P}_{\xi}$-closed semi-algebraic set, 
with $\mathcal{P}_{\xi}:=\{P_i \pm \xi \mid 1 \leq i \leq s\}$, 
defined by the $\mathcal{P}_\xi$-closed formula
$\Phi_\xi$ obtained from $\Phi$ by replacing
each occurrence of $P_i \leq 0$ by $P_i - \xi_i \leq 0$ and 
$P_i \geq 0$ by $P_i + \xi \geq 0$.
 Let $x \in S$, $D$ the semi-algebraically connected component of $S$ containing $x$, and suppose that $D$ is bounded. 
 Let $x_\xi  \in S_{\xi}$ be such that $\lim_{\xi} x_\xi  = x$, and $D_{\xi}$ be the semi-algebraically connected component of $S_{\xi}$ that contains $x_\xi$. Then $D_{\xi}$ is bounded over $\R$, and $\lim_{\xi} D_{\xi} \subset D$.
\end{theorem}}
\noindent
In many applications (e.g., central paths of~\PO), the conditions of Theorem~\ref{bounded_fibers} might seem too restrictive. We prove a variant of Theorem~\ref{bounded_fibers}, which obviates the need for the extra condition in Theorem~\ref{bounded_fibers}, but still guarantees the existence of a bounded semi-algebraically connected component of $V_{\xi}$. 
\begin{theorem}\label{bounded_connected_component}
Let $V \subset \R^n$ and $V_{\xi} \subset \R\la\xi\ra^n$,
(where $\xi = (\xi_1,\ldots,\xi_s)$)
be defined by Eqns.~\eqref{variety} and~\eqref{perturbed_variety}. Let $x \in V$, $D$ the semi-algebraically connected component of $V$ containing $x$, and suppose that $D$ is bounded. 
 Let $x_\xi  \in V_{\xi}$ be such that $\lim_{\xi} (x_\xi)  = x$, and $D_{\xi}$ be the semi-algebraically connected component of $V_{\xi}$ that contains $x_\xi$. Then $D_{\xi}$ is bounded over $\R$, and $\lim_{\xi} (D_{\xi}) \subset D$.
\end{theorem}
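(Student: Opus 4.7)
The plan is to isolate $D$ from $V\setminus D$ by a bounded semi-algebraic open neighborhood whose boundary is uniformly separated from $V$, and then transfer that separation to the perturbed variety $V_\xi$ using the Tarski--Seidenberg principle.

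First I would observe that, since semi-algebraic sets have only finitely many semi-algebraically connected components, $V\setminus D$ is closed in $V$ (hence in $\R^n$) and disjoint from the bounded, closed, hence semi-algebraically compact set $D$. Therefore the continuous semi-algebraic function $y\mapsto \distance(y,V\setminus D)$ attains a positive minimum $\delta_0>0$ on $D$. Fixing any $\delta\in\R$ with $0<\delta<\delta_0$, I would set $W=\{y\in\R^n:\distance(y,D)<\delta\}$. Then $W$ is a bounded semi-algebraic open neighborhood of $D$, and a direct verification using $\distance(V\setminus D,D)=\delta_0>\delta$ shows $\overline{W}\cap V=D$, so in particular $\partial W\cap V=\emptyset$.

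Next, since $\partial W\subset\R^n$ is closed, bounded, and semi-algebraic, the polynomial $g(y):=\sum_{i=1}^s P_i(y)^2$ attains a positive minimum $m>0$ on $\partial W$. This first-order statement transfers to $\R\la\xi\ra$: every $y\in\E(\partial W,\R\la\xi\ra)$ satisfies $g(y)\geq m$. But any $y\in V_\xi$ satisfies $g(y)=\sum_i\xi_i^2$, which is infinitesimal over $\R$ and thus strictly less than $m$. Hence $V_\xi\cap\E(\partial W,\R\la\xi\ra)=\emptyset$, and $V_\xi$ is partitioned by the two relatively clopen subsets $V_\xi\cap\E(W,\R\la\xi\ra)$ and $V_\xi\cap\E(\R^n\setminus\overline{W},\R\la\xi\ra)$. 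Because $\lim_\xi x_\xi=x\in D\subset W$ and $W$ is open, $x_\xi$ lies in $\E(W,\R\la\xi\ra)$, so the component $D_\xi$ containing $x_\xi$ is entirely contained in $\E(W,\R\la\xi\ra)$; the latter is bounded over $\R$, proving $D_\xi\subset\R\la\xi\ra_b^n$.

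For the limit inclusion, I would pick any $y_\xi\in D_\xi\subset\E(W,\R\la\xi\ra)\subset\R\la\xi\ra_b^n$ and set $y:=\lim_\xi y_\xi\in\overline{W}$. Since $P_i(y_\xi)=\xi_i$, applying the ring homomorphism $\lim_\xi$ gives $P_i(y)=0$ for all $i$, so $y\in V\cap\overline{W}=D$, which yields $\lim_\xi D_\xi\subset D$. The main obstacle is the construction of $W$ with the correct separation property in an arbitrary (possibly non-archimedean) real closed field $\R$; once semi-algebraic compactness of $D$ supplies both the positive constant $\delta_0\in\R$ and the positive minimum $m\in\R$ of $g$ on $\partial W$, the remaining steps follow routinely from transfer and the definition of $\lim_\xi$.
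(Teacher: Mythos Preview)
Your proof is correct and takes a genuinely different route from the paper. The paper argues by contradiction using semi-algebraic paths: assuming $D_\xi$ unbounded, it picks a path in $D_\xi$ from $x_\xi$ to a point of norm larger than $2r$ (where $D\subset\bar B(\mathbf{0},r)$), truncates it at the first crossing of the sphere of radius $2r$ via the Semi-algebraic Intermediate Value Theorem, and then invokes \cite[Proposition~12.43]{BPR06} to conclude that the $\lim_\xi$-image of this bounded connected arc is semi-algebraically connected, lies in $V$, contains $x$, and hence lies in $D$---yet it also contains a point of norm $2r$, a contradiction; the inclusion $\lim_\xi(D_\xi)\subset D$ is obtained by a second appeal to the same proposition. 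You instead isolate $D$ inside a bounded open neighborhood $W$ with $\overline W\cap V=D$, bound $\sum_i P_i^2$ below by some $m>0$ on $\partial W$, and transfer this to $\R\la\xi\ra$ to show $V_\xi$ never meets $\E(\partial W,\R\la\xi\ra)$ (since $\sum_i\xi_i^2<m$), so connectedness pins $D_\xi$ inside $\E(W,\R\la\xi\ra)$; the limit inclusion then follows pointwise from $\overline W\cap V=D$. Your argument is more self-contained---it avoids the cited connectedness-of-limits result entirely---and delivers both conclusions in one stroke, whereas the paper's path-based argument is closer to standard techniques in algorithmic real algebraic geometry. One small omission: your construction of $\delta_0$ tacitly assumes $V\setminus D\neq\emptyset$; when $V=D$, simply take $W$ to be any bounded open ball containing $D$, and the remainder of your argument goes through unchanged.
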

\noindent
Analogous to Theorem~\ref{bounded_fibers}, Theorem~\ref{bounded_connected_component} leads to conditions for existence and boundedness of critical points of $F$ on $V_{\xi}$.   

\begin{corollary}\label{existence_critical_points}
Let $F\in \R[X_1,\ldots,X_n]$,
and suppose that $V$ satisfies the conditions of Theorem~\ref{bounded_connected_component} and has a non-singular zero of $P$ 
belonging to
a bounded semi-algebraically connected component of $V$. Then $\Crit(V_\xi,F) \cap \R\la\xi\ra_b^n \neq \emptyset$
(i.e. $V_\xi$ has a critical point of $F$ which is bounded over $\R$).   
\end{corollary}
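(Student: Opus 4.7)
The plan is to produce a bounded semi-algebraically connected component of $V_\xi$ lying over the given bounded component of $V$, and then extract a critical point by minimizing $F$ on it. Let $x_0 \in V$ be a non-singular zero of $\mathcal{P}$ contained in a bounded semi-algebraically connected component $D$ of $V$.

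First I would construct a point $x_\xi \in V_\xi$ with $\lim_\xi x_\xi = x_0$. Since $J(\mathcal{P})(x_0)$ has rank $s$, the classical implicit function theorem applied to $(P_1,\ldots,P_s): \R^n \to \R^s$ at $x_0$ yields: for every $\epsilon > 0$ there is $\delta > 0$ such that every $y \in \R^s$ with $|y| < \delta$ admits a preimage $x' \in \R^n$ with $|x' - x_0| < \epsilon$ and $P_i(x') = y_i$ for all $i$. This statement is first-order in the language of ordered fields with $x_0$ as a parameter, so by the Tarski–Seidenberg transfer principle it holds over $\R\la\xi\ra$ as well. Since each $\xi_i$ is infinitesimal over $\R$, taking $y = (\xi_1,\ldots,\xi_s)$ yields an $x_\xi \in \R\la\xi\ra^n$ with $P_i(x_\xi) = \xi_i$ for all $i$ and $|x_\xi - x_0| < \epsilon$ for every standard $\epsilon > 0$; hence $x_\xi \in \R\la\xi\ra_b^n$ and $\lim_\xi x_\xi = x_0 \in D$. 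Theorem~\ref{bounded_connected_component} then implies that the semi-algebraically connected component $D_\xi$ of $V_\xi$ containing $x_\xi$ is bounded over $\R$.

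Next I would minimize $F$ on $D_\xi$. Since $V_\xi$ is a real algebraic set (hence closed in $\R\la\xi\ra^n$) with only finitely many semi-algebraically connected components, $D_\xi$ is closed; being also bounded over $\R$, it is semi-algebraically compact in the real-closed-field sense, so the continuous semi-algebraic function $F|_{D_\xi}$ attains its minimum at some $y_\xi \in D_\xi \subset \R\la\xi\ra_b^n$. On the stratum $S$ of the canonical Whitney stratification of $V_\xi$ containing $y_\xi$, the point $y_\xi$ is a local minimizer of $F|_S$, so $d(F|_S)(y_\xi) = 0$, placing $y_\xi$ in $\Crit(V_\xi, F)$ in the sense of stratified Morse theory. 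This yields the desired point in $\Crit(V_\xi, F) \cap \R\la\xi\ra_b^n$.

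The main obstacle is the first step: invoking the implicit function theorem inside the non-archimedean extension $\R\la\xi\ra$. The classical analytic proof does not apply directly to Puiseux series, but because the conclusion is first-order and the non-singularity hypothesis at $x_0$ guarantees its validity over $\R$, Tarski–Seidenberg transfers it cleanly to $\R\la\xi\ra$. The remaining ingredients — boundedness supplied by Theorem~\ref{bounded_connected_component}, semi-algebraic compactness of a closed-and-bounded set, and the identification of local extrema with stratified critical points — are standard.
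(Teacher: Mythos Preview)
Your proposal is correct and follows essentially the same route as the paper. The paper's proof cites Proposition~\ref{convergence_of_varieties} to produce $x_\xi \in V_\xi$ with $\lim_\xi x_\xi$ equal to the given non-singular point (that proposition's proof uses the Semi-algebraic Implicit Function Theorem over $\R$), whereas you obtain the same point by applying the classical implicit function theorem and transferring via Tarski--Seidenberg; both then invoke Theorem~\ref{bounded_connected_component} to get a closed and bounded component $D_\xi$ and extract a minimizer of $F$ on it as the desired critical point.
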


\hide{
\begin{remark}
We note that  Theorem~\ref{bounded_connected_component} no longer holds if the quantifier-free formula that defines $S$ contains strict inequalities or inequations. For example, consider the quantifier-free formula $\{X_1^2 + X_2^2 = 0\} \cup \{X^2 < 0\}$ whose realization over $\R^2$ is $\{(0,0)\}$. However, the realization of $\{X_1^2 + X_2^2 = 0\} \cup \{X^2 < \xi\}$ over $\mathbb{R}\langle \xi \rangle^2$ is unbounded, although it contains the real point $(\xi,\xi)$ such that $\lim_{\xi}(\xi,\xi)=(0,0)$. 
\end{remark}}
\begin{remark}
Although Theorems~\ref{bounded_fibers} and~\ref{bounded_connected_component} are valid only for infinitesimal $\xi$, they are closely related to a classic result on perturbation (not necessarily infinitesimal) of bounded convex sets~\cite[Theorem~24]{FM90}, which has important implications for convex optimization: if $S=\{x \mid g_i(x) \ge 0\}$ defined by concave functions $g_i$ is bounded, then $\{x \mid g_i(x) \ge -\varepsilon_i\}$ remains bounded for all $\varepsilon_i \ge 0$. The example in Remark~\ref{no_point_infinity_condition} already clarifies that a word-for-word extension of the above classic result
to possibly non-concave polynomial functions (even with infinitesimal perturbation) is not possible without additional conditions.   
\end{remark}

\subsubsection{Isolated critical points}\label{sec:isolated_critical}
Corollaries~\ref{bounded_critical_points}-\ref{existence_critical_points} only ensure the existence of bounded critical points on $V_{\xi}$. We establish conditions that guarantee that $\Crit(V_\xi,F)$ has isolated (non-degenerate) critical points. To that end, we leverage a projective version of the Karush Kuhn-Tucker (KKT) conditions (see Definition~\ref{projective_KKT_point}). KKT conditions are well-known in the theory of constrained \NO~(see e.g.,~\cite{G10}).

\hide{
\begin{notation}
Given a polynomial $P \in \R[X_1,\ldots,X_n]$, the gradient vector of $P$ is defined as
\begin{align*}
  \grad(P):=\Big(\cfrac{\partial P}{\partial X_1},\ldots,  \cfrac{\partial P}{\partial X_n}\Big).
\end{align*}
\end{notation}

\begin{theorem}\label{constrained_real_non-degenerate_critical_points}
Let $F, P \in \R[X_1,\ldots,X_n]$, $V=\zero(P,\R^n)$, and $V_\xi=\zero(P-\xi,\R\la \xi \ra^n)$. Suppose that $\grad(F)$ and $\grad(P)$ are linearly independent, and there exists $\xi \in \C$ such that $\xi F + (1-\xi) P$ has only complex isolated (non-degenerate) projective critical points (see Section~\ref{sec:projective_critical}). Then all critical points of $F$ on $V_{\xi}$ are isolated (non-degenerate).
\end{theorem}

\begin{remark}
Note that assuming linear independence of $F$ and $P$ in Theorem~\ref{constrained_real_non-degenerate_critical_points} is not a big loss of generality. This assumption only eliminates the trivial case that every point of $\zero(P-\xi,\R \la \xi \ra^n)$ is a critical point of $F$ on $V_{\xi}$.
\end{remark}
}
 
\begin{theorem}\label{constrained_real_isolated_critical_points}
Let $F, P \in \R[X_1,\ldots,X_n]$, $V=\zero(P,\R^n)$, and $V_\xi=\zero(P-\xi,\R\la \xi \ra^n)$. Suppose that for some $c \in \C$, the set of complex projective KKT points of $F$ in $\mathbb{P}_n(\C) \times \mathbb{P}_1(\C)$ on $V_{c} := \zero(P-c,\C^n)$ is finite. Then $\Crit(V_{\xi},F)$ is finite.
\end{theorem}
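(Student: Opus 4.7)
My plan is to view the projective KKT system for $F$ on $V_c$ as a family over the parameter $c$, and to combine upper semi-continuity of fiber dimension with the fact that $\xi$ is transcendental over $\R$ to transfer finiteness at the single value of $c$ furnished by the hypothesis to finiteness at $c = \xi$.

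First, I would introduce the incidence variety
$$
W := \bigl\{\, (x, (\mu:\lambda), c) \in \mathbb{P}_n(\C) \times \mathbb{P}_1(\C) \times \C \;:\; (x,(\mu:\lambda)) \text{ is a projective KKT point of } F \text{ on } V_c \,\bigr\}.
$$
The defining equations (from Definition~\ref{projective_KKT_point}) are polynomial in all of $x$, $(\mu,\lambda)$, and $c$, and have coefficients in $\R$, so $W$ is a Zariski-closed subvariety of $\mathbb{P}_n(\C) \times \mathbb{P}_1(\C) \times \C$ defined over $\R$. Let $\pi : W \to \C$ denote projection to the last factor.

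Next, since each fiber of $\pi$ is a closed subvariety of the projective variety $\mathbb{P}_n(\C) \times \mathbb{P}_1(\C)$, the standard upper semi-continuity of fiber dimension implies that the locus
$$
B := \bigl\{\, c \in \C \;:\; \dim \pi^{-1}(c) \geq 1 \,\bigr\}
$$
is Zariski closed in the affine line. By hypothesis, there exists $c_0 \in \C$ with $\pi^{-1}(c_0)$ finite, so $c_0 \notin B$ and $B$ is a proper closed subset of $\C$, hence a finite set of complex numbers. Since $W$ is defined over $\R$, so is $B$, and it is cut out by some nonzero polynomial $f(c) \in \R[c]$. Because $0 < \xi \ll 1$ is an infinitesimal over $\R$, the element $\xi$ is transcendental over $\R$, so $f(\xi) \neq 0$. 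Specializing the incidence variety at $c = \xi$ (via base change to $\R\la\xi\ra$), the fiber $W_\xi$ remains $0$-dimensional and hence consists of only finitely many points of $\mathbb{P}_n(\C\la\xi\ra) \times \mathbb{P}_1(\C\la\xi\ra)$.

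Finally, every $x \in \Crit(V_\xi, F)$ lifts to at least one projective KKT point $(x,(\mu:\lambda)) \in W_\xi$: the Lagrange-multiplier condition $\mu\,\grad(F)(x) + \lambda\,\grad(P)(x) = 0$ covers the smooth case with $\mu \neq 0$, while any critical point on the singular locus of $V_\xi$ (where $\grad(P)$ vanishes) is captured by taking $(\mu:\lambda) = (0:1)$. Since the projection $W_\xi \to V_\xi$ forgetting the multipliers is finite-to-one, finiteness of $W_\xi$ forces finiteness of $\Crit(V_\xi, F)$. The main obstacle I anticipate is checking cleanly that Definition~\ref{projective_KKT_point} produces the correct projective closure so that each fiber $\pi^{-1}(c)$ is indeed a closed subvariety of $\mathbb{P}_n(\C) \times \mathbb{P}_1(\C)$ --- which is what makes ``$\dim \geq 1$'' a genuine Zariski-closed condition on $c$ --- and simultaneously confirming that every stratified critical point, including those along $\Sing(\{P-\xi\})$, is indeed encoded as a projective KKT point, so that no element of $\Crit(V_\xi,F)$ is missed when lifting to $W_\xi$.
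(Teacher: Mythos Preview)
Your argument is correct and follows essentially the same strategy as the paper: both set up the projective KKT incidence variety over $\mathbb{A}^1_c$, invoke upper semi-continuity of fiber dimension (the paper additionally routes through Chevalley's theorem on the constructible locus of non-isolated points, whereas you use properness of $\mathbb{P}_n\times\mathbb{P}_1$ over $\mathbb{A}^1$ directly) to conclude that the bad parameter locus is a finite subset of $\C$ defined over $\R$, and then transfer finiteness to the transcendental value $c=\xi$. Regarding your stated obstacle about stratified critical points along $\Sing(\{P-\xi\})$: the paper resolves this simply by noting (via the Semi-algebraic Sard Theorem) that $V_\xi$ is non-singular, so $\Crit(V_\xi,F)$ consists entirely of smooth critical points and each one lifts to an affine KKT point with $u_0=1$.
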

\noindent
We show that if we incorporate non-degeneracy of critical points of $F$ on $V_{c}$, then all critical point of $F$ on $V_{\xi}$ must be non-degenerate.

\begin{theorem}\label{constrained_real_non-degenerate_critical_points}
Let $F, P \in \R[X_1,\ldots,X_n]$, $V=\zero(P,\R^n)$, and $V_\xi=\zero(P-\xi,\R\la \xi \ra^n)$. Suppose that for some $c \in \C$, all complex projective KKT points of $F$ in $\mathbb{P}_n(\C) \times \mathbb{P}_1(\C)$ on $V_{c} = \zero(P-c,\C^n)$ are non-singular. Then all critical points of $F$ on $V_{\xi}$ are non-degenerate (and thus $\Crit(V_{\xi},F)$ is finite).
\end{theorem}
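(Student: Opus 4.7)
\medskip
\textbf{Proof plan.} The plan is to view the condition ``all projective KKT points on $V_t$ are non-singular'' as a Zariski-open condition on the parameter $t$ in the algebraic family $\{V_t = \zero(P-t,\C^n)\}_{t}$, and then to transfer this property from the given complex value $t = c$ to the infinitesimal $t = \xi$ using that $\xi$ is transcendental over $\R$. I would set up the incidence variety
\[
\mathcal{K} \subset \mathbb{P}_n(\C) \times \mathbb{P}_1(\C) \times \mathbb{A}^1(\C)
\]
cut out by the homogenized projective KKT equations of Definition~\ref{projective_KKT_point}, now with $t$ treated as an additional coordinate, so that the fiber of the projection $\pi : \mathcal{K} \to \mathbb{A}^1(\C)$ over $t_0$ is exactly the set of projective KKT points of $F$ on $V_{t_0}$. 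Let $\mathcal{K}^{\mathrm{sing}} \subset \mathcal{K}$ be the Zariski-closed locus where the Jacobian of the defining equations (read in affine charts) fails to have maximal rank; a point of $\pi^{-1}(t_0)$ is a non-singular projective KKT point iff it lies outside $\mathcal{K}^{\mathrm{sing}}$.

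Since $\mathbb{P}_n(\C) \times \mathbb{P}_1(\C)$ is complete, $\pi|_{\mathcal{K}^{\mathrm{sing}}}$ is proper, so $Z := \pi(\mathcal{K}^{\mathrm{sing}})$ is Zariski-closed in $\mathbb{A}^1(\C)$. By hypothesis $c \notin Z$, hence $Z$ is a proper closed subset of the affine line, so it is finite. Because $F, P \in \R[X_1,\ldots,X_n]$, the ideal defining $\mathcal{K}^{\mathrm{sing}}$, and hence also $Z$, is generated by polynomials with coefficients in $\R$, so $Z = \zero(h,\C)$ for some non-zero $h \in \R[t]$. As $\xi$ is a positive infinitesimal, it is transcendental over $\R$, so $h(\xi) \neq 0$, i.e.\ $\xi \notin Z$ inside $\C\la\xi\ra$. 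Consequently every projective KKT point of $F$ on $V_\xi$ is non-singular.

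It remains to translate projective non-singularity into Morse non-degeneracy for real affine critical points. At any $x \in \Crit(V_\xi,F) \subset \R\la\xi\ra^n$, the gradient $\grad(P-\xi)(x)$ is non-zero (otherwise the corresponding projective KKT point would already be singular), so the Lagrange multiplier $\lambda$ is uniquely determined, and in the affine chart where both the $x$- and $\lambda$-coordinates are dehomogenized, the Jacobian of the KKT system at $(x,\lambda)$ reduces to the bordered Hessian of the Lagrangian $F - \lambda(P - \xi)$ at $x$; its non-singularity is then equivalent to Morse-theoretic non-degeneracy of $x$ as a critical point of $F|_{V_\xi}$. Finally, finiteness of $\Crit(V_\xi,F)$ follows from Theorem~\ref{constrained_real_isolated_critical_points}, whose hypothesis of finitely many complex projective KKT points on $V_c$ holds automatically, because non-singular points of $\mathcal{K}$ in the complete fiber $\pi^{-1}(c)$ are isolated. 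The main obstacle is this last translation step: precisely matching projective non-singularity on the KKT incidence variety with bordered-Hessian non-degeneracy at a real Lagrange point, which requires care with the choice of affine normalization in both $\mathbb{P}_n(\C)$ and $\mathbb{P}_1(\C)$ and a dimension check of the KKT variety cut out in the product.
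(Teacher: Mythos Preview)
Your proposal is correct and follows essentially the same route as the paper: define the locus of singular projective KKT points as a closed subvariety of $\mathbb{P}_n(\C)\times\mathbb{P}_1(\C)\times\C$, use completeness of the projective factor to conclude its image in $\C$ is Zariski closed, and infer from the hypothesis at $c$ that this image is finite, hence avoided by $\xi$. Your write-up is in fact more careful than the paper's on two points the paper leaves implicit---namely, why $\xi$ misses the finite bad set (your transcendence argument with $h\in\R[t]$) and why projective non-singularity of the KKT point yields affine bordered-Hessian non-degeneracy at the real critical point.
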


\begin{remark}
Although the notion of complex projective KKT points relies on non-singularity of $V_{c}$, we should clarify that the conditions of Theorems~\ref{constrained_real_isolated_critical_points}-\ref{constrained_real_non-degenerate_critical_points} are not vacuously true (see Examples~\ref{fintely_many_critical_points}-\ref{non-degenerate_critical_points}). In fact, we can show (see the proof of Theorem~\ref{constrained_real_isolated_critical_points}), using the Classical Sard Theorem (e.g.,~\cite[Theorem~6.10]{L13}) and the Chevalley Theorem~\cite[I.8, Corollary~2]{Mum99}, that for all but finitely many $c \in \C$, $V_{c}$ is non-singular. In particular, $V_\xi$ is non-singular, which can be independently obtained from the Semi-algebraic Sard Theorem~\cite[Theorem~5.56]{BPR06}.
\end{remark}

\begin{example}\label{fintely_many_critical_points}
Let $F = X_1$ and $P = X_1^3 - X_2^2$. Then we can check that $F$ has only non-degenerate critical points on $V_\xi$, and all conditions of Theorem~\ref{constrained_real_isolated_critical_points} hold. More precisely, for any $c \in \C\setminus\{0\}$, $X_1^3 - X_2^2-c$ has no singular complex zero. The complex projective KKT points are the zeros of 
\begin{align*}
    \{U_0X_0^2-3U_1X^2_1, 2U_1X_2,X_1^3-X_2^2X_0 - c X_0^3\},
\end{align*}
which are $((1:0:0),(0:1))$, $((0:0:1),(1:0))$, and $((1:t:0),(3t^2:1))$, where $t$ is a complex root of $c^3 - 1 = 0$. Thus,  
for each $c \in \C\setminus \{0\}$, $F$ has finitely many complex projective KKT points in $\mathbb{P}_2(\C) \times \mathbb{P}_1(\C)$ on $V$, although some of them are singular. Therefore, the conditions of Theorem~\ref{constrained_real_non-degenerate_critical_points} do not hold in this case. 
\end{example}
\begin{example}\label{non-degenerate_critical_points}
Let $F = X_1+X_2$ and $P = X_1X_2$. We can check that $F$ has only non-degenerate critical points on $V_\xi$, and all conditions of Theorem~\ref{constrained_real_non-degenerate_critical_points} hold. More precisely, for any $c \in \C\setminus\{0\}$, $X_1X_2-c$ has no singular complex zero. The complex projective KKT points are the zeros of 
\begin{align*}
    \{U_0X_0-U_1X_2,  U_0X_0 - U_1X_1,X_1X_2 - c X_0^2\},
\end{align*}
which are $((0:1:0),(1:0))$, $((0:0:1),(1:0))$, and $((1:\pm t:\pm t),(\pm t:1))$, where $t$ is a complex root of $c^2 - 1 = 0$. All these complex projective KKT points are non-singular for $c \in \C \setminus\{0\}$.
\end{example}
\noindent
The conditions of Theorems~\ref{constrained_real_isolated_critical_points}-\ref{constrained_real_non-degenerate_critical_points} guarantee 
that $\Crit(V_{\xi},F)$ is a finite set (but still, it could be empty).
Additionally, if we require $F$ to have a non-degenerate critical point on a non-singular $V$ (see Section~\ref{sec:KKT_non-singular}), then we can also guarantee that $\Crit(V_{\xi},F) \cap \R\la\xi\ra_b^n$ is non-empty.

\begin{theorem}\label{Morse_on_smooth_hypersurface}
Let $V \subset \R^n$ and $V_{\xi} \subset \R\la\xi\ra^n$,
(where $\xi = (\xi_1,\ldots,\xi_s)$) be defined by Eqns.~\eqref{variety} and~\eqref{perturbed_variety}, and assume that $V$ is non-singular. Further, let $\bar{x}$ be a non-degenerate critical point of $F$ on $V$. Then there exists a non-degenerate critical point $x_\xi \in \Crit(V_\xi,F) \cap \R\la\xi\ra_b^n$, and $\lim_{\xi} (x_\xi) = \bar{x}$. 
\end{theorem}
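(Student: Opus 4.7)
The strategy is to recast the problem as solving a parametrized polynomial system $\Phi(x,\lambda,\xi)=0$ and apply the implicit function theorem near the unperturbed solution $(\bar x, \bar\lambda)$, then transfer the conclusion from $\R$ to $\R\la\xi\ra$ by Tarski--Seidenberg.

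Since $V$ is non-singular, $J(\mathcal{P})(\bar x)$ has full row rank $s$, and the Lagrange multiplier condition at the critical point $\bar x$ furnishes a unique $\bar\lambda \in \R^s$ with $\nabla F(\bar x) = \sum_{i} \bar\lambda_i \nabla P_i(\bar x)$ and $P_i(\bar x)=0$. Define the polynomial map
\[
\Phi: \R^n \times \R^s \times \R^s \to \R^n \times \R^s,\qquad \Phi(x,\lambda,\xi) = \Bigl(\nabla F(x) - \textstyle\sum_{i=1}^s \lambda_i \nabla P_i(x),\, P_1(x)-\xi_1,\ldots,P_s(x)-\xi_s\Bigr).
\]
Then $\Phi(\bar x, \bar\lambda, 0)=0$, and the Jacobian of $\Phi$ in the variables $(x,\lambda)$ at $(\bar x, \bar\lambda, 0)$ is the bordered Hessian
\[
M = \begin{pmatrix} H & -J(\mathcal{P})(\bar x)^T \\ J(\mathcal{P})(\bar x) & 0 \end{pmatrix},
\]
with $H$ the Hessian of the Lagrangian $F - \sum_i \bar\lambda_i P_i$ at $\bar x$. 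Because $J(\mathcal{P})(\bar x)$ has full row rank, the non-degeneracy of $\bar x$ as a critical point of $F$ on $V$ is equivalent to $\det M \neq 0$.

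Apply the classical implicit function theorem to $\Phi$ at $(\bar x, \bar\lambda, 0)$: for every $\delta > 0$ in $\R$, there exists $\eta_\delta > 0$ in $\R$ such that for all $\xi \in \R^s$ with $|\xi_i| < \eta_\delta$, the system $\Phi(x,\lambda,\xi)=0$ has a unique solution $(x_\xi,\mu_\xi)$ in the open ball of radius $\delta$ around $(\bar x,\bar\lambda)$, and the Jacobian of $\Phi$ in $(x,\lambda)$ is invertible at this solution. This assertion is expressible by a first-order formula $\varphi(\delta,\eta_\delta)$ in the language of ordered fields with parameters from $\R$. By the Tarski--Seidenberg transfer principle, $\varphi(\delta,\eta_\delta)$ also holds in the real closed extension $\R\la\xi\ra$. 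Since every $\xi_i$ is positive and infinitesimal over $\R$, we have $0 < \xi_i < \eta_\delta$ for each $\delta \in \R_{>0}$. Hence there exists $(x_\xi,\mu_\xi) \in \R\la\xi\ra^{n+s}$ with $\Phi(x_\xi,\mu_\xi,\xi)=0$ and $\|(x_\xi,\mu_\xi)-(\bar x,\bar\lambda)\| < \delta$ for every $\delta \in \R_{>0}$. Therefore $(x_\xi,\mu_\xi) \in \R\la\xi\ra_b^{n+s}$, $\lim_\xi(x_\xi,\mu_\xi) = (\bar x,\bar\lambda)$, and the bordered Hessian at $(x_\xi,\mu_\xi)$ remains invertible by the same transferred formula, so $x_\xi \in \Crit(V_\xi,F) \cap \R\la\xi\ra_b^n$ is a non-degenerate critical point with $\lim_\xi x_\xi = \bar x$.

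The main technical obstacle is packaging the implicit function theorem conclusion as a transferable first-order statement. One must explicitly specify a semi-algebraic bounded neighborhood on which $\Phi(\cdot,\cdot,\xi)$ is a diffeomorphism onto its image (using continuity of the Jacobian together with $\det M \neq 0$), and then assert existence and uniqueness of a zero of $\Phi(\cdot,\cdot,\xi)$ inside it together with non-vanishing of the relevant Jacobian determinant. Once this semi-algebraic formulation is in place, transfer from $\R$ to $\R\la\xi\ra$ is automatic, and the translation between invertibility of $M$ and non-degeneracy in the Morse-theoretic sense (tangent-space Hessian) follows from a standard block-matrix identity valid whenever the constraint gradients are linearly independent.
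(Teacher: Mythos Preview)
Your proposal is correct and follows essentially the same approach as the paper: set up the KKT system, observe that non-degeneracy of $\bar x$ as a Morse critical point on $V$ is equivalent to invertibility of the bordered Hessian, and apply the implicit function theorem to obtain a nearby non-degenerate critical point on $V_\xi$. The only cosmetic difference is that the paper invokes the Semi-algebraic Implicit Function Theorem over $\R$ directly (and explicitly derives the bordered-Hessian equivalence), whereas you phrase the IFT conclusion as a first-order statement and transfer it to $\R\la\xi\ra$; both routes are equivalent.
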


\begin{remark}
Although Theorem~\ref{Morse_on_smooth_hypersurface} is stated under non-singularity of $V$, the result is still valid locally, where we only need a non-singular zero of $\mathcal{P}$ (in fact, the existence of a non-singular zero of $\mathcal{P}$ is enough for $V_{\xi}$ to be non-empty (see Proposition~\ref{convergence_of_varieties})) that is a non-degenerate critical point of $F$ on $V$, with respect to a ``local Whitney stratification" of $V$. However, this local approach would create an unwieldy statement, which we prefer to avoid. 
\end{remark}
\noindent
It is important to note that if any of the conditions of Theorem~\ref{Morse_on_smooth_hypersurface} are omitted, then the existence of an isolated critical point is no longer guaranteed, as shown by Examples~\ref{nonMorse_smooth_isolated_critical}-\ref{Morse_singular_co_critical}.
\begin{example}\label{nonMorse_smooth_isolated_critical}
Consider the polynomial $F = X_1^3 + X_1X_2^2$ on the non-singular hypersurface $V = \zero(X_2 - X_1,\mathbb{R}^2)$, which has an isolated degenerate critical point $(0,0)$. However, from the critical conditions~\eqref{critical_point_conditions_on_manifold} we get
\begin{align*}
\cfrac{\partial F}{\partial X_1}  =  6X_1^2 + 4\xi X_1 + \xi^2 = 0,
\end{align*}
which has no real root in $\mathbb{R}\langle \xi \rangle$.
\end{example}
\begin{example}\label{Morse_singular_co_critical}
Consider the polynomial $F=X_1^2 - X_2^2$ and the zero set of $P = X_1X_2$, which has a singular zero at $(0,0)$. It is easy to check that $F$ has no critical point on $V_{\xi}$.
\end{example}
\noindent
Theorem~\ref{Morse_on_smooth_hypersurface} ensures that a non-degenerate critical point exists which in some applications may not be always needed. For example, the existence of a central path in \PO~only requires the existence of an isolated, but not necessarily non-degenerate, critical point. For the purpose of this paper, we also establish a weaker non-singularity condition that guarantees that $\Crit(V_\xi,F) \cap \R\la\xi\ra_b^n$ has at least one critical point. This condition can be incorporated into the assumptions of Theorem~\ref{constrained_real_isolated_critical_points} to ensure the existence of an isolated bounded critical point.
\begin{definition}[General position]
\label{def:general-position}
We say that a finite set $\mathcal{P} \subset \R[X_1,\ldots,X_n]$ 
is in \emph{general position}
if for any subset $I \subset \{1,\ldots,s\}$, 
\begin{align*}
\zero(\{P_i\}_{i \in I},\R^n)
\end{align*}
is non-singular.     
\end{definition}
\begin{theorem}\label{Morse_on_transversal}
Let $V = \zero\big( \prod_{i=1}^s P_{i},\R^n\big)$, 
where $\mathcal{P} = \{P_1,\ldots,P_s\} \subset \R[X_1,\ldots,X_n]$ is in general position. Further, let $\bar{x}$ be a non-degenerate critical point of $F$ on $V$ with respect to its canonical Whitney stratification, and assume that the corresponding Lagrange multipliers of $\bar{x}$ are all positive. Then there exists a critical point $x_\xi \in \Crit(V_\xi,F) \cap \R\la\xi\ra_b^n$, and $\lim_{\xi} (x_\xi) = \bar{x}$. 
\end{theorem}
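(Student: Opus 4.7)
The plan is to locate $x_\xi$ by solving the stratum-specific perturbed KKT system near $(\bar{x}, \lambda)$ via the implicit function theorem, using the non-degeneracy of $\bar{x}$ to guarantee that the relevant bordered Hessian is invertible, and using the positivity of the Lagrange multipliers to pin down the correct branch of $V_\xi$.

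First, I would identify the stratum containing $\bar{x}$. Let $I := \{i \in \{1, \ldots, s\} : P_i(\bar{x}) = 0\}$. Since $\mathcal{P}$ is in general position, the set $S_I := \zero(\{P_i\}_{i \in I}, \R^n)$ is non-singular at $\bar{x}$, of codimension $|I|$, and the canonical Whitney stratum of $V$ containing $\bar{x}$ is an open neighborhood of $\bar{x}$ in $S_I$. The stratified critical-point conditions yield Lagrange multipliers $\lambda = (\lambda_i)_{i \in I}$ (all positive by hypothesis) with
\[
\grad(F)(\bar{x}) = \sum_{i \in I} \lambda_i \, \grad(P_i)(\bar{x}).
\]
The non-degeneracy of $\bar{x}$ as a stratified critical point is equivalent to the invertibility of the bordered Hessian
\[
M = \begin{pmatrix} \mathrm{Hess}(F)(\bar{x}) - \sum_{i \in I} \lambda_i \, \mathrm{Hess}(P_i)(\bar{x}) & J_I(\bar{x})^{T} \\ J_I(\bar{x}) & 0 \end{pmatrix},
\]
where $J_I$ is the Jacobian matrix of $(P_i)_{i \in I}$ with respect to $x$; this is the standard characterization via the Hessian of $F|_{S_I}$.

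Second, I would set up the perturbed KKT system over $\R\la\xi\ra$. Because $P_j(\bar{x}) \neq 0$ for $j \notin I$ and $\xi_j$ is infinitesimal, locally near $\bar{x}$ the hypersurfaces $\{P_j = \xi_j\}_{j \notin I}$ do not meet a neighborhood of $\bar{x}$, so the relevant stratum of $V_\xi$ near $\bar{x}$ is $S_I^\xi := \zero(\{P_i - \xi_i\}_{i \in I}, \R\la\xi\ra^n)$, and a critical point of $F$ on $S_I^\xi$ is a zero of
\[
G(x, \mu, \xi_I) := \Big( \grad(F)(x) - \sum_{i \in I} \mu_i \, \grad(P_i)(x), \; \big(P_i(x) - \xi_i\big)_{i \in I} \Big),
\]
a polynomial system in $(x, \mu_I) \in \R^{n + |I|}$ parametrized by $\xi_I = (\xi_i)_{i \in I}$. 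At $\xi_I = 0$ the base solution is $(\bar{x}, \lambda)$, and the Jacobian of $G$ with respect to $(x, \mu_I)$ at this base point coincides, up to a row permutation and sign, with the bordered Hessian $M$, which is invertible by the previous step.

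Hence, by the real implicit function theorem applied parametrically in $\xi_I$ together with the Tarski-Seidenberg transfer principle (equivalently, by a Hensel-lifting in the valuation ring $\R\la\xi_I\ra_b$), one obtains a unique branch of solutions $(x(\xi_I), \mu(\xi_I))$ whose entries are algebraic Puiseux series satisfying $(x(0), \mu(0)) = (\bar{x}, \lambda)$. Setting $x_\xi := x(\xi_I)$ and viewing it inside $\R\la\xi\ra^n$, one sees that $x_\xi \in \R\la\xi\ra_b^n$ with $\lim_{\xi}(x_\xi) = \bar{x}$, and that the perturbed multipliers $\mu(\xi_I)$ stay infinitesimally close to $\lambda$, hence remain positive; this ensures that $x_\xi$ truly lies on the stratum $S_I^\xi$ of $V_\xi$ (and not on a spurious sign-flipped branch) and is genuinely a stratified critical point, so $x_\xi \in \Crit(V_\xi, F)$. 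The main technical obstacle I anticipate is the clean identification of stratified non-degeneracy of $\bar{x}$ with invertibility of $M$ in order to feed it into the implicit function theorem, together with the careful transfer of the resulting real-analytic branch into algebraic Puiseux series; both are standard but require bookkeeping to align the stratified Morse theory framework of Section~\ref{stratified_Morse} with the parametric Newton-type argument.
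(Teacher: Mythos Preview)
Your argument has a genuine gap stemming from a misidentification of $V_\xi$. In this theorem, $V$ is presented as the zero set of the \emph{single} polynomial $P := \prod_{i=1}^s P_i$, so (following Eqn.~\eqref{perturbed_variety}) $V_\xi = \zero\bigl(\prod_{i=1}^s P_i - \xi,\,\R\la\xi\ra^n\bigr)$ is a smooth hypersurface in one infinitesimal $\xi$; it is \emph{not} the union (or intersection) of the perturbed hypersurfaces $\{P_i = \xi_i\}$. Consequently your $S_I^\xi = \zero(\{P_i-\xi_i\}_{i\in I})$ is not a stratum of $V_\xi$, and the point $x(\xi_I)$ you produce via the bordered-Hessian implicit function theorem neither lies on $V_\xi$ (since $\prod_i P_i(x(\xi_I)) = \big(\prod_{i\in I}\xi_i\big)\cdot\prod_{j\notin I}P_j(x(\xi_I))$, not $\xi$) nor satisfies the codimension-$1$ critical condition for $F$ on $V_\xi$.

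What you have actually carried out is exactly the content of Theorem~\ref{Morse_on_smooth_hypersurface} applied to $\zero(\{P_i\}_{i\in I},\R^n)$: a non-degenerate critical point there lifts to one on $\zero(\{P_i-\varepsilon_i\}_{i\in I})$. The paper's proof (via Lemma~\ref{equivalent_systems}) then performs the step you are missing: it converts this family of critical points, parametrized by $\varepsilon=(\varepsilon_1,\ldots,\varepsilon_s)$, into a critical point on the product hypersurface $\{\prod P_i = \xi\}$ by solving the coupling system $\varepsilon_i\,u_i(\varepsilon) = \xi$ for $\varepsilon_i$ as Nash functions of $\xi$. The Jacobian of this system at $(\varepsilon,\xi)=(0,0)$ is $\mathrm{diag}(\bar u_1,\ldots,\bar u_s)$, and \emph{this} is where the positivity of the Lagrange multipliers enters --- to make that Jacobian invertible and to guarantee $\varepsilon_i(\xi)>0$ --- not merely to select a sign branch as you suggest. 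Without this conversion step, your argument stops short of producing any point of $\Crit(V_\xi,F)$.
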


\subsubsection{Limit of bounded critical points}\label{sec:convergence_critical}
Now, we assume that $F$ has a bounded critical point on $V_{\xi}$ (e.g., when the conditions of Corollary~\ref{bounded_critical_points} or~\ref{existence_critical_points} hold). We prove that when $\mathcal{P}$ is in general position, the $\lim_{\xi}$ of a bounded critical point of $F$ on $V_{\xi}$ is a critical point of $F$ on $V$ with respect to its canonical Whitney stratification. To facilitate later applications to central paths of \PO, we only state the theorem for a hypersurface $V$.

\vspace{5px}
\noindent
We recall that the set of strata of $V$ with respect to the canonical Whitney stratification is obtained by computing the set $\Ireg(V)$ of irregular points of $V$ (which is a real variety in $\R^n$) and then the set of irregular points of $\Ireg(V)$, recursively according to~\eqref{algebraic_set_canonical_stratification} (see Section~\ref{sec:Whitney_strat} for details). In case that $V$ is a union of non-singular hypersurfaces and the set of defining polynomials is in general position, an explicit description of the strata is given by Proposition~\ref{Whitney_stratification_union_of_sets}. 
\begin{theorem}\label{convergence_of_critical_points}
Let $V = \zero\big( \prod_{i=1}^s P_{i},\R^n\big)$, 
where $\mathcal{P} = \{P_1,\ldots,P_s\} \subset \R[X_1,\ldots,X_n]$ is 
in general position.
Let $x_\xi \in V_{\xi} = \zero\big( \prod_{i=1}^s P_{i}-\xi,\R\la \xi \ra^n\big)$ be a critical point of $F$ on $V_{\xi}$ and $\bar{x} = \lim_{\xi} (x_\xi)$. Then $\bar{x}$ is a critical point of $F$ on $V$ with respect to its canonical Whitney stratification. 
\end{theorem}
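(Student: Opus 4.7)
The plan is to write the critical-point condition on the hypersurface $V_\xi$ as a Lagrange relation involving the $\grad P_i$, then use a valuation argument on $\R\la\xi\ra$ to identify which coefficients survive under $\lim_\xi$, and finally invoke the general-position hypothesis to rule out a degenerate collapse of the limit equation.

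First I would observe that, by the Semi-algebraic Sard Theorem (as in the remark following Theorem~\ref{constrained_real_non-degenerate_critical_points}), $V_\xi$ is non-singular for infinitesimal $\xi$, so any critical point $x_\xi$ of $F$ on $V_\xi$ satisfies
\[
\grad F(x_\xi)=\lambda_\xi\,\grad\!\big(\textstyle\prod_{i=1}^s P_i\big)(x_\xi)=\sum_{i=1}^{s}c_{i,\xi}\,\grad P_i(x_\xi),\qquad c_{i,\xi}:=\lambda_\xi\prod_{j\neq i}P_j(x_\xi),
\]
for a unique $\lambda_\xi\in\R\la\xi\ra$. Setting $I:=\{i:P_i(\bar x)=0\}\subseteq\{1,\ldots,s\}$, which is non-empty because $\bar x\in V$, the general-position assumption gives that $\{\grad P_i(\bar x)\}_{i\in I}$ is linearly independent.

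Next I would analyze valuations. Let $v$ denote the canonical valuation on $\R\la\xi\ra$ (so $v(\xi)=1$, and $y\in\R\la\xi\ra_b$ iff $v(y)\geq 0$). From $\prod_i P_i(x_\xi)=\xi$ we have $\sum_i v(P_i(x_\xi))=1$; since $P_j(\bar x)\neq 0$ for $j\notin I$ we get $v(P_j(x_\xi))=0$, while $v(P_i(x_\xi))>0$ for $i\in I$. A direct calculation yields $v(c_{j,\xi})=v(\lambda_\xi)+1$ for $j\notin I$ and $v(c_{i,\xi})=v(\lambda_\xi)+1-v(P_i(x_\xi))<v(\lambda_\xi)+1$ for $i\in I$, so $v(c_{j,\xi})>v(c_{i,\xi})$ strictly whenever $i\in I$ and $j\notin I$. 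I would then normalize by $M_\xi:=\max(1,|c_{1,\xi}|,\ldots,|c_{s,\xi}|)$, setting $\tilde c_{0,\xi}:=1/M_\xi$ and $\tilde c_{i,\xi}:=c_{i,\xi}/M_\xi$. All $\tilde c_{\cdot,\xi}$ lie in $\R\la\xi\ra_b$ with $v(\tilde c_{k,\xi})=0$ for at least one index $k$, so applying $\lim_\xi$ term-by-term to $\tilde c_{0,\xi}\grad F(x_\xi)=\sum_i\tilde c_{i,\xi}\grad P_i(x_\xi)$ yields
\[
\tilde c_0\,\grad F(\bar x)=\sum_{i=1}^s\tilde c_i\,\grad P_i(\bar x),
\]
with at least one $\tilde c_k$ non-zero. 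Because $v(M_\xi)\leq\min_{i\in I}v(c_{i,\xi})$, the strict inequality forces $v(\tilde c_{j,\xi})>0$, hence $\tilde c_j=0$ for every $j\notin I$.

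Finally I would rule out $\tilde c_0=0$: if it vanished, then $v(M_\xi)<0$ and the surviving non-zero normalized coefficients must have index in $I$, giving $\sum_{i\in I}\tilde c_i\,\grad P_i(\bar x)=0$ with not all $\tilde c_i$ zero, contradicting the linear independence established above. Therefore $\tilde c_0\neq 0$, and dividing through gives $\grad F(\bar x)\in\spanof\{\grad P_i(\bar x)\}_{i\in I}$. By Proposition~\ref{Whitney_stratification_union_of_sets}, the stratum of $\bar x$ in the canonical Whitney stratification of $V$ is an open neighborhood of $\bar x$ inside $\zero(\{P_i\}_{i\in I},\R^n)$, whose conormal space at $\bar x$ is exactly $\spanof\{\grad P_i(\bar x)\}_{i\in I}$, so this is precisely the stratified Morse critical-point condition. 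The main obstacle will be keeping the valuation bookkeeping of $\lambda_\xi$ and of the $P_i(x_\xi)$ clean enough that the strict inequality $v(c_{j,\xi})>v(c_{i,\xi})$ ($i\in I$, $j\notin I$) holds, so that general position can be leveraged at the normalization step to rescue a non-trivial limit relation.
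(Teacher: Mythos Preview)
Your argument is correct, and it takes a genuinely different (dual) route from the paper's proof. The paper works on the \emph{tangent-space} side: it first establishes Proposition~\ref{convergence_of_tangent_spaces_extended}, namely $T_{\bar x}Z\subset \lim_\xi T_{x_\xi}V_\xi$, and then argues by contradiction that a non-critical $\bar x$ would force $dF(x_\xi)\mid T_{x_\xi}V_\xi\neq 0$. You work instead on the \emph{normal-space} (Lagrange multiplier) side, writing $\grad F(x_\xi)=\sum_i c_{i,\xi}\grad P_i(x_\xi)$ and carrying out an explicit valuation/normalization argument to show that the limit relation lives in $\spanof\{\grad P_i(\bar x):i\in I\}$.

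The two approaches rest on the same underlying fact---that for $j\notin I$ the coefficient $\prod_{\ell\neq j}P_\ell(x_\xi)$ has strictly higher order than for $i\in I$---but package it differently. Your version is more self-contained: it bypasses the separate tangent-space convergence proposition (whose proof in the paper invokes a ``without loss of generality all orders are equal'' reduction that your valuation bookkeeping handles directly and more transparently). On the other hand, the paper's approach isolates the inclusion $T_{\bar x}Z\subset\lim_\xi T_{x_\xi}V_\xi$ as a standalone statement (Proposition~\ref{convergence_of_tangent_spaces_extended}), which is reused verbatim in the o-minimal extension (Proposition~\ref{convergence_of_tangent_spaces_definable}) and motivates Conjecture~\ref{tangent_space_inclusion_conjecture}; your Lagrange-multiplier computation would need a small rephrasing to serve that role.
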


\noindent 
We also conjecture that the result of Theorem~\ref{convergence_of_critical_points} is valid even when $\mathcal{P}$ is not in general position (see Conjecture~\ref{tangent_space_inclusion_conjecture}).

\begin{remark}
Analogous to Theorem~\ref{Morse_on_smooth_hypersurface}, a local condition would be enough for the result of Theorem~\ref{convergence_of_critical_points} to hold: $\mathcal{P}$ only needs to be ``locally in general position". However, this would unnecessarily complicate the stratification of $V$ and also the proof of Theorem~\ref{convergence_of_critical_points}. 
\end{remark}

\subsection{Critical and central paths of \PO}\label{appl_to_polynom_optim}
Our main motivation for addressing Problems~\ref{existence_critical_path}-\ref{boundedness_critical_path} is to understand the limiting behavior of local minimizers of the log-barrier function (and particularly central paths) for \PO. Roughly speaking, a \PO~problem is the minimization of a polynomial function over a basic closed semi-algebraic set, defined as follows. 
\begin{definition}
Let $\R$ be a real closed field, and $\mathcal{P} \subset \R[X_1,\ldots,X_{n}]$. A \textit{quantifier-free $\mathcal{P}$-formula} $\mathrm{\Phi}(X_1,\ldots,X_{n})$ with coefficients in $\R$ is a Boolean combination of atoms $P > 0$, $P=0$, or $P < 0$ where $P \in \mathcal{P}$, and $\{X_1,\ldots,X_{n}\}$ are the \emph{free variables} of $\mathrm{\Phi}$. 
A quantified $\mathcal{P}$-formula is given by 
\begin{align*}
\Psi= (\mathrm{Q}_1 Y_1) \cdots (\mathrm{Q}_{k} Y_{k}) \ \mathcal{B}(Y_1,\ldots, Y_{k},X_1,\ldots,X_{n}),
\end{align*}
in which $\mathrm{Q}_i \in\{\forall,\exists\}$ are quantifiers and $\mathcal{B}$ is a quantifier-free $\mathcal{P}$-formula with
\begin{align*}
\mathcal{P} \subset \R[Y_1,\ldots,Y_{k},X_1,\ldots,X_{n}].
\end{align*}
The set of all $(x_1,\ldots,x_{n}) \in \R^{n}$ satisfying $\Psi$ is called the $\R$-realization of $\Psi$. A $\mathcal{P}$-\textit{semi-algebraic} subset of $\R^{n}$ is defined as the $\R$-realization of a quantifier-free $\mathcal{P}$-formula. A function with a semi-algebraic graph is called a \textit{semi-algebraic function}. 
\end{definition}
\noindent
Formally, we define a \PO~problem as 
\begin{align}\label{poly_optim}
\inf_x\{f(x) \mid x \in S\},
\end{align}
where $f \in \R[X_1,\ldots,X_n]$ and $S$ is a basic closed $\mathcal{Q}$-semi-algebraic set, with $\mathcal{Q}:=\{g_1,\ldots,g_r\} \subset \R[X_1,\ldots,X_n]$, defined by
\begin{align*}
S:=\{x \in \R^n \mid g_i(x) \ge 0, \ i=1,\ldots,r\}.
\end{align*}

\begin{remark}[$\R$ versus $\mathbb{R}$]
Normally (and for practical purposes), the functions $f$ and $g_i$ in a \PO~problem are polynomials with coefficients in $\mathbb{R}$. However, for the sake of generality, and because all of our results in this section remain valid over any real closed field $\R$, we will work over $\R$, unless stated otherwise.
\end{remark}
\noindent
Notice that a \PO~problem becomes trivial if a local optimal solution satisfies $\prod_{i=1}^r g_i(x) >0$. In that case, the problem reduces to an unconstrained optimization problem, and we only need to compute zeros of $df$. Thus, to avoid trivialities, we assume that:
\begin{assumption}\label{non_trivial_problem}
The differential of $f$ does not vanish on $S$.
\end{assumption}
\noindent 
It is well-known that \PO~is provably hard from the point of view of both algebraic and bit-complexity~\cite{B17,BCSS98}. In the real number model of computation~\cite{BCSS98} (where the input is a finite set of real numbers, and complexity counts on the number of arithmetic operations, each of which requires one unit of time), checking the feasibility of \PO~when there is a polynomial of degree at least 4 is $\textbf{NP-complete}$~\cite{BSS89}. More concretely, a global optimal solution of~\PO, when it exists, can be symbolically computed in singly exponential time~\cite[Algorithm~14.9]{BPR06}. However, in the bit model of computation, it is not even known if \PO~belongs to \textbf{NP}.
\begin{remark}\label{doubly_exponentially_small}
It follows from~\cite[Algorithm~14.9]{BPR06} that a \PO~problem defined by $f,g_i \in \mathbb{Z}[X_,\ldots,X_n]$ could have doubly exponentially small local minimizers in the worst-case scenario. See~\cite[Example~22]{R97} for a concrete example.
\end{remark}

\vspace{5px}
\noindent
Although there is no polynomial-time algorithm for an exact optimal solution of \PO, one can utilize a penalization technique in \NO~(known as Sequential Unconstrained Optimization Technique, as introduced in~\cite{FM90}) to ``numerically" solve \PO. The idea is to minimize the log-barrier function
\begin{align}\label{unconstrained_semi-algebraic_optim}
\inf_x\bigg\{f(x) - \mu \sum_{i=1}^r \log(g_i(x)) \mid g_i(x) > 0, \quad i=1,\ldots,r\bigg\},
\end{align}
while letting $\mu \downarrow 0$. The unconstrained minimization~\eqref{unconstrained_semi-algebraic_optim} assumes the existence of a strictly feasible point (known as \textit{Slater's condition}~\cite{NW06}), as follows.
\begin{assumption}
There exists an $x$ such that $g_i(x) > 0$ for all $i=1,\ldots,r$.
\end{assumption}
\noindent
If there exists a local optimal solution $x^*$ to~\eqref{poly_optim}, then a local optimal solution of~\eqref{unconstrained_semi-algebraic_optim}, denoted by $x(\mu)$, should be close to $x^*$ for sufficiently small positive $\mu$. Computing local minimizers of the log-barrier function is the basis of modern IPMs, that been successfully applied to linear optimization (\LO)~\cite[Chapter~5]{CRT06}, \SDO~\cite[Chapter~5]{Kl02}, conic optimization~\cite[Chapter~3]{NN94}, and general \NO~\cite[Chapter~19]{NW06}. A special trajectory of local minimizers of the log-barrier function leads to the notion of a central path, as defined in~\cite[Page~72]{FM90}.
\begin{notation}
For the ease of exposition, we define the \textit{algebraic boundary} of $S$ by
\begin{align*}
S_{=}:=\Bigg\{x \in \R^n \mid \prod_{i=1}^r g_i(x) = 0\Bigg\},
\end{align*}
and the \textit{algebraic interior} of $S$ by
\begin{align*}
S_{>}:=S \setminus S_=.
\end{align*}
\end{notation}
\begin{definition}[Central path]\label{central_path_def}
Given $\mu_0 > 0$, a central path is a continuous function $\zeta:(0,\mu_0) \to \R^n$ such that $\zeta(\mu) = x(\mu)$ is an isolated (but not necessarily unique) local minimizer of~\eqref{unconstrained_semi-algebraic_optim} for each fixed $\mu \in (0,\mu_0)$.
\end{definition}
\begin{remark}[Isolated versus unique]
The notion of central path in Definition~\ref{central_path_def} generalizes the classical central path concepts from \LO~\cite[Section~5.6]{CRT06}, \SDO~\cite[Section~3.1]{Kl02}, and conic optimization~\cite[Section~3.2.2]{NN94}. Unlike the central paths for these classes of optimization problems—which are always unique due to the underlying convexity—the central path in \PO~may fail to be unique. This distinction motivates the use of the term ``isolated" in Definition~\ref{central_path_def}, as opposed to ``unique" in~\cite{Kl02,NN94,CRT06}.   
\end{remark}

\begin{remark}[Global versus local optimality]
We do not assume the attainment of the optimal value or even the boundedness of the optimal value of~\eqref{poly_optim} or~\eqref{unconstrained_semi-algebraic_optim}. However, we will be always explicit about the status of optimality. More precisely, we will use the adjective ``optimal" (or ``minimizer") to refer to a ``global optimal" value or solution. Otherwise, we will spell out the local optimality. 
\end{remark}
\noindent
A central path lies at the heart of primal-dual IPMs, and its algebro-geometric features (e.g., the degree of the Zariski closure of the image of a central path) reflect the iteration complexity or super-linear convergence of IPMs~\cite{ABGJ18,BL89a,BL89b,DSV12,DMS05,GS98,H02,HLMZ21,HST21,LSZ98}. However, unlike convex optimization, the existence or convergence of a central path is not guaranteed for \NO~(see Examples~\ref{non_existence}-\ref{Morse_non_compact} and~\cite[Section~7]{FGW02}). Even if a central path exists and converges, it may converge to a non-local optimal solution, as the following example illustrates.
\begin{example}\label{convergence_to_algebraic_boundary}
 Consider the minimization problem over the solid \textit{figure eight}:
\begin{align*}
\inf_x \{x _1 \mid  x_1^2 - x_1^4 - x_2^4 -x_2^2  \ge 0\},
\end{align*}
which has a global minimum at $(-1,0)$ (see Fig.~\ref{central_path_fig8}). There are two central paths converging to $(-1,0)$ and $(0,0)$, but $(0,0)$ is not a local minimizer of $f$ on $S$.

 \begin{figure}
\includegraphics[height=2in]{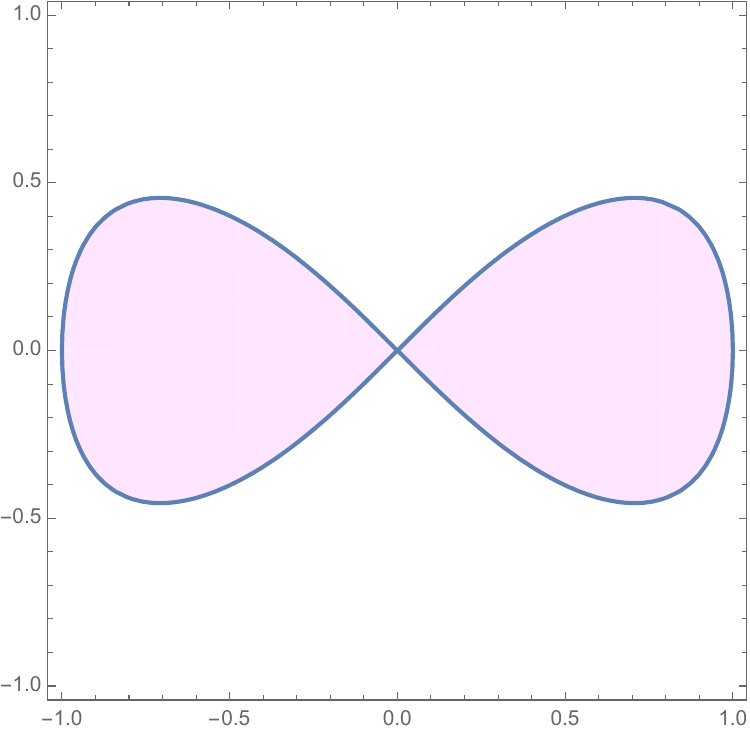} \ \
\includegraphics[height=2in]{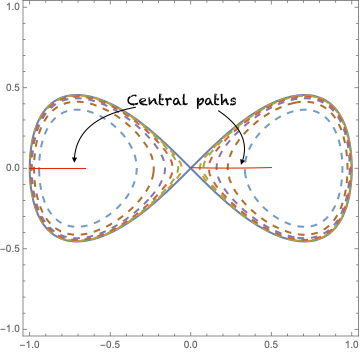}
\caption{A central path may converge to a non-local minimizer.}
\label{central_path_fig8}
\end{figure}
\end{example}
\noindent
The above issue arises from the presence of multiple local minimizers, maximizers or saddle points of the log-barrier function of~\PO. More precisely, the first-order optimality conditions for the log-barrier function~\eqref{unconstrained_semi-algebraic_optim} are given by
\begin{align}\label{central_path_original_form}
\frac{\partial f}{\partial x_j} - \mu \sum_{i=1}^r \frac{\partial g_i/\partial x_j}{g_i} = 0, \qquad j=1,\ldots,n. 
\end{align}
Given a fixed $\mu>0$ and the fact that $g_i(x) > 0$ for a central solution, the first-order conditions define an algebraic subset of $\R^n$ as follows
\begin{align}\label{critical_path_algebraic}
V_{\mu}:=\zero\Bigg(\Bigg\{\frac{\partial f}{\partial x_j} \prod_{i=1}^r g_i - \mu \sum_{k=1}^r \frac{\partial g_k}{\partial x_j} \prod_{i \neq k} g_i\Bigg\}_{j=1,\ldots,n},\R^n\Bigg). 
\end{align}
This would lead to possibly multiple central paths or non-central paths. For computational optimization purposes, it is important to understand the limiting behavior of all points in $V_{\mu} \cap S_{>}$, because a critical point of the log-barrier function might be simply mistaken (caused by numerical round-off errors (see Remark~\ref{doubly_exponentially_small})) with a local minimizer. Even if a local minimizer is correctly decided, the trajectory may converge to a non-local minimizer of \PO, as illustrated by Example~\ref{convergence_to_algebraic_boundary}.

\vspace{5px}
\noindent
Therefore, we need to expand the study of local minimizers of the log-barrier function to the set $V_{\mu}$. This introduces a new family of semi-algebraic paths associated with the log-barrier function (including central paths), so-called \textit{critical paths}, which we formally define below. 
\begin{definition}[Critical path]\label{critical_path_def}
Given $\mu_0 > 0$, a critical path is a continuous function $\nu:(0,\mu_0) \to \R^n$ such that $\nu(\mu) = x(\mu)$ is an isolated point of $V_{\mu} \cap S_{>}$ for each fixed $\mu \in (0,\mu_0)$.
\end{definition}

\begin{remark}
In convex optimization, the strict convexity of the log-barrier function implies that $\mu_0 = \infty$, provided that the Slater's condition holds. However, it is easy to see that $V_{\mu} \neq \emptyset$ is no longer guaranteed for \PO, when $\mu$ is an arbitrarily large positive value. This, however, is not a concern for the limiting behavior of a critical path, as we only require $V_{\mu}$ to be non-empty for sufficiently small $\mu > 0$ (a condition that is typically sufficient for IPMs to function properly). In order to make this precise, we use the terminology and notation introduced in Section~\ref{sec:critical_points_on_V_xi}, occasionally referring to a critical path as a point of $V_{\mu} \subset \R \la \mu \ra^n$. For consistency with Section~\ref{sec:critical_points_on_V_xi}, we denote a critical path as a semi-algebraic function by $x(\mu)$, and as a vector of Puiseux series by $x_{\mu}$.    
\end{remark}
\begin{remark}
As a proper extension of a central path, Definition~\ref{critical_path_def} excludes the points in $V_{\mu} \setminus S_>$. This exclusion does not significantly limit generality, because the limit of bounded points in $V_{\mu} \setminus S_>$ might be infeasible with respect to~\PO, and are thus irrelevant from an optimization perspective.  
\end{remark}
\noindent
As a semi-algebraic function, a critical path of \PO~is semi-algebraic (see Proposition~\ref{critical_path_semi-algebraic_proof}), and thus it is piece-wise continuous~\cite[Proposition~5.20]{BPR06}. Further, it can be shown that for sufficiently small $\mu$, a critical path is a Nash mapping (see Definition~\ref{Nash_def} and Proposition~\ref{critical_path_semi-algebraic_proof}). Nash functions are of significant importance in real algebraic geometry. Nash functions preserve the good algebraic properties of polynomials and they flexibly appear in implicit function and preparations theorems~\cite{BCR98}, which are utilized in the proofs of Theorems~\ref{Morse_on_smooth_hypersurface},~\ref{Morse_on_transversal}, and~\ref{sufficient_conditions_existence}. 
\begin{definition}[Nash function]\label{Nash_def}
Given an open semi-algebraic set $U \subset \R^n$, a semi-algebraic function $f:U \to \R$ is called Nash if $f$ is $\mathcal{C}^{\infty}$-smooth. If $\R=\mathbb{R}$, then a Nash function is analytic and semi-algebraic~\cite[Proposition~8.1.8]{BCR98}.
\end{definition}

\vspace{5px}
\noindent
In what follows, we show that the problem of existence of a critical path reduces to problem of existence of critical points of $f$ on the infinitesimally deformed hypersurface   
\begin{align}\label{S_perturbed}
S_{\xi}:=\Bigg\{x \in \R^n \mid \prod_{i=1}^r g_i(x) = \xi\Bigg\},
\end{align}
and we apply our theoretical results in Sections~\ref{sec:existence_critical}-~\ref{sec:convergence_critical} to establish conditions for the existence, convergence, and smoothness of a critical path.

\begin{remark}
Our results extend the classical analysis of the existence, convergence, and smoothness of central paths to the broader setting of critical paths. In contrast to traditional approaches in \NO, our framework does not rely on the existence of KKT points.
\end{remark}

\subsubsection{Existence of a critical path}\label{critical_path_existence}
Conditions on the existence of a critical path can be divided to (i) $V_{\mu} \subset \R \la \mu \ra^n$ being non-empty, and (ii) $V_{\mu} \cap S_>$ having isolated points. These conditions automatically hold for \LO~and~\SDO~~\cite{CRT06,Kl02}(and even more general classes of convex optimization), when the Slater's condition holds. We establish conditions that guarantee that $V_{\mu}$ is non-empty and finite.

\vspace{5px}
\noindent
First, we show that the existence of a special KKT point $(x_\xi,u_\xi)$ with $x_{\xi} \in S_>$ ensures that $V_{\mu}$ is non-empty.

\begin{theorem}\label{critcal_path_existence}
Let $x_{\xi}$ be a critical point of $f$ on $S_\xi$ and let $u_\xi$ be the Lagrange multiplier associated with $x_{\xi}$. If $x_{\xi} \in S_>$ and $\xi u_\xi -\mu$ has a positive zero in $\R\la \mu \ra$, then $x_\xi$ corresponds to a critical path, i.e., $V_{\mu}$ is non-empty. 
\end{theorem}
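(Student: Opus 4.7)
The plan is to write down the Lagrange condition satisfied by $x_\xi$ as a critical point of $f$ on the hypersurface $S_\xi = \zero(\prod_i g_i - \xi, \R\la\xi\ra^n)$, substitute $\prod_i g_i(x_\xi) = \xi$, and recognize the resulting equations as the defining system of $V_\mu$ from~\eqref{critical_path_algebraic} under the identification $\mu = \xi u_\xi$ guaranteed by the hypothesis.

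First, since $x_\xi \in S_> \cap S_\xi$, every $g_i(x_\xi) > 0$, so $\nabla(\prod_i g_i)(x_\xi) \ne 0$, and the Lagrange multiplier rule applied to the single equality constraint $\prod_i g_i - \xi = 0$ yields $u_\xi \in \R\la\xi\ra$ with
\[
\frac{\partial f}{\partial X_j}(x_\xi) \;=\; u_\xi \sum_{k=1}^r \frac{\partial g_k}{\partial X_j}(x_\xi) \prod_{i \ne k} g_i(x_\xi), \qquad j = 1,\ldots,n.
\]
Multiplying both sides by $\prod_i g_i(x_\xi) = \xi$, the $j$-th defining polynomial of $V_\mu$ in~\eqref{critical_path_algebraic} evaluated at $x_\xi$ becomes
\[
(\xi u_\xi - \mu) \sum_{k=1}^r \frac{\partial g_k}{\partial X_j}(x_\xi) \prod_{i \ne k} g_i(x_\xi),
\]
which vanishes as soon as the scalar identity $\xi u_\xi = \mu$ holds.

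To turn this observation into actual membership in $V_\mu \subset \R\la\mu\ra^n$, I invoke the hypothesis that $\xi u_\xi - \mu$ has a positive zero in $\R\la\mu\ra$: that is, there exists a positive Puiseux series $\tilde\xi \in \R\la\mu\ra$ such that substituting it for $\xi$ produces the identity $\tilde\xi \cdot u_{\tilde\xi} = \mu$. Applying this substitution coordinatewise to $x_\xi$ yields a point $\tilde x \in \R\la\mu\ra^n$ that satisfies every defining equation of $V_\mu$ by the calculation above, and still lies in $S_>$, since the strict inequalities $g_i(x_\xi) > 0$ are preserved when $\xi$ is replaced by the positive infinitesimal $\tilde\xi$. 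Hence $V_\mu \cap S_> \ne \emptyset$, and in particular $V_\mu$ is non-empty.

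The main obstacle is to justify the Puiseux-series composition step cleanly: one must check that substituting $\tilde\xi \in \R\la\mu\ra$ into Puiseux series in $\xi$ (namely the coordinates of $x_\xi$, the Lagrange multiplier $u_\xi$, and the values $g_i(x_\xi)$) produces well-defined elements of $\R\la\mu\ra$ preserving all sign conditions. I would handle this by appealing to the Tarski--Seidenberg transfer principle recalled in Notation~\ref{not:Puiseux}, applied to the first-order sentence asserting the existence of a point with the Lagrange relation above, lying in $S_>$, and satisfying $\prod_i g_i = \mu$; this sentence holds in the extension realizing the positive zero of $\xi u_\xi - \mu$ and hence transfers back to $\R\la\mu\ra$.
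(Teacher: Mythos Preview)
Your approach is essentially the same as the paper's, which is very terse: it simply observes (using the discussion surrounding~\eqref{Compact_central_path_conditions}--\eqref{prod_hyper_surface}) that $x_\xi$ corresponds to a critical point of the log-barrier function precisely when $u_\xi$ equals the multiplier $\mu/\xi$, i.e., when $\xi u_\xi = \mu$, and then invokes the hypothesis. Your version unpacks this by writing out the Lagrange condition explicitly, computing the factor $(\xi u_\xi - \mu)$, and making the substitution $\xi \mapsto \tilde\xi$ precise via transfer; the paper leaves all of this implicit.

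One small correction: your inference ``$x_\xi \in S_>$, so $\nabla(\prod_i g_i)(x_\xi) \neq 0$'' is not valid in general (take $n=1$, $g_1 = 1+x$, $g_2 = 1-x$ at $x=0$: both are positive but the product has vanishing derivative). The correct justification for the smoothness of $S_\xi$, used in the paper's surrounding discussion, is the Semi-algebraic Sard Theorem. This does not affect your argument, since the existence of the Lagrange multiplier $u_\xi$ is already part of the hypothesis.
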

\begin{remark}\label{no_critical_path}
We should indicate that without the condition on the Lagrange multiplier, Theorem~\ref{critcal_path_existence} is not true. For instance, consider the minimization of a Morse function on the basic semi-algebraic set
\begin{align}\label{Morse_central_path_problem}
\inf_x\{x_1^2-x_2^2 \mid x_2 \ge 0\},
\end{align}
which has an isolated critical point at $(0,0)$. Although $F=X_1^2-X_2^2$ has a critical point $x_\xi=(0,\xi)$ on $X_2 - \xi = 0$ with Lagrange multiplier $u = -2\xi$, there is no critical path for~\eqref{Morse_central_path_problem}. Notice that the equation $(\xi)(-2\xi) = \mu$ has no real root in $\R\la \mu \ra$. 
\end{remark}
\noindent
Theorem~\ref{critcal_path_existence} can be seen as an extension of Proposition~\ref{existence_of_central_path}. The key distinction is that, unlike a bounded set of local optima, a bounded set of critical points does not necessarily guarantee the existence of a critical path converging to it.

\vspace{5px}
\noindent
By Theorems~\ref{constrained_real_isolated_critical_points}-\ref{constrained_real_non-degenerate_critical_points}, the finiteness of $V_{\mu}$ follows if we require finiteness or non-degeneracy on the set of complex projective KKT points. Since we are only concerned with ensuring finiteness, and both theorems guarantee this, we invoke only Theorem~\ref{constrained_real_isolated_critical_points}.

\begin{remark}[Bounded critical paths]
For the purpose of establishing the existence of a critical path, we may often (e.g., in Theorem~\ref{isolated_critical_points}) assume that a critical point of the log-barrier function is bounded. This assumption does not significantly limit generality from an optimization perspective, as an unbounded critical path may indicate that the optimal value of the \PO~problem is either infinite or not attained. This assumption is unnecessary in convex optimization, as the central path exists if and only if it is bounded (see e.g.,~\cite[Proposition~6]{GP02}). 
\end{remark}
\begin{theorem}\label{isolated_critical_points}
Let $x_{\mu} \in V_{\mu} \cap \R \la \mu \ra_b^n$ be a bounded solution. Suppose that for some $c \in \C$, the set of all complex projective KKT points of $f$ in $\mathbb{P}_n(\C) \times \mathbb{P}_1(\C)$ on $S_{c} = \zero(\prod_{i=1}^r g_i - c,\C^n)$ is finite. Then $x_{\mu}$ is a critical path. 
\end{theorem}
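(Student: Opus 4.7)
The plan is to reduce the statement directly to Theorem~\ref{constrained_real_isolated_critical_points} via the central-path-to-manifold correspondence. Set $P := \prod_{i=1}^r g_i \in \R[X_1,\ldots,X_n]$; by the product rule $\nabla P = \sum_{k=1}^r (\partial g_k/\partial X_j)\prod_{i \neq k} g_i$, so the defining system~\eqref{critical_path_algebraic} of $V_\mu$ collapses to the single vector identity
\begin{equation*}
P(x)\,\nabla f(x) \;=\; \mu\,\nabla P(x).
\end{equation*}
Consequently, any $x_\mu \in V_\mu \cap \R\la\mu\ra_b^n$ lying in $S_>$ satisfies $\nabla f(x_\mu) = (\mu/\xi)\,\nabla P(x_\mu)$ with $\xi := P(x_\mu) > 0$ in $\R\la\mu\ra$; that is, $x_\mu$ is a critical point of $f$ on the hypersurface $\zero(P - \xi, \R\la\mu\ra^n)$ with positive Lagrange multiplier $\mu/\xi$.

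Next I would invoke Theorem~\ref{constrained_real_isolated_critical_points} with $F := f$ and the above $P$. Its hypothesis on the finiteness of complex projective KKT points of $f$ on $\zero(P-c, \C^n)$ for some $c \in \C$ coincides exactly with the standing assumption of Theorem~\ref{isolated_critical_points}. The conclusion then yields that $\Crit(S_\xi, f)$ is finite as a subset of $\R\la\xi\ra^n$, with $S_\xi$ the hypersurface defined in~\eqref{S_perturbed}.

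To finish, I would transfer the finiteness from the hypersurface $S_\xi$ back to $V_\mu$. By the Tarski-Seidenberg Transfer Principle, the uniform bound $|\Crit(S_\zeta, f)| \leq N$ valid for infinitesimal $\zeta$ persists for all sufficiently small real $\zeta > 0$. If $x_\mu$ were not isolated in $V_\mu \cap S_>$, the Curve Selection Lemma would furnish a continuous 1-parameter family $\{y_\mu(t)\}_{t \in (0,\delta)}$ of solutions in $V_\mu \cap S_>$ accumulating at $x_\mu$; the correspondence $y \mapsto (y, P(y))$ embeds each member of the family into $\Crit(S_{P(y_\mu(t))}, f)$, producing infinitely many critical points on hypersurfaces $S_\zeta$ for $\zeta$ ranging over a non-empty open interval of sufficiently small positive values, contradicting the uniform bound. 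Hence $x_\mu$ is isolated in $V_\mu \cap S_>$, and combined with the Nash regularity of critical paths (Proposition~\ref{critical_path_semi-algebraic_proof}), the semi-algebraic function $\mu \mapsto x(\mu)$ extracted from $x_\mu$ satisfies Definition~\ref{critical_path_def}.

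The principal obstacle will be the transfer step: one must carefully handle the fact that $\xi$ is not an independent parameter but is induced by $x_\mu$ via $\xi = P(x_\mu)$. The cleanest route is to expand $\xi$ as a Puiseux series in $\mu$ along the branch through $x_\mu$ (positive since $x_\mu \in S_>$), so that $\xi$ and $\mu$ may be regarded as equivalent parameterizations of the same semi-algebraic branch of the correspondence $\{(x,\xi,\mu) : P(x)\nabla f(x) = \mu\nabla P(x),\; P(x) = \xi\}$, thereby legitimately identifying isolated Puiseux solutions of $V_\mu$ with isolated Puiseux solutions of $\Crit(S_\xi, f)$.
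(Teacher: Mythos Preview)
Your overall strategy---set $P = \prod_i g_i$, recognize $x_\mu \in V_\mu \cap S_>$ as a critical point of $f$ on the level hypersurface $S_\xi = \{P = \xi\}$ with $\xi = P(x_\mu)$, and invoke Theorem~\ref{constrained_real_isolated_critical_points}---is exactly the paper's.

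The genuine gap is in your transfer step. You posit a $1$-parameter family $y_\mu(t) \subset V_\mu \cap S_>$ through $x_\mu$, note that each $y_\mu(t) \in \Crit(S_{P(y_\mu(t))}, f)$, and claim this contradicts the uniform per-level bound $|\Crit(S_\zeta, f)| \le N$. It does not: when $t \mapsto P(y_\mu(t))$ is non-constant (precisely the case you invoke, ``$\zeta$ ranging over a non-empty open interval''), you obtain at most one critical point per level, which is compatible with any per-level bound. A contradiction from the uniform bound arises only when $P \circ y_\mu$ is constant, a case you do not isolate.

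The paper's argument is structurally different and avoids this pitfall. From the proof of Theorem~\ref{constrained_real_isolated_critical_points} it extracts a \emph{finite} set $E \subset \R$ such that $\Crit(S_\zeta, f)$ is finite for every $\zeta \in \R \setminus E$; failure of isolation would then (by transfer) force $\xi(\mu) = P(x_\mu)$ to lie in $E$, i.e.\ to equal some real constant $\varepsilon$. But boundedness of $x_\mu$ gives $\lim_\mu P(x_\mu) = 0$ (the limit point lies on $S_=$), ruling out $\varepsilon \neq 0$, while $x_\mu \in S_>$ gives $P(x_\mu) > 0$, ruling out $\varepsilon = 0$. Your closing paragraph correctly senses that the dependence $\xi = P(x_\mu)$ is the crux, but the operative fact is simply that a positive element of $\R\la\mu\ra$ with $\lim_\mu$ equal to $0$ cannot belong to any finite subset of $\R$; no Puiseux-branch identification or Curve Selection argument is needed.
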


\begin{remark}
The non-degeneracy condition in Theorem~\ref{constrained_real_non-degenerate_critical_points} not only ensures that $x_{\mu}$ is isolated, but also 
implies its
non-degeneracy, which in turn guarantees that $x(\mu)$ is a Nash mapping. However, this is somewhat trivial, as any critical path $x(\mu)$ is already a Nash function for sufficiently small $\mu$. Therefore, in Theorem~\ref{isolated_critical_points}, it suffices to assume only the finiteness of the set of complex projective KKT points.
\end{remark}

\hide{
\vspace{5px}
\noindent
Given a bounded critical path with limit point $\bar{x}$, we can state the finiteness conditions in terms of projective KKT points (see Section~\ref{KKT_Conditions_Lagrange}).
\begin{theorem}\label{isolated_KKT_points}
Suppose that $V_{\mu} \subset \R \la \mu \ra^n$ is non-empty, and there exists an accumulation point $\bar{x} \in S_{=} \cap S$ such that $g_i(\bar{x}) = 0$ for $i=1,\ldots,\bar{r}$, $g_i(\bar{x}) > 0$ for $i=\bar{r}+1,\ldots,r$, $\bar{r} > 0$ and $g_i = 0$ for $i=1,\ldots,\bar{s}$ intersect transversally at $\bar{x}$. If $f(x) + \varepsilon\prod_{i=\bar{s}+1}^{\bar{r}} g_{i}(x)$ has only non-degenerate projective critical points on  
\begin{align*}
 \{x \in \C^n \mid g_i(x) = 0, \ i=1,\ldots,\bar{s}\Big\}
\end{align*}
for some $\varepsilon \in \C$, then there exists a Nash critical path converging to $\bar{x} \in S_= \cap S$.  
\end{theorem}
}
\noindent
By utilizing Theorems~\ref{Morse_on_transversal} and~\ref{isolated_critical_points}, we establish conditions that guarantee the existence of a bounded critical path. We prove results that involve the notion of critical points of $f$ on $S_=$. Recall that $S_=$ is canonically Whitney stratified according to Proposition~\ref{Whitney_stratification_union_of_sets}. 
\begin{theorem}\label{sufficient_conditions_existence}
Suppose that $\mathcal{Q}=\{g_1,\ldots,g_r\} \subset \R[X_1,\ldots,X_n]$ in the definition of~\PO~is in general position, and the conditions of Theorem~\ref{isolated_critical_points} hold. Further, let $\bar{x} \in S$ be a non-degenerate critical point of $f$ on $S_=$ with respect to its canonical Whitney stratification, and assume that the corresponding Lagrange multipliers of $\bar{x}$ are all positive. Then there exists a critical path $x_\mu \in \R \la \mu \ra_b^n$, and $\lim_{\mu} (x_\mu) = \bar{x}$. 

\vspace{5px}
\noindent
Furthermore, the critical path is Nash at $\mu = 0$. If $\R = \mathbb{R}$, the critical path is analytic at $\mu = 0$.
\end{theorem}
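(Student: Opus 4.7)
The plan is to combine Theorem~\ref{Morse_on_transversal}, Theorem~\ref{critcal_path_existence}, and Theorem~\ref{isolated_critical_points} in sequence, and then obtain Nash regularity at $\mu=0$ by a semi-algebraic implicit function argument. First, set $V=S_==\zero\bigl(\prod_{i=1}^{r} g_i,\R^n\bigr)$; under general position of $\mathcal{Q}$, this is a union of non-singular hypersurfaces whose canonical Whitney stratification is explicitly described by Proposition~\ref{Whitney_stratification_union_of_sets}. Since $\bar{x}$ is a non-degenerate stratified Morse critical point of $f$ on $V$ with all Lagrange multipliers positive, Theorem~\ref{Morse_on_transversal} applied with $F=f$ and $\mathcal{P}=\mathcal{Q}$ produces a critical point $x_\xi\in\Crit(V_\xi,f)\cap\R\la\xi\ra_b^n$ on $V_\xi=S_\xi$ satisfying $\lim_\xi(x_\xi)=\bar{x}$.

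The next step is to convert the $V_\xi$-Lagrange condition into KKT form and track signs. Let $I=\{i : g_i(\bar{x})=0\}$ be the active index set; since $\bar{x}\in S$, we have $g_i(\bar{x})>0$ for $i\notin I$. The critical condition $\nabla f(x_\xi)=\lambda_\xi\,\nabla\bigl(\prod_{k} g_k\bigr)(x_\xi)$ rewrites as $\nabla f(x_\xi)=\sum_i u_i^\xi\,\nabla g_i(x_\xi)$, where $u_i^\xi:=\lambda_\xi\prod_{j\neq i}g_j(x_\xi)$, and one reads off the relaxed complementarity identity $u_i^\xi\,g_i(x_\xi)=\lambda_\xi\,\xi$ for every $i$. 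Since $\prod_{j\neq i}g_j(\bar{x})=0$ for $i\notin I$ (it contains a factor $g_j$ with $j\in I$), and since the gradients $\{\nabla g_i(\bar{x})\}_{i\in I}$ are linearly independent by general position, comparison with the stratified critical condition $\nabla f(\bar{x})=\sum_{i\in I}\mu_i\,\nabla g_i(\bar{x})$ identifies $\lim_\xi u_i^\xi=\mu_i>0$ for $i\in I$ and $\lim_\xi u_i^\xi=0$ for $i\notin I$. In particular $u_i^\xi>0$ in the Puiseux ordering for $i\in I$; plugging into $u_i^\xi\,g_i(x_\xi)=\lambda_\xi\,\xi$ shows $g_i(x_\xi)$ and $\lambda_\xi$ carry a common sign, and the fact that Theorem~\ref{Morse_on_transversal} deforms $\bar{x}$ into the positive half-space dictated by positive multipliers (this is the sign payoff of the positivity hypothesis) fixes that sign to be positive. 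Hence $g_i(x_\xi)>0$ for $i\in I$, and by continuity $g_i(x_\xi)>0$ for $i\notin I$ as well, so $x_\xi\in S_>$ and $\lambda_\xi>0$.

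With $u_\xi:=\lambda_\xi>0$ and $x_\xi\in S_>$, the equation $\xi u_\xi-\mu=0$ has the positive solution $\mu=\xi\lambda_\xi\in\R\la\mu\ra$, so Theorem~\ref{critcal_path_existence} realizes $x_\xi$ as a critical path of \PO. The finiteness hypothesis inherited from Theorem~\ref{isolated_critical_points} then upgrades this to isolation of $x_\mu$ in $V_\mu\cap S_>$, completing the existence part, and $\lim_\mu(x_\mu)=\lim_\xi(x_\xi)=\bar{x}$ follows from the correspondence $\mu=\xi\lambda_\xi$.

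For Nash regularity at $\mu=0$, introduce auxiliary variables $U=(U_1,\ldots,U_r)$ and consider the polynomial system $\bigl\{\nabla f(X)-\sum_{i=1}^{r}U_i\nabla g_i(X),\;U_i g_i(X)-\mu\bigr\}_{i=1,\ldots,r}$ in $n+r$ unknowns parameterized by $\mu$, based at $(\bar{x},\bar{u},0)$ with $\bar{u}_i=\mu_i$ for $i\in I$ and $\bar{u}_i=0$ for $i\notin I$. For $i\notin I$, the equation $U_i g_i(X)-\mu=0$ is locally solved by the Nash function $U_i=\mu/g_i(X)$, and substituting back reduces the system to one whose Jacobian at the base point is precisely the bordered matrix built from the Hessian of the Lagrangian $f-\sum_{i\in I}\mu_i g_i$ restricted to the stratum together with the active gradients; this block is non-singular exactly because $\bar{x}$ is a non-degenerate stratified Morse critical point. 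The semi-algebraic implicit function theorem (see~\cite[Corollary~2.9.8]{BCR98}) then produces a Nash extension of $\mu\mapsto x(\mu)$ to $\mu=0$, and when $\R=\mathbb{R}$ this Nash function is analytic by~\cite[Proposition~8.1.8]{BCR98}. The main obstacle is the sign determination in the second paragraph and the rigorous translation of stratified Morse non-degeneracy into non-singularity of the reduced Jacobian; both steps rely crucially on the positivity of the Lagrange multipliers $\mu_i$.
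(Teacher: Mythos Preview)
Your strategy is the same as the paper's---apply the semi-algebraic implicit function theorem to a KKT-type system, using non-degeneracy of $\bar x$ and positivity of its multipliers---but the paper avoids your sign-determination difficulty by a cleaner setup. Rather than invoking Theorem~\ref{Morse_on_transversal} on the single-parameter product hypersurface $S_\xi$, the paper (assuming without loss of generality that all constraints are active at $\bar x$) applies Theorem~\ref{Morse_on_smooth_hypersurface} directly to the \emph{intersection} $\zero(\{g_1,\ldots,g_r\},\R^n)$ with the multi-parameter deformation $g_i=\xi_i$; this yields a bounded KKT pair $(x_\xi,u_\xi)$ with $g_i(x_\xi)=\xi_i>0$ by construction, so no post-hoc sign argument is needed. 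One then solves the system $\xi_i\,u_i(\xi)=\mu$, $i=1,\ldots,r$, for $\xi$ as a function of $\mu$: the Jacobian at the origin is $\diag(\bar u_1,\ldots,\bar u_r)$, invertible precisely because each $\bar u_i>0$, so a second application of the implicit function theorem produces Nash functions $\xi_i(\mu)>0$. The critical path $x(\xi(\mu))$ is Nash at $\mu=0$ as a composition of Nash maps, and Theorem~\ref{isolated_critical_points} supplies isolation.

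Your second paragraph's sign argument is genuinely incomplete as written: the statement of Theorem~\ref{Morse_on_transversal} does not assert $g_i(x_\xi)>0$, and when $|I|$ is even the constraint $\prod_i g_i(x_\xi)=\xi>0$ is consistent with all active $g_i(x_\xi)$ being negative, so the phrase ``deforms into the positive half-space'' is not a proof. That said, your own fourth-paragraph IFT on the primal-dual system $\{\nabla f-\sum_i U_i\nabla g_i,\ U_ig_i-\mu\}$ already closes this gap and in fact does all the work: the bordered Jacobian you describe is non-singular, and the resulting Nash branch $(X(\mu),U(\mu))$ automatically satisfies $g_i(X(\mu))=\mu/U_i(\mu)>0$ for active $i$ since $U_i(0)=\bar u_i>0$, with continuity handling the inactive constraints. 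This single IFT is equivalent to the paper's two-step version, so your first three paragraphs become redundant once the fourth is in place; you could streamline by opening with it.
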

\begin{remark}
Theorem~\ref{sufficient_conditions_existence} extends the existence result of~\cite[Theorems~12]{FM90} which was only stated for a central path.
\end{remark}
\hide{
\begin{remark}
A simpler version of Theorem~\ref{Morse_on_smooth_hypersurface} has been proved in~\cite[Lemma~B]{M65} for a general Morse function on an open subset of $\mathbb{R}^n$. This result applies to our problem when $f$ is Morse on $\mathbb{R}^n$ and $\bar{x} \not \in S_{=}$. In that case, for sufficiently small $\mu$, $x(\mu)$ must be a non-degenerate critical point of the log-barrier function.  
\end{remark}
}
\subsubsection{Convergence of a critical path}
By utilizing Theorem~\ref{convergence_of_critical_points}, we can characterize the limit of a bounded critical path and quantify its convergence rate. The later result extends the authors worst-case convergence rate of central path from~\SDO~\cite{BM22} to~\PO.  

\begin{theorem}\label{limiting_behavior_of_critical_path}
Suppose that $\mathcal{Q}=\{g_1,\ldots,g_r\} \subset \R[X_1,\ldots,X_n]$ in the definition of~\PO~is in general position, and let $x_{\mu} \in \R \la \mu \ra_b^n$ be a bounded critical path. Then $\bar{x} = \lim_{\mu} (x_{\mu})$ is a critical point of $f$ on $S_{=}$ with respect to its canonical Whitney stratification. Further, there exists $\gamma \in \mathbb{Z}_+$ such that  
\begin{align*}
    \|x_\mu - \bar{x}\| = O(\mu^{1/\gamma}), 
\end{align*}
and $\gamma = (rd)^{O(n)}$.
\end{theorem}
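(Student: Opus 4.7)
The plan is to derive the first claim from Theorem~\ref{convergence_of_critical_points} by recasting the critical path as a critical point of $f$ on an infinitesimally perturbed level set of $P := \prod_{i=1}^r g_i$, and to obtain the convergence rate from a B\'ezout-type degree bound on the defining system combined with the standard ramification bound for Puiseux expansions.

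For the first claim, multiplying the critical-path equations~\eqref{critical_path_algebraic} through by $P(x_\mu)>0$ yields the Lagrange condition
\[
\nabla f(x_\mu) \;=\; \frac{\mu}{P(x_\mu)}\,\nabla P(x_\mu),
\]
so $x_\mu$ is a critical point of $f$ on the hypersurface $\zero\bigl(P - \xi_\mu,\R\la\mu\ra^n\bigr)$ with $\xi_\mu := P(x_\mu)$. Because $x_\mu \in S_>$, $\xi_\mu$ is positive in $\R\la\mu\ra$, and because $\bar{x}\in S_=$, it is infinitesimal over $\R$. After reparametrizing Puiseux series in $\mu$ as Puiseux series in $\xi_\mu$ (legitimate since $\xi_\mu$ has a positive leading exponent in $\mu$), $x_\mu$ lies in $\R\la\xi\ra_b^n$. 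Since $\mathcal{Q}$ is in general position by hypothesis, Theorem~\ref{convergence_of_critical_points} applies and shows that $\bar{x} = \lim_\mu (x_\mu) = \lim_\xi (x_\mu)$ is a critical point of $f$ on $S_=$ with respect to its canonical Whitney stratification.

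For the convergence rate, I would bound the ramification index of the coordinates of $x_\mu$ as algebraic elements over $\R(\mu)$. The system~\eqref{critical_path_algebraic} consists of $n$ polynomials in $(X_1,\dots,X_n)$ with parameter $\mu$, each of total degree $O(rd)$ where $d=\max(\deg f,\deg g_i)$, so by B\'ezout the fiber over generic $\mu$ consists of at most $(rd)^{O(n)}$ complex points. Consequently each coordinate of $x_\mu$ is algebraic over $\R(\mu)$ of degree at most $\gamma = (rd)^{O(n)}$, and its Puiseux expansion at $\mu=0$ has ramification dividing $\gamma$. Writing
\[
x_\mu \;=\; \bar{x} + \sum_{k\ge 1} c_k\,\mu^{p_k/\gamma},\qquad p_k\in\mathbb{Z}_+,
\]
the leading exponent is at least $1/\gamma$, giving $\|x_\mu - \bar{x}\| = O(\mu^{1/\gamma})$ as claimed.

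The principal technical obstacle is to legitimately apply Theorem~\ref{convergence_of_critical_points} with the non-free infinitesimal $\xi_\mu = P(x_\mu)\in\R\la\mu\ra$ rather than with a free variable $\xi$. The cleanest way around this is to invoke the Tarski-Seidenberg transfer principle: the assertion ``\emph{every critical point of $f$ on $S_\xi$ has its limit, as $\xi\to 0^+$, in the critical locus of $f$ on $S_=$ for the canonical Whitney stratification}'' is first-order expressible, holds for $\xi$ infinitesimal by Theorem~\ref{convergence_of_critical_points}, and therefore holds for all sufficiently small positive real $\xi$, and in particular for $\xi = P(x(\mu))$ when $\mu$ is sufficiently small. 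A secondary concern is tracking the $(rd)^{O(n)}$ constant cleanly; this is a standard degree count that can be read off from the bounds in~\cite[Chapter~2]{BPR06}.
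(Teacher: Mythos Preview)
Your first part is correct and matches the paper's approach: the paper also reduces to Theorem~\ref{convergence_of_critical_points} via the observation that $x_\mu$ is a critical point of $f$ on $\{P=\xi\}$ for $\xi=P(x_\mu)$. Your care about the non-free infinitesimal is warranted; the reparametrization argument works because $\xi_\mu\in\R\la\mu\ra$ is a positive infinitesimal of positive order, so $\R\la\xi_\mu\ra=\R\la\mu\ra$ as ordered fields with the same valuation ring, and $\lim_{\xi_\mu}=\lim_\mu$. (The paper handles this implicitly via the correspondence set up around~\eqref{Compact_central_path_conditions}--\eqref{prod_hyper_surface} and Lemma~\ref{equivalent_systems}.)

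The second part has a genuine gap. Your B\'ezout step assumes the complex fiber of~\eqref{critical_path_algebraic} over generic $\mu$ is zero-dimensional, but nothing in the hypotheses guarantees this: the critical path is only required to be isolated in the \emph{real} set $V_\mu\cap S_>$, and a real isolated point can sit inside a positive-dimensional complex component (e.g.\ $x^2+\mu^2 y^2=0$ has the isolated real point $(0,0)$ for each $\mu\neq 0$, while the complex fiber is a pair of lines). When the complex fiber is positive-dimensional, B\'ezout gives no finiteness, and even the refined B\'ezout/Heintz bound on components of the total variety in $\R^{n+1}$ does not bound the degree of the Zariski closure of the critical-path curve, since that curve can lie inside a higher-dimensional component rather than being a component itself. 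Consequently you cannot read off a bound on $[\R(\mu)(x_i(\mu)):\R(\mu)]$ this way.

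The paper closes this gap with a different mechanism: it applies Parameterized Bounded Algebraic Sampling (\cite[Algorithm~12.18]{BPR06}) to $V_\mu$, which produces a real univariate representation~\eqref{parameterized_bounded_sampling} of degree $(rd)^{O(n)}$ hitting every semi-algebraically connected component of $V_\mu$---in particular the isolated point $x(\mu)$---\emph{regardless} of the complex dimension of $V_\mu$. Quantifier elimination (Theorem~\ref{14:the:tqe}) then eliminates the auxiliary variable $T$ to give a bivariate polynomial $P_i(\mu,X_i)$ of degree $(rd)^{O(n)}$ vanishing on the $i$-th coordinate, after which your Newton--Puiseux step goes through. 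This sampling-then-eliminate route is the standard way to obtain singly-exponential degree bounds for real isolated points; a direct B\'ezout count does not suffice. (Incidentally, the relevant bounds are in~\cite[Chapters~12--14]{BPR06}, not Chapter~2.)
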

\begin{example}\label{ex:No_central_path}
Consider the minimization problem
\begin{align*}
\inf_x\{x_1 \mid x_1^2+x_2^2 \ge 1, \ x_1 \ge 0\},
\end{align*}
where the canonical Whitney stratification of $S_{=}$ gives the strata 
\begin{align*}
Z_0&=\{(x_1,x_2) \mid x_1 = 0, \ x_2 > 1\}
\bigcup \{(x_1,x_2) \mid x_1 = 0, \ x_2 < -1\}\\
&\qquad\bigcup \{(x_1,x_2) \mid x_1 = 0, \ -1 < x_2 < 1\}
\bigcup \{(x_1,x_2) \mid x_1^2+x_2^2 = 1, x_1 > 0\}\\
&\qquad\bigcup \{(x_1,x_2) \mid x_1^2+x_2^2 = 1, x_1 < 0\},\\
Z_1&=\{(0,1)\} \bigcup \{(0,-1)\}.
\end{align*}
The first-order conditions~\eqref{central_path_original_form} yield  
\begin{align*}
    &x_1^3-3\mu x_1^2 - x_1 + \mu = 0,\\
    &x_2 = 0,
\end{align*}
which results in a unique critical (non-central) path $x_{\mu}$ with $\lim_{\mu} (x_{\mu}) = (1,0)$, which is a non-singular point of $S_=$. It is easy to see that $(1,0)$ is a critical point of $f$ on $Z_0$. Further, we can observe that no critical path converges to the unbounded critical set $\{(x_1,x_2) \mid x_1 = 0\}$.
\end{example}

\subsubsection{Smoothness of a critical path at $\mu=0$}
As a consequence of the Semi-algebraic Implicit Function Theorem~\cite[Corollary~2.9.8]{BCR98}, Theorem~\ref{sufficient_conditions_existence} ensures that the critical path is $\mathcal{C}^{\infty}$-smooth at $\mu = 0$. However, if any of the conditions in Theorem~\ref{sufficient_conditions_existence} are not satisfied, the derivatives of a critical path (when it exists) may fail to exist at $\mu =0$. 
\begin{example}\label{non_analytic_at_0}
Consider the \PO~problem
  \begin{align*}
 \inf_x\{x_1 \mid  x_1^3 - x_2^2  \ge 0, \ x_2 \ge 0\},  
 \end{align*} 
which has a unique local minimizer at $(0,0)$. In this example, there exists a unique central path $(x(\mu),y(\mu))=(O(\mu^{\frac 12}), O(\mu^{\frac 34}))$, which is clearly not analytic at $\mu = 0$ (see Fig.~\ref{central_path_cusp}).
 \begin{figure}
  \centering
\includegraphics[height=2.0in]{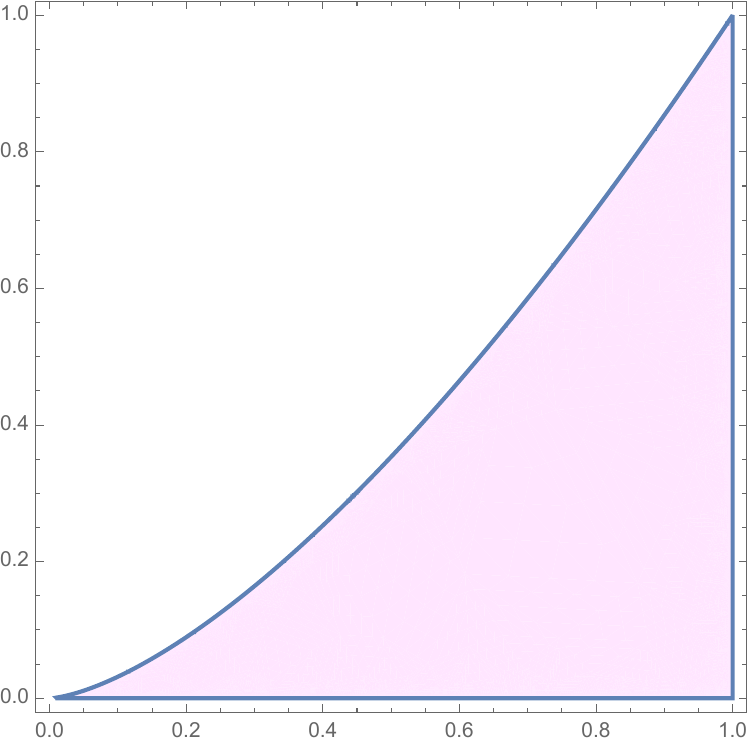} \ \ 
\includegraphics[height=2.0in]{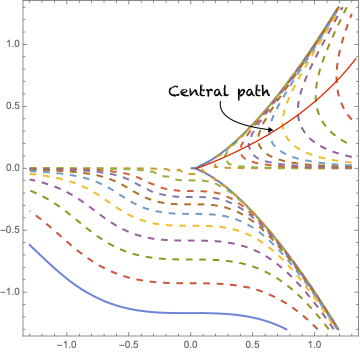}
\caption{Central path converges to an isolated singular solution.}
\label{central_path_cusp}
\end{figure}
\end{example}
\noindent
We prove the existence of a reparametrization that recovers the $\mathcal{C}^{\infty}$-smoothness of a critical path at $\mu = 0$. This is an extension of the authors results on the analyticity of the central path for~\SDO~\cite{BM24}.

\begin{theorem}\label{analytic_reparam_semi-algebraic}
Let $x(\mu)$ be a bounded critical path and consider the reparametrization $\mu \mapsto \mu^{\rho}$, where $\rho \in \mathbb{R}_+$. Then there exists a $\rho \in \mathbb{Z}_+$ such that $x(\mu^{\rho})$ is $\mathcal{C}^{\infty}$-smooth at $\mu = 0$. The minimal $\rho$ is bounded by $(rd)^{O(n^2)}$.

\vspace{5px}
\noindent
If $\R = \mathbb{R}$, then $x(\mu^{\rho})$ is analytic at $\mu = 0$.
\end{theorem}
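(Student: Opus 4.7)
My plan is to exploit the fact that a bounded critical path, viewed as a point $x_\mu \in \R\langle\mu\rangle_b^n$, consists of algebraic Puiseux series. The structure of these series immediately suggests the reparametrization, and the only nontrivial work is to (i) justify $\mathcal{C}^\infty$/analyticity from the formal power series structure and (ii) extract the stated quantitative bound on $\rho$.

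\emph{Step 1 (Puiseux expansion and definition of $\rho$).} By Notation~\ref{not:Puiseux}, each coordinate $x_{\mu,i}$ of the bounded critical path is a bounded algebraic Puiseux series in $\mu$ with coefficients in $\R$, so it admits an expansion
\begin{equation*}
x_{\mu,i} \;=\; \sum_{k \geq 0} a_{i,k}\, \mu^{k/q_i}
\end{equation*}
for some positive integer $q_i$ and coefficients $a_{i,k} \in \R$. Set $\rho := \mathrm{lcm}(q_1,\ldots,q_n)$. After the substitution $\mu \mapsto \mu^{\rho}$, each coordinate becomes
\begin{equation*}
x_i(\mu^{\rho}) \;=\; \sum_{k \geq 0} a_{i,k}\, \mu^{k\rho/q_i},
\end{equation*}
which involves only non-negative integer powers of $\mu$, i.e., a formal power series in $\mu$ with coefficients in $\R$.

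\emph{Step 2 (Smoothness/analyticity at $\mu=0$).} Each coordinate $x_{\mu,i}$ is algebraic over $\R(\mu)$ (being a coordinate of a point of the zero-dimensional variety defined by the system in~\eqref{critical_path_algebraic} over $\R\langle\mu\rangle$). Hence $x_i(\mu^\rho)$ satisfies a polynomial equation $Q_i(x_i(\mu^\rho),\mu^\rho) = 0$ over $\R[\mu]$. The formal power series expansion from Step~1 provides a simple root of $Q_i(\cdot,\mu)$ specializing at $\mu = 0$ to $a_{i,0}$, and by the Semi-algebraic Implicit Function Theorem~\cite[Corollary~2.9.8]{BCR98} this determines a unique Nash (hence $\mathcal{C}^\infty$) function on a semi-algebraic neighborhood of $0$ whose Taylor expansion coincides with the formal series; by uniqueness this function must agree with $x_i(\mu^\rho)$. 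Thus $x(\mu^\rho)$ is $\mathcal{C}^\infty$ at $\mu=0$. When $\R = \mathbb{R}$, Nash functions are real analytic by~\cite[Proposition~8.1.8]{BCR98}, giving analyticity.

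\emph{Step 3 (Degree bound).} It remains to bound $\rho$. The critical path is a component of the zero set $V_\mu$ defined by $n$ polynomials of degree $O(rd)$ in $n$ variables over the field $\R(\mu)$; cf.~\eqref{critical_path_algebraic}. This variety is zero-dimensional over $\R\langle\mu\rangle$ (by Theorem~\ref{isolated_critical_points}, which is part of the hypotheses ensuring that critical paths exist), so by a Bezout-type bound its algebraic degree over $\R(\mu)$ is at most $(rd)^{O(n)}$. Consequently the minimal polynomial of each $x_{\mu,i}$ over $\R(\mu)$ has degree $\leq (rd)^{O(n)}$, which bounds the Puiseux denominator $q_i$ by the same quantity. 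Therefore
\begin{equation*}
\rho \;=\; \mathrm{lcm}(q_1,\ldots,q_n) \;\leq\; \prod_{i=1}^n q_i \;\leq\; \bigl((rd)^{O(n)}\bigr)^n \;=\; (rd)^{O(n^2)},
\end{equation*}
as claimed.

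\emph{Main obstacle.} The conceptual content (Puiseux expansion $\Rightarrow$ power series after ramified substitution) is essentially formal; the delicate part is the passage from a \emph{formal} power series expansion to an honest smooth/analytic function when $\R$ is an arbitrary real closed field, which is where invoking the semi-algebraic implicit function theorem is crucial. The quantitative $(rd)^{O(n^2)}$ bound also depends on being able to locate $x_\mu$ as a coordinate of a zero-dimensional variety of controlled degree, which is why the hypotheses of Theorem~\ref{isolated_critical_points} must be implicitly invoked.
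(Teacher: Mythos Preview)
Your overall strategy---express each coordinate as a bounded algebraic Puiseux series, take the lcm of the ramification indices, and bound that lcm by a product---is exactly the paper's. Two points deserve correction.

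In Step~2, applying the Implicit Function Theorem at $\mu=0$ requires $a_{i,0}$ to be a \emph{simple} root of $Q_i(X,0)$, and nothing guarantees this: the discriminant of $Q_i$ in $X$ may well vanish at $\mu=0$. The paper is equally terse here, implicitly using that an algebraic formal power series over $\R$ is the germ of a Nash function (the ring of Nash germs at the origin being the Henselization of $\R[\mu]_{(\mu)}$, i.e., the ring of algebraic power series); you could invoke this directly rather than IFT.

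Your Step~3 contains a more substantive gap. You invoke Theorem~\ref{isolated_critical_points} to claim $V_\mu$ is zero-dimensional over $\R\langle\mu\rangle$, then apply a B\'ezout-type bound. But Theorem~\ref{isolated_critical_points} carries its own hypothesis (finiteness of the complex projective KKT points on some $S_c$), which is \emph{not} assumed in Theorem~\ref{analytic_reparam_semi-algebraic}: the present theorem assumes only that $x(\mu)$ is a bounded critical path, i.e., an isolated point of $V_\mu\cap S_>$ for each small $\mu$. The ambient $V_\mu$ may be positive-dimensional, so B\'ezout does not bound the degree of $x_\mu$ over $\R(\mu)$. The paper sidesteps this by reusing the machinery from the proof of Theorem~\ref{limiting_behavior_of_critical_path}: Parameterized Bounded Algebraic Sampling \cite[Algorithm~12.18]{BPR06} produces a univariate representation of degree $(rd)^{O(n)}$ for a sample point in every semi-algebraically connected component of $V_\mu$ (in particular for the isolated point $x_\mu$), and quantifier elimination (Theorem~\ref{14:the:tqe}) then yields a bivariate polynomial $P_i(\mu,X_i)$ of degree $(rd)^{O(n)}$ vanishing along the $i$-th coordinate of the path. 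The Newton--Puiseux ramification index is bounded by this degree, with no zero-dimensionality assumption needed.
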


\subsection{Extension to o-minimal structures}\label{definable_central_path}
There are variants of \NO~problems in which the objective or the feasible set is 
not semi-algebraic but still 
definable in an o-minimal structure (e.g., an o-minimal structure
where the exponential function  or trigonometric functions on 
compact intervals are definable (see Section~\ref{sec:o-minimal})). Hence, it is important to understand the convergence behavior of central and critical paths of \NO~problems restricted to this category of sets.

\begin{example}
Consider the following \NO~problem with definable functions in $\mathbb{R}_{\mathrm{exp}}$ (see Section~\ref{sec:o-minimal}):
\begin{align*}
\inf_x\{x \mid xe^x - 1 \ge 0\},
\end{align*}
where the global minimizer is a zero of $xe^x - 1 = 0$. One can check (using the Implicit Function Theorem) that a central path exists, and its graph is the zero of $xe^x - \mu e^x(x+1) - 1$, i.e., it is a definable function in $\mathbb{R}_{\mathrm{exp}}$.
\end{example}

\subsubsection{Definable critical paths}
We extend the results on the existence, convergence, and smoothness of critical paths to \NO~problems where $f$ and $g_i$ are smooth definable functions in an o-minimal structure $\mathcal{S}(\mathbb{R})$. Whenever $\mathcal{Q}=\{g_1,\ldots,g_r\}$ is in general position, we assume that $S_=$ is stratified with respect to the canonical Whitney stratification~\cite[II.1.14]{Shiota97}.

\vspace{5px}
\noindent
First, we show that an analog of Theorem~\ref{Morse_on_transversal} can be proved for definable sets and functions and then can be applied to critical paths.
\begin{theorem}\label{sufficient_conditions_existence_definable}
Let $\mathcal{S}(\mathbb{R})$ be an o-minimal structure, suppose that $f,g_i \in \mathcal{S}(\mathbb{R})$ are $\mathcal{C}^2$-smooth definable functions, $\mathcal{Q}=\{g_1,\ldots,g_r\}$ is in general position, and let $\bar{x} \in S$ be a non-degenerate critical point of $f$ on $S_=$ with respect to its canonical Whitney stratification. Further, assume that the corresponding Lagrange multipliers of $\bar{x}$ are all positive. Then there exists a definable path $x(\mu)$ of critical points of the log-barrier function, and $\lim_{\mu \downarrow 0} (x(\mu)) = \bar{x}$.
\end{theorem}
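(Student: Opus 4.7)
The plan is to reduce the theorem to an application of the $C^1$ Implicit Function Theorem for definable functions in an o-minimal structure (see~\cite[Ch.~7]{Dries}), paralleling the proof of Theorem~\ref{sufficient_conditions_existence} but replacing the Nash and semi-algebraic ingredients by their definable analogs. Since the desired conclusion is purely local at $\bar x$, this approach sidesteps the need for an infinitesimal $\R\la\mu\ra$-deformation as used in Theorems~\ref{Morse_on_transversal} and~\ref{isolated_critical_points}.

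First I would reorder the $g_i$'s so that $g_1(\bar x) = \cdots = g_k(\bar x) = 0$ and $g_i(\bar x) > 0$ for $i > k$. The general position hypothesis implies that $\nabla g_1(\bar x), \ldots, \nabla g_k(\bar x)$ are linearly independent, and that the canonical Whitney stratum of $S_=$ through $\bar x$ coincides locally with the $C^2$ definable submanifold $\Sigma = \{g_1 = \cdots = g_k = 0, g_{k+1} > 0, \ldots, g_r > 0\}$; this is the o-minimal analog of Proposition~\ref{Whitney_stratification_union_of_sets}, valid by the general theory of definable Whitney stratifications~\cite{Shiota97}. The hypothesis then supplies unique $\bar\lambda_1, \ldots, \bar\lambda_k > 0$ with $\nabla f(\bar x) = \sum_{i=1}^k \bar\lambda_i \nabla g_i(\bar x)$.

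Next, I would introduce auxiliary variables $\lambda = (\lambda_1, \ldots, \lambda_k)$ and consider, on a neighborhood of $(\bar x, \bar\lambda, 0)$ where the $g_j$ with $j > k$ remain positive, the definable $C^1$ system
\begin{align*}
F(x, \lambda, \mu) \defeq \Bigg( \nabla f(x) - \sum_{i=1}^k \lambda_i \nabla g_i(x) - \mu \sum_{j>k} \frac{\nabla g_j(x)}{g_j(x)}, \ \big\{ \lambda_i g_i(x) - \mu \big\}_{i=1}^k \Bigg) = 0.
\end{align*}
Then $F(\bar x, \bar\lambda, 0) = 0$, and a direct calculation shows the Jacobian $\partial F/\partial (x,\lambda)$ at $(\bar x, \bar\lambda, 0)$ equals
\begin{align*}
\begin{pmatrix} H & -G \\ D G^T & 0 \end{pmatrix},
\end{align*}
where $H$ is the Hessian of the Lagrangian $f - \sum_i \bar\lambda_i g_i$ at $\bar x$, $G = [\nabla g_1(\bar x) \ \cdots \ \nabla g_k(\bar x)]$ has full column rank, and $D = \diag(\bar\lambda_1, \ldots, \bar\lambda_k)$ is invertible. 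Identifying $\ker G^T$ with $T_{\bar x}\Sigma$, non-degeneracy of $\bar x$ as a stratified critical point translates to non-singularity of $H|_{\ker G^T}$; a routine Schur-complement calculation then yields non-singularity of the above Jacobian.

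Applying the $C^1$ definable Implicit Function Theorem then produces a definable $C^1$ curve $\mu \mapsto (x(\mu), \lambda(\mu))$ on some $(-\delta, \delta)$ with $(x(0), \lambda(0)) = (\bar x, \bar\lambda)$ solving $F = 0$. Positivity of $\bar\lambda_i$ gives $\lambda_i(\mu) > 0$ and hence $g_i(x(\mu)) = \mu/\lambda_i(\mu) > 0$ for small $\mu > 0$, so $x(\mu) \in S_>$, and substituting $\lambda_i(\mu) = \mu/g_i(x(\mu))$ back into the first block of $F = 0$ recovers the log-barrier critical conditions~\eqref{central_path_original_form}. The main obstacle I anticipate is carefully identifying the canonical Whitney stratum of $S_=$ through $\bar x$ with $\Sigma$ under the general position hypothesis in the o-minimal setting, and translating the abstract stratified non-degeneracy of $\bar x$ into the concrete Hessian condition on $\ker G^T$; once this geometric dictionary is in place, the rest is a direct application of the o-minimal IFT.
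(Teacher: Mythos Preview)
Your approach is correct and is genuinely more direct than the paper's. The paper proves Theorem~\ref{sufficient_conditions_existence_definable} by transporting the entire proof of Theorem~\ref{sufficient_conditions_existence} to the o-minimal setting, which in turn routes through the definable analogs of Lemma~\ref{equivalent_systems}, Theorem~\ref{Morse_on_smooth_hypersurface}, and Theorem~\ref{critcal_path_existence}: first apply the Implicit Function Theorem to the KKT system $\nabla f - \sum u_i \nabla g_i = 0$, $g_i = \xi_i$ to obtain $(x(\xi),u(\xi))$, and then apply it a second time to $\xi_i u_i(\xi) = \mu$ to solve for $\xi$ in terms of $\mu$. You instead collapse this into a single application of the definable $C^1$ Implicit Function Theorem to the combined system $F(x,\lambda,\mu)=0$, whose Jacobian nonsingularity you reduce (correctly, via the bordered-Hessian/Schur argument) to the stratified non-degeneracy of $\bar x$ plus the full rank of $G$. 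Your route is shorter and more self-contained, and it handles the inactive constraints $g_j(\bar x)>0$ explicitly rather than by a ``without loss of generality'' that the paper's proof leaves implicit. The paper's two-step route, on the other hand, makes visible the correspondence between critical points of $f$ on the level sets $\{\prod g_i = \xi\}$ and log-barrier critical points (Lemma~\ref{equivalent_systems}), which is reused elsewhere; your direct argument bypasses this correspondence entirely, which is fine since Theorem~\ref{sufficient_conditions_existence_definable} only asserts existence of a definable path of log-barrier critical points, not isolatedness.
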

\noindent
We prove the analog of Theorem~\ref{limiting_behavior_of_critical_path} to characterize the limit point of a critical path, as a definable function (see Proposition~\ref{critical_path_definable_proof}). 
\begin{theorem}\label{limiting_behavior_of_definable_critical_path}
Let $\mathcal{S}(\mathbb{R})$ be an o-minimal structure, and suppose that $f,g_i \in \mathcal{S}(\mathbb{R})$ are $\mathcal{C}^1$-smooth definable functions. Further, assume that $\mathcal{Q}=\{g_1,\ldots,g_r\}$ is in general position, and let $x(\mu)$ be a bounded critical path. Then $\bar{x} = \lim_{\mu \downarrow 0} (x(\mu))$ is a critical point of $f$ on $S_{=}$ with respect to its canonical Whitney stratification. 

\vspace{5px}
\noindent
If $\mathcal{S}(\mathbb{R})$ is a polynomially bounded o-minimal structure, then there exists $N \in (0,1]$ such that
\begin{align*}
\|x(\mu)-\bar{x}\|=O(\mu^N) 
\end{align*}
for all sufficiently small positive $\mu$. Further, if $f,g_i \in \mathbb{R}_{\mathrm{an}}$, then $N \in \mathbb{Q} \cap (0,1]$. 
\end{theorem}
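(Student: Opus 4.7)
The plan is to mirror the proof of Theorem~\ref{limiting_behavior_of_critical_path}, replacing semi-algebraic tools (Puiseux valuation, semi-algebraic curve selection) with their o-minimal counterparts: the Monotonicity Theorem, the Curve Selection Lemma in o-minimal structures, and Shiota's Whitney stratification of definable sets. The growth estimate in the polynomially bounded case will follow from C.~Miller's growth dichotomy, and the rationality of the exponent for $\mathbb{R}_{\mathrm{an}}$ from the Preparation Theorem for restricted analytic functions.

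Since $x(\mu)$ is bounded and definable, the Monotonicity Theorem applied coordinate-wise guarantees that $\bar{x}=\lim_{\mu\downarrow 0}x(\mu)$ exists. Let $I=\{i\mid g_i(\bar{x})=0\}$. Because $\mathcal{Q}$ is in general position, the canonical Whitney stratification identifies the stratum $Z$ of $S_=$ through $\bar{x}$ as $\{g_i=0,\ i\in I;\ g_j\neq 0,\ j\notin I\}$ near $\bar{x}$, and the gradients $\{\nabla g_i(\bar{x})\mid i\in I\}$ are linearly independent. The first-order conditions~\eqref{central_path_original_form} applied to $x(\mu)$ read
\begin{equation*}
\nabla f(x(\mu))=\sum_{i=1}^{r}\lambda_i(\mu)\,\nabla g_i(x(\mu)),\qquad \lambda_i(\mu):=\mu/g_i(x(\mu))>0.
\end{equation*}
For $i\notin I$, $g_i(\bar{x})>0$ implies $\lambda_i(\mu)\to 0$. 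The crux is boundedness of $\lambda_i(\mu)$ for $i\in I$. Suppose $\Lambda(\mu):=\max_{i\in I}\lambda_i(\mu)$ is unbounded; by the Monotonicity Theorem, $\Lambda(\mu)\to\infty$. The definable ratios $\lambda_i(\mu)/\Lambda(\mu)\in[0,1]$ then admit limits $c_i$ (Monotonicity Theorem again), and continuity of the maximum forces $\max_{i\in I}c_i=1$, so the $c_i$ are not all zero. Dividing the displayed equation by $\Lambda(\mu)$ and passing to the limit kills the left-hand side as well as the $i\notin I$ terms on the right, leaving $\sum_{i\in I}c_i\,\nabla g_i(\bar{x})=0$ with not all $c_i$ zero, which contradicts the linear independence supplied by general position. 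Hence the $\lambda_i(\mu)$ are bounded, converge to definable limits $\bar{\lambda}_i\ge 0$, and passing to the limit in the displayed equation yields $\nabla f(\bar{x})=\sum_{i\in I}\bar{\lambda}_i\,\nabla g_i(\bar{x})$, i.e., $\nabla f(\bar{x})\perp T_{\bar{x}}Z$, which is the stratified Morse critical condition on $Z$.

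For the second part, $h(\mu):=\|x(\mu)-\bar{x}\|$ is definable in $\mathcal{S}(\mathbb{R})$ and tends to $0$ as $\mu\downarrow 0$. In a polynomially bounded o-minimal structure, Miller's growth dichotomy asserts that $h$ is either identically zero near $0$ or asymptotic to $c\,\mu^{N_0}$ for some $c>0$ and some $N_0>0$ in the field of exponents. Taking $N:=\min(N_0,1)\in(0,1]$ gives $\|x(\mu)-\bar{x}\|=O(\mu^N)$, since $\mu^{N_0}\le\mu^{\min(N_0,1)}$ for $\mu\in(0,1)$. When $f,g_i\in\mathbb{R}_{\mathrm{an}}$, every one-variable definable function admits a convergent Puiseux expansion in $\mu^{1/q}$ for some $q\in\mathbb{Z}_+$, so $N_0\in\mathbb{Q}$ and $N\in\mathbb{Q}\cap(0,1]$.

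The main obstacle is establishing the boundedness of the Lagrange multipliers $\lambda_i(\mu)$ for $i\in I$: in the semi-algebraic setting one can work directly with the non-archimedean valuation on Puiseux series, but here one must argue purely within the o-minimal framework, using the Monotonicity Theorem to extract the non-trivial limiting linear relation that contradicts general position.
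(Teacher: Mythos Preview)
Your argument is correct, and it takes a genuinely different route from the paper's proof.

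For the first part, the paper does not work with the individual Lagrange multipliers $\lambda_i(\mu)=\mu/g_i(x(\mu))$ at all. Instead it routes the problem through the product hypersurface $S_\mu=\{\prod_i g_i=\xi(\mu)\}$ and invokes the o-minimal analogue of its tangent-space convergence result (Proposition~\ref{convergence_of_tangent_spaces_definable}): one shows $T_{\bar x}Z\subset\lim_{\xi\to 0}T_{x(\xi)}V_\xi$, and then argues by contradiction that if $df(\bar x)$ did not annihilate $T_{\bar x}Z$ it could not annihilate $T_{x(\mu)}S_\mu$ either. Your approach is the classical LICQ argument from nonlinear optimization: normalize the possibly unbounded multipliers, pass to the limit, and obtain a nontrivial linear relation among the $\nabla g_i(\bar x)$, contradicting general position. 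This is more elementary and self-contained---it needs only the Monotonicity Theorem and continuity of $\nabla f,\nabla g_i$, and it bypasses the product-hypersurface machinery entirely. The paper's route, on the other hand, is a direct specialization of its general framework for critical points on perturbed varieties (Theorem~\ref{convergence_of_critical_points}), so it reuses existing infrastructure rather than introducing a separate argument for the critical-path setting.

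For the quantitative part, the paper applies the H\"older inequality for definable functions (Proposition~\ref{Holder_ineq}) to $d(\mu)=\|x(\mu)-\bar x\|^2$, whereas you invoke Miller's growth dichotomy directly on $\|x(\mu)-\bar x\|$; both yield $O(\mu^N)$ with $N>0$, and your explicit truncation $N:=\min(N_0,1)$ makes the constraint $N\le 1$ transparent. For $\mathbb{R}_{\mathrm{an}}$ both proofs appeal to the Puiseux expansion of globally subanalytic functions.
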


\noindent
Finally, we establish an o-minimal version of Theorem~\ref{analytic_reparam_semi-algebraic}. We begin by proving its analytic counterpart.
\begin{theorem}\label{thm:analyticity_analytic_func}
Suppose that $f$ and $g_i$ in~\eqref{poly_optim} are real globally analytic functions. Let $x(\mu)$ be a bounded critical path and consider the reparametrization $\mu \mapsto \mu^{\rho}$, where $\rho \in \mathbb{R}_+$. Then there exists a $\rho \in \mathbb{Z}_+$ such that $x(\mu^{\rho})$ is analytic at $\mu = 0$.
\end{theorem}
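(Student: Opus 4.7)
The plan is to view the bounded critical path as the germ of a one-dimensional real analytic branch through its limit point and invoke the real analytic Newton--Puiseux theorem to supply the required reparametrization. Concretely, let $\bar{x} := \lim_\mu(x(\mu))$, whose existence follows from Theorem~\ref{limiting_behavior_of_definable_critical_path} applied on a compact neighborhood where $f,g_i$ become definable in $\mathbb{R}_{\mathrm{an}}$. I would introduce the globally analytic functions
\begin{align*}
G_j(x,\mu) := \frac{\partial f}{\partial x_j}(x)\prod_{i=1}^r g_i(x) - \mu \sum_{k=1}^r \frac{\partial g_k}{\partial x_j}(x)\prod_{i\neq k} g_i(x), \quad j=1,\ldots,n,
\end{align*}
and set $W := \{(x,\mu) \in \mathbb{R}^n \times \mathbb{R} : G_1 = \cdots = G_n = 0\}$. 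The graph $\Gamma := \{(x(\mu),\mu) : 0 < \mu < \mu_0\}$ lies in $W$, and its closure at $\mu = 0$ is the single point $(\bar{x},0)$.

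The next step is to exploit that in a sufficiently small neighborhood of $(\bar{x},0)$ the set $W$ is a real analytic variety, hence semi-analytic. I would apply the local analytic decomposition of real analytic germs to write the germ of $W$ at $(\bar{x},0)$ as a finite union of irreducible real analytic germs. Since $x(\mu)$ is continuous and converges to $\bar{x}$, the germ of $\Gamma$ must lie in a single irreducible component $W_0$ that projects surjectively onto an interval $[0,\mu_1)$ of the $\mu$-axis. I then plan to invoke the real Newton--Puiseux theorem for one-dimensional real analytic germs---which follows from the classical complex Newton--Puiseux theorem applied to the complexification of $W_0$, together with the Weierstrass preparation theorem---to obtain a positive integer $\rho$ and a convergent real analytic function $\phi : (-\delta,\delta) \to \mathbb{R}^n$ with $\phi(0) = \bar{x}$ such that the germ of $\Gamma$ at $(\bar{x},0)$ is the image of $t \mapsto (\phi(t), t^\rho)$ for $t \in (0,\delta)$. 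Substituting $\mu = t^\rho$ then gives $x(\mu^\rho) = \phi(\mu)$, which is analytic at $\mu = 0$.

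The principal obstacle will be extracting a convergent real analytic Puiseux parametrization from the complex Newton--Puiseux theorem. The complex theorem yields a convergent Puiseux series for any irreducible complex analytic branch of the complexification of $W_0$, but I will need to verify that continuity of the real branch $\Gamma$ forces it to coincide with the real trace of a single conjugate-closed branch whose Puiseux coefficients are in fact real. I plan to handle this via Noether normalization in the analytic setting: choose a sufficiently generic linear projection of the $x$-coordinates so that $W_0$ becomes the image of a finite analytic correspondence over a plane curve defined by a single Weierstrass polynomial in one of the $x$-variables with analytic coefficients in $\mu$. The classical one-variable Newton--Puiseux theorem then supplies the convergent expansion, and the reality of $\phi$ together with the correct choice of $\rho$ follow from continuity of $x(\mu)$ combined with the uniqueness of the Puiseux expansion, using that the parametrization constructed for $\mathbb{R}_{\mathrm{an}}$-definable critical paths in the proof of Theorem~\ref{analytic_reparam_semi-algebraic} carries over verbatim once $W$ is known to be a real analytic (hence locally subanalytic) set.
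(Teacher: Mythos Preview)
Your proposal and the paper's proof share the same skeleton: show that the graph of the critical path lies in a real analytic variety near $(\bar x,0)$, reduce to a plane-curve situation via Weierstrass preparation, apply Newton--Puiseux to obtain convergent Puiseux expansions for each coordinate, and take the least common multiple of the ramification indices. The difference is in how the reduction to one variable is carried out. You pass through the local irreducible decomposition of the analytic germ $W$ and a Noether-normalization projection to land on a single Weierstrass polynomial over $\mathcal{O}(\mu)$; the paper instead iterates Weierstrass preparation together with {\L}ojasiewicz's analytic version of Tarski--Seidenberg (Theorem~2.2 of Bierstone--Milman) to eliminate $x_n,\ldots,x_2$ one at a time, producing for each coordinate $x_i$ a Weierstrass polynomial in $x_i$ with coefficients analytic in $\mu$ alone, to which Theorem~\ref{Newton_Puiseux_Thm} applies directly. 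The paper's route is more explicit and sidesteps the step you leave vague, namely that the germ of $\Gamma$ lies in a single \emph{one-dimensional} irreducible component $W_0$---you would need the isolatedness of $x(\mu)$ in $V_\mu$ (built into Definition~\ref{critical_path_def}) to rule out higher-dimensional components, and this deserves a sentence. Finally, your closing appeal to Theorem~\ref{analytic_reparam_semi-algebraic} is off target: that result is the \emph{semi-algebraic} case and rests on polynomial quantifier elimination, which does not transfer verbatim to analytic data; the correct analytic substitute is exactly the {\L}ojasiewicz--Tarski--Seidenberg elimination the paper uses, or alternatively the sub-analytic Puiseux expansion of Proposition~\ref{Puiseux_subanalytic}.
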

\begin{remark} Theorem~\ref{thm:analyticity_analytic_func} not only establishes the existence of an analytic reparametrization for a critical path of a \NO~problem defined by real globally analytic functions, but also implies that the critical path itself is analytic when $\mu > 0$ is sufficiently small (see the proof and Remark~\ref{Extension_of_ Peterzil}). In this regard, Theorem~\ref{thm:analyticity_analytic_func} extends/strengthens a result of~\cite[Proposition~8 and Remark~1]{GP02}, where it is shown that the central path of a convex \SDO~with analytic data is definable in $\mathbb{R}_{\mathrm{an}}$, when $\mu > 0$ is sufficiently small.
\end{remark}
\noindent
We now generalize Theorem~\ref{thm:analyticity_analytic_func} by showing that every definable bounded critical path in a polynomially bounded o-minimal structure can be made $\mathcal{C}^k$-smooth at $\mu = 0$. 
\begin{theorem}\label{thm:analyticity_definable_func}
Let $\mathcal{S}(\mathbb{R})$ be a polynomially bounded o-minimal structure, and suppose that $f,g_i \in \mathcal{S}(\mathbb{R})$ are $\mathcal{C}^1$-smooth definable functions. Let $x(\mu)$ be a bounded critical path and consider the reparametrization $\mu \mapsto \mu^{\rho}$, where $\rho \in \mathbb{R}_+$. Then for every $k \in \mathbb{Z}_+$, there exists a $\rho \in \mathbb{R}_+$ such that $x(\mu^{\rho})$ is $\mathcal{C}^k$-smooth at $\mu = 0$.

\vspace{5px}
\noindent
Further, if $f,g_i \in \mathbb{R}_{\mathrm{an}}$, then there exists a $\rho \in \mathbb{Z}_+$ such that $x(\mu^{\rho})$ is analytic at $\mu = 0$.
\end{theorem}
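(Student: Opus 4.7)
The plan is to leverage the asymptotic (Puiseux-type) expansions available for definable unary functions in a polynomially bounded o-minimal structure, and then to choose $\rho$ so as to either push the leading non-trivial exponent past the desired smoothness order or (in the analytic case) clear denominators of the rational exponents. First I would invoke Theorem~\ref{limiting_behavior_of_definable_critical_path} to set $\bar{x} := \lim_{\mu \downarrow 0} x(\mu)$, and record that each coordinate $y_i(\mu) := x_i(\mu)-\bar{x}_i$ is a bounded definable function on $(0,\mu_0)$ that vanishes as $\mu \downarrow 0$; by the o-minimal cell decomposition, $y_i$ is $\mathcal{C}^k$ on some sufficiently small subinterval $(0,\mu_0')$ for any prescribed $k$. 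The technical workhorse is the differentiable asymptotic-expansion theorem for polynomially bounded structures (Miller's Growth Dichotomy combined with the van den Dries--Speissegger preparation theorem, applied iteratively): for every $M>0$ one obtains
\begin{align*}
y_i(\mu) = \sum_{j=1}^{N_i} c_{i,j}\,\mu^{\alpha_{i,j}} + R_{i,M}(\mu),
\end{align*}
with $0<\alpha_{i,1}<\cdots<\alpha_{i,N_i}<M$, coefficients $c_{i,j}\in\mathbb{R}$, and remainder satisfying $R_{i,M}^{(\ell)}(\mu)=O(\mu^{M-\ell})$ for every $\ell\le k$.

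For the $\mathcal{C}^k$ assertion, I would fix $k\in\mathbb{Z}_+$, take $M=k+1$, set $\alpha_*:=\min_{i,j}\alpha_{i,j}$, and choose any $\rho>k/\alpha_*$. Substituting $\mu\mapsto\mu^{\rho}$ produces
\begin{align*}
y_i(\mu^\rho) = \sum_{j=1}^{N_i} c_{i,j}\,\mu^{\rho\alpha_{i,j}} + R_{i,M}(\mu^\rho),
\end{align*}
in which every exponent $\rho\alpha_{i,j}$ is strictly greater than $k$, so each monomial $\mu^{\rho\alpha_{i,j}}$ is of class $\mathcal{C}^k$ at $\mu=0$ (with all derivatives of order $\le k$ extending by $0$ at the origin). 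A Fa\`a di Bruno computation, combined with the bound $R_{i,M}^{(\ell)}=O(\mu^{M-\ell})$, yields $(R_{i,M}(\mu^\rho))^{(\ell)}=O(\mu^{\rho M-\ell})$ for $\ell\le k$, which also extends by $0$ at the origin once $\rho$ is chosen large enough to make $\rho M>k$. Assembling, $x_i(\mu^\rho)=\bar{x}_i+y_i(\mu^\rho)$ is $\mathcal{C}^k$ at $\mu=0$, which is the first claim.

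For the analytic refinement, I would specialise to $\mathcal{S}(\mathbb{R})=\mathbb{R}_{\mathrm{an}}$ and invoke the Puiseux theorem for definable germs at $0^+$ in this structure: each bounded $y_i$ admits a convergent expansion $\sum_{j\ge 1}c_{i,j}\mu^{\alpha_{i,j}}$ with a common denominator $q_i\in\mathbb{Z}_+$ such that every $\alpha_{i,j}\in q_i^{-1}\mathbb{Z}_+$ (positivity of the exponents is forced by $y_i(0^+)=0$ together with boundedness). Taking $\rho$ to be the least common multiple of $q_1,\ldots,q_n$ turns each $\rho\alpha_{i,j}$ into a positive integer, so $x_i(\mu^\rho)$ is represented by an honest convergent power series in $\mu$ near $0$, and is therefore real-analytic at $\mu=0$.

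The main obstacle is installing the termwise-differentiable asymptotic expansion in the strength required above. Miller's Growth Dichotomy gives only the leading monomial; peeling off higher-order terms while maintaining uniform derivative estimates on the remainder requires the van den Dries--Speissegger preparation theorem and an induction on the expansion length, and the chain-rule bookkeeping needed to transfer these bounds through the substitution $\mu \mapsto \mu^\rho$ must be done with some care. For the analytic half, the additional subtlety is passing from a formal Puiseux series to genuine convergence on a punctured neighbourhood of $0$; this is specific to $\mathbb{R}_{\mathrm{an}}$, and is precisely what sharpens the $\mathcal{C}^k$ conclusion of the first part to real analyticity.
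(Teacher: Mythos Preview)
Your proposal is correct and follows the same overall strategy as the paper: extract a leading power-type asymptotic for each coordinate, choose $\rho$ so that the reparametrized leading exponent exceeds $k$, and for the $\mathbb{R}_{\mathrm{an}}$ case clear denominators via the Puiseux expansion and an lcm. The second part of your argument is identical to the paper's.

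The one point worth flagging is that you are working harder than necessary in the first part. The paper does \emph{not} build a multi-term expansion with controlled remainder derivatives; it uses only Miller's Growth Dichotomy to write $x_i(\mu)-\bar{x}_i = c_i\mu^{r_i} + o(\mu^{r_i})$ with $r_i>0$, and then takes $\rho \ge k/\min_i r_i$. The reason this single leading term suffices is that in a polynomially bounded o-minimal structure the derivative of a definable unary germ again satisfies the Growth Dichotomy with the expected exponent (iterate: if $h(\mu)\sim c\mu^r$ then $h'(\mu)\sim cr\mu^{r-1}$), so once $\rho r_i > k$ every derivative of order $\le k$ of $x_i(\mu^\rho)-\bar{x}_i$ tends to $0$, and a standard mean-value argument upgrades this to $\mathcal{C}^k$ at $0$. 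Thus the van den Dries--Speissegger preparation, the finite expansion to order $M$, and the Fa\`a di Bruno bookkeeping you identify as ``the main obstacle'' can all be bypassed. Your route is not wrong, just heavier; the paper's is the more economical version of the same idea.
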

\begin{remark}
Theorems~\ref{analytic_reparam_semi-algebraic} and~\ref{thm:analyticity_analytic_func}-\ref{thm:analyticity_definable_func} establish the existence of an analytic reparametrization of a critical path for the classes of semi-algebraic (when $\R =\mathbb{R}$), analytic, and sub-analytic functions. However, this may not be the case for an arbitrary o-minimal structure. For example, consider the o-minimal structure $\mathbb{R}_{\mathrm{exp}}$ and let $f$ be defined by 
\begin{align*}
    f(x) = \begin{cases} e^{-1/x} \quad &x \neq 0,\\
    0 \quad &x = 0.
    \end{cases} 
\end{align*}
It is a classical fact that for every $\rho \in \mathbb{Z}_+$, $f(x^\rho)$ is $\mathcal{C}^{\infty}$-smooth, but non-analytic, at $x = 0$. 
\end{remark}

\subsection{Outline of the proofs}
We will provide a brief outline of the key ideas behind the proofs of our main results. 

\vspace{5px}
For the proofs of Theorem~\ref{bounded_fibers} and Corollary~\ref{bounded_critical_points}, we consider projective zeros of $P_i^H$, and leverage the fact that the parts at infinity of $V^H$ and $V^H_{\xi}$ are identical. The proofs of Theorem~\ref{bounded_connected_component} and Corollary~\ref{existence_critical_points} rely on semi-algebraically connected components of a semi-algebraic set, the Semi-algebraic Intermediate Value Theorem~\cite[Proposition~3.4]{BPR06}, and infinitesimal properties of a bounded, over $\R$, semi-algebraically connected semi-algebraic subset of $\R\la \xi\ra^n$~\cite[Proposition~12.43]{BPR06}. 

The proofs of Theorems~\ref{constrained_real_isolated_critical_points} and~\ref{constrained_real_non-degenerate_critical_points} invoke generic properties of complex algebraic sets. More specifically, for Theorem~\ref{constrained_real_isolated_critical_points} we describe the set of complex $\xi$ with isolated complex projective KKT points on $V_{c}$ as a constructible subset of $\mathbb{P}_n(\mathrm{C}) \times \mathrm{C}$ and apply the fact that the projection of a constructible set is constructible (using Chevalley's theorem~\cite[I.8, Corollary~2]{Mum99}). We then leverage the upper semi-continuity of the dimension of fibers of the projection map~\cite[I.7, Corollary~3]{Mum99}. For Theorem~\ref{constrained_real_non-degenerate_critical_points} we restrict to non-degenerate critical points and describe this set as a complex algebraic set. We then utilize the fact that the projection of a complex algebraic set is Zariski closed~\cite[Theorems~1.9 and~1.10]{S13}.  

\vspace{5px}
In the proof of Theorem~\ref{Morse_on_smooth_hypersurface}, non-singularity of $V$, the Semi-algebraic Sard Theorem~\cite[Theorem~9.6.2]{BCR98}, and the Semi-algebraic Implicit Function Theorem~\cite[Corollary~2.9.8]{BCR98} together yield the existence of bounded non-degenerate critical points of $F$ on $V_{\xi}$. In the proof of Theorem~\ref{Morse_on_transversal}, we exploit the correspondence between critical points of $F$ on $\zero(\prod_{i=1}^s P_i,\R^n)$ and those on the non-singular algebraic set $\zero(\{P_1,\ldots,P_s\},\R^n)$, when $\mathcal{P}$ is in general position. We apply the Implicit Function Theorem to prove the existence of Lagrange multipliers. 

\vspace{5px}
In the proof of Theorem~\ref{convergence_of_critical_points}, we exploit the order of vanishing terms $\prod_{\ell=1, \ell \neq i}^s P_{\ell}(x_\xi)$ for each $i=1,\ldots,s$, and then leverage the canonical Whitney stratification of $V$ and the Semi-algebraic Sard Theorem to show that the limit of the tangent spaces of $V_{\xi}$, as a bounded subset of $\R \la \xi\ra^n$, exists and contains the limit of tangent space of the stratum that contains $\bar{x}$ (see Proposition~\ref{convergence_of_tangent_spaces_extended}).

\vspace{5px}
\noindent
In the second part of the paper, we apply Theorems~\ref{constrained_real_isolated_critical_points},~\ref{Morse_on_transversal}, and~\ref{convergence_of_critical_points} to the special case $F=f$ and $V=S_=$. More specifically, Theorem~\ref{critcal_path_existence} uses the correspondence and the existence condition of Theorem~\ref{Morse_on_transversal}, and for Theorems~\ref{isolated_critical_points},~\ref{sufficient_conditions_existence}, and~\ref{limiting_behavior_of_critical_path} we apply Theorems~\ref{constrained_real_isolated_critical_points},~\ref{Morse_on_transversal}, and~\ref{convergence_of_critical_points} to $F=f$ and $V=S_=$, respectively. In the second part of Theorem~\ref{limiting_behavior_of_critical_path}, we quantify the worst-case convergence rate of a critical path, as a semi-algebraic function (Proposition~\ref{critical_path_semi-algebraic_proof}). Although the proof of Proposition~\ref{critical_path_semi-algebraic_proof} (based on cell decomposition of semi-algebraic sets (see Definition~\ref{def:cad})) remains applicable, we instead apply the Parameterized Bounded Algebraic Sampling~\cite[Algorithm~12.18]{BPR06} to $V_{\mu}$ (see~\eqref{critical_path_algebraic}), and then we utilize the Quantifier Elimination Theorem (Theorem~\ref{14:the:tqe}) and the Newton-Puiseux Theorem (Theorem~\ref{Newton_Puiseux_Thm}). This approach is standard in algorithmic real algebraic geometry~\cite{BPR06} and has been previously employed in~\cite{BM22,BM24}
for describing the central path of \SDO. A similar technique will be used in the proof of Theorem~\ref{analytic_reparam_semi-algebraic} to show the existence of a reparametrization for a critical path that recovers smoothness at the limit point.

\vspace{5px}
\noindent
In the final part of the paper, we extend the existence, convergence, and smoothness results for critical paths to a class of \NO~problems involving definable functions in an o-minimal structure $\mathcal{S}(\mathbb{R})$. Theorem~\ref{sufficient_conditions_existence_definable} extends Theorem~\ref{sufficient_conditions_existence} by incorporating o-minimal analogs of Lemma~\ref{equivalent_systems},  Theorem~\ref{Morse_on_transversal}, and Theorem~\ref{critcal_path_existence}, and the Definable Implicit Function Theorem~\cite[Page~113]{Dries2}. To prove Theorem~\ref{limiting_behavior_of_definable_critical_path} - an extension of Theorem~\ref{limiting_behavior_of_critical_path}- we first establish the o-minimal analogs of Propositions~\ref{inverse_semi-alg_func},  \ref{convergence_of_varieties}, and ~\ref{convergence_of_tangent_spaces_extended} (presented as Propositions~\ref{inverse_definable_func}-\ref{convergence_of_tangent_spaces_definable}). These results are then combined with the Definable Sard Theorem~\cite[Theorem~2.7]{Wilkie2} in the proof of Theorem~\ref{convergence_of_critical_points}. The quantitative aspect of Theorem~\ref{limiting_behavior_of_definable_critical_path} relies on the H\"older inequality~\cite{Dries2} and a Puiseux type expansion for globally sub-analytic functions~\cite[Lemma~2.6]{Kur88}. Finally, Theorem~\ref{thm:analyticity_definable_func} extends the smoothness result of Theorem~\ref{analytic_reparam_semi-algebraic} to polynomially bounded o-minimal structures. The proof leverages the definability of a critical path and applies a growth dichotomy result for definable functions in $\mathcal{S}(\mathbb{R})$~\cite[Page~258]{Miller94(b)}. Specifically, we employ the Puiseux type expansion from~\cite[Lemma~2.6]{Kur88} to establish the existence of an analytic reparametrization when $f,g_i$ are globally sub-analytic functions. 

\section{Prior and related work}\label{classical_results}
 
\subsection{Central path}\label{central_path_background}
Existence, convergence, and analyticity of central paths have been extensively studied for variants of convex and non-convex optimization problems (see e.g.,~\cite{AM91,FM90,GP02,Guler93,Gu94,HKR02,KMNY91,MP98,MZ98}). In this section, we survey some classical results on the theory of central paths in \NO~and \SDO~which are related to the problems discussed in this paper. 

\subsubsection{Central path for \NO}
Unlike the central path of \LO~and~\SDO, there are more complications with the existence of a central path for~\PO, and a \NO~problem in general (see~\cite{FM90,FGW02,WO2002}). The main challenge stems from the non-convexity of~\eqref{poly_optim} which introduces the possibility of local optimal solutions or saddle points of the log-barrier function. More importantly, a local minimizer of the log-barrier function may not even exist. These scenarios do not arise in~\LO~and~\SDO~due to their inherent convexity.   

\begin{example}\label{non_existence}
Consider the minimization of $x_1x_2^2$ over the nonnegative orthant as follows
\begin{align*}
\inf_x \{x_1x_2^2 \mid x_1,x_2 \ge 0\}.  
\end{align*}
Then the first-order optimality for the log-barrier function leads to
\begin{equation}\label{no_local_minimizer}
\begin{aligned}
\begin{cases} x_1x_2^2 = \mu,\\
        x_1x_2^2 = \mu / 2,
        \end{cases}
\end{aligned}
\end{equation}
which has no solution for positive $\mu$.  
\end{example}
\noindent
Notice that the optimal set of~\eqref{no_local_minimizer} is non-empty and unbounded. However, if we include some boundedness assumptions on a local optimal set of~\eqref{no_local_minimizer}, then there are sufficient conditions for the existence of local minimizers of the log-barrier function~\eqref{unconstrained_semi-algebraic_optim}, when $\mu$ is sufficiently small. See~\cite{FM90,FGW02} for the proofs.
\begin{proposition}[Theorem~3.10 in~\cite{FGW02}] \label{existence_of_central_path}
Suppose that (a) the algebraic interior $S_>$ is non-empty, (b) corresponding to a local optimal value $v^*$ the set
\begin{align*}
 A:=\{x \in S \mid f(x) = v^*\}   
\end{align*}
has a non-empty isolated compact subset $A^*$, and (c) $A^* \cap \overline{S_>} \neq \emptyset$. Then given a sequence $\{\mu_k\} \downarrow 0$, there exist a compact set $U$ and a sequence $\{y_k\}$ such that $A^* \subset \interior(U)$, $y_k$ is the minimizer of the log-barrier function~\eqref{unconstrained_semi-algebraic_optim} on $S_> \cap \interior(U)$ when $k$ is sufficiently large, and the sequence $\{y_k\}$ has an accumulation point in $A^*$.    
\end{proposition}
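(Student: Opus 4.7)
The plan is to choose the compact set $U$ small enough to isolate $A^*$ from the rest of the critical behavior of $f|_S$, produce $y_k$ as an interior minimizer of the log-barrier function
\begin{equation*}
F_{\mu_k}(x) := f(x) - \mu_k \sum_{i=1}^r \log g_i(x)
\end{equation*}
over $S_> \cap \interior(U)$ for sufficiently small $\mu_k$, and then use a well-chosen test point $\hat{x}$ near $A^* \cap \overline{S_>}$ (available by hypothesis~(c)) to force the sequence $\{y_k\}$ to accumulate inside $A^*$.

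First, exploiting that $A^*$ is a compact isolated subset of $A = \{x \in S \mid f(x) = v^*\}$, I would pick a compact $U$ with $A^* \subset \interior(U)$ and a smaller open $W \supset A^*$ with $\overline{W} \subset \interior(U)$, so that
\begin{equation*}
\inf_{x \in S \cap (U \setminus W)} f(x) \geq v^* + 2\delta
\end{equation*}
for some $\delta > 0$; this follows from compactness of $A^*$, continuity of $f$, and the fact that $v^*$ is a strict local value on $A^*$ isolated from the remainder of $A$. Next, fix $\hat{p} \in A^* \cap \overline{S_>}$ and, for each $\tau > 0$, pick $\hat{x}_\tau \in S_>$ with $\|\hat{x}_\tau - \hat{p}\| < \tau$; then $f(\hat{x}_\tau) \to v^*$ as $\tau \to 0$, and for fixed $\hat{x}_\tau$ we have $F_{\mu_k}(\hat{x}_\tau) \to f(\hat{x}_\tau)$ as $\mu_k \to 0$. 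The sublevel set $\{F_{\mu_k} \leq F_{\mu_k}(\hat{x}_\tau)\} \cap S_> \cap U$ is closed, bounded, and stays at positive distance from $S_=$ (because $F_{\mu_k} \to +\infty$ there), hence compact, which yields an attained minimizer $y_k$ of $F_{\mu_k}$ over $S_> \cap U$. To force $y_k \in \interior(U)$, observe that $-\mu_k \log g_i$ is $o(1)$ uniformly on the portion of $\partial U \cap \overline{S_>}$ at positive distance from $S_=$, while near $S_= \cap \partial U$ the barrier term is nonnegative; combining this with $f \geq v^* + 2\delta$ on $\partial U \cap S$ shows that $F_{\mu_k}|_{\partial U \cap \overline{S_>}} > F_{\mu_k}(\hat{x}_\tau)$ for sufficiently small $\mu_k$ and $\tau$.

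Passing to a convergent subsequence $y_{k_j} \to y^* \in U$ and using that $-\mu_{k_j} \log g_i(y_{k_j})$ is nonnegative whenever $g_i(y_{k_j}) \leq 1$ (and $o(1)$ otherwise), the inequality $F_{\mu_{k_j}}(y_{k_j}) \leq F_{\mu_{k_j}}(\hat{x}_\tau)$ passes to the limit to give $f(y^*) \leq f(\hat{x}_\tau)$; letting $\tau \to 0$ forces $f(y^*) \leq v^*$, so $y^* \in A \cap U = A^*$ by the isolated property. The main obstacle is the interior-minimum step: unlike the algebraic boundary $S_= \cap U$, which automatically repels minimizers via the barrier's blow-up, the free boundary $\partial U \setminus S_=$ could in principle host the $F_{\mu_k}$-infimum if the negative contribution $-\mu_k \sum \log g_i$ there were large. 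Overcoming this requires delicately exploiting the uniform bound on $|\log g_i|$ on compact subsets of $S_>$ bounded away from $S_=$, together with the strict margin $f \geq v^* + 2\delta$ on $\partial U \cap S$, to guarantee that for all sufficiently small $\mu_k$ the infimum is realized inside $W$.
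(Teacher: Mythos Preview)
The paper does not prove this proposition: it is quoted verbatim as Theorem~3.10 of \cite{FGW02}, with the surrounding text directing the reader to \cite{FM90,FGW02} for the proofs. Your sketch is essentially the standard barrier--compactness argument from that literature and is sound: isolate $A^*$ by a compact $U$ with a strict margin $f\ge v^*+2\delta$ on $S\cap(U\setminus W)$, use a fixed test point $\hat x_\tau\in S_>$ near $A^*\cap\overline{S_>}$ to produce compact sublevel sets and hence minimizers $y_k$, and compare values on $\partial U$ to force $y_k\in\interior(U)$.

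One small point to tighten: in the last step you obtain $f(y^*)\le v^*$, but to conclude $y^*\in A$ you also need $f(y^*)\ge v^*$. This follows because $y^*\in S\cap U$ (continuity gives $g_i(y^*)\ge 0$) and $v^*$ is a local \emph{minimum} value of $f|_S$ on the neighborhood $U$ of $A^*$; it is worth making that use of the local-optimality hypothesis explicit. With that, $f(y^*)=v^*$ and the isolation $A\cap U=A^*$ (which should be built into the choice of $U$) gives $y^*\in A^*$. Also note that the threshold ``$k$ large'' in $F_{\mu_k}(\hat x_\tau)<v^*+\delta$ depends on the fixed $\hat x_\tau$, but since $y_k$ itself does not depend on $\tau$, the final ``let $\tau\to 0$'' comparison is legitimate.
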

\noindent
In practice, verifying conditions of Proposition~\ref{existence_of_central_path} 
 might be challenging. More concrete criteria for the existence of local minimizers are provided in~\cite{WO2002}, which involve KKT points (see Section~\ref{sec:KKT_PO}) that satisfy second-order sufficiency conditions and Mangasarian-Fromovitz constraint qualification~\cite[Theorem~15]{WO2002}. However, we should clarify that neither these conditions nor the conditions in Proposition~\ref{existence_of_central_path} ensure the existence of a central path, as demonstrated in Example~\ref{Morse_non_compact}. Furthermore, as pointed out in~\cite{FGW02}, even if a central path exists (which could be any sequence ${y_k}$ of minimizers within $\interior(U)$), Proposition~\ref{existence_of_central_path} does not claim the limit point to be a local minimum of~\eqref{poly_optim} (see Example~\ref{convergence_to_algebraic_boundary}).
\begin{example}\label{Morse_non_compact}
Consider the \PO~problem
\begin{align*}
\inf_x \{x_1^2 + x_2^2 \mid x_1^2 + x_2^2 \ge 1\}
\end{align*}
whose optimal set $\{(x_1,x_2) \in \mathbb{R}^2 \mid x_1^2 + x_2^2 = 1\}$ satisfies the conditions of Proposition~\ref{existence_of_central_path}. Then the first-order optimality conditions for the log-barrier function lead to $x_1^2+x_2^2 = \mu + 1$ which has infinitely may solutions for each $\mu > 0$. Therefore, there is no central path converging to an optimal solution (see Fig.~\ref{Infinitely_Many_Paths}). 
 \begin{figure}
 \centering
  \begin{minipage}[c]{0.5\textwidth}
\includegraphics[height=2.0in]{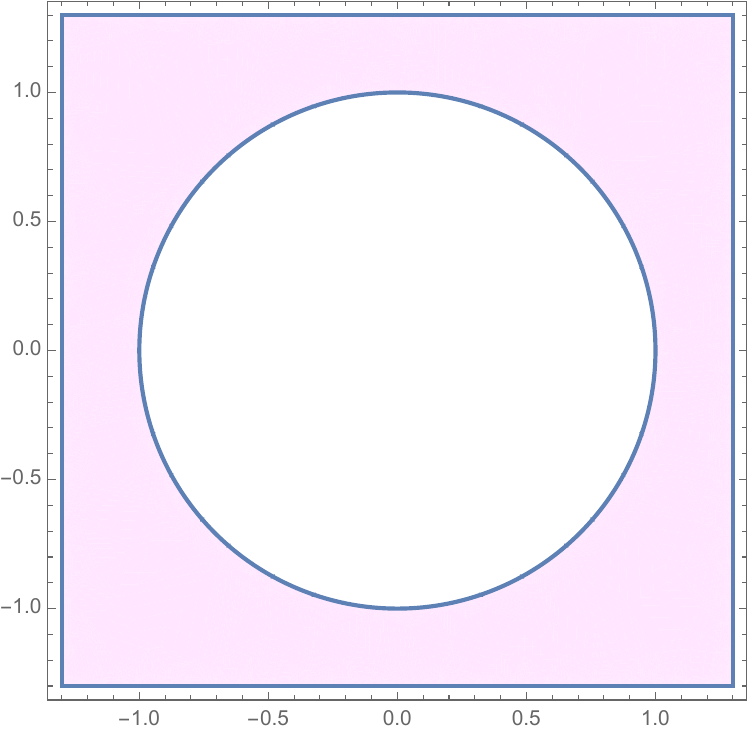}
\end{minipage}\hfill
 \begin{minipage}[c]{0.5\textwidth}
 \includegraphics[height=2.0in]{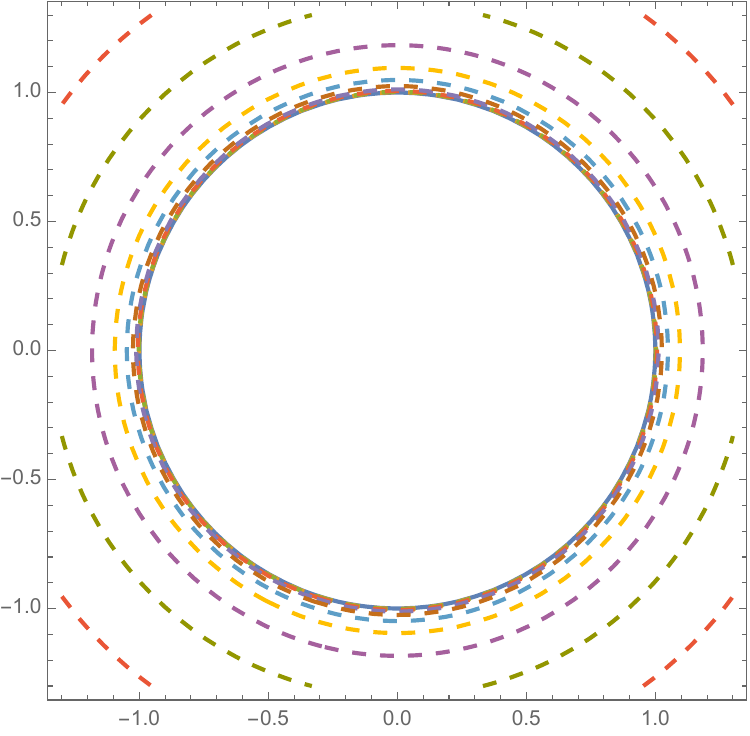}
 \end{minipage}
\caption{There are infinitely many non-isolated paths converging to the optimal set $\{(x_1,x_2) \in \mathbb{R}^2 \mid x_1^2 + x_2^2 = 1\}$.}
  \label{Infinitely_Many_Paths}
\end{figure}
\end{example}
\begin{remark}[Finiteness condition of the central path]\label{finiteness_condition}
It is important to emphasize the role of the log-barrier function and the central path in the theory of IPMs. The search directions in primal IPMs are obtained by approximately solving the log-barrier function%
\footnote{The idea of using log-barrier functions originated with Frisch~\cite{Frisch} in the context of~\LO~problems and was later developed by Fiacco and McCormick~\cite{FM90} for \NO.}(see~\cite[Section~19.5]{NW06}). The primal-dual IPMs operate within a neighborhood of the central path, where the search directions are obtained from a first-order Taylor approximation of~\eqref{primal_dual_central_path_form} (or equivalently, first-order derivatives of the central path (see~\cite[Page~569]{NW06})). Therefore, in both cases, well-defined search directions require the existence of local minimizers for sufficiently small $\mu>0$. In the latter case, the absence of the central path implies non-isolated local minimizers of the log-barrier function and a singular Jacobian of~\eqref{primal_dual_central_path_form}, which may render an ill-defined search direction for a primal-dual IPM.  
\end{remark}
\noindent
In~\cite[Theorems~12-15]{FM90}, the existence of a central path and its smoothness at $\mu = 0$ are guaranteed under linear independence constraint qualification, second-order sufficiency conditions, and the strict complementarity condition (see Section~\ref{sec:KKT_PO}). Alternative conditions are presented in~\cite[Theorems~8 and~12]{WO2002} which replace linear independence constraint qualification by Mangasarian-Fromovitz constraint qualification~\cite{NW06}. All these results involve the existence of a KKT point and ensure the non-singularity of the Jacobian of the KKT conditions or the Hessian of the log-barrier function, allowing the direct application of the Implicit Function Theorem. However, it is important to note that the existence of a KKT point is not required for the existence of a central path. For instance, consider the \PO~problem
 \begin{align}\label{unconstrained_cusp}
 \inf_x\{x_1 \mid x_1^3 - x_2^2  \ge 0\}.    
 \end{align}
One can easily check that the realization of the KKT system
 \begin{align*}
\{((x_1,x_2),u) \in \mathbb{R}^2 \times \mathbb{R} \mid 1-3x_1^2u = 0, \ 2x_2u = 0,\
     (x_2^2-x_1^3)u = 0,\
     x_2^2-x_1^3 \le 0\}
 \end{align*}
is empty, while there exists a unique central path $(x_1(\mu),x_2(\mu))=(3\mu,0)$. This is in sharp contrast with a conic optimization problem with finite optimal value, where the existence of the ``primal central path" implies convergence to an optimal KKT point~\cite[Page~74]{Ren01}.

\subsubsection{Central path for~\SDO}\label{central_path_SDO}
A special case of~\eqref{poly_optim} which also serves as a key computational tool for \PO~is \SDO~(see e.g.,~\cite{L15,L09}), given by
\begin{align*}
v^*_p:=\inf_{X} \big \{\langle C, X\rangle  \mid \langle A_{i}, X\rangle =b_i, \ \  i=1,\ldots, m, \ X \succeq 0 \big \},
\end{align*}
where
$A_1,\ldots,A_m,C$ are symmetric $n \times n$ matrices, $b \in \mathbb{R}^m$, $X \succeq 0$ means positive semi-definite, and $\langle X, Y\rangle:=\trace(XY)$ is the trace of $XY$. We recall that the graph of the central path of SDO~\cite{BM22} is defined as the semi-algebraic set  
\begin{align*}
\bigg\{(\mu,X,y,S)\mid \langle A_i,X \rangle = b_i, \ \ i=1,\ldots,m, \sum_{i=1}^m y_i A_i + S = C,\ XS = \mu I,\ X,S \succ 0\bigg\}.
\end{align*}
The existence of the central path for \SDO~is guaranteed under the linear independence of $A_i$ and the existence of a strictly feasible point, i.e., an $(X,y,S)$ with $X,S \succ 0$ such that $\langle A_i,X \rangle  = b_i$ and $ A^T y + S = C$~\cite{Kl02}. These conditions together with semi-algebraicity of the central path ensure the convergence of the central path to a primal-dual optimal solution $\big(X^{**},y^{**},S^{**}\big)$~\cite{BM22,GS98,HKR02}.  

\vspace{5px}
\noindent
It is known that the central path of \SDO~is unique, and it is analytic~\cite{BM22,Kl02}. Further, the analyticity can be extended to $\mu = 0$ when $X^{**}+S^{**} \succ 0$ (known as the strict complementarity condition for~\SDO), as shown in~\cite[Theorem~1]{H02}. In such cases, the central path converges to the limit point $\big(X^{**},y^{**},S^{**}\big)$ at the rate of 1~\cite[Theorem~3.5]{LSZ98}: 
\begin{align}\label{Lips_Bounds}
 \|X(\mu)-X^{**}\| = O(\mu) \ \ \text{and} \  \ \|S(\mu)-S^{**}\| = O(\mu).
\end{align}
However, both the analyticity at $\mu = 0$ and the Lipschitzian bounds~\eqref{Lips_Bounds} fail to hold without the strict complementarity condition~\cite{GS98}. In~\cite{BM22}, the authors proved a more general bound 
\begin{align*}
\|X({\mu})-X^{**}\|= O(\mu^{1/\gamma}) \ \ \text{and} \ \  \|S({\mu})-S^{**}\|=O(\mu^{1/\gamma}), \quad \gamma=2^{O(m+n^2)}
\end{align*}
that is independent of the strict complementarity condition. The authors showed~\cite{BM24} that a reparametrization $\mu \mapsto \mu^{\rho}$ with optimal $\rho=2^{O(m^2+n^2m + n^4)}$ recovers the analyticity of the central path at $\mu = 0$.

\vspace{5px}
\noindent
A slightly more general version of the central path was defined in~\cite{GP02} for a convex \SDO~problem as follows 
\begin{align}\label{CP_Convex_SDO}
\begin{cases}
    \cfrac{\partial f}{\partial y_i}(y) = \trace\bigg(\cfrac{\partial G}{\partial y_i}(y)S\bigg), \ \ i=1,\ldots,n,\\ 
    G(y)S = \mu I_n,\\
    G(y) \succ 0, S \succ 0,
\end{cases}
\end{align}
where $f:\mathbb{R}^n \to \mathbb{R}$ is an analytic convex function, and $G:\mathbb{R}^n \to \mathbb{S}^n$ is an analytic concave mapping. Assuming the existence of a strictly feasible point and the linear independence of the partial derivatives of $G$, the authors leveraged the underlying o-minimal structure (see Section~\ref{o-minimal_optimization}) to prove the convergence of the central path~\cite[Theorem~9]{GP02}. 

\vspace{5px}
\noindent
Before concluding this section, it is worth highlighting one variant of central path that specializes the central path of~\eqref{CP_Convex_SDO}. In~\cite{MZ98}, the convergence and analyticity of ``weighted" central paths were studied for a convex optimization problem with the same data as in~\cite{GP02}, where $G:\mathbb{R}^n \to \mathbb{R}^m$ is an analytic concave mapping. 

\subsection{O-minimal geometry and optimization}\label{o-minimal_optimization}
Significant advancements have been made at the intersection of optimization, variational analysis, and o-minimal structures (see e.g.,~\cite{ABS13,DDKL20,GP02,Io08}). The definable sets and functions (the analogs of semi-algebraic sets and functions) in an o-minimal structure share many geometric and topological properties with 
semi-algebraic sets~\cite{Michel2,Shiota97,Dries,Dries2}, and they 
arise in practice (e.g., in neural networks applications). One early application of o-minimality and definable sets and maps in optimization dates back to the work of Drummond and Peterzil~\cite{GP02} on the convergence of the central path of a convex \SDO~with analytic objective function and constraints (which in this case, the central path is definable in $\mathbb{R}_{\mathrm{an}}$ (see Section~\ref{central_path_SDO})). Their proof relies on the following result from the o-minimal geometry.
\begin{theorem}[4.1 in~\cite{Dries2}]\label{Monoton_Thm}
Given a definable function $f:(a,b) \to \mathbb{R}$, the interval $(a,b)$ can be partitioned using the midpoints $a_1 < a_2 < \ldots< a_n$ ($a_0:=a$ and $a_{n+1}:=b$) such that $f$ is either constant or $\mathcal{C}^{k}$-smooth, for some integer $k > 0$, and strictly monotone on each sub-interval $(a_i,a_{i+1})$.   
\end{theorem}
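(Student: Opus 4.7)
The plan is to prove the theorem in two stages, corresponding to its two conclusions. First I would partition $(a,b)$ into finitely many open sub-intervals on each of which $f$ is continuous and either constant or strictly monotone; second I would refine this partition to upgrade continuity to $\mathcal{C}^k$-smoothness.

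For the first stage, I would introduce three definable subsets of $(a,b)$: the set $E_c$ of points at which $f$ is constant on some neighborhood, the set $E_+$ of points at which $f$ is strictly increasing on some neighborhood, and $E_-$ defined analogously for strictly decreasing. Each is cut out by a first-order formula in $\mathcal{S}(\mathbb{R})$ and is therefore definable, so by o-minimality (every definable subset of $\mathbb{R}$ is a finite union of points and open intervals) each is a finite union of open intervals together with finitely many isolated points. The essential step is to argue that the exceptional set $B := (a,b)\setminus(E_c\cup E_+\cup E_-)$ is finite. If $B$ contained an open interval $I$, then $f|_I$ would be nowhere locally monotone and nowhere locally constant; an oscillation argument then forces some generic horizontal level $y$ to have infinite fibre $\{x\in I : f(x)=y\}$, contradicting o-minimality, which demands this definable $0$-dimensional set be finite. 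Combining the finiteness of $B$ with the interval-and-point decompositions of $E_c,E_+,E_-$ and discarding the finite set of endpoints yields the desired partition into intervals of constancy or strict monotonicity; continuity on each sub-interval follows from the monotonicity together with o-minimality applied to the set of discontinuity points.

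For the second stage, I would apply stage one to the definable two-variable function $g(x,h) = (f(x+h)-f(x))/h$ on a suitable open set, and use its monotone decomposition in the $h$-variable to conclude that the one-sided limits $f'(x)=\lim_{h\to 0} g(x,h)$ exist at every point of some cofinite subset of $(a,b)$. Since $f'$ is itself definable wherever it exists, I would iterate: apply the monotone decomposition to $f'$ to produce finitely many sub-intervals on which $f''$ exists and is continuous, and induct on $k$. After finitely many refinements (one per order of derivative up to $k$), the claim follows. The main obstacle is the finiteness of $B$ in stage one; this requires showing that a nowhere-locally-monotone definable function on an interval must induce a definable family of fibres of unbounded combinatorial complexity, which is the technical heart of the Monotonicity Theorem and ultimately rests on the definable choice/cell decomposition apparatus of $\mathcal{S}(\mathbb{R})$.
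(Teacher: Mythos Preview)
The paper does not supply a proof of this statement: it is quoted verbatim as Theorem~4.1 from \cite{Dries2} and used as a black box throughout (in Propositions~\ref{inverse_semi-alg_func}, \ref{critical_path_semi-algebraic_proof}, \ref{convergence_definable}, and \ref{inverse_definable_func}). There is therefore nothing in the paper to compare your proposal against.

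That said, your outline is essentially the standard textbook argument (as in van den Dries, \emph{Tame Topology and O-minimal Structures}, Chapter~3). A couple of remarks on the details. In stage one, your oscillation argument for the finiteness of $B$ is the right idea but is usually sharpened as follows: one first shows (via o-minimality of fibres) that for any definable $f$ on an interval and any $c$ in the interval, the one-sided limits $\lim_{x\to c^{\pm}} f(x)$ exist in $\mathbb{R}\cup\{\pm\infty\}$; from this, local monotonicity/constancy near every point of a cofinite set follows directly, bypassing the need to produce a ``generic level'' with infinite fibre. Your version works too, but making the infinite-fibre step precise requires a definable-choice argument that is a bit delicate. In stage two, the iteration you describe is correct; the only thing to be careful about is that after applying the stage-one decomposition to $g(x,h)$ you need a uniform (in $x$) neighborhood on which the difference quotient is monotone in $h$, which is exactly what the cell-decomposition machinery provides. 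With those caveats, your plan would succeed.
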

\noindent
Very recently, following the first author's work on combinatorial and topological complexity of definable sets~\cite{Basu9}, the authors proved~\cite[Theorem~2.20]{BM24b} a version of {\L}ojasiewicz inequality in polynomially bounded o-minimal structures (see Section~\ref{sec:o-minimal}), as an abstraction of the notion of independence of the {\L}ojasiewicz exponent from the combinatorial parameters. They proved the existence of a common {\L}ojasiewicz exponent for certain combinatorially defined infinite (but not necessarily definable) families of pairs of functions.

\section{Proof of the main results}
Before proving the main results, we briefly review concepts from algorithmic real algebraic geometry, Morse theory, o-minimal geometry, and the theory of real analytic functions. The reader is referred to~\cite{BPR06,GM,M63,Michel2,Dries,KP02,W78} for details.

\subsection{Algorithmic real algebraic geometry}
\label{background_RAG}
In this section, we recall the notions of Thom encoding, univariate representations, and quantifier elimination from algorithmic real algebraic geometry. See~\cite[Chapters~2,12, and~14]{BPR06} for more details.

\vspace{5px}
\noindent
\subsubsection{Real closed fields} Although in the optimization literature, the primary focus of \PO~is on the field of real numbers, here we will need to consider $\R$, as a real closed field, for the proofs of the first part and also the non-Archimedean real closed extensions of $\R$ -- namely, the field of algebraic Puiseux series with coefficients in $\R$ -- and their applications to the existence, convergence, and smoothness of critical paths of~\PO. Recall from~\cite[Chapter 2]{BPR06} that a real closed field $\R$ is an ordered field, where every positive element is a square, and every polynomial of odd degree has a root in $\R$.

\subsubsection{Univariate representations and Thom encodings}\label{sign_cond_Thom_encod}
Let $\R$ be a real closed field. An \textit{$\ell$-univariate representation} is $(\ell+2)$-tuple of polynomials $u=\big(f, g_{0},\ldots,g_{\ell}\big) \in \R[T]^{\ell+2}$, where $f$ and $g_0$ are coprime. A \textit{real $\ell$-univariate representation} of an $x \in \R^{\ell}$ is a pair $(u,\sigma)$ of an $\ell$-univariate representation $u$ and a \textit{Thom encoding} $\sigma$ of a real root $t_{\sigma}$ of $f$ such that
\begin{align*}
x=\bigg(\frac{g_1(t_{\sigma})}{g_0(t_{\sigma})},\ldots,\frac{g_{\ell}(t_{\sigma})}{g_0(t_{\sigma})}\bigg) \in \R^{\ell}.
\end{align*}
Let $\Der(f):= \big\{f, f^{(1)},f^{(2)},\ldots,f^{(\deg(f))}\big\}$ denote a list of polynomials in which $f^{(i)}$ for $i > 0$ is the formal $i^{\mathrm{th}}$-order derivative of $f$ and $\deg(f)$ stands for the degree of $f$. The Thom encoding $\sigma$ of $t_{\sigma}$ is a sign condition on $\Der(f)$ such that $\sigma(f)=0$. By Thom's Lemma~\cite[Proposition~2.27]{BPR06}, every root of a polynomial
$P \in \R[X]$ is uniquely characterized by a sign condition on $\Der(P)$. 

\begin{proposition} [Thom's Lemma] 
\label{prop:Thom}
Let $P \subset \R[X]$ be a univariate polynomial and $\sigma \in \{-1,0,1\}^{\Der(P)}$. Then the realization of the sign condition $\sigma$ is either empty, a point, or an open interval.
\end{proposition}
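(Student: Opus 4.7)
My plan is to prove Thom's Lemma by induction on $n = \deg(P)$, exploiting the fact that $\Der(P) = \{P\} \cup \Der(P')$ so that the induction hypothesis can be applied directly to $P'$.

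For the base case $n = 0$, $P$ is a nonzero constant $c$ and $\Der(P) = \{P\}$, so the realization of $\sigma$ is all of $\R = (-\infty,+\infty)$ if $\sigma(P) = \sign(c)$ and empty otherwise; since $\R$ counts as an open interval, this case is immediate.

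For the inductive step, let $\sigma'$ be the restriction of $\sigma$ to $\Der(P')$, and let $A \subset \R$ be its realization. By induction, $A$ is empty, a point, or an open interval. Since the realization of $\sigma$ is $A \cap \{x \in \R : \sign(P(x)) = \sigma(P)\}$, the cases $A = \emptyset$ and $A = \{x_0\}$ are trivial (the intersection is empty or a single point depending on whether $\sign(P(x_0))$ matches $\sigma(P)$). The interesting case is $A = (a,b)$. Here I would first observe that $\sigma(P') \neq 0$: if $\sigma(P') = 0$ then $P'$ would vanish identically on the open interval $A$, but $P'$ is a nonzero polynomial (since $\deg P \geq 1$) and hence has only finitely many roots in $\R$. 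Therefore $P'$ has constant nonzero sign on $A$, so $P$ is strictly monotone on $A$. Strict monotonicity together with the intermediate value theorem over the real closed field $\R$ then forces $\{x \in A : \sign(P(x)) = \sigma(P)\}$ to be empty, a single point (which can happen only when $\sigma(P) = 0$), or an open subinterval of $A$, completing the induction.

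The argument is essentially devoid of technical obstacles once the correct induction is chosen; the conceptual crux is that differentiation strips off one polynomial at a time and that monotonicity of $P$ on the realization of $\sigma|_{\Der(P')}$ controls exactly how refining by the sign of $P$ itself affects an open interval. The only subtlety is excluding $\sigma(P') = 0$ over an open interval during the inductive step, which follows from the elementary fact that a nonzero polynomial has only finitely many zeros in a real closed field.
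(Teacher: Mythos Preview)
The paper does not actually supply a proof of this proposition; it is stated with a reference to \cite[Proposition~2.27]{BPR06} and used as background. Your induction on $\deg(P)$ is correct and is exactly the standard argument given in that reference: peel off $P$ from $\Der(P)=\{P\}\cup\Der(P')$, apply the induction hypothesis to $P'$, and use strict monotonicity of $P$ on the resulting open interval (ruling out $\sigma(P')=0$ there because a nonzero polynomial has finitely many roots in a real closed field). There is nothing to add or correct.
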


\subsubsection{Quantifier elimination}
A classical result due to Tarski \cite{Tarski51} states that every quantified formula is equivalent modulo the theory of real closed fields to a quantifier-free formula. We will use a quantitative version of this theorem~\cite[Theorem~14.16]{BPR06} in the proofs of Theorems~\ref{limiting_behavior_of_critical_path}-\ref{analytic_reparam_semi-algebraic}.

\begin{theorem}
[Quantifier Elimination]\label{14:the:tqe}
Let $\mathcal{P} \subset \R[X_{[1]},\ldots, X_{[\omega]}, Y]_{\leq d}$ be a
finite set of $s$ polynomials, where $X_{[i]}$ is a block of $k_i$ variables, and
$Y$ is a block of $\ell$ variables.
Consider the quantified formula 
\[
\Phi (Y) = (Q_1 X_{[1]})\cdots (Q_\omega X_{[\omega]}) \Psi(X_{[1]},\ldots, X_{[\omega]}, Y)
\]
and $\Psi$ a $\mathcal{P}$-formula.
 Then there exists a quantifier-free formula 
  \[ \Psi (Y) = \bigvee_{i=1}^{I} \bigwedge_{j=1}^{J_{i}} \Big(
     \bigvee_{n=1}^{N_{ij}} \sign (P_{ijn} (Y))= \sigma_{ijn} \Big)  \]
    equivalent to $\Phi$, where $P_{ijn} (Y)$ are polynomials in the variables $Y$, $\sigma_{ijn} \in
  \{0,1, - 1\}$, and
\begin{align*}
\sign(P_{ijn}(Y))\!&:=\begin{cases} \ \ 0 \ \ &P_{ijn}(Y) = 0,\\ \ \ 1 \ \ &P_{ijn}(Y) > 0,\\-1  &P_{ijn}(Y) < 0.  \end{cases}
\end{align*}
Furthermore, we have  
  \begin{eqnarray*}
    I & \leq & s^{(k_{\omega} +1) \cdots (k_{1} +1) ( \ell +1)} d^{O
    (k_{\omega} ) \cdots O (k_{1} ) O ( \ell )} ,\\
    J_{i} & \leq & 
      s^{(k_{\omega} +1) \cdots (k_{1} +1)} d^{O (k_{\omega} ) \cdots O (k_{1}
      )}
    ,\\
    N_{ij} & \leq &
      d^{O (k_{\omega} ) \cdots O (k_{1} )}
  ,
  \end{eqnarray*}
  and the degrees of the polynomials $P_{ijk} (y)$ are bounded by $d^{O
  (k_{\omega} ) \cdots O (k_{1} )}$.
\end{theorem}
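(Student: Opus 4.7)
The plan is to follow the standard ``critical points'' approach to effective quantifier elimination from algorithmic real algebraic geometry, proceeding by induction on the number of quantifier blocks $\omega$ and handling each block via parametrized bounded algebraic sampling. The key point is that the complexity bounds multiply in the exponent block by block, which is precisely what produces the $(k_\omega+1)\cdots(k_1+1)(\ell+1)$ tower in the final estimate. I would first establish the single-block case, then iterate.

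For the base case $\omega = 1$, suppose (after possibly passing to $\neg \Phi$) that the quantifier is existential, so we need to eliminate $\exists X_{[1]}$ from $\Psi(X_{[1]}, Y)$ where $X_{[1]}$ is a block of $k_1$ variables. The core tool is the Parametrized Bounded Algebraic Sampling procedure (as in~\cite[Chapter~13]{BPR06}): I would first perturb and bound the variables using an infinitesimal deformation of the form $\sum X_{[1],j}^{2d} + \sum Y_j^{2d} \le 1/\bar\epsilon$ so that every fiber of the parametrized constructible set over $Y$ meets the bounded hypersurface in isolated critical points of a suitable projection. Then for each sign condition $\tau \in \{-1,0,1\}^{\mathcal{P}}$, one computes a finite family of parametrized real univariate representations $u_\tau(Y) = (f_\tau, g_{\tau,0}, \ldots, g_{\tau,k_1})$ with coefficients in $\R[Y]$, together with Thom encodings $\sigma$ of the roots of $f_\tau$, whose specializations at each $y \in \R^\ell$ meet every semi-algebraically connected component of every realizable sign condition in the fiber over $y$. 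The formula $\Phi(Y)$ is then equivalent to the disjunction, over $\tau$ and over $(u_\tau, \sigma)$, of the conjunction asserting that $\sigma$ is a Thom encoding of a real root of $f_\tau(Y,\cdot)$ and that the resulting sample point satisfies $\Psi$, which by Thom's Lemma (Proposition~\ref{prop:Thom}) reduces to a Boolean combination of sign conditions on a list of polynomials in $Y$ (the coefficients of $f_\tau$, its derivatives, and the polynomials obtained by substituting the rational parametrization into $\mathcal{P}$).

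The complexity bookkeeping at this step gives $I, J_i, N_{ij} \le s^{k_1+1}d^{O(k_1)}$ with degrees in $Y$ bounded by $d^{O(k_1)}$, because the parametrized sampling produces $s^{k_1} d^{O(k_1)}$ univariate representations whose coefficients have degrees $d^{O(k_1)}$ in $Y$, and the sign conditions on $\Psi$ multiply the count by an additional factor of $s$. For the inductive step, assume the theorem holds for $\omega - 1$ blocks. I would apply the base case to the innermost block to eliminate $Q_\omega X_{[\omega]}$, producing an equivalent quantifier-free formula in the variables $(X_{[1]}, \ldots, X_{[\omega-1]}, Y)$ involving at most $s_\omega := s^{k_\omega+1} d^{O(k_\omega)}$ polynomials of degrees at most $d_\omega := d^{O(k_\omega)}$, then apply the inductive hypothesis to the remaining $\omega - 1$ blocks with these updated parameters. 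Composing the bounds multiplicatively yields the claimed formulas for $I$, $J_i$, $N_{ij}$, and the degrees, since $(s_\omega)^{(k_{\omega-1}+1)\cdots(k_1+1)(\ell+1)} d_\omega^{O(k_{\omega-1})\cdots O(k_1) O(\ell)} = s^{(k_\omega+1)\cdots(k_1+1)(\ell+1)} d^{O(k_\omega)\cdots O(k_1) O(\ell)}$.

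The hard part will be the parametrized sampling step: ensuring that the univariate representations and Thom encodings are genuinely uniform in the parameter $Y$, i.e., that the coefficients of $f_\tau$ and $g_{\tau,j}$ are polynomials in $Y$ whose degrees can be controlled, and that the Thom encoding of a root of $f_\tau(Y,\cdot)$ persists as a well-defined specification of a real root on each stratum of a partition of $\R^\ell$ induced by sign conditions on the derivatives of $f_\tau$ with respect to $T$ and on the resultants/discriminants in $Y$. This requires the full machinery of the critical-point method on a bounded non-singular algebraic set obtained by infinitesimal deformation, together with a careful complexity analysis tracking how the number of critical points and the degrees of the associated univariate representations depend on the number of variables and the degrees of the input polynomials. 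Once this parametrized sampling is in place, the resulting disjunction-of-conjunctions form of $\Psi(Y)$ falls out of Thom's Lemma applied fiberwise, and the bounds follow by the multiplicative recursion sketched above.
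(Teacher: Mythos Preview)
The paper does not prove this theorem; it merely quotes it as \cite[Theorem~14.16]{BPR06} and uses it as a black box in the proofs of Theorems~\ref{limiting_behavior_of_critical_path} and~\ref{analytic_reparam_semi-algebraic}. Your sketch is essentially the strategy carried out in \cite[Chapter~14]{BPR06} (block-by-block elimination via parametrized sampling, Thom encodings, and the Block Elimination/Sign Determination machinery), so there is nothing to compare against in this paper itself.
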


\subsubsection{Newton-Puiseux theorem}\label{sec:Puiseux_series}
Puiseux series appear naturally in algorithmic real algebraic geometry for the description of roots
of a branch of a  real algebraic curve $F(X,Y) = 0$. In this paper, they serve a major role in the extension of solutions of $V$ to $\R\langle \xi \rangle$ in Theorems~\ref{bounded_fibers}-\ref{bounded_connected_component}, and also in the smoothness properties of a critical path, which is a semi-algebraic function.

\vspace{5px}
\noindent
A \textit{Puiseux series} with coefficients in $\R$ (resp.  $\C$)
is an infinite series of the form $\sum_{i = r}^{\infty} c_i \varepsilon^{i/q}$ where $c_i \in \R$ (resp. $c_i \in \C)$, $i,r \in \mathbb{Z}$, and $q$ is a positive integer, so-called the \textit{ramification index} of the Puiseux series. 

\vspace{5px}
\noindent
The field of Puiseux series in $\varepsilon$ with coefficients in $\R$ (resp. $\C$) is  denoted by 
$\R\langle \langle \varepsilon \rangle\rangle$ (resp. $\C\langle \langle \varepsilon \rangle\rangle$). It is a classical fact (see~\cite[Theorems~2.91 and~2.92]{BPR06}) that the field 
$\R\langle \langle \varepsilon \rangle\rangle$ 
(resp. $\C\langle \langle \varepsilon \rangle\rangle$)
is real closed (resp. algebraically closed). The subfield of $\R\langle \langle \varepsilon \rangle\rangle$ of elements which are algebraic over $\R(\varepsilon)$ is called the field of algebraic Puiseux series with coefficients in $\R$, 
and is denoted by $\R \langle \varepsilon \rangle$. 
It is the real closure of the ordered field $\R(\varepsilon)$ in which $\varepsilon$ is positive but smaller than every positive element of $\R$.
Alternatively, $\R \langle \varepsilon \rangle$ is the field of germs of semi-algebraic functions to the right of the origin, i.e., a continuous semi-algebraic
function $(0,t_0) \rightarrow \R$ can be represented by a Puiseux series in
$\R \langle \varepsilon \rangle$~\cite[Theorem~3.14]{BPR06}.

\vspace{5px}
\noindent
We let $\order(\cdot)$ denote the order of a Puiseux series, and it is defined as $\order(\sum_{i = r}^{\infty} c_i \varepsilon^{i/q})=r/q$ if $c_{r} \neq 0$ ( see~\cite[Section~2.6]{BPR06}). We denote by $\R\langle\varepsilon\rangle_b$ the subring of $\R\langle\varepsilon\rangle$ of elements with are bounded over $\R$
(i.e. all Puiseux series in $\R\langle\varepsilon \rangle$ whose orders are
non-negative). We denote by $\lim_{\varepsilon}:\R\langle\varepsilon\rangle_b \rightarrow \R$ 
which maps a bounded Puiseux series $\sum_{i = 0}^{\infty} c_i\varepsilon^{i/q}$ to $c_0$
(i.e. to the value at $0$ of the continuous extension of the corresponding curve). In terms of germs, the elements of $\R\langle\varepsilon\rangle_b$
are represented by semi-algebraic functions $(0,t_0) \rightarrow \R$ which can be extended continuously to $0$, and $\lim_\varepsilon$ maps such an element to the value
at $0$ of the continuous extension.

\vspace{5px}
\noindent
Finally, we state the Newton-Puiseux theorem~\cite[Theorem~3.1 of Chapter IV]{W78}, which describes the roots of $F(X,Y) = 0$ near $x = 0$ as a Puiseux series in $X$.  

\begin{theorem}[Theorems 3.2 and~4.1 and Section~4.2 of Chapter IV in~\cite{W78}]\label{Newton_Puiseux_Thm}
Let 
\begin{align*}
F(X,Y)=a_0 + a_1Y + \cdots + a_d Y^d \in \mathbb{C}\la \la X \ra\ra[Y],
\end{align*}
where $a_d \neq 0$. There exist $d$ (not necessarily distinct) Puiseux series $\psi_i(X) \in \mathbb{C} \la \la X \ra \ra$ for $i=1,\ldots,d$ such that 
\begin{align*}
F(X,Y)=a_d \prod_{i=1}^d (Y - \psi_i).
\end{align*}
\end{theorem}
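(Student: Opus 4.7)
The Newton-Puiseux theorem is equivalent to the statement that the field $\mathbb{C}\langle\langle X\rangle\rangle$ of Puiseux series is algebraically closed, so the plan is to exhibit one root of $F$ in $\mathbb{C}\langle\langle X\rangle\rangle$ and then induct on $d$. Once a single root $\psi_1$ has been produced, Euclidean division in the polynomial ring over the Puiseux series field yields $F = (Y - \psi_1)\,F^*$ with $\deg F^* = d - 1$, and the induction hypothesis applied to $F^*$ gives the full factorization $F = a_d \prod_{i=1}^d (Y - \psi_i)$.

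To produce one root, I would use the classical Newton polygon construction. After dividing by $a_d$, assume $F$ is monic. For each nonzero coefficient $a_i(X)$, plot the point $(i,\order(a_i)) \in \mathbb{Z}\times\mathbb{Q}$ and form the lower convex hull; each edge has a negative rational slope $-p/q$ with $\gcd(p,q) = 1$. Fixing such a slope and substituting $Y = X^{p/q}(c + Y_1)$ into $F$, the terms of lowest $X$-order assemble into a characteristic polynomial $\Phi(c) \in \mathbb{C}[c]$ of positive degree, where the degree is the horizontal length of the chosen edge. Since $\mathbb{C}$ is algebraically closed, a nonzero root $c_0$ of $\Phi$ exists, providing the leading monomial $c_0 X^{p/q}$ of a candidate root.

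The iterative step then sets $Y = c_0 X^{p/q} + Y_1$, obtains a modified polynomial $F_1(X,Y_1)$, and repeats the Newton-polygon analysis on $F_1$ to determine the next term. To control convergence (in the formal sense of eventually fixing every coefficient), one observes that at each step either the multiplicity of the chosen slope strictly decreases, or $c_0$ is a simple root of the associated characteristic polynomial. In the latter case, a Hensel-type lift in the complete local ring $\mathbb{C}[[X^{1/Q}]]$ uniquely produces the remainder of the series, terminating the iteration. Otherwise one continues, with the multiplicity serving as an induction parameter that forces the process to terminate in finitely many genuinely branching steps.

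The main obstacle is ensuring that the ramification index $Q$ stays bounded throughout the process, so that the root really lies in $\mathbb{C}\langle\langle X\rangle\rangle$ rather than in some larger object with unbounded denominators. The key estimate is that new denominators are introduced only at substitutions where the characteristic polynomial has multiple roots of the corresponding slope, and the sum of such multiplicities over the whole algorithm is bounded by $d$; hence $Q$ divides $d!$ and all coefficients ultimately lie in $\mathbb{C}((X^{1/Q}))\subset \mathbb{C}\langle\langle X\rangle\rangle$. Combining this ramification bound with the Hensel-lift step gives $\psi_1 \in \mathbb{C}\langle\langle X\rangle\rangle$, and induction on $d$ yields the stated factorization.
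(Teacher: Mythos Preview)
The paper does not prove this theorem: it is stated as a quotation of Theorems~3.2 and~4.1 and Section~4.2 of Chapter~IV in~\cite{W78} and is used as a black box in the proofs of Theorems~\ref{limiting_behavior_of_critical_path}, \ref{analytic_reparam_semi-algebraic}, and~\ref{thm:analyticity_analytic_func}. There is therefore no in-paper argument to compare your proposal against.

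That said, your outline is the standard Newton--polygon proof and is essentially correct in spirit, but two small points deserve tightening. First, the assertion that ``each edge has a negative rational slope $-p/q$'' is not true in general: the coefficients $a_i$ are arbitrary Puiseux series and may have negative order, so edges of the lower convex hull can have nonnegative slope (corresponding to roots of nonpositive order). The construction still works---one simply allows $p/q$ to be any rational equal to minus the slope---but the sign restriction should be dropped. Second, the ramification bound can be sharpened: the classical argument (as in Walker) shows that after a preliminary reduction one may take $Q \le d$, not merely $Q \mid d!$; your weaker bound suffices for the statement but is not the tight estimate. With these adjustments your sketch matches the proof in~\cite{W78} that the paper invokes.
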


\subsection{O-minimal geometry}\label{sec:o-minimal}
Let $\R$ be a real closed field. An \textit{o-minimal structure}~\cite{Michel2} over the field $\R$ is a sequence $\mathcal{S}(\R):=(\mathcal{S}_n)_{n \in \mathbb{N}}$, where $\mathcal{S}_n$ is a collection of subsets of $\R^n$, such that the following axioms hold:
\begin{itemize}
\item All algebraic subsets of $\R^n$ belong to $\mathcal{S}_n$.
\item If $A \in \mathcal{S}_m$ and $B \in \mathcal{S}_n$, then $A \times B \in \mathcal{S}_{m + n}$.
\item The class $\mathcal{S}_n$ is the Boolean algebra of subsets of $\R^n$, i.e., the complement and finite union and intersection of elements of $\mathcal{S}_n$ belong to $\mathcal{S}_n$.
\item If $A \in \mathcal{S}_{n+1}$, then $\pi(A) \in \mathcal{S}_n$, where $\pi: \R^{n+1} \to \R^n$ is the projection map to the first $n$ coordinates. 
\item The elements of $\mathcal{S}_1$ are precisely the finite union of points and intervals. 
\end{itemize}
\noindent
The elements of $\mathcal{S}_n$ are called \textit{definable sets} in the structure $\mathcal{S}(\R)$. Given two definable subsets $X \subset \R^n$ and $Y \subset \R^m$, a map $f : X \to Y$ is called definable if $\mathrm{graph}(f) \in \mathcal{S}_{m+n}$. An o-minimal expansion of $\mathbb{R}$ is called \textit{polynomially bounded} if for every definable function $f:\mathbb{R} \rightarrow \mathbb{R}$, there exist
$N \in \mathbb{N}$ and $c \in \mathbb{R}$, such that $|f(x)| < x^N$ for all $x > c$.

\vspace{5px}
\noindent
A classic example is the o-minimal structure over $\mathbb{R}$, where $\mathcal{S}_n$ is the class of semi-algebraic subsets of $\mathbb{R}^n$, which we denote by $\mathbb{R}_{\mathrm{sa}}$. We borrow the following examples from~\cite{Basu9,Dries2} (see also~\cite{Michel2,Dries,Dries3,Dries4,Wilkie2} for more instances of o-minimal structures).
\begin{example}[Restricted analytic functions]
The o-minimal structure over $\mathbb{R}$ of restricted analytic functions, denoted by $\mathbb{R}_{\mathrm{an}}$, is defined by $\mathcal{S}_n$ being the image under the projection map $\mathbb{R}^{n+k} \to \mathbb{R}^n$ of subsets $\{(x,y) \in \mathbb{R}^{n} \times \mathbb{R}^k \mid P(x,y,e^x,e^y) = 0\}$, where $x=(x_1,\ldots,x_n)$, $y=(y_1,\ldots,y_k)$, and $P$ is an analytic function restricted to $[0,1]^{n+k}$. In this structure, $\mathcal{S}_n$ is called the class of \textit{globally sub-analytic} sets. 
\end{example}
\begin{example}[Restricted analytic functions with power functions]
If an o-minimal structure over $\mathbb{R}$ involves both restricted analytic functions in $\mathbb{R}_{\mathrm{an}}$ and all power functions $x^r$ with $r \in \mathbb{R}$ defined as
\begin{align*}
    x^r = \begin{cases} x^r & x >0\\0 & x \le 0, \end{cases}
\end{align*}
then we get a new polynomially bounded o-minimal structure, which we denote by $\mathbb{R}^{\mathbb{R}}_{\mathrm{an}}$.
\end{example}
\begin{example}[Exponential functions]
The o-minimal structure over $\mathbb{R}$ of exponential functions, denoted by $\mathbb{R}_{\mathrm{exp}}$, is defined by $\mathcal{S}_n$ being the image under the projection map $\mathbb{R}^{n+k} \to \mathbb{R}^n$ of subsets $\{(x,y) \in \mathbb{R}^{n} \times \mathbb{R}^k \mid P(x,y,e^x,e^y) = 0\}$, where $P \in \mathbb{R}[X_1,\ldots,X_{n+k}]$.  
\end{example}

\begin{example}[Exponential and restricted analytic functions]
An structure over $\mathbb{R}$ where the functions range over both exponential and restricted analytic functions forms a new o-minimal structure, which we denote by $\mathbb{R}_{\mathrm{an,exp}}$. This o-minimal structure contains $\mathbb{R}_{\mathrm{an}}$, $\mathbb{R}^{\mathbb{R}}_{\mathrm{an}}$, and $\mathbb{R}_{\mathrm{exp}}$. 
\end{example}

\subsubsection{Cylindrical definable decomposition}
The notion of cylindrical definable decomposition~\cite{Loj2,Loj1} serves a key role in semi-algebraic and o-minimal geometry, and will be needed later in Sections~\ref{proof_of_critical_path_semi-algebraic} and~\ref{proof_of_o-minimal}. In what follows, we include the definition for definable sets and refer the reader to~\cite[Definition~5.1]{BPR06} for its semi-algebraic version.

\begin{definition}[Cell decomposition]
\label{def:cad}
Fixing the standard basis of $\R^n$, we identify for each $i,1 \leq i \leq n$, $\R^i$ with the span of the first
$i$ basis vectors.
Fixing an o-minimal expansion of $\R$,
a cylindrical definable decomposition (or simply a cell decomposition) of $\R$ is an $1$-tuple $(\mathcal{D}_1)$, where
$\mathcal{D}_1$ is a finite set of subsets of $\R$, each element being a point or an open interval,
which together gives a partition of $\R$. 
A cell decomposition of $\R^n$ is an $n$-tuple $(\mathcal{D}_1,\ldots,\mathcal{D}_n)$,
where each $\mathcal{D}_i$ is a decomposition of $\R^i$,
$(\mathcal{D}_1,\ldots, \mathcal{D}_{n-1})$ is a cell decomposition of $\R^{n-1}$,
and $\mathcal{D}_n$ is a finite set of definable subsets of $\R^n$ (called the cells of $\mathcal{D}_n$) giving a partition of $\R^n$ consisting 
of the following:
for each $C \in \mathcal{D}_{n-1}$, there is a finite set of definable continuous functions
$f_{C,1}, \ldots, f_{C,N_C}: C \rightarrow \R$ such that
$f_{C_1} < \cdots < f_{C,N_C}$,
and each element of $\mathcal{D}_n$ is either the graph of a function $f_{C,i}$ 
or of the form 
\begin{enumerate}[(a)]
    \item $\{ (x,t) \;\mid\; x\in C, \ t < f_{C,1}(x) \}$, 
    
    \item  $\{ (x,t) \;\mid\; x\in C, \ f_{C,i}(x) < t < f_{C,i+1}(x) \}$, 
    
    \item  $\{ (x,t) \;\mid\; x\in C, \ f_{C,N_C}(x) < t  \}$,     
    \item  $\{ (x,t) \;\mid\; x\in C \}$
\end{enumerate}
(the last case arising is if the set of functions $\{f_{C,i} | 1 \leq i \leq N_C \}$ is empty). We will say that the cell decomposition $(\mathcal{D}_1,\ldots,\mathcal{D}_n)$ is adapted to 
a definable subset $D$ of $\R^n$, if for each $C \in \mathcal{D}_n$, $C \cap D$ is either equal to 
$C$ or empty. 

\vspace{5px}
\noindent
If the definable functions $f_{C_i}$ are of $\mathcal{C}^k$ type for some positive integer $k$, then $(\mathcal{D}_1,\ldots,\mathcal{D}_n)$ is called a $\mathcal{C}^k$-cell decomposition. It is well-known that for every definable subset $D$ of $\R^n$ there exists a $\mathcal{C}^k$-cell decomposition adapted to $D$~\cite[Theorem~6.6]{Michel2}. 
\end{definition}

\subsection{Morse theory}\label{KKT_Conditions_Lagrange}
We will need the notions of critical points and projective critical points on a non-singular algebraic set in Sections~\ref{proof_on_V} and~\ref{proof_of_critical_path_semi-algebraic}. These definitions are given in accordance with the notion of a critical point of a smooth (differentiable) real (complex)-valued function on a smooth (complex) manifold (see e.g.,~\cite{GH94,M63}).

\subsubsection{Critical points}
Critical points of a complex polynomial can be described algebraically. The process for deriving these points in the real case follows a similar approach, with analogous techniques applied to the real algebraic set.

\vspace{5px}
\noindent
Let $F \in \C[X_1,\ldots,X_n]$. We call $x=(x_1,\ldots,x_n) \in \C^n$ a \textit{critical point} of $F$ if $x$ is a zero of $\Big\{\cfrac{\partial F}{\partial X_1},\ldots,\cfrac{\partial F}{\partial X_n}\Big\}$. The value of $F$ at $x$ is called a \textit{critical value}. A critical point $x$ is called \textit{non-degenerate} if the Hessian matrix
\begin{align*}
\Bigg[\frac{\partial^2 F}{\partial X_i \partial X_j}(x)\Bigg]_{\substack{i=1,\ldots,n\\ j=1,\ldots,n}}
\end{align*}
is non-singular. Further, the polynomial $F$ is called \textit{Morse} if its critical points are all non-degenerate.
\subsubsection{Projective critical points}\label{sec:projective_critical}
Let $F \in \C[X_1,\ldots,X_n]$. We call $x=(x_0:\cdots:x_n) \in \mathbb{P}_n(\C)$ a \textit{projective critical point of $F$} if $x$ is a projective zero of $\Big\{\Big(\cfrac{\partial F}{\partial X_1}\Big)^H,\ldots,\Big(\cfrac{\partial F}{\partial X_n}\Big)^H\Big\}$, where the homogeneous polynomial $\Big(\cfrac{\partial F}{\partial X_j}\Big)^H \in \C[X_0,\ldots,X_n]$ is obtained from homogenization of $\cfrac{\partial F}{\partial X_j}$. A projective critical point $x$ is called \textit{non-degenerate} if $x$ is a non-singular projective zero of $\Big\{\Big(\cfrac{\partial F}{\partial X_1}\Big)^H,\ldots,\Big(\cfrac{\partial F}{\partial X_n}\Big)^H\Big\}$ (see Definition~\ref{non_singular_point}).

\subsubsection{Critical points on a non-singular algebraic set and KKT points}\label{sec:KKT_non-singular}
Let $F\in \C[X_1,\ldots,X_n]$ and $V_{\C}:=\zero(\mathcal{P},\C^n)$, where
\begin{align*}
\mathcal{P}:=\{P_1,\ldots,P_s\} \subset \C[X_1,\ldots,X_n],
\end{align*}
and $V_{\C}$ is non-empty and non-singular (and therefore $V_{\C}$ is a complex sub-manifold of $\C^n$). Let $\bar{x} \in V_{\C}$, and assume without loss of generality, that the leading $s$-principal submatrix of the Jacobian of $\mathcal{P}$, denoted by $[J(\{P_1,\ldots,P_s\})(\bar{x})]_{s \times s}$ is non-singular. Then, by the Implicit Function Theorem~\cite[Page~19]{GH94}, $\bar{x}$ has a local coordinate system $(X_{s+1},\ldots,X_n)$ in a sufficiently small neighborhood, i.e., there exist an open subset $U \subset \C^{n-s}$ and holomorphic functions $\phi_i:U \to \C^{n-s}$ for $i=1,\ldots,s$ such that 
\begin{align*}
\Phi(\bar{x}_{s+1},\ldots,\bar{x}_n):=(\phi_1(\bar{x}_{s+1},\ldots,\bar{x}_n),\ldots,\phi_{s}(\bar{x}_{s+1},\ldots,\bar{x}_n),\bar{x}_{s+1},\ldots,\bar{x}_n) \in V_{\C}.
\end{align*}
Then $\bar{x}$ is called a \textit{critical point} of $F$ on $V_{\C}$ if it satisfies the equations
\begin{align}\label{critical_point_conditions_on_manifold}
\begin{cases}
   \displaystyle \sum_{i=1}^{s} \cfrac{\partial F}{\partial X_{i}} \cfrac{\partial \phi_i}{\partial X_j} + \cfrac{\partial F}{\partial X_j} = 0, \ \ j=s+1,\ldots,n,\\
    P_1=\cdots =P_s= 0,
\end{cases}
\end{align}
and $\bar{x}$ is called \textit{non-degenerate} if the Jacobian of~\eqref{critical_point_conditions_on_manifold} at $\bar{x}$ is non-singular. Further, $F$ is called \textit{Morse} on $V_{\C}$ if its critical points are all non-degenerate.

\vspace{5px}
\noindent
For the purpose of our derivations in Theorems~\ref{Morse_on_smooth_hypersurface}-\ref{Morse_on_transversal}, we provide an equivalent definition of critical points. By taking the derivatives of $P_i= 0$, we get 
\begin{align*}
\sum_{j=1}^{s} \frac{\partial P_i}{\partial X_j} \frac{\partial \phi_j}{\partial X_k} +  \frac{\partial P_i}{\partial X_k}  = 0, \quad i=1,\ldots,s, \ \  k =s+1,\ldots,n.
\end{align*}
Letting
\begin{align}\label{Lagrange_multiplier}
\bar{u} :=[\bar{u}_1,\ldots,\bar{u}_{s}]^T= \big([J(\{P_1,\ldots,P_s\})(\bar{x})]_{s \times s}\big)^{-T} \Big[\cfrac{\partial F}{\partial X_{1}}(\bar{x}),\ldots,\cfrac{\partial F}{\partial X_{s}}(\bar{x})\Big]
\end{align}
then $(\bar{x},\bar{u})$ satisfies  
\begin{align}\label{KKT_system_complex}
\begin{cases}
\displaystyle \cfrac{\partial F}{\partial X_j} - \sum_{i=1}^{s} U_i \cfrac{\partial P_i}{\partial X_{j}} = 0, \ \ j=1,\ldots,n,\\
    P_1=\cdots =P_s= 0,
\end{cases}
\end{align}
where $u_i$ are called \textit{Lagrange multipliers} and~\eqref{KKT_system_complex} is called the \textit{KKT conditions} for critical points of $F$ on $V_{\C}$. Accordingly, $(\bar{x},\bar{u})$ is called a KKT solution. Since $V_{\C}$ is non-singular, then $\bar{u}$ is the unique Lagrange multiplier for $\bar{x}$ that satisfies~\eqref{KKT_system_complex}. 

\vspace{5px}
\noindent
We will show in Theorem~\ref{Morse_on_smooth_hypersurface} that $\bar{x}$ is a non-degenerate critical point of $F$ on $V_{\C}$ if and only if $(\bar{x},\bar{u})$ is a non-singular zero of~\eqref{KKT_system_complex}.

\begin{remark}
Our proof of the above equivalence property is not entirely new; it was previously established in~\cite[Theorem~A]{F82} (which has not received adequate attention in the optimization literature) in the context of \NO~and KKT points . For the sake of completeness, we provide a proof the statement for critical points of $F$ on $V$. 
\end{remark}

\subsubsection{Complex projective critical points on a non-singular algebraic set and complex projective KKT points}\label{sec:projective_KKT}
Our conditions in Theorems~\ref{constrained_real_isolated_critical_points}-\ref{constrained_real_non-degenerate_critical_points} rely on the notions of projective critical and KKT points, which are defined as follows. 

\begin{definition}[Complex projective KKT and critical points]\label{projective_KKT_point}
Let $F$ and $V_{\C}=\zero(\mathcal{P},\C^n)$, where
\begin{align*}
\mathcal{P}:=\{P_1,\ldots,P_s\} \subset \C[X_1,\ldots,X_n],
\end{align*}
and $V_{\C}$ is non-empty and non-singular. Then $(x,u)=((x_0:\cdots:x_n),(u_0:\cdots:u_s))$ is called a complex \textit{projective KKT point} of $F$ in $\mathbb{P}_n(\C) \times \mathbb{P}_s(\C)$ on $V_{\C}$ if it is a zero of 
\begin{align}\label{KKT_system_complex_homogeneous}
\begin{cases}
\displaystyle \Bigg(\cfrac{\partial F}{\partial X_j} - \sum_{i=1}^{s} U_i \cfrac{\partial P_i}{\partial X_{j}}\Bigg)^H = 0, \ \ j=1,\ldots,n,\\
    P^H_1=\cdots =P^H_s= 0,
\end{cases}
\end{align}
where $(\cdot)^H$ in~\eqref{KKT_system_complex_homogeneous} means the bi-homogenization of polynomials with respect to $X$ and $U$, and the equations in~\eqref{KKT_system_complex_homogeneous} are bi-homogeneous polynomials in $\C[X_0,\ldots,X_n;U_0,\ldots,U_s]$. Further, $x$ is called a complex \textit{projective critical point} of $F$ in $\mathbb{P}_n(\C) \times \mathbb{P}_s(\C)$ on $V_{\C}$. A complex projective critical point $x$ is called \textit{non-degenerate} if $(x,u)$ is a non-singular zero of~\eqref{KKT_system_complex_homogeneous} for some $u \in \mathbb{P}_s(\C)$. 
\end{definition}

\begin{remark}\label{infinitely_many_Lagrange_multipliers}
Note that the non-singularity of $V_{\mathrm{C}}$ implies that a complex projective critical point $x$ with $x_0 = 1$ has a unique vector of Lagrange multipliers $u = (u_0:\cdots:u_s)$, and $u$ satisfies $u_0 =1$, i.e., it is impossible to have a complex projective KKT point $(x,u)$ with $x_0 = 1$ and $u_0 = 0$. However, it is possible to have a complex projective critical point $x$ with $x_0 = 0$ that admits infinitely many Lagrange multipliers. For example, consider $F = X_1$ and $V=\zero(X_1^2X_2^2-1,\mathbb{R}^2)$, for which the complex projective KKT system is
\begin{align*}
\begin{cases}
U_0X_0^3-2U_1 X_1X_2^2 &= 0,\\
-2U_1X_1^2X_2 &= 0,\\
    X_1^2 X_2^2-X_0^4  &= 0.
\end{cases}
\end{align*}
It is easy to verify that $(0:0:1)$ and $(0:1:0)$ are degenerate projective critical points that admit infinitely many Lagrange multipliers. 
\end{remark}
\subsubsection{Projective KKT points for~\PO}\label{sec:KKT_PO}
The concept of a KKT point is a standard tool in \NO~and does not require  any smoothness assumptions on the feasible set~\cite{G10}. More specifically, a KKT point of~\PO~is a solution of 
\begin{equation}\label{KKT_conditions_PO}
\begin{aligned}
    \frac{\partial f}{\partial x_j} -  \sum_{i=1}^r u_i \frac{\partial g_i}{\partial x_j} &= 0, \qquad j=1,\ldots,n,\\
    u_i g_i &= 0, \quad i=1,\ldots,r,\\
    u_i,g_i(x) &\ge 0, \quad i=1,\ldots,r,
\end{aligned}
\end{equation}
where $u_i$ is a Lagrange multiplier associated to $g_i$. A KKT point $(x,u)$ of~\PO~is called \textit{strictly complementary} if $g_i(x) + u_i > 0$ for all $i=1,\ldots,s$. Under certain regularity conditions, KKT conditions are necessary for a local optimal solution of~\PO~(see e.g.,~\cite{G10}). In general, however,~\eqref{KKT_conditions_PO} may have no solution (see Example~\eqref{unconstrained_cusp}). 

\vspace{5px}
\noindent
In order to describe limit points of critical paths in Section~\ref{proof_of_critical_path_semi-algebraic}, we extend the notion of KKT points of~\PO~ (which may not always exist) to projective KKT points. Let us ignore the sign conditions $u_i \ge 0$ and bi-homogenize the polynomials in terms of $x$ and $u$:
\begin{equation}\label{Projective_KKT_conditions_PO}
\begin{aligned}
    u_0 F_j-  \sum_{i=1}^r u_i  G_{ij} &= 0, \qquad j=1,\ldots,n,\\
    u_i g^H_i &= 0, \quad i=1,\ldots,r,
\end{aligned}
\end{equation}
where for each $j$, $F_j,G_{1j},\ldots,G_{rj} \in \R[X_0,\ldots,X_n]$ and for each $i$, $g_i^H \in \R[X_0,\ldots,X_n]$ are homogeneous polynomials. A zero $(x,u)$ of~\eqref{Projective_KKT_conditions_PO} is called a \textit{projective KKT point} of~\eqref{poly_optim}. A projective KKT point $(x,u)$ is called \textit{strictly complementary} if $u_i \neq  0$ for all $i=1,\ldots,r$.

\vspace{5px}
\noindent
The following proposition shows that as long as $f$ has a critical point on $S_=$, \PO~ always admits a projective KKT point.
\begin{proposition}\label{existence_projective_KKT_points}
Suppose that $f$ has a critical point on $S_=$. Then \PO~ has a projective KKT point.    
\end{proposition}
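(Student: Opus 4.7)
The plan is to construct an explicit projective KKT point $(x,u) \in \mathbb{P}_n(\R) \times \mathbb{P}_r(\R)$ directly from the given critical point $\bar{x}$. I will take $x$ to be the projective point with affine coordinates $\bar{x}$ (so $x_0 = 1$), set $I := \{i \mid g_i(\bar{x}) = 0\}$ (non-empty since $\bar{x} \in S_=$), and force $u_i = 0$ for every $i \notin I$. The complementarity equations $u_i g_i^H(x) = 0$ of~\eqref{Projective_KKT_conditions_PO} then hold automatically, and the entire task reduces to producing $u_0$ together with $\{u_i\}_{i \in I}$, not all zero, such that
\begin{align*}
u_0 \, \nabla f(\bar{x}) \;=\; \sum_{i \in I} u_i \, \nabla g_i(\bar{x}).
\end{align*}

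I plan to split the argument into three cases based on the behavior of the gradients $\nabla g_i(\bar{x})$, $i \in I$. If some $i_0 \in I$ satisfies $\nabla g_{i_0}(\bar{x}) = 0$, I will set $u_0 = 0$, $u_{i_0} = 1$, and all remaining $u_i = 0$; the gradient equation collapses to $\nabla g_{i_0}(\bar{x}) = 0$, which holds by hypothesis. If instead every $\nabla g_i(\bar{x})$, $i \in I$, is non-zero but the family $\{\nabla g_i(\bar{x})\}_{i \in I}$ is linearly dependent, I will extract a non-trivial relation $\sum_{i \in I} \lambda_i \nabla g_i(\bar{x}) = 0$ and set $u_0 = 0$, $u_i = \lambda_i$; the gradient equation is satisfied by construction and $u$ is non-zero.

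In the remaining transverse case, where $\{\nabla g_i(\bar{x})\}_{i \in I}$ is linearly independent, I will appeal to the Implicit Function Theorem to conclude that $\bigcap_{i \in I} \zero(g_i,\R^n)$ is a smooth submanifold of codimension $|I|$ near $\bar{x}$. Since $g_j(\bar{x}) \neq 0$ for $j \notin I$, locally $S_=$ equals $\bigcup_{i \in I} \zero(g_i,\R^n)$, a transverse union of smooth hypersurfaces through $\bar{x}$. From this I plan to identify the canonical Whitney stratum $Z$ containing $\bar{x}$ with $\bigcap_{i \in I} \zero(g_i,\R^n)$ in a neighborhood, so that $T_{\bar{x}}Z = \bigcap_{i \in I} \ker \nabla g_i(\bar{x})$ and its normal space is $\spanof\{\nabla g_i(\bar{x}) : i \in I\}$. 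The stratified-Morse critical condition $df(\bar{x})|_{T_{\bar{x}}Z} = 0$ then forces $\nabla f(\bar{x})$ to lie in this span, yielding Lagrange multipliers $\lambda_i$ with $\nabla f(\bar{x}) = \sum_{i \in I} \lambda_i \nabla g_i(\bar{x})$; I will take $u_0 = 1$, $u_i = \lambda_i$ for $i \in I$.

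The main obstacle lies in the transverse case: I must verify that $Z$ really agrees locally with $\bigcap_{i \in I} \zero(g_i,\R^n)$, and is not a strictly smaller sub-stratum produced by hidden irregularities of $S_=$. The plan is to use the inductive construction of the canonical Whitney stratification through iterated irregular loci, combined with the observation that linear independence of $\{\nabla g_i(\bar{x})\}_{i \in I}$ together with $g_j(\bar{x}) \neq 0$ for $j \notin I$ precludes any extra singularities of $S_=$ in a neighborhood of $\bar{x}$; once this local identification of $Z$ is in hand, the description of the normal space and the extraction of the $\lambda_i$ are routine.
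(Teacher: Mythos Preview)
Your proposal is correct and follows essentially the same two-branch strategy as the paper (linearly dependent active gradients $\Rightarrow$ take $u_0=0$; linearly independent $\Rightarrow$ extract Lagrange multipliers with $u_0=1$), but the case split is organized differently. You localize everything at the given critical point $\bar{x}$ and split according to the behavior of $\{\nabla g_i(\bar{x})\}_{i\in I}$. The paper instead splits on whether $\mathcal{Q}$ is in general position \emph{globally}: if so, it invokes Proposition~\ref{Whitney_stratification_union_of_sets} to identify the stratum through $\bar{x}$ and read off the multipliers; if not, it abandons $\bar{x}$ altogether, picks any $x'\in S_=$ where some subfamily of the $dg_i$ is dependent (guaranteed by the failure of general position), and builds a projective KKT point at $x'$ with $u_0=0$---the critical-point hypothesis is not even used on this branch.

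The payoff of the paper's global dichotomy is that it completely avoids the obstacle you flag: once global general position holds, the canonical strata are explicit and no local analysis of $\Ireg$ is needed; and when it fails, the point $x'$ comes for free. Your route is more uniform (always producing the KKT point at $\bar{x}$) but requires the local identification of the stratum in the transverse case, which you correctly identify as the only non-routine step.
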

\begin{proof}
Let $\bar{x} \in S_=$ be a critical point of $f$ on $S_=$. Assume without loss of generality that $g_i(\bar{x}) =0$ for $i=1,\ldots,r$. If $\mathcal{Q}=\{g_1,\ldots,g_r\} \subset \R[X_1,\ldots,X_n]$ is in general position, then by the canonical Whitney stratification of $S_=$ (see Proposition~\ref{Whitney_stratification_union_of_sets}) there exist unique Lagrange multipliers $\bar{u}_i$ such that $(\bar{x},\bar{u})$ is a KKT point of $f$ on $\zero(\mathcal{Q},\R^n)$ (see Section~\ref{sec:KKT_non-singular}). Then it is easy to see that $\big((1:\bar{x}_1:\cdots:\bar{x}_n),(1,\bar{u}_1:\cdots:\bar{u}_r)\big)$ is a projective KKT point of \PO. Otherwise, if $\mathcal{Q}$ is not in general position, then there exists $x' \in S_=$ where $\{dg_1(x'),\ldots,dg_r(x')\}$ are linearly dependent. This implies that $\big((1:x_1':\cdots:x'_n),(0,1:\cdots:1)\big)$ is a projective KKT point of~\PO.
\end{proof}

\begin{remark}[Projective KKT versus Fritz-John conditions]
An indicated by Proposition~\ref{existence_projective_KKT_points}, the projective KKT conditions are necessary for critical points of $f$ on $S_=$. However, these conditions are not sufficient: when $\mathcal{Q}$ is not in general position, any point on $S_{=}$ may satisfy projective KKT point. In this sense, projective KKT points can be regarded as the extension of classical \textit{Fritz-John} (FJ) conditions for \PO~(see e.g.,~\cite{G10}). FJ conditions are necessary - though weaker than KKT conditions - for the existence of a local optimum~\cite[Theorem~9.4]{G10}. Unlike a KKT point, a FJ point always exists as long as $f$ has a local optimum on $S$.  
\end{remark}

\subsection{Stratified Morse theory}\label{stratified_Morse}
The stratified Morse theory, developed by Goresky and MacPherson~\cite{GM88}, extends the classical Morse theory to compact Whitney stratified spaces. Roughly speaking, a Whitney stratified space is the decomposition of a singular space into sub-manifolds, so-called strata, on which the topological nature of singularities remain constant. We adopt the definition of a Whitney stratified space in~\cite{GM88}. Let $\mathscr{S}$ be a partially ordered set. We define an $\mathscr{S}$-decomposition of a topological space $X$ as a locally finite collection of locally closed subsets $Z_i$ of $X$ such that 
\begin{itemize}
  \item $X = \bigcup_{i \in \mathscr{S}} Z_i$.
  \item $Z_i \cap \bar{Z}_j = \not \emptyset \ \Leftrightarrow \ Z_i \subset \bar{Z}_j \ \Leftrightarrow i \le j$.
\end{itemize}
\hide{
The latter condition is denoted by $Z_i < Z_j$.
}

 \begin{definition}[Whitney stratification]
  Let $X$ be a closed subset of a smooth manifold $M$ with an $\mathscr{S}$-decomposition $X = \cup_{i \in \mathscr{S}} Z_i$. Then $X$ is called a \emph{Whitney stratified} space if
  \begin{itemize}
      \item Each $Z_i$ is a locally closed smooth sub-manifold of $M$.
      \item (Whitney's Condition A): Let $Z_i \subset Z_j$ be two strata of $X$ and let $\{x_i\} \in Z_j$ be a sequence of points converging to $x \in Z_i$. If the tangent space $T_{x_i} Z_j$ converge to a subspace $V$ of $T_x M$, then $T_x Z_i \subseteq V$. 
      \item (Whitney's Condition B): Let the hypotheses of Whitney's Condition A hold and let $\{y_i\} \in Z_i$ be a sequence of points converging to $x$. If the sequence of one-dimensional subspaces $\mathbb{R}(y_i - x_i)$ (by choosing a local coordinate system around $x$) converges to a line $\ell$, then $\ell \in V$.
  \end{itemize}
 The last two conditions are called Whitney regularity conditions.
 \end{definition}
\noindent
There are well-known examples
of stratifications that satisfy the first condition but 
fail 
one of the last two conditions (Whitney regularity conditions A and B) (see e.g.,~\cite[Page~206]{N11} or~\cite[Page~237]{BCR98}).

\begin{definition}\label{Generalized_Morse_Func}
Given a Whitney stratified subset $X$ of a smooth manifold $M$, a function $f$ is called smooth if $f=g|X$, where $g$ is a smooth function on $M$. A critical point of a smooth function $f$ on $X$ is a critical point of $f|Z$, i.e., $df(x)|T_x Z = 0$, where $Z$ is a stratum of $X$. The value of $f$ at a critical point is called its critical value. A smooth function $f$ on $M$ is called Morse, if (i) the restriction of $f$ to each stratum of $X$ only has non-degenerate critical points, (ii) the critical values are distinct, and (iii) the limit of a tangent space to a different stratum containing the critical point (when it exists) is not annihilated by the differential of $f$.
\end{definition}
\subsubsection{Canonical Whitney stratification}\label{sec:Whitney_strat}
\begin{definition}[Regular and irregular points]
\label{regular_points}
Let $\mathcal{P} \subset \R[X_1,\ldots,X_n]$ be a finite set of polynomials and $V = \zero(\mathcal{P},\R^n)$. We define a regular point of $V$ according to~\cite[Definition~3.3.4]{BCR98} as follows. Let $\mathcal{I}(V) \subset \R[X_1,\ldots,X_n]$ be the vanishing ideal of $V$ and assume that $\mathcal{I}(V) = (R_1,\ldots,R_k)$, where $R_1,\ldots,R_k \in \R[X_1,\ldots,X_n]$. Then $\bar{x} \in V$ is called a \textit{regular point} of $V$ if the rank of $J(\{R_1,\ldots,R_k\}) = n - \dim(V)$. Otherwise, $\bar{x}$ is called an \textit{irregular point}. A real variety with no irregular point is called \textit{smooth}. 
\end{definition}
\begin{remark}
By definition~\ref{regular_points}, a non-singular zero of $\mathcal{P}$ (see Definition~\ref{non_singular_point}) is a regular point of $V$. 
\end{remark}
\hide{
\begin{remark}
Our notion of non-singularity defined in Definition~\ref{non_singular_point} is a special case of~\cite[Definition~3.3.4]{BCR98}, and unlike~\cite[Definition~3.3.4]{BCR98}, our notion is dependent on the given description of the algebraic set. To avoid confusion, non-singular and singular points in the sense of ~\cite[Definition~3.3.4]{BCR98} are referred to as regular and irregular points, and they are only utilized for stratification of varieties (see Section~\ref{stratified_Morse}).  
\end{remark}}

\begin{definition}[Canonical Whitney stratification of $V$]
\label{def:canonical-Whitney}
A canonical stratification of an algebraic set $V$ is the tuple
$(Z_i \subset V)_{i \geq 0}$, defined inductively by: 
\begin{eqnarray*}
V^{(0)} &=& V, \\
V^{(i+1)} &=& \Ireg(V^{(i)}), \ \ i \geq 0, \\
Z_i &=& V^{(i)} \setminus V^{(i+1)}, \ \ i \geq 0.
\end{eqnarray*}
\hide{
as follows: Let $V^{(0)} := V$ and $V^{(1)} = \Ireg(V)$, where $\Ireg(V^{(1)})$ is the set of irregular points of $V$.
\todo{I think we do not need to insist on the $k$ where it terminates}
Repeatedly, define $V^{(i)} = \Ireg(V^{(i-1)})$ for $i=2,\ldots,k$ and continue until $V^{(k)} = \emptyset$. Then the canonical Whitney stratification of $V$ consists of $k$ strata defined as follows
\begin{equation}\label{algebraic_set_canonical_stratification}
\begin{aligned}
Z_i&:=V^{(i-1)} \setminus V^{(i)}, \quad i=1,\ldots,k.
\end{aligned}
\end{equation}
}
\end{definition}
\begin{example}
The polynomial $F=X_1X_2^2$ is not Morse, because $(0,0)$ is a degenerate critical point (see Fig.~\ref{Morse_Examples}). However, if we consider the canonical Whitney stratification of the cusp $V = \zero (X_2^2 - X_1^3, \mathbb{R}^2)$ as 
\begin{align*}
     Z_0 = \{X_2^2 - X_1^3 = 0, \ X_2 > 0\} \bigcup \{X_2^2 - X_1^3 = 0, \ X_2 < 0\}, \ \ Z_1 = \{(0,0)\},
\end{align*}
then we can observe that $F$ has no critical point on $Z_0$, and thus $F$ is Morse on $V$ with respect to its canonical Whitney stratification. In contrast, $F=X_1$ is Morse in $\mathbb{R}^3$ but not on the flat $2$-torus (see Fig.~\ref{Morse_Examples}).  
\begin{figure}
\centering
 \begin{minipage}[c]{0.5\textwidth}
\includegraphics[height=1.5in]{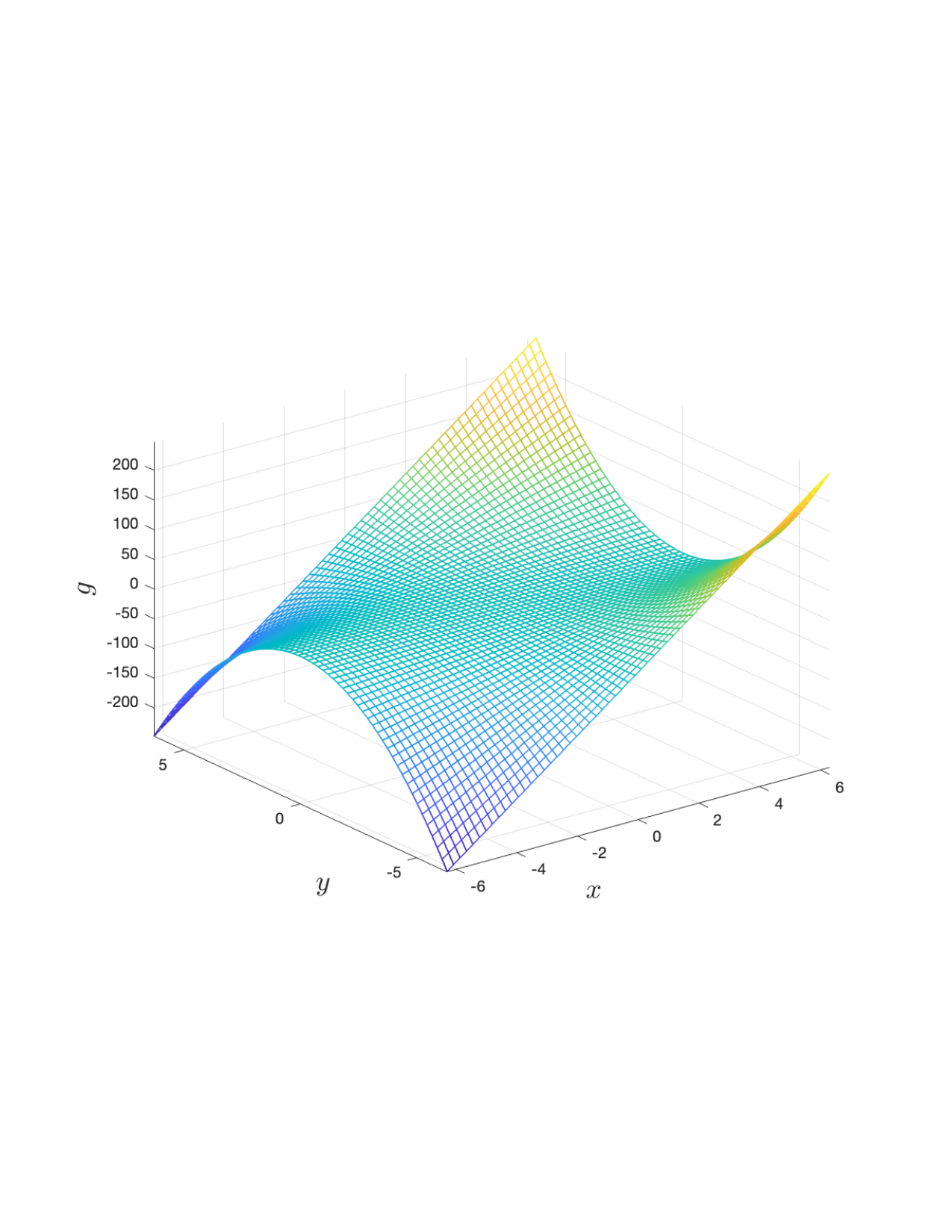}  
\end{minipage}\hfill
 \begin{minipage}[c]{0.5\textwidth}
 \includegraphics[height=1.5in]{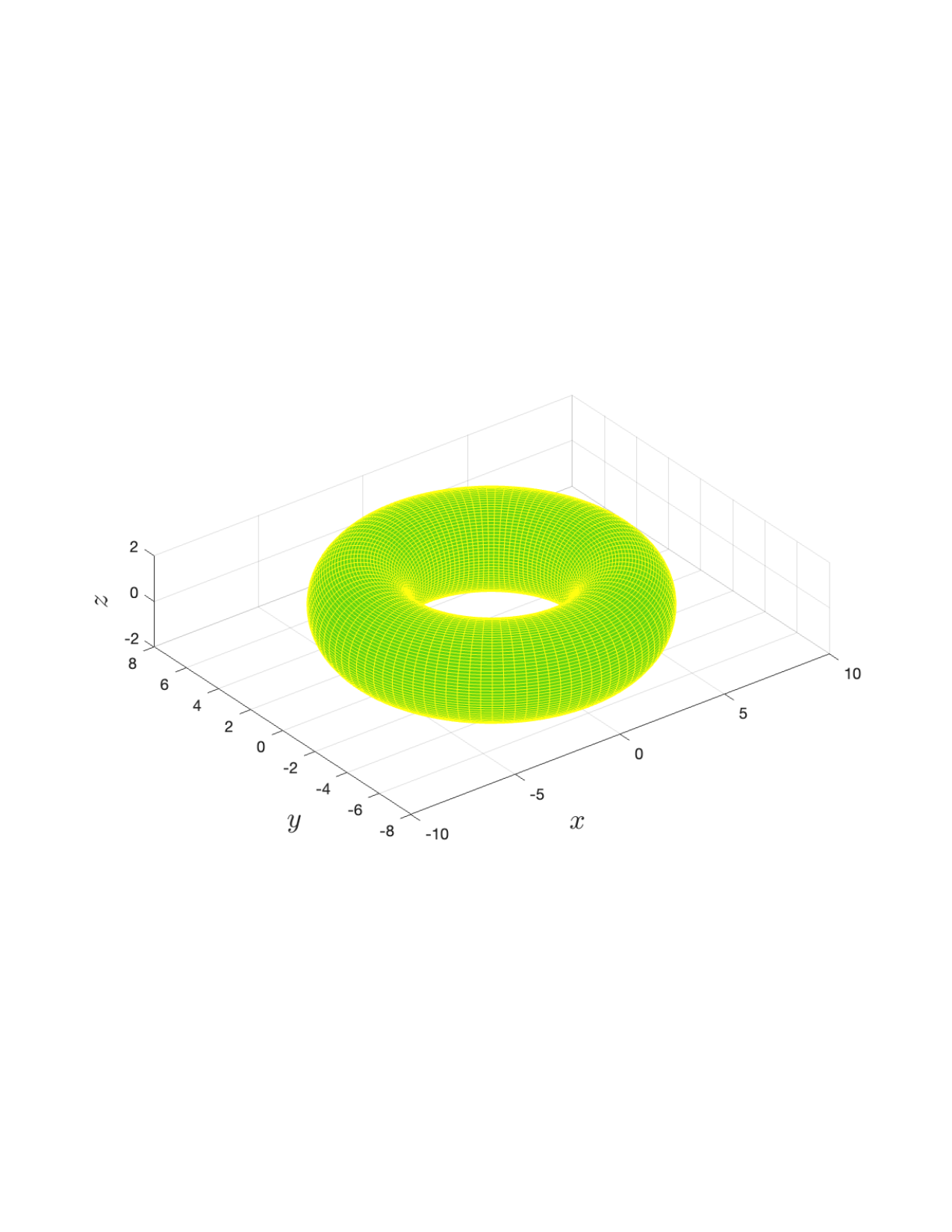}
\end{minipage}
\caption{$F=X_1X_2^2$ is Morse on $V = \zero (X_2^2 - X_1^3, \mathbb{R}^2)$ with respect to its canonical Whitney stratification, but not on $\mathbb{R}^2$. On the other hand, $F = X_3$ is Morse in $\mathbb{R}^3$ but not on the flat $3$-torus.}
\label{Morse_Examples}
\end{figure}
\end{example}

\hide{
\begin{proposition}
If $V=\zero(\prod_i P_i,\R^n)$, where $\mathcal{P}=\{P_1,\ldots,P_s\} \subset \R[X_1,\ldots,X_n]$ is in general position. Then the canonical Whitney stratification of $V$ consists of $k+1$ strata defined as follows
\begin{equation}\label{algebraic_set_canonical_stratification}
\begin{aligned}
Z_1&:=V\setminus \bigcup_{I \subset \{1,\ldots,s\}}\zero(\mathcal{P}_{I},\R^n),\\
Z_2&=\quad i=1,\ldots,k,\\
Z_{k+1}&:=\Ireg(V^{(k)}).
\end{aligned}
\end{equation}
\end{proposition}
}
\noindent
If $\mathcal{P}$ is in general position, then the canonical Whitney stratification can be explicitly characterized.
\begin{proposition}\label{Whitney_stratification_union_of_sets}
Let 
\[
V=\zero\Big(\prod_{i=1}^s P_i,\R^n\Big) = \bigcup_{i=1}^s \zero(P_i,\R^n),
\]
where $\mathcal{P}=\{P_1,\ldots,P_s\} \subset \R[X_1,\ldots,X_n]$ is in general position. Then the canonical Whitney stratification of $V$ 
(see Definition~\ref{def:canonical-Whitney}) $(Z_i)_{i \geq 0}$ is defined by:
\begin{equation}\label{algebraic_set_canonical_stratification}
Z_i = \bigcup_{I \subset \{1,\ldots,s\}, \card(I) = i+1}
\left(\bigcap_{i \in I} \zero(P_i,\R^n)\right) \setminus 
\bigcup_{I \subset \{1,\ldots,s\}, \card(I) = i+2}
\left(\bigcap_{i \in I}  \zero(P_i,\R^n)\right).
\end{equation}
\end{proposition}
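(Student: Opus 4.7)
The plan is to prove by induction on $i \geq 0$ that
\begin{align*}
V^{(i)} \;=\; \bigcup_{|J|=i+1} V_J, \qquad V_J := \bigcap_{j \in J} \zero(P_j,\R^n),
\end{align*}
from which the formula for $Z_i = V^{(i)} \setminus V^{(i+1)}$ follows immediately. The base case $V^{(0)} = V = \bigcup_j \zero(P_j,\R^n)$ is clear.

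For the inductive step, assume $V^{(i)} = \bigcup_{|J|=i+1} V_J$; by general position each non-empty $V_J$ is non-singular of pure dimension $n-(i+1)$, so $\dim V^{(i)} = n-(i+1)$. Fix $x \in V^{(i)}$, set $I(x) := \{j : P_j(x) = 0\}$, and let $m := |I(x)| \geq i+1$. Since each $V_{J'}$ not passing through $x$ is closed and avoids $x$, locally near $x$ we have $V^{(i)} = \bigcup_{J \subseteq I(x),\, |J|=i+1} V_J$. General position forces $\{dP_j(x)\}_{j \in I(x)}$ to be linearly independent, so by the Implicit Function Theorem the functions $\{P_j\}_{j \in I(x)}$ extend to a local coordinate system at $x$. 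In these coordinates the $V_J$'s become distinct coordinate subspaces of codimension $i+1$, and an elementary monomial-ideal computation yields
\begin{align*}
\mathcal{I}\bigl(V^{(i)}\bigr) \;=\; \Big(\textstyle\prod_{j \in K} P_j \;:\; K \subseteq I(x),\ |K| = m-i\Big) \quad\text{in the local ring at }x.
\end{align*}

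For any such generator $R = \prod_{j \in K} P_j$,
\begin{align*}
dR(x) \;=\; \sum_{j \in K} \Big(\prod_{l \in K \setminus \{j\}} P_l(x)\Big)\, dP_j(x),
\end{align*}
and every summand contains a factor $P_l(x) = 0$ unless $|K| = 1$, i.e.\ $m = i+1$. In Case A ($m = i+1$), the Jacobian of $\mathcal{I}(V^{(i)})$ at $x$ has rank equal to that of $\{dP_j(x)\}_{j \in I(x)}$, which is $i+1 = n - \dim V^{(i)}$, so $x$ is regular; global polynomials in $\mathcal{I}(V^{(i)})$ realizing this rank are
\begin{align*}
R_j \;:=\; P_j \cdot \prod_{|J'|=i+1,\ j \notin J'} \Big(\sum_{l \in J'} P_l^2\Big), \qquad j \in I(x),
\end{align*}
whose gradient at $x$ is a nonzero scalar multiple of $dP_j(x)$ (when $m = i+1$ no such $J'$ is contained in $I(x)$, so the second factor is strictly positive at $x$). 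In Case B ($m \geq i+2$), every generator has zero gradient at $x$, hence the Jacobian rank is $0 < n - \dim V^{(i)}$, and $x$ is irregular. Therefore $\Ireg\bigl(V^{(i)}\bigr) = \{x \in V^{(i)} : |I(x)| \geq i+2\} = \bigcup_{|J|=i+2} V_J$, closing the induction. The main technical point is the monomial-ideal identity for the local vanishing ideal, which rests on the regular-sequence property of $\{P_j\}_{j \in I(x)}$ supplied by general position.
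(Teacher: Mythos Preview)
Your proof is correct. The paper's own proof is a single sentence—``The proof follows immediately from Definitions~\ref{def:general-position} and~\ref{def:canonical-Whitney}''—so you have supplied the argument the paper leaves implicit. Your inductive identification of $V^{(i)}$ with $\bigcup_{|J|=i+1} V_J$ via the local monomial-ideal computation is precisely the natural way to unpack that sentence, and your explicit global polynomials $R_j$ in Case~A nicely bridge the potential gap between the local (Nash) ideal used in the coordinate computation and the global polynomial ideal $\mathcal{I}(V^{(i)})$ required by Definition~\ref{regular_points}.

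One minor observation on Case~B: instead of passing through the local generators, you can argue directly that any global $R \in \mathcal{I}(V^{(i)})$ vanishes on each smooth $V_J$ through $x$, so $dR(x)$ annihilates $T_x V_J$ and hence lies in
\[
\bigcap_{J\subseteq I(x),\,|J|=i+1}\mathrm{span}\{dP_j(x):j\in J\}.
\]
When $m\geq i+2$ this intersection is $\{0\}$, since every index $j_0\in I(x)$ is omitted by some such $J$. This sidesteps any question about whether the local monomial ideal coincides with the germ of the global vanishing ideal, and keeps the argument entirely at the level of Definition~\ref{regular_points}.
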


\begin{proof}
The proof follows immediately from Definitions~\ref{def:general-position} and~\ref{def:canonical-Whitney}.
\end{proof}

\hide{
Given a semi-algebraic set $S$ defined by a Boolean combinations of polynomials $\{P_1,\ldots,P_s\}$ we define the strata as follows: let $V = \zero(\prod_{i=1}^s P_i,\R^n)$ and stratify $V$ according to~\eqref{algebraic_set_canonical_stratification}. Now, the canonical Whitney stratification of $S$ consists of $k+2$ stratum defined as follows:
\begin{equation}\label{semi-algebraic_set_canonical_stratification}
\begin{aligned}
Z_1&:=S \setminus V,\\
Z_{i+1}&:=V^{(i)} \cap S, \quad i=1,\ldots,k,\\
Z_{k+2}&:=\Ireg(V^{(k)}) \cap S.
\end{aligned}
\end{equation}
}

\subsection{Proofs of existence and convergence of critical points on $V_{\xi}$}\label{proof_on_V}
\subsubsection{Proofs of Theorems~\ref{bounded_fibers}-\ref{bounded_connected_component}, and 
Corollaries~\ref{bounded_critical_points}-\ref{existence_critical_points}}
\hide{
\begin{proof}[Proof of Theorem~\ref{bounded_fibers}]
Since the problem of deciding the boundedness of $V_{\xi}$ can be described by a quantified formula over $\R$, we only need to prove the theorem for $\mathbb{R}$, and then the result carries over to $\R$ by the \textit{Tarski-Seidenberg Transfer Principle}~\cite[Theorem~2.80]{BPR06}.

\vspace{5px}
\noindent
We define the real projective algebraic set 
$V^H_{\xi}=\zero\big(P_i^H-\xi X_0^{\alpha_i},\mathbb{P}_n(\mathbb{R}\langle \xi_1,\ldots,\xi_s \rangle)\big)$, where $\alpha_i = \deg(P_i)$. Suppose that $V_{\xi}$ is unbounded over $\mathbb{R}$. If $P^H_i-\xi_iX_0^{\alpha_i}$s have a common real point at infinity, then, by the homogeneity of $P^H_i-\xi_iX_0^{\alpha_i}$, there exists a projective zero $x'=\big(0:x'_1:\ldots:x'_n\big) \in V_{\xi}^H$, where $x'_1,\ldots,x'_n$ are bounded over $\mathbb{R}$. Since $V_{\xi}$ and $V$ have the same parts at infinity, we have $\lim_{\xi}(x')\in V^H$, but this would contradict the initial assumption.

\vspace{5px}
\noindent
If $V_{\xi}$ has no real point at infinity, then $P_i^H(1,X_1,\ldots,X_n) -\xi_i = 0, i=1,\ldots,s$ has a zero $x'=(x'_1,\ldots,x'_n) \in \mathbb{\mathbb{R}}\langle \xi_i,\ldots,\xi_s \rangle^n$ with an unbounded $x'_i$ over $\mathbb{R}$ for some $i$ (because otherwise $V_{\xi}$ would be bounded over $\mathbb{R}$). However, this implies the existence of a projective zero $x^{''} \in V_{\xi}^H$ such that $\lim_{\xi}(x^{''}_0) = 0$ and $\lim_{\xi}(x^{''}_i) =1$, i.e., $\lim_{\xi}(x^{''}) \in V^H \cap \{X_0 = 0\}$, which is again a contradiction.
\end{proof}
}

\begin{proof}[Proof of Theorem~\ref{bounded_fibers}]
Suppose that $V_\xi$ is unbounded over $\R$. Then, there exists 
$x = (x_1,\ldots,x_n)  \in V_\xi \subset \R\la\xi\ra^n$, with 
$\lim_\xi \left(\frac{1}{||x||}\right) = 0$,
and satisfying
$P_{i}(x) = \xi_i, 1 \leq i \leq s$.
We denote  
\[
y = (y_1,\ldots,y_n) = \lim_\xi (x_1/||x||,\ldots,x_n/||x||).
\]

\hide{
Since, $\lim_\xi \left(\frac{1}{||x||}\right) = 0$, we have that
\begin{eqnarray*}
  \lim_\xi (1:x_1:\cdots:x_n) &=&  \lim_\xi(1/||x||: x_1/||x||: \cdots: x_n/||x||) \\
  &=& 
(0:y_1:\ldots:y_n) \in \PP_n(\R).  
\end{eqnarray*}
}
\noindent
For $1 \le i \leq s$ and $d \geq 0$,
let $P_{i,d}$ denote the homogeneous part of $P_i$ of degree $d$, and let 
$d_i = \deg(P_i)$. We have
\begin{eqnarray*}
 \xi_i &=& P_i(x) \\
 &=& ||x||^{d_i} P_{i,d_i}(x/||x||) + 
 ||x||^{d_i-1}P_{i,d_i-1}(x/||x||) + \cdots + 
P_{i,0}(x/||x||).
 \end{eqnarray*}
\noindent
 Dividing by $||x||^{d_i}$ we get that,
 \[
 P_{i,d_i}(x/||x||) + (1/||x||)P_{i,d_i-1}(x/||x||) + \cdots + (1/||x||^{d_i}) P_{i,0}(x/||x||) = \xi_i/||x||^{d_i}.
 \]
\noindent
Applying $\lim_\xi$ to both sides, and using the fact that the coefficients of $P_i$ belong to $\R$,  we obtain  
\[
\lim_\xi (P_{i,d_i}(x_1/||x||,\ldots,x_n/||x||)) = 
P_{i,d_i}(\lim_\xi(x_1/||x||,\ldots,x_n/||x||)) = P_{i,d_i}(y_1,\ldots,y_n) = 0,
\]
which in turn 
implies that $(0:y_1:\cdots:y_n) \in \zero(P_i^H, \PP_n(\R))$ for each $i, 1 \leq i \leq s$, which is a contradiction.
\end{proof}

\begin{proof}[Proof of Corollary~\ref{bounded_critical_points}]
Since $V$ has at least one non-singular point, then $V_{\xi}$ is non-empty by Proposition~\ref{convergence_of_varieties}. Furthermore, by Theorem~\ref{bounded_fibers}, $V_{\xi}$ is bounded and its image under $F$, as a continuous semi-algebraic function, is closed and bounded~\cite[Theorem~3.20]{BPR06}.
\end{proof}

\vspace{5px}
\noindent
From now on, we use the following notation for open and closed balls.
\begin{notation}
We denote by $B(x,r)$ and $\bar{B}(x,r)$ open and closed balls in $\R^n$ with center $x$ and radius $r > 0$.  
\end{notation}

\begin{proof}[Proof of Theorem~\ref{bounded_connected_component}]
\hide{
By definition, $S_{\xi}$ is described by 
\begin{align*}
    P_i - \xi \ge 0, \ \ P_j - \xi = 0, \quad \forall P_i,P_j \in \mathcal{P}
\end{align*}
using the same quantifier-free formula that defines $S$.
Let $B^n_{r}(0)$ be an open ball such that $D \subset B^n_{r}(0)$ for some $r > 0$. Suppose that $D_{\xi}$ is unbounded and let $x \in D_{\xi} \setminus B^n_{2r}(0)$. Since $D_{\xi}$ is semi-algebraically connected, there exists a semi-algebraic function $\gamma: [0,1] \to D_{\xi}$ such that $\gamma(0) = x(\xi)$ and $\gamma(1) = x$. Let $x' \in \partial B^n_{2r}(0)$ be the point at which $\gamma$ intersects $\partial B^n_{2r}(0)$ for the first time, say $\gamma(t') = x'$, and consider the restriction of $\gamma$ to $[0,t']$. We note that $\gamma([0,t']) \subset D_{\xi} \cap B^n_{2r}$ and thus $\lim_{\xi} (\gamma([0,t'])) \in S \cap B^n_{2r}(0)$. Further, by~\cite[Proposition~12.43]{BPR06}, $\lim_{\xi} (\gamma([0,t']))$ is semi-algebraically connected, which by $\lim_{\xi} \gamma(0) = \bar{x} \in D$ implies that $\lim_{\xi} (\gamma([0,t'])) \subset D$. However, this is a contradiction, because $\lim_{\xi} (\gamma(t')) = \lim_{\xi}(x') \in \partial B^n_{2r}(0)$ while $D \subset B^n_{r}(0)$. Therefore $D_{\xi}$ must be bounded and using ~\cite[Proposition~12.43]{BPR06} again, we conclude that $\lim_{\xi} (D_{\xi}) \subset D$.
}
Since $D$ is bounded, there exists $r > 0$ such that $D \subset \bar{B}(\mathbf{0},r)$. 
Suppose $D_\xi$ is not bounded over $r$. 
Then $D_\xi$ has a non-empty intersection with $\R\la\xi\ra^n \setminus B(\mathbf{0},2r)$.
Let 
$y_\xi \in D_\xi$, with $||y_\xi|| > 2r$. Then, there exists a semi-algebraic path, $\gamma_\xi: [0,1] \rightarrow D_\xi$, with $\gamma_\xi(0) = x_\xi, \gamma_\xi(1) = y_\xi$.

\vspace{5px}
\noindent
Since $\lim_\xi (x_\xi) = x$ and $||x|| \leq r, r \in \R, r>0$,
we have that
$||x_\xi||  <  2r$,  while $||y_\xi|| > 2r$. 
By the Semi-algebraic Intermediate Value Theorem~\cite[Proposition~3.4]{BPR06}, there exists
$t_\xi \in \R\la\xi\ra, 0 < t_\xi < 1$, such that
$||\gamma_\xi(t_\xi)|| =  2r$, and for all $t$ satisfying
$0 \leq t < t_\xi$, $||\gamma_\xi(t)|| < 2r$. Then the semi-algebraic set 
$\Gamma_\xi  := \gamma_\xi([0,t_\xi))$ is semi-algebraically connected 
and contained in $D_\xi \cap \bar{B}(\mathbf{0},2r)$. Using \cite[Proposition~12.43]{BPR06}, 
$\Gamma := \lim_\xi (\Gamma_\xi)$ is semi-algebraically connected.
Moreover, $\Gamma \subset V$, and $x = \lim_\xi (x_\xi) \in \Gamma$.
Therefore, $\Gamma \subset D$. 
However, $y := \lim_\xi (y_\xi) \in \Gamma \subset D$, while $||y|| = 2r$, and $y \in D$, which contradicts the fact that $D \subset \bar{B}(\mathbf{0},r)$. 

\vspace{5px}
\noindent
Thus, $D_\xi$ is bounded over $\R$. Again using \cite[Proposition~12.43]{BPR06}, $\lim_\xi (D_\xi)$ is semi-algebraically connected 
and $x \in \lim_\xi (D_\xi)$. This implies that $\lim_\xi (D_\xi) \subset D$.
 \end{proof}

\begin{proof}[Proof of Corollary~\ref{existence_critical_points}]
Analogous to the proof of Corollary~\ref{bounded_critical_points}, $V_{\xi}$ is non-empty by Proposition~\ref{convergence_of_varieties}. Furthermore, by Theorem~\ref{bounded_connected_component}, $V_{\xi}$ has a closed and bounded semi-algebraically connected component $D_{\xi}$ whose image under $F$ is closed and bounded. Further, by definition, $D_{\xi}$ is a 
semi-algebraically
connected component of $V$, which implies that $F$ must have a local minimizer on $V_{\xi}$.   
\end{proof}

\subsubsection{Proofs of Theorems~\ref{constrained_real_isolated_critical_points}-\ref{constrained_real_non-degenerate_critical_points}}
Recall the notions of complex projective critical points and complex projective KKT points introduced in Section~\ref{KKT_Conditions_Lagrange}. In what follows, we establish conditions, formulated in terms of complex projective KKT points, that guarantee that $F$ has finitely many critical points on $V_\xi$. 
\hide{
\begin{proposition}\label{constrained_generic_isolated_critical_points}
Let $F,G \in \C[X_1,\ldots,X_n]$ and $V_{\C}:=\zero(\mathcal{P},\C^n)$, where
\begin{align*}
\mathcal{P}:=\{P_1,\ldots,P_s\} \subset \C[X_1,\ldots,X_n].
\end{align*}
\noindent
Suppose that $V_{\C}$ is non-empty and non-singular, and there exists $\xi \in \C$ such that 
\begin{align*}
\xi F + (1-\xi) G 
\end{align*}
has finitely many projective KKT points on $V_{\C}$. Then for all but finitely many $\xi$ in $\C$, $\xi F + (1-\xi) G$ has finitely many projective critical points on $V_{\C}$.
\end{proposition}
\begin{proof}
For a fixed $\xi \in \mathbb{C}$, the set of projective critical points of $\xi F + (1-\xi) G$ on $V_{\mathbb{C}}$ is given by
\begin{align}\label{homogenized_critical_conditions_on_manifold}
\begin{cases}
    Q^H_j = 0, \quad &j=1,\ldots,n,\\
    P^H_i = 0, \quad &i=1,\ldots,s,
\end{cases}
\end{align}
where 
\begin{align*}
\begin{cases}
    Q_{j}:=\cfrac{\partial (\xi F + (1-\xi) G)}{\partial X_j} - \sum_{i=1}^s u_i \cfrac{\partial P_i}{\partial X_j}  = 0,  &j=1,\ldots,n,\\
    P_i = 0,  &i=1,\ldots,s,
\end{cases}
\end{align*}
and $P^H_i,Q_j^H \in \mathbb{C}[\xi][X_0,\ldots,X_n,U_0,\ldots,U_s]$ are bi-homogeneous polynomials. By the assumption, there exists a $\xi \in \mathbb{C}$ such that~\eqref{homogenized_critical_conditions_on_manifold} has finitely many zeros in $\mathbb{P}_{n}(\mathbb{C}) \times \mathbb{P}_{s}(\mathbb{C})$ (see Remark~\ref{infinitely_many_Lagrange_multipliers}). Further, by the upper semi-continuity of the dimension of the fibers of the projection map $\pi: \mathbb{P}_{n}(\mathbb{C}) \times \mathbb{P}_{s}(\mathbb{C}) \times \mathbb{C} \to \mathbb{C}$, the dimension of $\pi^{-1}(\cdot)$ does not drop in a neighborhood of $\xi$, implying that~\eqref{homogenized_critical_conditions_on_manifold} has isolated zeros in a small neighborhood of $\xi$. 

\vspace{5px}
\noindent
Finally, define the constructible set
\begin{align*}
\Gamma:=\Big\{(x,u,\xi) \in \mathbb{P}_{n}(\mathbb{C}) \times \mathbb{P}_{s}(\mathbb{C}) \times \mathbb{C} \mid  (Q_j^H= 0, \ j=1,\ldots,n) \
\wedge \ (P^H_i = 0, \ i=1,\ldots,s)\\
\wedge \ \big((x,u) \ \text{is not isolated}\big) \Big\}.
\end{align*}
By applying the Chevalley's theorem once more, we conclude that $\pi(\Gamma)$ is either finite or the complement of a finite set. Since the interior of $\pi(\Gamma)$ is nonempty, it follows that $\mathbb{C} \setminus \pi(\Gamma)$ must be finite. Now, the result extends to $\C$ using the Lefschetz Transfer Principle.   
\end{proof}
\noindent
When we incorporate non-degeneracy conditions in the proof of Proposition~\ref{constrained_generic_isolated_critical_points}, then not only all projective critical points of $\xi F + (1-\xi) G$ on $V_{\C}$ are isolated for all but finitely many complex $\xi$, but they are also non-degenerate.
\begin{proposition}\label{constrained_generic_non-degenerate_critical_points}
Let $F$, $G$, and $V_{\C}$ be as in Proposition~\ref{constrained_generic_isolated_critical_points}, and suppose that there exists $\xi \in \C$ such that each complex projective KKT point of $\xi F + (1-\xi) G$ on $V_{\C}$ yields a non-degenerate critical point. Then for all but finitely many $\xi$ in $\C$, all projective critical points of $\xi F + (1-\xi) G$ on $V_{c}$ are non-degenerate.
\end{proposition}
\begin{proof}
We define the projective algebraic set
\begin{align*}
\Gamma:=\Big\{(x,u,\xi) \in \mathbb{P}_{n}(\mathbb{C}) \times \mathbb{P}_{s}(\mathbb{C}) \times \mathbb{C} \mid  Q_j^H&= 0, \ j=1,\ldots,n, \\
P^H_i &= 0, \ i=1,\ldots,s,\\
\rank(J(\{Q^H_1,\ldots,Q^H_{n},P^H_1,\ldots,P^H_s\})) &\le n+s-1\Big\},
\end{align*}
 where $J(\{Q^H_1,\ldots,Q^H_{n},P^H_1,\ldots,P^H_s\})$ is the Jacobian of the equations in~\eqref{homogenized_critical_conditions_on_manifold}. Since $\Gamma$ is a closed algebraic subset of $\mathbb{P}_{n}(\mathbb{C}) \times \mathbb{P}_{s}(\mathbb{C}) \times \mathbb{C}$ and the projection map $\pi:\mathbb{P}_{n}(\mathbb{C}) \times \mathbb{P}_{s}(\mathbb{C}) \times \mathbb{C} \to \mathbb{C}$ is regular, the image $\pi(\Gamma)$ is Zariski closed. By the assumption, there exists a $\xi \in \mathbb{C} \setminus \pi(\Gamma)$, which implies that $\pi(\Gamma)$ must be finite. 
\end{proof}
}

\hide{
\begin{proof}[Proof of Theorem~\ref{constrained_real_non-degenerate_critical_points}]
By Proposition~\ref{constrained_generic_non-degenerate_critical_points}, there exists a finite set $\Gamma \subset \R$ such that for all $\varepsilon \in \R \setminus \Gamma$, $F + \varepsilon P = 0$ has finitely many non-degenerate critical points in $\R^n$. Now, suppose that $F$ has degenerate critical points on $V_{\xi}$. Then there exists a singular KKT solution $(x_{\xi},u_{\xi}) \in \R \la \xi \ra^n \times \R \la \xi \ra$ to the system 
\begin{align*}
\begin{cases}
\displaystyle \cfrac{\partial F}{\partial X_j} - U \cfrac{\partial P}{\partial X_{j}} = 0, \ \ j=1,\ldots,n,\\
    P - \xi = 0.
\end{cases}
\end{align*}
Since $F + \varepsilon P = 0$ has finitely many non-degenerate critical points for all $\varepsilon \in \R \setminus \Gamma$, we must have $u_{\xi} \in \R$. However, this together with the linear independence of $\grad(F)$ and $\grad(P)$ implies that $x_{\xi}  = 0$, which is a contradiction. T
\end{proof}
}

\begin{proof}[Proof of Theorem~\ref{constrained_real_isolated_critical_points}]
First, we demonstrate that the non-singularity condition of Theorem~\ref{constrained_real_isolated_critical_points} is not vacuous. The set of critical points of $P$ forms an algebraic subset of $\C^n$, and by the Chevalley's Theorem~\cite[I.8, Corollary~2]{Mum99}, its image under the polynomial map $P:\C^n \to \C$, i.e., the set of critical values of $P$, is a constructible subset of $\C^n$. Therefore, the set of critical values is either finite or the complement of a finite subset of $\C^n$. However, by the classical Sard Theorem (see e.g.,~\cite[Theorem~6.10]{L13}), the set of critical values of $P$ has Lebesgue measure zero in $\C$. This implies that the set of critical values of $P$ must be finite, i.e., for all but finitely many $c \in \C$, $V_{c}$ is non-singular. Independently, the Semi-algebraic Sard Theorem~\cite[Theorem~9.6.2]{BCR98} implies that the set of critical values of $P:\R^n \to \R$ is a semi-algebraic subset of $\R$ of dimension zero, implying that $V_{\xi}$ is non-singular.

\vspace{5px}
\noindent
Now, the complex projective KKT points of $F$ in $\mathbb{P}_n(\C) \times \mathbb{P}_1(\C)$ on $V_{c}$ (see Definition~\ref{projective_KKT_point}) are the zeros of
\begin{align}\label{projective_KKT_system}
\Bigg\{
\bigg\{\bigg(\displaystyle \cfrac{\partial F}{\partial X_j} - U \cfrac{\partial P}{\partial X_{j}}\bigg)^H\bigg\}_{j=1,\ldots,n}, P^H - c X_0^\alpha = 0\Bigg\},
\end{align}
where $\alpha = \deg(P)$. By the assumption,~\eqref{projective_KKT_system} has finitely many zeros in $\mathbb{P}_{n}(\C) \times \mathbb{P}_{1}(\C)$ (see Remark~\ref{infinitely_many_Lagrange_multipliers}). Further, by the upper semi-continuity of the dimension of the fibers of the projection map $\pi: \mathbb{P}_{n}(\C) \times \mathbb{P}_{1}(\C) \times \C \to \C$~\cite[I.7, Corollary~3]{Mum99}, the dimension of $\pi^{-1}(\cdot)$ does not drop in a neighborhood of $c$, implying that~\eqref{projective_KKT_system} has isolated zeros in a small neighborhood of $c$.

\vspace{5px}
\noindent
Finally, we define the constructible set
\begin{align*}
\Gamma:=\Bigg\{(x,u,c) \in \mathbb{P}_{n}(\C) \times \mathbb{P}_{1}(\C) \times \C \mid  &\bigg(\bigg(\displaystyle \cfrac{\partial F}{\partial X_j} - U \cfrac{\partial P}{\partial X_{j}}\bigg)^H(x,u)= 0, \ j=1,\ldots,n\bigg) \\
&\wedge \ (P^H(x) - c X_0^\alpha = 0)\\
&\wedge \ \big((x,u) \ \text{is not isolated}\big) \Bigg\}.
\end{align*}
By applying the Chevalley's Theorem again, we conclude that $\pi(\Gamma)$ is either finite or the complement of a finite set. Since the interior of $\pi(\Gamma)$ is nonempty, it follows that $\C \setminus \pi(\Gamma)$ must be finite. By non-singularity of $V_{\xi}$, all this implies that $F$ has finitely many critical points on $V_{\xi}$.   
\end{proof}

\begin{proof}[Proof of Theorem~\ref{constrained_real_non-degenerate_critical_points}]
Consider the polynomial system~\eqref{projective_KKT_system}. We define the complex algebraic set

\begin{align*}
\Gamma:=\Bigg\{(x,u,c) &\in \mathbb{P}_{n}(\C) \times \mathbb{P}_{1}(\C) \times \C\mid\\
&\bigg(\bigg(\displaystyle \cfrac{\partial F}{\partial X_j} - U \cfrac{\partial P}{\partial X_{j}}\bigg)^H(x,u)= 0, \ j=1,\ldots,n,\\ 
&P^H(x) - c X_0^\alpha = 0,\\
&\rank\bigg(J\bigg(\bigg\{\bigg\{\displaystyle \bigg(\cfrac{\partial F}{\partial X_j} - U \cfrac{\partial P}{\partial X_{j}}\bigg)^H\bigg\}_{j=1,\ldots,n},P^H - c X_0^\alpha\bigg\}\bigg)(x,u)\bigg) \le n\Bigg\}.
\end{align*}
Since $\Gamma$ is a closed algebraic subset of $\mathbb{P}_{n}(\C) \times \mathbb{P}_{1}(\C) \times \C$ and the projection map $\pi:\mathbb{P}_{n}(\C) \times \mathbb{P}_{1}(\C) \times \C \to \C$ is regular, the image $\pi(\Gamma)$ is Zariski closed~\cite[Theorems~1.9 and~1.10]{S13}. By the assumption, there exists a $\xi \in \C \setminus \pi(\Gamma)$, which implies that $\pi(\Gamma)$ must be finite. Hence, the zero set of~\eqref{projective_KKT_system} is non-singular for all but finitely many $c \in \C$, which means that all critical points of $F$ on $V_{\xi}$ are non-degenerate.

\end{proof}

\hide{
First, we prove a spacial case of Proposition~\ref{constrained_generic_isolated_critical_points}. 
\begin{proposition}\label{generic_isolated_critical_points}
Let $F,G \in \C[X_1,\ldots,X_n]$, and suppose that there exists a $\xi \in \C$ such that $\xi F + (1-\xi) G$ has only isolated projective critical points. Then for all but finitely many $\xi \in \C$, all projective critical points of $\xi F + (1-\xi) G$ are isolated. 
\end{proposition}

\begin{proof}
Note that the problem of deciding if $\xi F + (1-\xi) G$ has isolated projective critical points for all but finitely many $\xi$ is a constructible subset of $\mathbb{P}_n(\C) \times \C$. Thus, by the \textit{Lefschetz Transfer Principle}~\cite[Theorem~1.26]{BPR06}, we only need to prove that this result is true over $\mathbb{C}$.

\vspace{5px}
\noindent
For each fixed $\xi \in \mathbb{C}$, projective critical points of $\xi F+(1-\xi) G$ are the zeros of
\begin{align}\label{homogenized_critical_conditions}
     Q_j^H = 0, \quad j=1,\ldots,n,
 \end{align}
in $\mathbb{P}_n(\C)$, where 
 \begin{align*}
  Q_j:=\frac{\partial \big(\xi F+(1-\xi)G\big)}{\partial X_j}= 0, \ \ j=1,\ldots,n,    
 \end{align*}
 and $Q^H_j \in \mathbb{C}[\xi][X_0,\ldots,X_n]$ is a homogeneous polynomial. By the assumption, there exists a $\xi \in \mathbb{C}$ such that~\eqref{homogenized_critical_conditions} has finitely many zeros in $\mathbb{P}_n(\mathbb{C})$. Further, by the upper semi-continuity of the dimension of the fibers of the projection map $\pi: \mathbb{P}_n(\mathbb{C}) \times \mathbb{C} \to \mathbb{C}$~\cite[I.8, Corollary~3]{Mum99}, the dimension of $\pi^{-1}(\cdot)$ does not drop in a neighborhood of $\xi$, implying that~\eqref{homogenized_critical_conditions} has only isolated zeros for each $\xi'$ in a small neighborhood of $\xi$. 

\vspace{5px}
\noindent
Now, we define the constructible set
\begin{align*}
\Gamma:=\Big\{(x,\xi) \in \mathbb{P}_{n}(\mathbb{C}) \times \mathbb{C} \mid  (Q_j^H= 0, \ j=1,\ldots,n) \wedge (x \ \text{is not an isolated zero})\Big\}.
\end{align*}
By the Chevalley's theorem~\cite[I.8, Corollary~2]{Mum99}, the image $\pi(\Gamma)$ is a constructible subset of $\mathbb{C}$, i.e., $\pi(\Gamma)$ is either finite or the complement of a finite set. However, we just showed that the interior of $\mathbb{C} \setminus \pi(\Gamma)$ is non-empty, meaning that $\pi(\Gamma)$ must be finite. This completes the proof.
\end{proof}
}

\hide{
\begin{proposition}\label{generic_non-degenerate_critical_points}
Let $F,G \in \C[X_1,\ldots,X_n]$, and suppose that there exists a $\xi \in \C$ such that $\xi F + (1-\xi) G$ has only non-degenerate projective critical points. Then for all but finitely many $\xi \in \C$, all projective critical points of $\xi F + (1-\xi) G$ are non-degenerate.
\end{proposition}
\begin{proof}
Analogous to Proposition~\ref{generic_isolated_critical_points}, we only need to prove the statement over $\mathbb{C}$. 

\vspace{5px}
\noindent
By the assumption, there exists a $\xi \in \mathbb{C}$ such that all zeros of~\eqref{homogenized_critical_conditions} are non-singular. We define the algebraic set
\begin{align*}
\Gamma:=\Big\{(x,\xi) \in \mathbb{P}_{n}(\mathbb{C}) \times \mathbb{C} \mid  Q_j^H= 0, \ j=1,\ldots,n, \ \rank(J(\{Q^H_1,\ldots,Q^H_n\})) \le n-1\Big\},
\end{align*}
 where $J(\{Q^H_1,\ldots,Q^H_n\})$ is the Jacobian of the equations in~\eqref{homogenized_critical_conditions}. Since $\Gamma$ is a closed algebraic subset of $\mathbb{P}_{n}(\mathbb{C}) \times \mathbb{C}$~\cite[Theorems~1.9 and~1.10]{S13} and the projection map $\pi:\mathbb{P}_{n}(\mathbb{C}) \times \mathbb{C} \to \mathbb{C}$ is regular, the image $\pi(\Gamma)$ is Zariski closed, i.e., $\pi(\Gamma)$ is either empty, non-empty and finite, or $\mathbb{C}$. By the assumption, $\mathbb{C} \setminus \pi(\Gamma) \neq \emptyset$, which implies that $\pi(\Gamma)$ must be finite. 
\end{proof}
}
\noindent
Now, we show that non-degeneracy of critical points of $F$ on a non-singular $V$ implies the existence of non-degenerate critical points of $F$ on $V_{\xi}$. 
\begin{proof}[Proof of Theorem~\ref{Morse_on_smooth_hypersurface}]
The Hessian of $F$ at $\bar{x} \in V$ is given by
\begin{align*}
\hess_F(\bar{x}):= J(\Phi)(\bar{x})^T \Big[\cfrac{\partial^2 F}{\partial X_{k} \partial X_{\ell}}(\bar{x})\Big] J(\Phi)(\bar{x}) + \sum_{i=1}^{s} \frac{\partial F}{\partial X_i}(\bar{x}) \Big[\cfrac{\partial^2 \phi_i}{\partial X_{k} \partial X_{\ell}}(\bar{x})\Big]^2. 
\end{align*}
By taking the derivatives of $P_i= 0$, we get 
\begin{align*}
J(\Phi)^T \Big[\cfrac{\partial^2 P_i}{\partial X_{k} \partial X_{\ell}}\Big] J(\Phi) + \sum_{j=1}^{s} \frac{\partial P_i}{\partial X_j} \Big[\cfrac{\partial^2 \phi_i}{\partial X_{k} \partial X_{\ell}}\Big]^2 = 0, \quad i=1,\ldots,s,
\end{align*}
which, by~\eqref{Lagrange_multiplier}, yields 
\begin{align*}
\hess_F(\bar{x})= J(\Phi)(\bar{x})^T\bigg( \Big[\cfrac{\partial^2 F}{\partial X_{k} \partial X_{\ell}}(\bar{x})\Big] - \sum_{i=1}^s \bar{u}_i \Big[\cfrac{\partial^2 P_i}{\partial X_{k} \partial X_{\ell}}(\bar{x})\Big]\bigg)  J(\Phi)(\bar{x}). 
\end{align*}
\noindent
All this implies that at the KKT point $(\bar{x},\bar{u})$ the Jacobian matrix
\begin{align*}
\begin{bmatrix}
\Big[\cfrac{\partial^2 F}{\partial X_{k} \partial X_{\ell}}(\bar{x})\Big] - \sum_{i=1}^s \bar{u}_i \Big[\cfrac{\partial^2 P_i}{\partial X_{k} \partial X_{\ell}}(\bar{x})\Big] & J(\{P_1,\ldots,P_s\})(\bar{x})^T\\
        J(\{P_1,\ldots,P_s\})(\bar{x}) & 0 
\end{bmatrix}
\end{align*}
of~\eqref{KKT_system_complex} is non-singular.

\vspace{5px}
\noindent
By the Semi-algebraic Sard Theorem~\cite[Theorem~9.6.2]{BCR98}, $\{P_1 - \xi_1,\ldots,P_s-\xi_s\}$ is in general position. Thus, $V_{\xi}$ is non-singular and non-empty (see Proposition~\ref{convergence_of_varieties}). Now, by the application of the Semi-algebraic Implicit Function Theorem~\cite[Corollary~2.9.8]{BCR98}  to~\eqref{KKT_system_complex}, there exist an open set $U \subset \R^s$ and a semi-algebraic mapping $(x(\xi),u(\xi)):U \to \R^{n} \times \R^s$ such that for all $\xi \in U$, $(x(\xi),u(\xi))$ satisfies
\begin{align}\label{perturbed_KKT}
\begin{cases}
   \displaystyle \cfrac{\partial F}{\partial X_j} - \sum_{i=1}^{s} U_i \cfrac{\partial P_i}{\partial X_{j}} = 0, \ \ j=1,\ldots,n,\\
    P_i=\xi_i, \ \ i=1,\ldots,s,
\end{cases}
\end{align}
and $(x(0),u(0)) = (\bar{x},\bar{u})$. Furthermore, by the continuity of $(x(\xi),u(\xi))$, the Jacobian of~\eqref{perturbed_KKT} at $(x(\xi),u(\xi))$ remains non-singular for sufficiently small $\xi > 0$, and thus $x(\xi)$ is non-degenerate. 
\end{proof}
\hide{
\begin{remark}
We should clarify that Theorem~\ref{Morse_on_smooth_hypersurface} only assures the existence of non-degenerate critical points. However, not every critical point of $F$ on $V_\xi$ is necessarily non-degenerate. For example, consider $F=X_1X_2$ and $G = X_1^2 X_2^2$, where $F$ has a unique non-degenerate critical point $(0,0)$. It is easy to see that $\xi F+( 1-\xi) G$ has unbounded degenerate critical points for every non-zero $\xi$, although $(0,0)$ is still an isolated non-degenerate critical point.
\end{remark}}

\begin{lemma}\label{equivalent_systems}
Let $V = \zero\big( \prod_{i=1}^s P_{i},\R^n\big)$ be non-empty, 
where $\mathcal{P} = \{P_1,\ldots,P_s\} \subset \R[X_1,\ldots,X_n]$ is in general position, and let $F \in \R[X_1,\ldots,X_n]$. Assume that $F$ has a bounded critical point on $V_{\xi}$. Then $F$ has a bounded critical point on 
\begin{align*}
\zero(\{P_i-\xi_i\}_{i \in I},\R\la \xi_1,\ldots,\xi_s\ra^n)    
\end{align*}
for some $I \subset \{1,\ldots,s\}$. If $F$ has a non-degenerate critical point on $V$ where all corresponding Lagrange multipliers are positive, then $F$ has a bounded critical point on $V_{\xi}$.      
\end{lemma}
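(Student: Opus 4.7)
The lemma has two directions, and I would handle them separately, relying on the theorems established earlier in the excerpt.

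\emph{Forward direction (bounded critical point on $V_\xi$ yields one on an intersection).} Let $x_\xi \in V_\xi$ be a bounded critical point, and set $\bar{x} = \lim_\xi (x_\xi) \in V$. Theorem~\ref{convergence_of_critical_points} immediately gives that $\bar{x}$ is a critical point of $F$ on $V$ with respect to its canonical Whitney stratification. Let $I = \{i \in \{1,\ldots,s\} : P_i(\bar{x}) = 0\}$. By Proposition~\ref{Whitney_stratification_union_of_sets}, the stratum containing $\bar{x}$ is an open neighborhood of $\bar{x}$ in $M_0 := \bigcap_{i \in I} \zero(P_i,\R^n)$, which is non-singular at $\bar{x}$ because $\mathcal{P}$ is in general position. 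Hence there are unique Lagrange multipliers $\bar{u}_i \in \R$ ($i \in I$) satisfying $dF(\bar{x}) = \sum_{i \in I} \bar{u}_i\, dP_i(\bar{x})$ together with $P_i(\bar{x}) = 0$ for $i \in I$.

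Next I consider the polynomial system in variables $(x,u,\eta) \in \R^n \times \R^{|I|} \times \R^{|I|}$:
\[
dF(x) - \sum_{i \in I} u_i\, dP_i(x) = 0, \qquad P_i(x) - \eta_i = 0 \ \text{ for } i \in I,
\]
whose zero locus $Z$ contains $(\bar{x},\bar{u},0)$. The existence of the original $x_\xi$ provides, via $\lambda^\xi_i := u_\xi \xi / P_i(x_\xi)$, an \emph{approximate} solution to this system in $\R\la\xi\ra$, where the error $\sum_{j \notin I}\lambda^\xi_j\, dP_j(x_\xi)$ is infinitesimal; this shows that $(\bar{x},\bar{u},0)$ is not an isolated point of $Z$ in the $\eta$-direction. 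Applying the Curve Selection Lemma (or equivalently a Puiseux parametrization of a semi-algebraic branch of $Z$ through $(\bar{x},\bar{u},0)$ along which $\eta \not\equiv 0$), I obtain a semi-algebraic arc $t \mapsto (x(t), u(t), \eta(t))$ with $x(0)=\bar{x}$, $\eta(0)=0$, and $\eta(t) \neq 0$ for $t > 0$. Reparametrizing in terms of the independent infinitesimals of the iterated real closed field $\R\la\xi_1,\ldots,\xi_s\ra$, the $\eta_i(t)$ become a tuple $(\xi_i)_{i \in I}$ of infinitesimals, and the corresponding $x$-component is a bounded critical point of $F$ on $\zero(\{P_i-\xi_i\}_{i \in I},\R\la\xi_1,\ldots,\xi_s\ra^n)$ which $\lim$-maps back to $\bar{x}$.

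\emph{Reverse direction (non-degenerate critical point on $V$ with positive multipliers yields one on $V_\xi$).} This is precisely the hypothesis of Theorem~\ref{Morse_on_transversal}, which is stated for the hypersurface $V = \zero(\prod_{i=1}^s P_i,\R^n)$ under general position of $\mathcal{P}$ and positivity of the Lagrange multipliers, and directly produces a bounded critical point $x_\xi \in \Crit(V_\xi,F)\cap \R\la\xi\ra_b^n$ with $\lim_\xi (x_\xi) = \bar{x}$. I would simply invoke this theorem.

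\emph{Main obstacle.} The delicate part is the forward direction, specifically justifying that the curve extracted in $Z$ through $(\bar{x},\bar{u},0)$ has a non-trivial $\eta$-component, i.e.\ that $(\bar{x},\bar{u},0)$ is not isolated as we move in the $\eta$-direction. The natural way to certify this is to use $x_\xi$ itself: writing the critical condition on $V_\xi$ as $dF(x_\xi) = \sum_{j=1}^s (u_\xi\xi/P_j(x_\xi))\, dP_j(x_\xi)$ and analyzing the orders $\order(u_\xi\xi/P_j(x_\xi))$ via the assumption $\bar{x} = \lim_\xi (x_\xi)$, one verifies that the multipliers for $j \in I$ tend to $\bar{u}_i$ while those for $j \notin I$ are infinitesimal, so $(x_\xi,(\lambda^\xi_i)_{i\in I},(P_i(x_\xi))_{i\in I})$ furnishes a bounded sequence in $Z$ with $\eta$-coordinate converging to $0$ but nonzero. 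This places $(\bar{x},\bar{u},0)$ in the closure of the open stratum of $Z$ over $\eta \neq 0$, unlocking the Curve Selection step. The sign-handling needed when the $\xi_i$ are required to be positive infinitesimals is absorbed into the choice of branch of the Puiseux parametrization.
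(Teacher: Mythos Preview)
Your reverse direction is circular. In the paper, Theorem~\ref{Morse_on_transversal} is \emph{deduced from} Lemma~\ref{equivalent_systems}: its proof reads ``it follows from Lemma~\ref{equivalent_systems} that $F$ has a bounded critical point $x_\xi$ on $V_{\xi}$.'' So invoking Theorem~\ref{Morse_on_transversal} to establish the second assertion of the lemma assumes what is to be proved. The actual argument in the paper starts from Theorem~\ref{Morse_on_smooth_hypersurface} applied to the \emph{non-singular} set $\zero(\{P_1,\ldots,P_s\},\R^n)$ (not the product hypersurface) to produce a KKT point $(x''_\varepsilon,u''_\varepsilon)$ over $\R\la\varepsilon_1,\ldots,\varepsilon_s\ra$, and then uses the Semi-algebraic Implicit Function Theorem on the coupling equations $\varepsilon_i\, u''_i(\varepsilon)=\xi$ (non-singular at $\varepsilon=0$ precisely because $\bar u_i>0$) to solve $\varepsilon_i=h_i(\xi)$; composing, one lands on $\zero\big(\prod_i P_i-\prod_i h_i(\xi),\R\la\xi\ra^n\big)$, and a final inversion of $h(\xi)=\prod_i h_i(\xi)$ gives a bounded critical point on $V_\zeta$.

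Your forward direction also takes a detour the paper avoids. You pass to $\bar x=\lim_\xi(x_\xi)$ via Theorem~\ref{convergence_of_critical_points} and then try to re-enter the perturbed regime through Curve Selection. The paper instead observes directly that the KKT identity on $V_\xi$,
\[
dF(x_\xi)=\sum_{i=1}^s \frac{\xi u_\xi}{P_i(x_\xi)}\,dP_i(x_\xi),\qquad \prod_i P_i(x_\xi)=\xi,
\]
already \emph{is} a KKT identity on the intersection $\bigcap_i\{P_i=P_i(x_\xi)\}$ with multipliers $\xi u_\xi/P_i(x_\xi)$; no passage to the limit or curve extraction is needed. Your Curve Selection step has a gap anyway: the arc you obtain is one-dimensional in a single parameter $t$, and ``reparametrizing in terms of the independent infinitesimals $\xi_1,\ldots,\xi_s$'' does not produce a point over the iterated field $\R\la\xi_1,\ldots,\xi_s\ra$ when $|I|>1$. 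Moreover, the point $(x_\xi,(\lambda^\xi_i)_{i\in I},(P_i(x_\xi))_{i\in I})$ you construct is only an \emph{approximate} element of $Z$ (the error $\sum_{j\notin I}\lambda^\xi_j\,dP_j(x_\xi)$ is infinitesimal but nonzero), so it does not by itself place $(\bar x,\bar u,0)$ in the closure of $Z\cap\{\eta\neq 0\}$, which is what Curve Selection requires.
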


\begin{proof}
Let $x_{\xi}$ be a bounded critical point of $F$ on $V_{\xi}$, and assume without loss of generality that $\lim_{\xi} (P_i(x_{\xi})) = 0$ for all $i=1,\ldots,s$. Since $V_{\xi}$ is non-singular, there exists a KKT point $(x_\xi,u_\xi) \in \R\la \xi \ra^n \times \R\la \xi \ra^s$ such that
\begin{align*}
\begin{cases}
\displaystyle \cfrac{\partial F}{\partial X_j}(x_\xi) -  \sum_{i=1}^{s} u_{\xi} \prod_{\substack{\ell=1 \\ \ell \neq i}}^s P_{\ell}(x_\xi) \cfrac{\partial P_i}{\partial X_{j}}(x_\xi) = 0, \ \ j=1,\ldots,n,\\
    \prod_{i=1}^s P_i(x_\xi)= \xi,
\end{cases}
\end{align*}
which can be simplified to
\begin{align*}
\begin{cases}
\displaystyle \cfrac{\partial F}{\partial X_j}(x_\xi) -  \sum_{i=1}^{s} \frac{\xi u_{\xi}}{P_i(x_{\xi})}  \cfrac{\partial P_i}{\partial X_{j}}(x_\xi) = 0, \ \ j=1,\ldots,n,\\
    \prod_{i=1}^s P_i(x_\xi)= \xi.
\end{cases}
\end{align*}  
Then it is easy to see that $(x_\xi,u_\xi)$ satisfies
\begin{align}\label{Intermediate_System}
\begin{cases}
\displaystyle \cfrac{\partial F}{\partial X_j} -  \sum_{i=1}^{s} \frac{\xi U}{P_i}\cfrac{\partial P_i}{\partial X_{j}} = 0, \ \ j=1,\ldots,n,\\
     P_i= P_i(x_\xi), \ \ i=1,\ldots,s.
\end{cases}
\end{align} 
Since $V$ is non-empty, $\lim_{\xi} (P_i(x_\xi)) = 0$ for $i=1,\ldots,s$, and $\mathcal{P}$ is in general position, Proposition~\ref{convergence_of_varieties} ensures that 
\begin{align}\label{non_singular_intermediate_algebraic_set}
\zero(\{P_1-\varepsilon_1,\ldots,P_s-\varepsilon_s\},\R\la \varepsilon_1,\ldots,\varepsilon_s\ra^n)
\end{align}
is non-empty and non-singular. All this implies that $F$ has a KKT point $(x'_\epsilon,u'_\epsilon)$ on~\eqref{non_singular_intermediate_algebraic_set}
satisfying
\begin{align}\label{alternative_system}
\begin{cases}
\displaystyle \cfrac{\partial F}{\partial X_j} -  \sum_{i=1}^{s} U_i \cfrac{\partial P_i}{\partial X_{j}} = 0, \ \ j=1,\ldots,n,\\
     P_i= \varepsilon_i, \ \ i=1,\ldots,s,
\end{cases}
\end{align} 
where $\varepsilon:=(\varepsilon_1,\ldots,\varepsilon_s)$, $(x'_\varepsilon)_i: = x_{\prod_{i=1}^s \varepsilon_i}$, $(u'_\varepsilon)_i:=u_{\prod_{i=1}^s \varepsilon_i}\prod_{\substack{\ell=1 \\ \ell \neq i}}^s \varepsilon_{\ell}$. Consequently, $F$ has a critical point on ~\eqref{non_singular_intermediate_algebraic_set}.

\vspace{5px}
\noindent
Now, assume without loss of generality that $F$ has a non-degenerate critical point $\bar{x}$ on $\zero(\{P_1,\ldots,P_s\},\R^n)$ (which is non-singular). By Theorem~\ref{Morse_on_smooth_hypersurface}, there exists a unique bounded KKT point 
\begin{align*}
(x''_{\varepsilon},u''_{\varepsilon}) \in \R\la \varepsilon_1,\ldots,\varepsilon_s \ra^n \times \R\la \varepsilon_1,\ldots,\varepsilon_s \ra^s
\end{align*}
satisfying ~\eqref{alternative_system} such that $x''_{\varepsilon}$ is a non-degenerate critical point of $F$ on~\eqref{non_singular_intermediate_algebraic_set} and $\lim_\varepsilon (x''_\varepsilon) = \bar{x}$, $\lim_\varepsilon (u''_\epsilon) = \bar{u}$. Further, by comparison with~\eqref{Intermediate_System}, $x'_{\varepsilon}$ corresponds to a critical point of $F$ on $V_{\xi}$ if 
\begin{align}\label{epsilon_xi_system}
    u''_i(\varepsilon_1,\ldots,\varepsilon_s) P_i(x''(\varepsilon_1,\ldots,\varepsilon_s))  &= \xi U ,\quad i=1,\ldots,s,  
\end{align}
or equivalently,
\begin{align*}
    u''_i(\varepsilon_1,\ldots,\varepsilon_s) \varepsilon_i  &= \xi U, \quad i=1,\ldots,s.
\end{align*}
Notice that the Jacobian of the system~\eqref{epsilon_xi_system} with respect to $\varepsilon_1,\ldots,\varepsilon_s$ is non-singular, because $\bar{u}_i>0$ for $i=1,\ldots,s$. If we let $u=1$, then by applying the Semi-Algebraic Implicit Function Theorem~\cite[Corollary~2.9.8]{BCR98}, there exist Nash functions $h_1,\ldots,h_s$ such that $\varepsilon_i = h_i(\xi)$ for sufficiently small $\xi > 0$ and $h_i(\xi) > 0$. Let 
\begin{align*}
y(\xi):=\Big(x''_1\big(h_1(\xi),\ldots,h_s(\xi)\big),\ldots,x''_s\big(h_1(\xi),\ldots,h_s(\xi)\big)\Big).
\end{align*}
By substituting $h_i$ in~\eqref{alternative_system} we obtain
\begin{align*}
\begin{cases}
\displaystyle \cfrac{\partial F}{\partial X_j}(y(\xi)) -  \xi \sum_{i=1}^{s} \frac{1}{P_i(y(\xi))} \cfrac{\partial P_i}{\partial X_{j}}(y(\xi)) =  0, \ \ j=1,\ldots,n,\\
\prod_{i=1}^s P_i(y(\xi))= \prod_{i=1}^s h_i(\xi), 
\end{cases}
\end{align*} 
which can be simplified to
\begin{align*}
\begin{cases}
\displaystyle  \cfrac{\partial F}{\partial X_j}(y(\xi)) -  \frac{\xi}{\prod_{i=1}^s h_i(\xi)} \cfrac{\partial \prod_{i=1}^{s} P_i}{\partial X_{j}}(y(\xi)) =  0, \ \ j=1,\ldots,n,\\
\prod_{i=1}^s P_i(y(\xi))= \prod_{i=1}^s h_i(\xi), 
\end{cases}
\end{align*}
which implies that $y_\xi$ is a critical point of $F$ on $\zero\big(\prod_{i=1}^s P_i- \prod_{i=1}^s h_i(\xi),\R\la \xi \ra^n\big)$. Since $\lim_\xi (h_i(\xi)) = 0$ and $h_i(\xi) >0$, by Theorem~\ref{Monoton_Thm}, $\prod_{i=1}^s h_i(\xi)$ is strictly decreasing on $(0,a)$ where $a$ is a sufficiently small positive value. Then by the Semi-algebraic Inverse Function Theorem~\cite[Proposition~2.9.7]{BCR98}, for sufficiently small $\xi>0$, $ h(\xi):=\prod_{i=1}^s h_i(\xi)$ has a Nash inverse. All this means that $F$ has a bounded critical point $z_\zeta = y(h^{-1}(\zeta))$ on
\begin{align*}
V_\zeta:=\zero\bigg(\prod_{i=1}^s P_i - \zeta,\R\la \zeta \ra^n\bigg),
\end{align*}
and $\lim_\zeta (z_\zeta) = \bar{x}$.
\end{proof}
\noindent
Now, we prove Theorem~\ref{Morse_on_transversal}.
\begin{proof}[Proof of Theorem~\ref{Morse_on_transversal}]
Let $\bar{x} \in \Crit(V,F)$ and assume without loss of generality that $P_i(\bar{x}) = 0$ for $i=1,\ldots,s$. Since $\bar{x}$ is non-degenerate, the canonical Whitney stratification of $V$ (see Proposition~\ref{Whitney_stratification_union_of_sets}) implies that $\bar{x}$ is a non-degenerate critical point of $F$ on $\zero(\{P_1,\ldots,P_s\},\R^n)$. Now, it follows from Lemma~\ref{equivalent_systems} that $F$ has a bounded critical point $x_\xi$ on $V_{\xi}$ and $\lim_{\xi} (x_\xi) = \bar{x}$.  
\end{proof}
\noindent
Alternatively, we can exploit the conditions in Corollaries~\ref{bounded_critical_points} and~\ref{existence_critical_points} and Theorems~\ref{constrained_real_isolated_critical_points} and~\ref{constrained_real_non-degenerate_critical_points} to guarantee the existence of bounded isolated critical points.
\begin{corollary}\label{corollary_bounded_finiteness}
Let $F\in \R[X_1,\ldots,X_n]$, and suppose that $V$ satisfies the conditions of Corollary~\ref{bounded_critical_points} or Corollary~\ref{existence_critical_points}. Further, assume that the conditions of Theorems~\ref{constrained_real_isolated_critical_points} or~\ref{constrained_real_non-degenerate_critical_points} hold. Then $\Crit(V_\xi,F) \cap \R\la\xi\ra_b^n$ is non-empty and finite. In particular,
\begin{enumerate}[(i)]
    \item If the conditions of Corollary~\ref{bounded_critical_points} hold, then $\Crit(V_\xi,F) \neq \emptyset$ and $\Crit(V_\xi,F) \subset \R\la\xi\ra_b^n$.
    \item If the conditions of Theorem~\ref{constrained_real_non-degenerate_critical_points} hold, then all critical points in $\Crit(V_\xi,F)$ are non-degenerate.
\end{enumerate}
\end{corollary}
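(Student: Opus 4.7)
The plan is to simply assemble the two independent pieces of information already established: existence of a bounded critical point, supplied by Corollaries~\ref{bounded_critical_points}/\ref{existence_critical_points}, together with finiteness of $\Crit(V_\xi,F)$, supplied by Theorems~\ref{constrained_real_isolated_critical_points}/\ref{constrained_real_non-degenerate_critical_points}. Neither ingredient is used to derive the other; the corollary is essentially the intersection of the two conclusions.

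First, I would invoke the relevant existence statement. If the hypotheses of Corollary~\ref{bounded_critical_points} hold, then $\Crit(V_\xi,F)$ is non-empty and every critical point is automatically in $\R\la\xi\ra_b^n$. If instead the (weaker) hypotheses of Corollary~\ref{existence_critical_points} hold, then at least $\Crit(V_\xi,F)\cap\R\la\xi\ra_b^n$ is non-empty, because $F$ attains a local minimum on the bounded semi-algebraically connected component of $V_\xi$ supplied by Theorem~\ref{bounded_connected_component}. In either case, I would record that $\Crit(V_\xi,F)\cap\R\la\xi\ra_b^n\neq\emptyset$. Next, I would invoke Theorem~\ref{constrained_real_isolated_critical_points} (or its strengthening, Theorem~\ref{constrained_real_non-degenerate_critical_points}), which asserts that $\Crit(V_\xi,F)$ is finite under the stated complex projective KKT hypothesis. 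Combining the two gives the main claim that $\Crit(V_\xi,F)\cap\R\la\xi\ra_b^n$ is a non-empty finite set.

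For part (i), I would note that Corollary~\ref{bounded_critical_points} gives the stronger conclusion $\Crit(V_\xi,F)\subset\R\la\xi\ra_b^n$ (every critical point lies in the bounded semi-algebraic set $V_\xi\subset\R\la\xi\ra_b^n$ guaranteed by Theorem~\ref{bounded_fibers}), so the intersection with $\R\la\xi\ra_b^n$ is in fact all of $\Crit(V_\xi,F)$. For part (ii), I would appeal directly to Theorem~\ref{constrained_real_non-degenerate_critical_points}, which asserts non-degeneracy of every point of $\Crit(V_\xi,F)$ under the non-singular complex projective KKT hypothesis; combined with the existence part above, this produces a non-empty set of non-degenerate critical points, all of which are bounded over $\R$.

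The only subtlety I anticipate, and the one I would flag carefully in the write-up, is that Theorems~\ref{constrained_real_isolated_critical_points}--\ref{constrained_real_non-degenerate_critical_points} are phrased for the hypersurface $V=\zero(P,\R^n)$, whereas Corollaries~\ref{bounded_critical_points}/\ref{existence_critical_points} are phrased for $V=\zero(\mathcal{P},\R^n)$ with $\mathcal{P}=\{P_1,\ldots,P_s\}$. Thus when both hypotheses are simultaneously imposed, the natural reading is that the finiteness hypothesis is applied to the relevant defining polynomial (or the product $\prod_i P_i$ in the general-position setting, which already appears in Theorems~\ref{Morse_on_transversal} and~\ref{convergence_of_critical_points}). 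No deeper argument is required; the corollary is a bookkeeping statement combining two previously proved results.
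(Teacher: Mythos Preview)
Your proposal is correct and follows essentially the same approach as the paper, which simply states that the result ``follows immediately from'' Corollaries~\ref{bounded_critical_points} and~\ref{existence_critical_points} and Theorems~\ref{constrained_real_isolated_critical_points} and~\ref{constrained_real_non-degenerate_critical_points}. Your write-up is actually more careful than the paper's one-line proof, and the subtlety you flag about the hypersurface versus general $\mathcal{P}$ formulation is a legitimate observation that the paper leaves implicit.
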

\begin{proof}
It follows immediately from ~\ref{bounded_critical_points} and~\ref{existence_critical_points} and Theorems~\ref{constrained_real_isolated_critical_points} and~\ref{constrained_real_non-degenerate_critical_points}.
\end{proof}

\subsubsection{Proof of Theorem~\ref{convergence_of_critical_points}}
We begin by establishing sufficient conditions for the existence of points in $V_{\xi}$ (Proposition~\ref{convergence_of_varieties}). We then use these results to characterize the limit of tangent spaces of $V_\xi$ (Propositions~
\ref{convergence_of_tangent_spaces}-\ref{convergence_of_tangent_spaces_extended}).

\vspace{5px}
\noindent
The following result leverages the Monotonicity Theorem (Theorem~\ref{Monoton_Thm}) and will be used in the proof of Proposition~\ref{convergence_of_varieties}.
\begin{proposition}\label{inverse_semi-alg_func}
Let $f:(0,a) \to \R$ be a semi-algebraic function such that $f(t) > 0$ and $\lim_{t \downarrow 0} (f(t)) = 0$. Then $f$ has a Nash inverse on $(0,a')$ for sufficiently small $a'>0$.    
\end{proposition}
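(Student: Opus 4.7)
The plan is to reduce the statement to an application of the Semi-algebraic Inverse Function Theorem, which requires first showing that $f$ is $\mathcal{C}^\infty$-smooth and has nowhere-vanishing derivative on a small right-neighborhood of $0$.

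First I would apply the Monotonicity Theorem (Theorem~\ref{Monoton_Thm}) to $f$ on $(0,a)$, which yields a partition into finitely many sub-intervals on which $f$ is either constant or strictly monotone and $\mathcal{C}^k$-smooth for some integer $k>0$. On the left-most sub-interval $(0,a_1)$, $f$ cannot be constant, since such a constant would have to equal $\lim_{t \downarrow 0} f(t) = 0$, contradicting $f>0$. Moreover $f$ must be strictly increasing on $(0,a_1)$: if $f$ were strictly decreasing, then for $0 < s < t < a_1$ we would have $0 = \lim_{u \downarrow 0} f(u) \geq f(s) > f(t) > 0$, a contradiction.

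To upgrade $\mathcal{C}^k$-smoothness to $\mathcal{C}^\infty$ and to guarantee $f'>0$, I would invoke the correspondence between germs of continuous semi-algebraic functions $(0,t_0) \to \R$ and bounded algebraic Puiseux series in $\R\langle \varepsilon \rangle_b$ recalled in Section~\ref{sec:Puiseux_series}. Writing $f(t) = c_r t^{r/q} + c_{r+1} t^{(r+1)/q} + \cdots$, the hypothesis $\lim_{t \downarrow 0} f(t) = 0$ forces $r \geq 1$, while the hypothesis $f>0$ forces $c_r > 0$. Since this Puiseux series converges on some right-neighborhood of $0$, $f$ is real-analytic (hence Nash) on an interval $(0,\delta)$, and termwise differentiation gives $f'(t) = (r/q)\, c_r\, t^{r/q - 1}(1 + o(1))$, which is strictly positive on a possibly smaller interval $(0,b)$.

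Finally, I would apply the Semi-algebraic Inverse Function Theorem~\cite[Proposition~2.9.7]{BCR98} to the restriction $f|_{(0,b)}$: since $f$ is Nash and strictly monotone with $f' > 0$, its inverse $f^{-1}$ is Nash on the image $f((0,b)) = (0, f(b))$, and setting $a' := f(b)$ completes the argument. The only substantive obstacle is the passage from $\mathcal{C}^k$- to $\mathcal{C}^\infty$-smoothness, which the Monotonicity Theorem does not directly furnish but which is delivered cleanly by the Puiseux representation of one-variable continuous semi-algebraic functions.
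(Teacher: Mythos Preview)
Your proof is correct and follows essentially the same strategy as the paper---Monotonicity Theorem near $0$, then the Semi-algebraic Inverse Function Theorem~\cite[Proposition~2.9.7]{BCR98}---but you are more careful in explicitly establishing that $f$ is Nash (not merely $\mathcal{C}^k$) via its Puiseux expansion, a point the paper's terse proof glosses over. One minor caveat: your appeal to ``convergence'' of the Puiseux series and to $f$ being ``real-analytic'' is literally valid only for $\R = \mathbb{R}$; over a general real closed field one should instead invoke the Tarski--Seidenberg Transfer Principle, as the paper does.
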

\begin{proof}
By Theorem~\ref{Monoton_Thm} and the Tarski-Seidenberg Transfer Principle~\cite[Theorem~2.80]{BPR06}, $f$ is either constant or $\mathcal{C}^1$-smooth and strictly monotone near $0$. Since $f(t) > 0$ and $\lim_{t \downarrow 0} (f(t)) =0$, $f$ cannot be constant near $0$. Therefore, there exists $0 < a' \le a$ such that $f'(t) > 0$ on $(0,a')$. Now, the rest follows from the Semi-algebraic Inverse Function Theorem~\cite[Proposition~2.9.7]{BCR98}.   
\hide{
Note that the germs of semi-algebraic functions at the right of origin is isomorphic to the field of algebraic Puiseux series~\cite{BPR06}. Thus, $f$ can be described as a Puiseux series with positive order, because $\lim_{t \downarrow  0} (f(t)) = 0$. Furthermore, this Puiseux series corresponds to a branch of the graph of $f$ that is described by an affine parametrization $(t^m,\phi(t))$ for all $t \ge 0$, where $m$ is a minimal positive integer and $\phi(t)$ is a real analytic function. Since $\phi(t)$ is analytic and $\phi(t) > 0$ on $(0,\varepsilon^{1/m})$, then $\phi'(t) > 0$ on $(0,\varepsilon')$ for sufficiently small $\varepsilon'>0$%
\footnote{If we represent $\phi(t) = \sum_{i=1}^{\infty} c_i t^{\alpha+i}$ with $c_1 \neq 0$, then we must have $\alpha \ge 0$. Then $\phi(t) > 0$ for sufficiently small positive $t$ implies that $c_1 > 0$. Therefore, $\phi'(t) = \sum_{i=1}^{\infty} (\alpha+i) c_i t^{\alpha-1+i} > 0$ for sufficiently small positive $t$. }. Hence, by the Semi-algebraic Inverse Function theorem~\cite[Proposition~3.24]{BPR06}, $\phi(t)$ has a Nash inverse $\phi^{-1}$ on $(0,\varepsilon')$. Then the inverse of $f$ on $(0,\varepsilon')$ is given by $f^{-1}(x)=(\phi^{-1}(x))^m$ which is also Nash on $(0,\varepsilon')$. 
}
\end{proof}

\begin{proposition}\label{convergence_of_varieties}
Let $V$ and $V_{\xi} \subset \R\la \xi \ra^n$ be defined as in~\eqref{variety} and~\eqref{perturbed_variety}, where $V$ is non-empty. Suppose that $\Sing(V) \subset \overline{V \setminus \Sing(V)}$. Then for every $\bar{x} \in V$ there exists $x_{\xi} \in V_{\xi}$ such that $\lim_{\xi} (x_{\xi}) = \bar{x}$.
In particular, $V_\xi$ is non-empty.   
\end{proposition}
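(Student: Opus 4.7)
The plan is to split into two cases depending on whether $\bar x$ is a non-singular or a singular zero of $\mathcal P$, and to package the existence of $x_\xi$ into a first-order sentence over $\R$ that can then be transferred to $\R\la\xi\ra$ via Tarski-Seidenberg.

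For the non-singular case $\bar x \in V \setminus \Sing(\mathcal P)$, the Jacobian $J(\{P_1,\ldots,P_s\})(\bar x)$ has rank $s$, and I would invoke the Semi-algebraic Implicit Function Theorem~\cite[Corollary~2.9.8]{BCR98} to obtain a semi-algebraic (Nash) map $\sigma : U \to \R^n$ on an open neighborhood $U \subset \R^s$ of $0$, with $\sigma(0) = \bar x$ and $P_i \circ \sigma = y_i$ on $U$. Since the graph of $\sigma$ is cut out by a first-order formula over the ground field, its $\R\la\xi\ra$-realization contains the infinitesimal tuple $\xi = (\xi_1,\ldots,\xi_s)$, and $x_\xi := \sigma(\xi) \in V_\xi$ satisfies $\lim_\xi(x_\xi) = \bar x$.

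For singular $\bar x$, fix a real $r > 0$ and introduce the semi-algebraic function $\mu_r(y) := \inf\{\|x - \bar x\| : x \in \bar{B}(\bar x, r),\ P(x) = y\}$, with the convention that $\mu_r(y) = +\infty$ when the feasible set is empty. The hypothesis $\Sing(V) \subset \overline{V \setminus \Sing(V)}$ combined with the non-singular case produces the $\R$-sentence: for every real $\eta > 0$ there exists a real $\delta > 0$ such that $\mu_r(y) < \eta$ for all $y \in (0,\delta)^s$. To see this, given $\eta$ I would select a non-singular $\bar y \in V$ with $\|\bar y - \bar x\| < \eta/2$, apply the IFT at $\bar y$, and shrink its domain by continuity so the resulting section lies in $B(\bar x, r) \cap B(\bar y, \eta/2)$. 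Transferring the sentence to $\R\la\xi\ra$ forces $\mu_r(\xi)$ to be smaller than every positive real, hence infinitesimal; in particular, the feasible set at $y = \xi$ is a non-empty, closed and bounded semi-algebraic subset of $\R\la\xi\ra^n$, and the extension of the Weierstrass theorem to real closed fields produces a minimizer $x_\xi \in V_\xi$ with $\lim_\xi(x_\xi) = \bar x$. Non-emptiness of $V_\xi$ is then immediate from the non-emptiness of $V$.

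The main obstacle is the singular case: a direct implicit-function argument fails because the Jacobian is not surjective, and a purely pointwise approximation of $\bar x$ by non-singular points of $V$ does not on its own produce a single element of $\R\la\xi\ra^n$. The device of encoding the conclusion as a smallness statement for the semi-algebraic function $\mu_r$ is what lets Tarski-Seidenberg transfer interact correctly with the infinitesimal hierarchy $\xi_s \ll \cdots \ll \xi_1 \ll 1$, and Weierstrass then converts the infinitesimal infimum into an actual point of $V_\xi$.
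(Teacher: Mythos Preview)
Your argument is correct. The non-singular case is essentially identical to the paper's: both invoke the Semi-algebraic Implicit Function Theorem at $\bar x$ to produce a Nash section $\sigma$ with $P\circ\sigma=\mathrm{id}$, and then evaluate at the infinitesimal tuple~$\xi$.

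For the singular case the two proofs diverge. The paper argues that $(\bar x,0)$ lies in the closure of
\[
T=\{(x,\xi)\in\R^n\times\R^s \mid P_i(x)=\xi_i,\ \xi_i>0\},
\]
applies the Curve Selection Lemma to obtain a semi-algebraic arc $\eta(t)=(x(t),\xi_1(t),\ldots,\xi_s(t))$ with $\eta(0)=(\bar x,0)$, and then inverts each $\xi_i(t)$ via Proposition~\ref{inverse_semi-alg_func}. Your route is different and more elementary: you encode ``$V_\xi$ meets $B(\bar x,\eta)$'' as the first-order statement $\mu_r(y)<\eta$, establish it over $\R$ for all $y\in(0,\delta_\eta)^s$ using the IFT at a nearby non-singular point, transfer the formula with the \emph{real} witness $\delta_\eta$ to $\R\la\xi\ra$, and then extract a single $x_\xi$ by minimising $\|x-\bar x\|$ on the closed bounded set $\{x:\|x-\bar x\|\le r,\ P(x)=\xi\}$ (using \cite[Theorem~3.20]{BPR06}). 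One small expository point: when you write ``transferring the sentence'', make explicit that you transfer, for each fixed real $\eta$, the parameter-formula $\forall y\in(0,\delta_\eta)^s\,(\mu_r(y)<\eta)$ rather than the full $\forall\eta\exists\delta$ sentence; otherwise the $\delta$ produced in $\R\la\xi\ra$ need not dominate the $\xi_i$.

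What each buys: the paper's curve-selection route yields an explicit semi-algebraic path into $T$, which is in the spirit of the later o-minimal analogue (Proposition~\ref{convergence_of_definable_zero_sets}); your transfer-and-minimise argument avoids Curve Selection entirely, handles the iterated-infinitesimal case $s>1$ transparently (the paper's inversion step ``$y(\xi_i)=x(\xi_i^{-1})$'' is delicate when $s>1$, since a one-parameter arc cannot be reparametrised to hit the specific tuple $(\xi_1,\ldots,\xi_s)$), and packages everything into a single extremal point of $V_\xi$.
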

\begin{remark}
Notice that the condition $\Sing(V) \subset \overline{V \setminus \Sing(V)}$ enforces the existence of at least one non-singular point for $V$, which also implies that $s \le n$. 
\end{remark}
\begin{proof}
 We condition on singular and non-singular points of $V$.
\begin{enumerate}[(i)]
\item\label{smooth} Let $\bar{x} \in V$ be a non-singular point of $V$. If we assume, without loss of generality, that the leading $s$-principal submatrix of $J(\{P_1,\ldots,P_s\})$ is non-singular, then by the Semi-algebraic Implicit Function Theorem~\cite[Corollary~2.9.8]{BCR98}, there exist semi-algebraic functions $\zeta_i:U \to \R$, where $U \subset \R^s$ is a small neighborhood of $0$, such that 
\begin{align*}
(\zeta_1(\xi_1,\ldots,\xi_s),\ldots,\zeta_s(\xi_1,\ldots,\xi_s),\bar{x}_{s+1},\ldots,\bar{x}_n)
\end{align*}
for all $\xi \in U$ is a solution of the system
\begin{align*}
   P_1 - \xi_1 = 0, \ \ldots, \ P_s - \xi_s = 0, \ \ X_{s+1}-\bar{x}_{s+1} = 0,\ldots, X_n-\bar{x}_n = 0, 
\end{align*}
and $\zeta_i(0)=\bar{x}_i$ for $i=1,\ldots,s$. Thus, for every non-singular $\bar{x} \in V$ there exists $x_{\xi} \in V_{\xi}$ such that $\lim_{\xi} (x_\xi) = \bar{x}$. 

\item\label{singular} Let $\bar{x} \in V$ be singular. By the assumption, every neighborhood $U$ of $\bar{x}$ contains a non-singular point $y \in V$, and by Part~\ref{smooth}, there exists $y' \in V_{\xi} \cap U$. All this implies that $(\bar{x},0) \in \overline{T}$, where 
\begin{align*}
    T:=\{(x,\xi) \in \R^n \times \R^s  \mid P_i(x) - \xi_i = 0, \ \xi_i > 0, \ i=1,\ldots,s\}.
\end{align*}
By the Curve Selection Lemma~\cite[Theorem~3.19]{BPR06}, there exists a semi-algebraic mapping $\eta(t):=(x(t),\xi_1(t),\ldots,\xi_s(t))$ such that $\eta((0,1]) \subset T$ and $\eta(0) = (\bar{x},0)$. By Proposition~\ref{inverse_semi-alg_func}, each $\xi_i(t)$ has a Nash inverse $\xi_i^{-1}$ on $(0,t_i')$ for sufficiently small $t_i' > 0$. Thus, $y(\xi_i) = x(\xi_i^{-1})$ is a semi-algebraic function such that $y(0) = \bar{x}$.
\end{enumerate}
\end{proof}
\noindent

\begin{example}
 We should note that without the non-singularity condition of Proposition~\ref{convergence_of_varieties}, $V_{\xi}$ might be empty. For instance, if $P = -X^3 - X^2 \in \mathbb{R}[X]$, then $V=\zero(P,\mathbb{R})=\{-1,0\}$, where $0$ is a singular point of $V$. In this case, $V$ does not satisfy the condition of Proposition~\ref{convergence_of_varieties}, and it is easy to see that $\lim_{\xi} (y) \neq 0$ for any bounded $y \in V_{\xi}$.
\end{example}
\noindent
Now, we characterize the limit of tangent spaces of $V_{\xi} \subset \R\la \xi \ra^n$.
\hide{
\begin{proposition}\label{convergence_of_tangent_spaces}
Let $V$ and $V_{\xi}$ be defined as in~\eqref{variety} and~\eqref{perturbed_variety}, where $V$ is non-empty. Furthermore, consider $\{\xi_k\} \downarrow 0$ and $x(\xi_k) \in V_{\xi_k}$ such that $x(\xi_k) \to \bar{x} \in V$, and $\bar{x}$ is a non-singular point of $V$. Then we have
\begin{align}\label{tangent_space_limit}
T_{x(\xi_{k})} V_{\xi_k} \to T_{\bar{x}} Z,
\end{align}
where $Z$ is the stratum containing $\bar{x}$.
\end{proposition}

\begin{proof}
By Proposition~\ref{convergence_of_varieties}, the Semi-algebraic Sard Theorem~\cite[Theorem~9.6.2]{BCR98}, and the transversality at $\bar{x}$, $V_{\xi}$ is non-empty and non-singular for sufficiently small $\xi >0$, and
\begin{align}\label{tangent_transversal}
T_{x({\xi})} V_{\xi} = \bigcap_{i=1}^s T_{x({\xi})} \{P_i - \xi_i = 0\}. 
\end{align}

\vspace{5px}
\noindent
The proof of the second part is based on a result on convergence of sets in the sense of Painl\'eve-Kuratowski. Notice that  
\begin{align*}
T_{x({\xi})} V_{\xi_k}&= \{h \in \mathbb{R}^n \mid J(\{P_1,\ldots,P_s\})(x(\xi_k))h = 0\},\\
T_{\bar{x}} Z&=\{h \in \mathbb{R}^n \mid J(\{P_1,\ldots,P_s\})(\bar{x})h = 0\}.
\end{align*}
Since $J(\{P_1,\ldots,P_s\})\bar{x}$ has full row rank, then the result follows from~\cite[Theorem~4.32(b)]{Rock09}.
\end{proof}
}
\begin{definition}
Let $V$ be defined as in ~\eqref{variety}, and assume that $V$ is non-singular. We define the \textit{tangent space} of $V$ at $x$ as
\begin{align*}
T_{x} V= \{h \in \R^n \mid J(\{P_1,\ldots,P_s\})(x)h = 0, \quad \|h \| \le 1\}.
\end{align*}
\end{definition}
\begin{proposition}\label{convergence_of_tangent_spaces}
Let $V$ and $V_{\xi} \subset \R\la \xi \ra^n$ be defined as in~\eqref{variety} and~\eqref{perturbed_variety}, where $V$ is non-empty and non-singular. Furthermore, let $x_\xi \in V_{\xi}$ be bounded and $\lim_\xi (x_\xi) = \bar{x}$. Then we have
\begin{align*}
\lim_{\xi} (T_{x_\xi} V_{\xi}) = T_{\bar{x}} V.
\end{align*}
\end{proposition}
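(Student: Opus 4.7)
The plan is to parametrize both tangent spaces via a block decomposition of the Jacobian, then apply $\lim_\xi$ to the parametrizations. Since $V$ is non-singular, $J(\mathcal{P})(\bar{x})$ has rank $s$; after permuting coordinates, we may assume its leading $s \times s$ submatrix $J_1(\bar{x})$ is invertible, with the remaining $s \times (n-s)$ block denoted $J_2(\bar{x})$. Because $x_\xi \in \R\la\xi\ra_b^n$ with $\lim_\xi(x_\xi) = \bar{x}$, every entry of $J(\mathcal{P})(x_\xi)$ is a polynomial expression in bounded Puiseux series, hence lies in $\R\la\xi\ra_b$, and $\lim_\xi$ commutes with polynomial evaluation, giving $\lim_\xi J_i(x_\xi) = J_i(\bar{x})$ for $i = 1,2$. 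In particular $\det J_1(x_\xi) \in \R\la\xi\ra_b$ with $\lim_\xi \det J_1(x_\xi) = \det J_1(\bar{x}) \neq 0$, so $\det J_1(x_\xi)$ is a unit in the valuation ring $\R\la\xi\ra_b$. Consequently $J_1(x_\xi)^{-1}$ has entries in $\R\la\xi\ra_b$ and $\lim_\xi J_1(x_\xi)^{-1} = J_1(\bar{x})^{-1}$.

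Writing $h = (h',h'') \in \R\la\xi\ra^s \times \R\la\xi\ra^{n-s}$, the defining equation $J(\mathcal{P})(x_\xi) h = 0$ of $T_{x_\xi} V_\xi$ is equivalent to $h' = -J_1(x_\xi)^{-1} J_2(x_\xi) h''$, so the restriction $\|h\| \le 1$ identifies $T_{x_\xi} V_\xi$ with the image of
\begin{equation*}
B_\xi := \bigl\{ h'' \in \R\la\xi\ra^{n-s} \;\big|\; \|h''\|^2 + \bigl\|J_1(x_\xi)^{-1} J_2(x_\xi) h''\bigr\|^2 \le 1 \bigr\}
\end{equation*}
under the $\R\la\xi\ra$-linear map $\Phi_\xi : h'' \mapsto \bigl(-J_1(x_\xi)^{-1} J_2(x_\xi) h'', \, h''\bigr)$. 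Exactly the same description, with $\bar{x}$ in place of $x_\xi$, gives $T_{\bar{x}} V$ as the image of $B_0$ under the analogous map $\Phi_0$.

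The key point is that both $\Phi_\xi$ and the defining inequality of $B_\xi$ are built from the entries of $J_1(x_\xi)^{-1}$ and $J_2(x_\xi)$, which are bounded over $\R$ and send $\bar{x}$-values to the corresponding $\R$-entries under $\lim_\xi$. Since $B_\xi \subset \R\la\xi\ra^{n-s}$ is bounded uniformly in $\xi$ (contained in the unit ball) and the inequality defining it is polynomial with bounded coefficients converging to those of $B_0$, one has $\lim_\xi (B_\xi) = B_0$; likewise $\lim_\xi \Phi_\xi = \Phi_0$ on bounded arguments. Commuting $\lim_\xi$ with the semi-algebraic parametrization then yields $\lim_\xi (T_{x_\xi} V_\xi) = \lim_\xi \Phi_\xi(B_\xi) = \Phi_0(B_0) = T_{\bar{x}} V$.

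The main technical step is justifying this last commutation of $\lim_\xi$ with the parametrization. I would handle it by the Tarski--Seidenberg transfer principle: the statement \emph{``for every $\eta > 0$ and every $h'' \in B_0$ there exists $\tilde h'' \in B_\xi$ with $\|\Phi_\xi(\tilde h'') - \Phi_0(h'')\| < \eta$, and conversely''} is first-order in the coefficients of $J_1^{-1}, J_2$ and $x$, and it holds at $\bar{x}$ by continuity; invertibility of $J_1(x_\xi)$ (i.e.\ $\det J_1(x_\xi)$ being a unit of $\R\la\xi\ra_b$) ensures no coefficients ``blow up'' in the $\xi$-family, so the same first-order statement holds for $x_\xi$ and passes to $\lim_\xi$. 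This is the only place where non-singularity at $\bar{x}$ is essential; without it, $J_1(x_\xi)^{-1}$ could have unbounded entries and the equality could fail (as in the standard examples from stratified Morse theory where tangent-space limits strictly contain the tangent at the limit point).
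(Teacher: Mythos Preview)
Your proof is correct and follows essentially the same strategy as the paper. The paper argues the inclusion $\lim_\xi(T_{x_\xi}V_\xi)\subset T_{\bar x}V$ directly from the definition of $\lim_\xi$ (apply $\lim_\xi$ to $J(\mathcal P)(x_\xi)h=0$), and obtains the reverse inclusion by invoking the Semi-algebraic Implicit Function Theorem together with the full row rank of $J(\mathcal P)(\bar x)$; your explicit block decomposition $h'=-J_1^{-1}J_2 h''$ is precisely the content of that IFT step, just unpacked. One remark: your closing Tarski--Seidenberg paragraph is more machinery than you need --- once you have $J_1(x_\xi)^{-1}\in\R\la\xi\ra_b^{s\times s}$ with $\lim_\xi J_1(x_\xi)^{-1}=J_1(\bar x)^{-1}$, the two inclusions follow by a direct computation (given $h\in T_{\bar x}V$, take $h_\xi=\Phi_\xi(h'')$ and rescale if necessary to land in the closed unit ball), without appealing to transfer.
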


\begin{proof}
By Proposition~\ref{convergence_of_varieties} and the Semi-algebraic Sard Theorem~\cite[Theorem~9.6.2]{BCR98}, $V_{\xi}$ is non-empty and non-singular, and we have
\begin{align*}
T_{x_\xi} V_{\xi}= \{h \in \R\la \xi \ra^n \mid J(\{P_1,\ldots,P_s\})(x_\xi)h = 0, \quad \|h\|\le 1\}.
\end{align*}

\vspace{5px}
\noindent
By the definition of $\lim_{\xi}$, $\lim_\xi (T_{x_\xi} V_{\xi}) \subset T_{x} V$. Further, since $J(\{P_1,\ldots,P_s\})(\bar{x})$ has full row rank, it follows from the Implicit Function Theorem (analogous to the proof of Proposition~\ref{convergence_of_varieties}) that $T_{x} V \subset \lim_\xi( T_{x_\xi} V_{\xi})$, which completes the proof.  
\end{proof}

\noindent
Proposition~\ref{convergence_of_tangent_spaces} characterizes the limit of tangent spaces for a non-singular algebraic set. We can extend this result for the union of non-singular hypersurfaces, as follows. 
\begin{proposition}\label{convergence_of_tangent_spaces_extended}
Let $V = \zero\big( \prod_{i=1}^s P_{i},\R^n\big)$, 
where $\mathcal{P} = \{P_1,\ldots,P_s\} \subset \R[X_1,\ldots,X_n]$ is in general position. Further, let $x_\xi \in V_{\xi} = \zero\big( \prod_{i=1}^s P_{i}-\xi,\R\la \xi \ra^n\big)$ be a bounded solution and $\lim_{\xi} (x_\xi) = \bar{x}$. Then we have
\begin{align}\label{tangent_space_inclusion}
T_{\bar{x}}Z\subset \lim_{\xi}( T_{x_\xi} V_{\xi}) = \bigg\{h \in \R^n \mid\sum_{i=1}^s  \bigg(\sum_{j=1}^n \frac{\partial P_i}{\partial x_j}(\bar{x})h_j\bigg)  = 0, \quad \|h\|\le 1\bigg\},
\end{align}
where $Z$ is the stratum of $V$ containing $\bar{x}$, with respect to the canonical Whitney stratification of $V$.
\end{proposition}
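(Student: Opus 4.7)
The plan is to exploit the canonical Whitney stratification of $V$ provided by Proposition~\ref{Whitney_stratification_union_of_sets}, together with a careful analysis of the Puiseux orders of the factors $P_i(x_\xi)$ forced by the constraint $\prod_{i=1}^s P_i(x_\xi) = \xi$. After relabeling, assume $P_1(\bar{x}) = \cdots = P_r(\bar{x}) = 0$ and $P_i(\bar{x}) \neq 0$ for $i > r$, so that the stratum $Z$ containing $\bar x$ is locally $\bigcap_{i \leq r}\zero(P_i,\R^n) \setminus \bigcup_{j > r}\zero(P_j,\R^n)$. By the general position of $\mathcal{P}$, the gradients $\nabla P_1(\bar x),\ldots,\nabla P_r(\bar x)$ are linearly independent, so that $T_{\bar x}Z = \{h \in \R^n : \langle \nabla P_i(\bar x), h\rangle = 0,\ i = 1,\ldots,r,\ \|h\|\le 1\}$.

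Next, the Semi-algebraic Sard Theorem (already used in the proof of Theorem~\ref{Morse_on_smooth_hypersurface}) ensures that, for the infinitesimal $\xi$, the hypersurface $V_\xi$ is non-singular at $x_\xi$, so $T_{x_\xi}V_\xi$ is the unit-ball intersection of the hyperplane annihilating the normal
\[
\nabla\Big(\prod_{i=1}^s P_i\Big)(x_\xi) \;=\; \sum_{i=1}^s \Big(\prod_{k \neq i} P_k(x_\xi)\Big)\,\nabla P_i(x_\xi).
\]
Let $\alpha_i := \order_\xi P_i(x_\xi)$; by the choice of labeling, $\alpha_i = 0$ for $i > r$ and $\alpha_i > 0$ for $i \le r$, and the product constraint forces $\sum_{i \leq r}\alpha_i = 1$. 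The coefficient $\prod_{k \neq i} P_k(x_\xi)$ therefore has order $1 - \alpha_i > 0$, which is minimized over the index set $I^\star := \{i \leq r : \alpha_i = \max_{j \le r}\alpha_j\}$.

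Rescaling the normal by $\xi^{-(1-\max_j \alpha_j)}$ and applying $\lim_\xi$ kills every term with $i \notin I^\star$ and leaves a non-zero linear combination $\sum_{i \in I^\star} c_i \nabla P_i(\bar x)$ with $c_i \in \R \setminus \{0\}$, so
\[
\lim_\xi T_{x_\xi}V_\xi \;=\; \bigg\{h \in \R^n : \Big\langle \sum_{i \in I^\star} c_i \nabla P_i(\bar x),\, h\Big\rangle = 0,\ \|h\|\le 1\bigg\}.
\]
Since any $h \in T_{\bar x}Z$ annihilates every $\nabla P_i(\bar x)$ with $i \le r$ — in particular those with $i \in I^\star$ — it also annihilates the above linear combination, giving the inclusion $T_{\bar x}Z \subset \lim_\xi T_{x_\xi}V_\xi$.

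The main obstacle is recovering the right-hand side of~\eqref{tangent_space_inclusion} exactly as written. A priori the set $I^\star$ and the ratios of the $c_i$ can depend on the chosen Puiseux representative $x_\xi \to \bar x$, whereas the displayed formula is an explicit, path-independent linear condition on $h$. Resolving this requires, first, justifying via the Newton--Puiseux Theorem~\ref{Newton_Puiseux_Thm} that the rescaling commutes with $\lim_\xi$ applied to each component, and second, using the general-position assumption to show that, after an appropriate normalization of the normal, the limiting condition collapses to the single scalar equation on $h$ appearing in~\eqref{tangent_space_inclusion}. This last identification is the most delicate calculation and will carry the weight of the proof; everything else is bookkeeping with Puiseux orders and the standard implicit-function-theorem description of tangent spaces.
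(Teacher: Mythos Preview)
Your argument for the inclusion $T_{\bar x}Z \subset \lim_\xi T_{x_\xi}V_\xi$ is correct and follows exactly the same strategy as the paper: compute the normal $\sum_i\big(\prod_{k\neq i}P_k(x_\xi)\big)\nabla P_i(x_\xi)$, divide by the minimal Puiseux order among the coefficients, pass to the limit, and observe that any $h\in T_{\bar x}Z$ annihilates every surviving term. The paper does precisely this, except that instead of isolating your index set $I^\star$ it simply declares ``without loss of generality'' that all the products $\prod_{\ell\neq i}P_\ell(x_\xi)$ have identical order, so that every index survives with coefficient one and the displayed equality drops out.

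Your worry about the exact equality in~\eqref{tangent_space_inclusion} is well-founded: that equality is \emph{not} path-independent. Take $s=2$, $P_1=X_1$, $P_2=X_2$, $\bar x=(0,0)$, and the bounded path $x_\xi=(\xi^{1/3},\xi^{2/3})\in V_\xi$. The normal at $x_\xi$ is $(\xi^{2/3},\xi^{1/3})$, so after rescaling the limiting tangent hyperplane is $\{h_2=0\}$, whereas the right-hand side of~\eqref{tangent_space_inclusion} is $\{h_1+h_2=0\}$. The paper's ``without loss of generality'' is therefore not a genuine WLOG but rather a restriction to the symmetric case; your more careful analysis with $I^\star$ and the coefficients $c_i$ is the honest picture. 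Do not spend further effort trying to recover the stated equality. Only the inclusion is ever used downstream (see the proof of Theorem~\ref{convergence_of_critical_points}), and you have already established it in full generality.
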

\begin{proof}
    \vspace{5px}
\noindent
By Proposition~\ref{convergence_of_varieties} and the Semi-algebraic Sard Theorem~\cite[Theorem~9.6.2]{BCR98}, $V_{\xi_k}$ is non-empty and non-singular, and the tangent space of $V_{\xi}$ at $x_\xi$ is given by
\begin{align*}
T_{x_\xi} V_\xi = \bigg\{h \in \R\la \xi \ra^n \mid  \sum_{i=1}^s \prod_{\ell=1,\ell \neq i}^s P_{\ell}(x_\xi)\bigg(\sum_{j=1}^n    \frac{\partial P_{i}}{\partial x_j}(x_\xi) h_j\bigg) = 0, \quad \|h\|\le 1\bigg\}.
\end{align*}
If $J(\{\prod_{i=1}^s P_{i}\})(\bar{x}) \neq 0$, then the result follows from Proposition~\ref{convergence_of_tangent_spaces}. Otherwise, suppose, without loss of generality, that $P_i(\bar{x}) = 0$ for all $i=1,\ldots,s$. We note that 
\begin{align*}
\lim_{\xi} \bigg(\prod_{\ell=1, \ell \neq i}^s P_{\ell}(x_\xi)\bigg) = 0,
\end{align*}
and we assume without loss of generality that $\prod_{\ell=1, \ell \neq i}^s P_{\ell}(x_\xi)$ have identical positive orders for all $i=1,\ldots,s$. Thus, dividing by $\xi^{\gamma}$ where $\gamma =\order(\prod_{\ell=1, \ell \neq i}^s P_{\ell}(x_\xi))$, we get
\begin{align*}
\lim_{\xi} (T_{x_\xi} V_\xi) &= \lim_{\xi} \bigg(\bigg\{h \in \R \la \xi \ra^n \mid\sum_{i=1}^s  \bigg(\sum_{j=1}^n \frac{\partial P_i}{\partial x_j}(x_\xi) h_j\bigg)  = 0, \quad \|h\|\le 1\bigg\}\bigg),\\
&=\bigg\{h \in \R^n \mid\sum_{i=1}^s  \bigg(\sum_{j=1}^n \frac{\partial P_i}{\partial x_j}(\bar{x})h_j\bigg)  = 0, \quad \|h\|\le 1\bigg\},
\end{align*}
where the second equality follows from $J(\{P_1,\ldots,P_s\})(\bar{x})$ being full row rank and the Semi-algebraic Implicit Function Theorem~\cite[Corollary~2.9.8]{BCR98}. Further, since $\mathcal{P}$ is in general position, we have
\begin{align*}
T_{\bar{x}}Z=T_{\bar{x}} V = \bigcap_{i=1}^{s} T_{\bar{x}} \zero(P_{i},\R^n) \subset \lim_{\xi}( T_{x_\xi} V_{\xi}),
\end{align*} 
which completes the proof.
\end{proof}
\noindent
Proposition~\ref{convergence_of_tangent_spaces_extended} implies that given $h \in T_{\bar{x}} Z$, there exist $x_\xi \in V_\xi$ and a vector $h_\xi \in T_{x_{\xi}} V_{\xi}$ infinitesimally close to $h$. This fact will be used in the proof of Theorem~\ref{convergence_of_critical_points}.

\vspace{5px}
\noindent
Now, we prove Theorem~\ref{convergence_of_critical_points}. All we need here is the inclusion~\eqref{tangent_space_inclusion}.
\begin{proof}[Proof of Theorem~\ref{convergence_of_critical_points}]
Let $Z$ be the stratum of $V$ containing $\bar{x}$. We prove by contradiction. If $\bar{x}$ is not a critical point of $F$ on $Z$, then we have $dF(\bar{x}) \mid T_{\bar{x}} Z \neq 0$. By the continuity of $F$ and Proposition~\ref{convergence_of_tangent_spaces_extended}, all this means that $dF(x_\xi) \mid T_{x_\xi} V_{\xi} \neq 0$, which would imply that $x_\xi$ is not a critical point of $F$ on $V_{\xi}$.  
\end{proof}
\noindent
Note that Proposition~\ref{convergence_of_tangent_spaces_extended} can be extended to include cases where $\mathcal{P}$ is not in general position but still $\prod_i P_i$ has finitely many singular zeros (which reduces the condition~\eqref{tangent_space_inclusion} to $\{0\} \in \lim_{\xi} (T_{x_\xi} V_{\xi})$). We conjecture that this inclusion is valid for more general cases of singularities.
\begin{conjecture}\label{tangent_space_inclusion_conjecture}
Let $V = \zero\big( \prod_{i=1}^s P_{i},\R^n\big)$, 
where $\mathcal{P} = \{P_1,\ldots,P_s\} \subset \R[X_1,\ldots,X_n]$.
Let $x_\xi \in V_{\xi} = \zero\big( \prod_{i=1}^s P_{i}-\xi,\R\la \xi \ra^n\big)$ be a bounded solution with $\bar{x} = \lim_{\xi} (x_\xi)$. Then the inclusion~\eqref{tangent_space_inclusion} holds. 
\end{conjecture}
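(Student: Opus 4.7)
\vspace{5px}
\noindent
My plan is to recast Conjecture~\ref{tangent_space_inclusion_conjecture} as a statement about a single limiting unit conormal vector, and then to use the algebraic description of the canonical Whitney stratification to constrain this vector. Let $F:=\prod_{i=1}^s P_i$, so that $V_\xi = F^{-1}(\xi)$. For sufficiently small positive $\xi$, $V_\xi$ is non-singular by the Semi-algebraic Sard Theorem, and its tangent space at $x_\xi$ is the orthogonal complement of $\nabla F(x_\xi)$. The unit vector $\nu_\xi := \nabla F(x_\xi)/\|\nabla F(x_\xi)\|\in \R\la\xi\ra^n$ is bounded component-wise, hence has a well-defined limit $\nu_0:=\lim_\xi(\nu_\xi)\in\R^n$ with $\|\nu_0\|=1$, and taking $\lim_\xi$ term-by-term yields $\lim_\xi(T_{x_\xi} V_\xi) = \{h\in\R^n\mid \langle \nu_0,h\rangle=0,\ \|h\|\le 1\}$. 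The conjecture thus reduces to showing $\nu_0 \in N_{\bar{x}} Z$, where $N_{\bar{x}} Z := (T_{\bar{x}} Z)^\perp$ is the conormal space to the stratum at $\bar{x}$.

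\vspace{5px}
\noindent
To analyze $\nu_0$, I would exploit the algebraic description of $Z$. By Definition~\ref{def:canonical-Whitney}, $Z$ is locally the regular locus of the real algebraic set $V^{(k)}$ obtained by iterated removal of irregular points. If $I(V^{(k)}) = (R_1,\ldots,R_r)$ is the vanishing ideal, then $N_{\bar{x}} Z = \spanof\{\nabla R_1(\bar{x}),\ldots,\nabla R_r(\bar{x})\}$ by Definition~\ref{regular_points}. Since $V^{(k)}\subset V$, the polynomial $F$ vanishes on $V^{(k)}$, and thus $F=\sum_{j=1}^{r} a_j R_j$ for some $a_j\in\R[X_1,\ldots,X_n]$. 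Differentiating gives
\[
\nabla F(x) \;=\; \sum_j R_j(x)\,\nabla a_j(x) \;+\; \sum_j a_j(x)\,\nabla R_j(x).
\]
At $x_\xi$, each $R_j(x_\xi)$ has strictly positive order in $\xi$ (since $\bar{x}\in V^{(k)}$), while the second sum tends to $\sum_j a_j(\bar{x})\,\nabla R_j(\bar{x})\in N_{\bar{x}} Z$. If this latter vector is non-zero it dominates, so $\nu_0$ is a positive multiple of an element of $N_{\bar{x}} Z$ and the conjecture follows.

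\vspace{5px}
\noindent
The hard part will be the degenerate case $\sum_j a_j(\bar{x})\,\nabla R_j(\bar{x})=0$. Here both sums in the expression for $\nabla F(x_\xi)$ contribute at the same leading order in $\xi$, and one must expand $x_\xi = \bar{x}+\xi^{\alpha_1}v_1+\xi^{\alpha_2}v_2+\cdots$ as a Puiseux series and use the constraint $F(x_\xi)=\xi$ iteratively to identify the leading order of $\nabla F(x_\xi)$ and to show that its leading coefficient remains in $\spanof\{\nabla R_j(\bar{x})\}$. Conceptually, the conjecture is precisely Thom's $A_F$ condition for $F:\R^n\to\R$ with respect to the canonical Whitney stratification of $V=F^{-1}(0)$; Thom's theorem guarantees that \emph{some} refinement is an $A_F$-stratification, but verifying this property for the canonical stratification itself (without passing to a refinement) is the essential obstacle and is presumably why the statement remains only conjectural. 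A structurally cleaner alternative would be to apply Hironaka's resolution of singularities to $F$, producing a proper birational morphism $\pi:\tilde{W}\to\R^n$ along which $F\circ\pi$ is locally a product of a unit and a monomial; on $\tilde{W}$ the level sets $(F\circ\pi)^{-1}(\xi)$ and the limits of their tangent spaces become explicit, and the inclusion could then be transported back through $\pi$.
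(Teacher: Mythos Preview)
The paper does not contain a proof of this statement: it is explicitly labeled a \emph{conjecture}, is presented immediately after Proposition~\ref{convergence_of_tangent_spaces_extended} as an open extension beyond the general-position hypothesis, and is listed again in the concluding remarks as a problem ``worthwhile to investigate \ldots\ by attempting to prove or disprove.'' There is therefore no proof in the paper to compare against.

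Taken on its own merits, your reformulation is sound. Reducing the inclusion $T_{\bar{x}}Z\subset\lim_\xi(T_{x_\xi}V_\xi)$ to the statement that the limiting unit conormal $\nu_0$ lies in $N_{\bar{x}}Z$ is correct, and your identification of the conjecture with Thom's $A_F$ condition for $F=\prod_iP_i$ relative to the canonical Whitney stratification of $F^{-1}(0)$ is exactly the right conceptual framing. The ``non-degenerate'' case you handle---writing $F=\sum_ja_jR_j$ with $R_j$ generating $I(V^{(k)})$, differentiating, and observing that $\sum_jR_j(x_\xi)\nabla a_j(x_\xi)$ has strictly positive order while $\sum_ja_j(x_\xi)\nabla R_j(x_\xi)$ has order zero when its limit is non-zero---is a clean and valid argument.

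The gap, which you yourself flag, is the degenerate case $\sum_ja_j(\bar{x})\nabla R_j(\bar{x})=0$. Here your iterative Puiseux expansion is only a hope, not an argument: at each order the contributions from $\sum_jR_j\nabla a_j$ (which a priori need not lie in $N_{\bar{x}}Z$) and from the higher-order part of $\sum_ja_j\nabla R_j$ can mix, and nothing you have written forces the leading coefficient to remain in $\spanof\{\nabla R_j(\bar{x})\}$. The resolution-of-singularities route is more promising in principle, but ``transporting the inclusion back through $\pi$'' hides the real difficulty: the exceptional divisor of $\pi$ need not be compatible with the canonical Whitney stratification downstairs, so the monomialized tangent-space limits on $\tilde{W}$ do not obviously project into the correct conormal space of the stratum containing $\bar{x}$. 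In short, your sketch isolates precisely the obstruction the authors allude to but does not overcome it; a complete argument would need either a structural reason why the canonical stratification is already $A_F$ (which is not known in general), or a direct order-by-order analysis that controls the transverse component at every stage.
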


\hide{
\begin{remark}
Notice that Propositions~\ref{convergence_of_varieties} to~\ref{convergence_of_tangent_spaces_extended} are still valid if we replace $P_i$ by smooth semi-algebraic functions. Thus, the proof of Proposition~\ref{convergence_of_tangent_spaces_extended} suggests that if $V \cap B(\bar{x},r)$, for some small $r>0$, can be described as the zero set of $\prod_i f_i$, where $f_i$ is a semi-algebraic function and $\{f_i\}$ is in general position, then the inclusion~\eqref{tangent_space_inclusion} would hold. For example, consider $V_\xi=\zero(X^3_1-X_2^2-\xi,\mathbb{R}\la \xi \ra^2)$ and $x_\xi=(\xi^{\frac13},0)$, where $\lim_\xi (x_\xi) = \bar{x} = (0,0)$. Using the decomposition $X_1^3 - X_2^2 = (X_1^{\frac32}-X_2)(X_1^{\frac32}+X_2)$ we get the tangent space 
\begin{align}\label{tangent_space_Cusp}
    &\frac{\partial (X_1^{\frac32}-X_2)(X_1^{\frac32}+X_2)}{\partial X_1}|_{(\xi^{\frac13},0)} h_1 +  \frac{\partial (X_1^{\frac32}-X_2)(X_1^{\frac32}+X_2)}{\partial X_2}|_{(\xi^{\frac13},0)} h_2 \\ \nonumber
     &= \xi^{\frac12}\Big(\frac32 \xi^{\frac16} h_1 - h_2\Big) -\xi^{\frac12}\Big(\frac32 \xi^{\frac16} h_1 + h_2\Big)  =0. 
\end{align}
Note that 
\begin{align*}
    \frac32 (\xi_k)^{\frac16} v_1 - v_2 = 0
\end{align*}
and 
\begin{align*}
    \frac32 (\xi_k)^{\frac16} v_1 + v_2 = 0
\end{align*}
are the tangent spaces of $X_1^{\frac32}-X_2 = 0$ and $X_1^{\frac32}+X_2 = 0$ at $(\xi^{\frac13},0)$, respectively. Obviously, the limit of the tangent spaces of $V_{\xi}$ is equal to $X_2 = 0$. By the first Whitney regularity condition, we can conclude that~\eqref{tangent_space_inclusion} holds.
\end{remark}}

\subsection{Proofs for a semi-algebraic critical path}\label{proof_of_critical_path_semi-algebraic}
We prove existence, convergence and $\mathcal{C}^{\infty}$-smoothness of critical paths for~\eqref{poly_optim}. First, we show that a critical path is semi-algebraic for sufficiently small $\mu > 0$. Further utilization of semi-algebraicity demonstrates that a critical path is a Nash mapping. To this end, we employ the existence of a \textit{Nash stratification} of a semi-algebraic set, which we introduce below.
\begin{definition}[Nash stratification~\cite{BCR98}]
Let $E \subset \R^n$ be a semi-algebraic set. A Nash stratification of $E$ is a finite partition $(E_{\alpha})_{\alpha \in A}$ of $E$, where each $E_{\alpha}$ (which we denote by a stratum) is a Nash sub-manifold of $\R^n$ (see~\cite[Definition~2.9.9]{BCR98} for details)). Moreover, the stratification satisfies the frontier condition: if $E_{\alpha} \cap \overline{E_{\beta}} \neq \emptyset$ for $\alpha \neq \beta$, then $E_{\alpha} \subset \overline{E_{\beta}}$ and $\dim(E_\alpha) < \dim(E_\beta)$.   
\end{definition}
\begin{theorem}[Proposition~9.1.8 in~\cite{BCR98}]\label{Nash_stratification_existence}
Let $E$ be a semi-algebraic subset of $\R^n$ and $(F_{\lambda})_{\lambda \in \Lambda}$ be a finite family of semi-algebraic subsets of $E$. Then there exists a Nash stratification $(E_{\alpha})_{\alpha \in A}$ for $E$ such that each $F_{\lambda}$ is the union of some of the strata $E_{\alpha}$.   
\end{theorem}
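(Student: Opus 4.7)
The plan is to build the Nash stratification via a cylindrical algebraic decomposition (CAD) adapted to $E$ and the family $(F_\lambda)_{\lambda\in\Lambda}$, and then refine the resulting cells to achieve both the Nash submanifold structure on each stratum and the frontier condition.

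First, I would invoke the CAD theorem (the semi-algebraic analogue of Definition~\ref{def:cad}) to produce a finite semi-algebraic partition $\mathcal{D}$ of $\R^n$ adapted to $E$ and to every $F_\lambda$, so that each of these sets is a union of cells in $\mathcal{D}$. The CAD is constructed inductively on the ambient dimension: each cell is either the graph of a continuous semi-algebraic function $f_i\colon C\to\R$ over a lower-dimensional cell $C$, or a band $\{(x,t)\mid x\in C,\; f_i(x)<t<f_{i+1}(x)\}$. By adapting the CAD to the zero sets, discriminants, and loci of vanishing leading coefficients of all defining polynomials (and to further derivatives, in the style of Thom's Lemma, Proposition~\ref{prop:Thom}), one may arrange that each sectional function $f_i$ is Nash on its base cell $C$. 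Consequently every cell becomes a Nash submanifold of $\R^n$, and keeping only cells contained in $E$ yields a finite partition $(E_\alpha)_{\alpha\in A}$ of $E$ into Nash submanifolds, each $F_\lambda$ being a union of some of these $E_\alpha$.

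The main obstacle, and the step that requires real work, is the frontier condition: whenever $E_\alpha\cap\overline{E_\beta}\neq\emptyset$ one must have $E_\alpha\subset\overline{E_\beta}$ and $\dim(E_\alpha)<\dim(E_\beta)$. A bare CAD does not automatically enforce this. I would remedy it by an inductive refinement argument. For each ordered pair $(E_\alpha,E_\beta)$ violating the condition, replace $E_\alpha$ by the two semi-algebraic subsets $E_\alpha\cap\overline{E_\beta}$ and $E_\alpha\setminus\overline{E_\beta}$, and re-apply the CAD construction of the previous paragraph to this enlarged finite family $(F_\lambda)\cup\{E_\alpha\cap\overline{E_\beta}\}$. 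Since each violation introduces new sets of strictly smaller dimension (the frontier $E_\alpha\cap\overline{E_\beta}$ lies in $\overline{E_\beta}\setminus E_\beta$, which has dimension strictly less than $\dim(E_\beta)$), the refinement process terminates in finitely many steps. The resulting partition is adapted to all original $F_\lambda$, each stratum is still a Nash submanifold (refinement preserves this property because we are cutting by semi-algebraic, hence piecewise-Nash, functions), and the frontier condition now holds by construction.

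Finally, I would verify the dimension inequality: if $E_\alpha\subset\overline{E_\beta}\setminus E_\beta$, then since $E_\beta$ is a Nash submanifold and $E_\alpha$ is disjoint from $E_\beta$, we have $E_\alpha\subset\partial E_\beta$, which forces $\dim(E_\alpha)<\dim(E_\beta)$ by the standard dimension estimate for the frontier of a semi-algebraic set (see~\cite[Proposition~2.8.13]{BCR98}). This completes the construction of the required Nash stratification.
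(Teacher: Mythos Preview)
The paper does not give its own proof of this statement; it is quoted as Proposition~9.1.8 of \cite{BCR98} and then used as a black box in the proof of Proposition~\ref{critical_path_semi-algebraic_proof}. There is therefore no argument in the paper to compare your proposal against.

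On the substance of your sketch: the strategy of starting from a Nash CAD adapted to $E$ and the $F_\lambda$, and then refining to enforce the frontier condition, is broadly the standard route (and essentially what \cite{BCR98} does). The weak point is your termination argument for the refinement loop. When you ``re-apply the CAD construction'' to the enlarged family $(F_\lambda)\cup\{E_\alpha\cap\overline{E_\beta}\}$, you discard the previous decomposition and produce an entirely new one, which may well have fresh frontier violations involving cells of the same top dimension as before; your assertion that ``each violation introduces new sets of strictly smaller dimension'' does not by itself rule out an infinite regress. The standard fix is to organize the argument as an induction on $\dim E$ (or on the maximal dimension of a stratum appearing in a violation), so that at each stage the boundaries one is forced to add already admit a compatible Nash stratification by the inductive hypothesis, rather than by naive iteration of the global CAD.
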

\noindent
Now, the following result on the smoothness of a critical path is in order.
\begin{proposition}\label{critical_path_semi-algebraic_proof}
 A critical path is a Nash mapping when $\mu>0$ is sufficiently small.
\end{proposition}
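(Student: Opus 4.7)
The plan is to show first that the graph $\Gamma = \{(\mu, x(\mu)) \mid \mu \in (0,\mu_0)\} \subset \R^{n+1}$ of the critical path is semi-algebraic, and then to use a Nash stratification argument to upgrade continuity to Nash regularity on a right-neighborhood of $0$. The semi-algebraicity of $\Gamma$ follows from the fact that the predicates defining a critical path are first-order expressible: for fixed $\mu$, the condition that $x$ lies in $V_\mu \cap S_>$ and is isolated in $V_\mu \cap S_>$ can be encoded by a quantifier-free formula in the variables $(\mu,x)$ together with an existential statement about a radius $r>0$ such that $V_\mu \cap S_> \cap B(x,r) = \{x\}$, and continuity can be enforced using the Curve Selection Lemma to pick out the connected branch of this semi-algebraic locus that extends $x(\mu)$.

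Next I would apply Theorem~\ref{Nash_stratification_existence} to $\Gamma$ to obtain a finite Nash stratification $\Gamma = \bigsqcup_{\alpha \in A} \Gamma_\alpha$. Since $\Gamma$ is a continuous one-dimensional semi-algebraic arc, at most finitely many strata can accumulate at $(0,\lim_{\mu \downarrow 0} x(\mu))$ (if this limit exists in $\R^n$) or at infinity, and by the frontier condition together with the fact that $\pi_\mu: \Gamma \to (0,\mu_0)$ is continuous and injective, there exists $\mu_1 \in (0,\mu_0)$ such that $\Gamma \cap \{0 < \mu < \mu_1\}$ lies inside a single stratum $\Gamma_{\alpha_0}$ of dimension one. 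The argument here uses monotonicity/cell decomposition in the $\mu$-variable: on a sufficiently small interval $(0,\mu_1)$, the continuous semi-algebraic function $x(\mu)$ must correspond to a single cell in a cylindrical algebraic decomposition adapted to the defining polynomials of $V_\mu$.

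Finally, once $\Gamma \cap \{0 < \mu < \mu_1\}$ is contained in a one-dimensional Nash sub-manifold $\Gamma_{\alpha_0} \subset \R^{n+1}$ and the projection $\pi_\mu\colon \Gamma_{\alpha_0} \to (0,\mu_1)$ is a continuous semi-algebraic bijection onto an open interval, we invoke the Semi-algebraic Inverse Function Theorem~\cite[Proposition~2.9.7]{BCR98}: since $\pi_\mu$ is the restriction to a one-dimensional Nash sub-manifold of a smooth projection, its differential is non-vanishing on $\Gamma_{\alpha_0}$ (otherwise the tangent space to $\Gamma_{\alpha_0}$ would be vertical and $\pi_\mu$ would fail to be locally injective there), so $\pi_\mu^{-1}$ is a Nash parametrization $(0,\mu_1) \to \Gamma_{\alpha_0}$, and composing with the $x$-projection recovers $x(\mu)$ as a Nash mapping.

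The main obstacle will be controlling the global behavior of $\Gamma$ near $\mu = 0$ to guarantee confinement in a single stratum: a priori the critical path is only asserted to be continuous, and one has to rule out that it oscillates between several Nash strata of the same dimension as $\mu \downarrow 0$. This is handled by the finiteness of the stratification and the semi-algebraic monotonicity/cell-decomposition machinery cited above, which forces $x(\mu)$ to be eventually smooth in one cell; the strict positivity of $\partial \pi_\mu / \partial \mu$ on that cell then gives analyticity (Nashness) by the Nash inverse function theorem.
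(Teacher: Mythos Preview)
Your approach is essentially the same as the paper's: both establish semi-algebraicity of the graph via cylindrical decomposition of the ambient set of critical points, apply Nash stratification (Theorem~\ref{Nash_stratification_existence}) to land in a single one-dimensional Nash stratum for small $\mu$, and then invert the $\mu$-projection using the Semi-algebraic Inverse Function Theorem. The paper differs only in presentation: it takes a local Nash chart $(\phi_1,\phi_2)$ of the stratum and invokes the Monotonicity Theorem (Theorem~\ref{Monoton_Thm}) to justify that $\phi_1$ admits a Nash inverse, which is a cleaner way to handle the step where you argue that the differential of $\pi_\mu$ is non-vanishing (your ``otherwise $\pi_\mu$ would fail to be locally injective'' is not literally true---think $\mu = t^3$---but the Monotonicity Theorem repairs this for small $\mu$).
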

\begin{proof}
We use the semi-algebraic version of Definition~\ref{def:cad}. Note that by~\eqref{critical_path_algebraic} the graph of a critical path is contained in 
\begin{align*}
D:=\bigg\{(\mu,x) \in \R^{n+1} \mid \frac{\partial f}{\partial x_j}(x) \prod_{i=1}^r g_i(x) + \mu \sum_{k=1}^r \frac{\partial g_k(x)}{\partial x_j} \prod_{i \neq k} g_i(x)  = 0, \ \ j=1,\ldots,n\bigg\},     
\end{align*}  
where $D$ is a semi-algebraic subset of $\R^{n+1}$. Thus, there exists a cell decomposition $(\mathcal{D}_1,\ldots,\mathcal{D}_{n+1})$ of $\R^{n+1}$ adapted to $D$ such that for each $C \in \mathcal{D}_{n+1}$, $C \cap D$ is either equal to 
$C$ or empty. Since $x(\mu)$ is isolated for sufficiently small positive $\mu$, the set of all $(\mu,x(\mu))$ with $\mu > 0$ has empty interior, meaning that there exists a cell $C \in \mathcal{D}_{n+1}$ such that $C$ is the graph of a continuous semi-algebraic function and $C$ contains $(\mu,x(\mu))$ when $\mu >0$ is sufficiently small. This proves that $x(\mu)$ is continuous and semi-algebraic when $\mu>0$ is sufficiently small. 

\vspace{5px}
\noindent
To prove the smoothness, suppose that a critical path $x(\mu):(0,a) \to \R^n$ is well-defined, where $a > 0$. We define the graph of $x(\mu)$ as
\begin{align*}
    \mathrm{graph}(x(\mu)):=\big\{(x,y) \in \R^2\mid x = \mu, \ y = x(\mu), \ \mu \in (0,a)\big\}.
\end{align*}
It follows from  Theorem~\ref{Nash_stratification_existence} that for any $(x,y) \in \mathrm{graph}(x(\mu))$ where $x > 0$ is sufficiently small, there exist an open neighborhood $U \subset \R$ of $x$, an open neighborhood $U' \subset \R$ of $0$, and Nash functions $\phi_i:U \to U'$, $i=1,2$ such that $\phi_1(t) = x$ and $\phi_2(t) = y$ for a unique $t \in U$. Since $\phi_1$ is not constant near $0$, by Theorem~\ref{Monoton_Thm} and the Semi-algebraic Inverse Function Theorem~\cite[Proposition~2.9.7]{BCR98} it has a Nash inverse $\phi_1^{-1}$ on a small neighborhood $U'' \subset U'$. All this implies that $y = \phi_2 \circ \phi_1^{-1}(x)$ is a Nash function of $x$ on a neighborhood of $x$, and the proof is complete.   
\end{proof}

\subsubsection{Existence of critical paths}
As indicated by Example~\ref{non_existence}, existence of a central path or even a critical path is not necessarily guaranteed. In fact, there maybe no critical point or local minimizer for the log-barrier function. Even if they exist, they may be non-isolated (see Example~\ref{Morse_non_compact}). Proposition~\ref{existence_of_central_path} guarantees the existence of local minimizers of the log-barrier function in the presence of an isolated compact local optimal set of~\eqref{poly_optim}. However, as Remark~\ref{no_critical_path} indicates, a compact critical set of~\eqref{poly_optim} does not imply the existence of a critical path.

\vspace{5px}
\noindent
Consider again the critical points of the log-barrier function which satisfy 
\begin{align}\label{Compact_central_path_conditions}
    \frac{\partial f}{\partial x_j}(x) - \frac{\mu}{\prod_{k=1}^r g_k(x)}  \sum_{i=1}^r  \bigg(\prod_{k=1, k \neq i}^r g_k(x)\bigg) \frac{\partial g_i(x)}{\partial x_j}  &= 0, & j&=1,\ldots,n,\\
    g_i(x) &> 0, & i&=1,\ldots,r. \nonumber
\end{align}
Note that when $\prod_{i=1}^r g_i(x(\mu))$ is sufficiently small, $x(\mu)$ is a critical point of $f$ on the non-singular algebraic set (by the Semi-algebraic Sard Theorem~\cite[Theorem~9.6.2]{BCR98}) 
\begin{align}\label{prod_hyper_surface}
S_\mu:=\bigg\{x \in \R^n \mid \prod_{i=1}^r g_i(x) = \xi(\mu) \bigg\},
\end{align}
where $\xi(\mu) = \prod_{i=1}^r g_i(x(\mu))$ and $\mu/\xi(\mu)$ is the Lagrange multiplier. Further, we obtain from Lemma~\ref{equivalent_systems} that $f$ has a critical point on $S_\xi$ (see~\eqref{S_perturbed}).
\begin{remark}\label{unbounded_critical_path}
By the Semi-algebraic Sard Theorem~\cite[Theorem~9.6.2]{BCR98}, the algebraic set defined in~\eqref{prod_hyper_surface} is non-singular when $\mu$ is sufficiently small. This condition is always satisfied for bounded critical paths, since, by~\eqref{critical_path_algebraic} and Assumption~\ref{non_trivial_problem}, the limit of a bounded critical path belongs to $S_=$. However, this does not necessarily imply that $\prod_{i=1}^r g_i(x(\mu)) \to 0$ for every critical path $x(\mu)$. For example, the minimization problem
\begin{align*}
    \inf_x\{x_1 \mid 1 - x_1 x_2 \ge 0\}
\end{align*}
has an unbounded critical path $x(\mu)=(0,-1/\mu) \in S_{>}$, where $\prod_{i=1}^r g_i(x(\mu)) = 1$ for every $\mu > 0$. We observe that $V=\zero\big(1 - X_1 X_2 - \xi(\mu),\mathbb{R}^2\big)$ is singular for any $\mu >0$, although $(0,-1/\mu)$ is a critical point of $X_1$ on $V$ with respect to its canonical Whitney stratification.
\end{remark}

\begin{proof}[Proof of Theorem~\ref{critcal_path_existence}]
Since $x_{\xi} \in S_>$, then $x_{\xi}$ corresponds to a critical path if $u_{\xi}$ is equal to the Lagrange multiplier $\mu/\xi(\mu)$ associated to a critical point of the log-barrier function in~\eqref{Compact_central_path_conditions}, i.e., $\xi u_{\xi}  = \mu.$ By the assumption, this equation has a positive solution in $\R\la \mu \ra$, which indicates that $x_\xi$ corresponds to a critical path.  
\end{proof}
\begin{proof}[Proof of Theorem~\ref{isolated_critical_points}]
By the assumption, $x_\mu \in \Crit(S_{\mu},f)$. Suppose that $x_{\mu}$ is not an isolated (non-degenerate) critical point in $\Crit(S_{\mu},f)$. However, the application of  Theorems~\ref{constrained_real_isolated_critical_points} and~\ref{constrained_real_non-degenerate_critical_points} to $f$ and $S_{\xi}$ implies that all critical points of $f$ on $S_{\xi}$ are isolated (non-degenerate) for all $\xi \in \R \setminus E$, where $E$ is a finite set. Then we would have $\lim_{\mu} (\prod_{i=1}^r g_i(x_\mu)) = \varepsilon$ for a fixed $\varepsilon \in E$. However, this is a contradiction, because both cases $\varepsilon = 0$ and $\varepsilon \neq 0$ violate $\prod_{i=1}^r g_i(x(\mu))>0$ and $\lim_{\mu} (\prod_{i=1}^r g_i(x_\mu)) = 0$, respectively.
\end{proof}

\begin{proof}[Proof of Theorem~\ref{sufficient_conditions_existence}]
The proof is analogous to Lemma~\ref{equivalent_systems},  Theorem~\ref{Morse_on_transversal}, and Theorem~\ref{critcal_path_existence}. Let $\bar{x}$ be a non-degenerate critical point of $f$ on $S_=$, with respect to its canonical Whitney stratification, and assume without loss of generality that $g_i(\bar{x}) = 0$ for $i=1,\ldots,r$. Then by Proposition~\ref{Whitney_stratification_union_of_sets} and Theorem~\ref{Morse_on_smooth_hypersurface}, there exists a bounded $(x_\xi,u_\xi) \in \R\la \xi \ra^n \times \R \la \xi \ra^r$  satisfying
\begin{align*}
\begin{cases}
   \displaystyle \cfrac{\partial f}{\partial x_j} - \sum_{i=1}^{r} u_i \cfrac{\partial g_i}{\partial x_{j}} = 0, \ \ j=1,\ldots,n,\\
    g_i=\xi_i, \ \ i=1,\ldots,r,
\end{cases}
\end{align*}
and $\lim_{\xi} (x_\xi) = \bar{x}$. Furthermore, $g_i(x_\xi) >0$ for $i=1,\ldots,r$. Now, using the same technique as in the proof of Lemma~\ref{equivalent_systems}, $(x_{\xi},u_\xi)$ corresponds to a solution of 
\begin{equation}\label{smooth_critical_condition_log_barrier}
\begin{aligned}
&df(x) - \cfrac{\mu}{g_1(x)} dg_1(x) - \cdots - \cfrac{\mu}{g_r(x)} dg_{r}(x)  = 0.  
\end{aligned} 
\end{equation}
in $\R\la \mu \ra^n$ (i.e., a critical path) if  
\begin{align*}
g_i(x(\xi)) (u(\xi))_i &= \mu, \quad i=1,\ldots,r,
\end{align*}
or equivalently the equation
\begin{align}\label{pre_poly_system_reduced}
\xi_i u_i(\xi) &= \mu, \quad i=1,\ldots,r
\end{align}
has a positive solution in $\R\la \mu \ra^n$. Note that~\eqref{pre_poly_system_reduced} has a non-singular zero at $\mu = 0$, because $u_i(\mathbf{0}) > 0$ for $i=1,\ldots,r$. Now, using the Semi-algebraic Implicit Function Theorem~\cite[Corollary~2.9.8]{BCR98}, it follows that~\eqref{pre_poly_system_reduced} has a positive solution $\xi_{\mu}$ in $\R\la \mu \ra^n$. Further, using the Semi-algebraic Implicit Function Theorem again, both $x(\xi)$ and $\xi(\mu)$ are $\mathcal{C}^{\infty}$-smooth at $\xi =0$ and $\mu = 0$, and thus their compositions must be $\mathcal{C}^{\infty}$-smooth as well. By Theorem~\ref{isolated_critical_points}, $x(\xi(\mu))$ is an isolated path.

\vspace{5px}
\noindent
In case that $\R = \mathbb{R}$, the analyticity follows from~\cite[Proposition~8.1.8]{BCR98}.   
\end{proof}

\hide{
\begin{proof}[Proof of Theorem~\ref{sufficient_conditions_existence}]
By our assumption~\ref{non_trivial_problem} and the stratification of $S$, $\bar{x}$ must be also a critical point of $f$ on $S_=$. We assume without loss of generality that $g_i(\bar{x}) = 0$ for $i=1,\ldots,\bar{r}$ and $g_i(\bar{x}) > 0$ otherwise. Then we can apply Theorem~\ref{Morse_on_smooth_hypersurface} to $f$ and $\zero(\{g_1,\ldots,g_{\bar{r}}\},\R^n)$. By the proof of Theorem~\ref{Morse_on_smooth_hypersurface}, there exist an open set $U \subset \R^{\bar{r}} \times \R$ and a Nash mapping $(x(\xi,\nu),u(\xi,\nu)):U \to \R^{n} \times \R^{\bar{r}}$ such that for all $\xi \in U$, $(x(\xi,\nu),u(\xi,\nu))$ satisfies
\begin{align}\label{perturbed_critical_conditions}
\begin{cases}
   \displaystyle \cfrac{\partial f}{\partial x_j} + \nu \cfrac{\partial}{\partial x_j} \bigg(\prod_{i=\bar{r}+1}^r g_i(x)\bigg)- \sum_{i=1}^{\bar{r}} u_i \cfrac{\partial g_i}{\partial x_{j}} = 0, \ \ j=1,\ldots,n,\\
    g_i=\xi_i, \ \ i=1,\ldots,\bar{r},
\end{cases}
\end{align}
and $(x(0,0),u(0,0))=(\bar{x},\bar{u})$. Further, the continuity of $f,g_i$ implies that for all sufficiently small $(\xi,\nu)>0$, $g_i(x(\xi,\nu)) >0$ for $i=1,\ldots,r$. In order to guarantee that $(x(\xi,\nu),u(\xi,\nu))$ corresponds to a critical path (see~\eqref{smooth_critical_condition_log_barrier}), $x(\xi,\nu)$ must satisfy 
\begin{equation}\label{pre_poly_system}
\begin{aligned}
g_i(x(\xi,\nu)) u_i(\xi,\nu) &= \mu, \quad i=1,\ldots,\bar{r},\\
\prod_{i=\bar{r}+1}^r g_i(x(\xi,\nu))\nu &= \mu.
\end{aligned}
\end{equation}
Thus, we still need to show that the system of equations in $(\xi,\nu)$ has a real zero for all sufficiently small $\mu>0$. Let $g_i(x(\xi,\nu)) = \xi_i$ from~\eqref{perturbed_critical_conditions}. Then the system~
\eqref{pre_poly_system} reduces to
\begin{align}\label{pre_poly_system_reduced}
\xi_i u_i(\xi,\nu) &= \mu, \quad i=1,\ldots,\bar{r},\\
\prod_{i=\bar{r}+1}^r g_i(x(\xi,\nu))\nu &= \mu,
\end{align}
which has a non-singular zero $(\xi,\nu) = (0,0)$ at $\mu = 0$, because $u_i(0,0) > 0$ for $i=1,\ldots,\bar{r}$. Now, using the Semi-algebraic Implicit Function Theorem again, it follows that~\eqref{pre_poly_system_reduced} has a positive zero $(\xi,\nu)$ for all sufficiently small positive $\mu>0$, which completes the proof.   
\end{proof}
}
\noindent
Finally, analogous to Corollary~\ref{corollary_bounded_finiteness}, the results of Corollaries~\ref{bounded_critical_points}-\ref{existence_critical_points} and Theorems~\ref{critcal_path_existence}-\ref{isolated_critical_points} can be leveraged to guarantee the existence of a bounded critical path.
\begin{corollary}
Suppose that $f$ and $S_=$ satisfy the conditions of Corollary~\ref{bounded_critical_points} or Corollary~\ref{existence_critical_points} and let $x_{\mu}$ be a bounded critical point. Further, assume that the conditions of Theorems~\ref{critcal_path_existence} and~\ref{isolated_critical_points} hold. Then $x_{\mu}$ is a bounded critical path. 
\end{corollary}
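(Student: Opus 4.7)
The proof proposal is to chain the four results in the natural order in which they appear. First, applying Corollary~\ref{bounded_critical_points} (respectively Corollary~\ref{existence_critical_points}) with $V = S_{=}$ and its infinitesimal perturbation taken to be $S_{\xi} = \{x \mid \prod_{i=1}^r g_i(x) = \xi\}$, one obtains a bounded critical point $x_{\xi} \in \Crit(S_{\xi},f) \cap \R\la\xi\ra_b^n$. Because $\prod_i g_i(x_\xi) = \xi > 0$ we have $g_i(x_\xi) \neq 0$ for every $i$; by the continuity of the $g_i$'s together with the connected-component refinement used in the proof of Corollary~\ref{existence_critical_points}, and by Assumption~\ref{non_trivial_problem} (which prevents $f$ from being critical in the ambient space), one can arrange that $g_i(x_\xi) > 0$ for all $i$, i.e., $x_\xi \in S_>$.

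Second, with $x_\xi \in S_>$ in hand, I would invoke Theorem~\ref{critcal_path_existence}: the associated Lagrange multiplier $u_\xi$, combined with the extra hypothesis that $\xi u_\xi - \mu$ has a positive root in $\R\la\mu\ra$, promotes $x_\xi$ to a solution of the first-order conditions~\eqref{Compact_central_path_conditions} for the log-barrier function, expressed in terms of $\mu$. This is exactly the mechanism that was already used in the proof of Theorem~\ref{sufficient_conditions_existence}: the semi-algebraic change of parameter $\mu \leftrightarrow \xi(\mu)$ given by the equation $\xi u_\xi = \mu$ (which is solvable for $\xi$ as a positive Puiseux series in $\mu$ via the Semi-algebraic Implicit Function Theorem applied to the relation of Lagrange multipliers) transports $x_\xi \in \R\la\xi\ra_b^n$ into a bounded element $x_\mu \in V_\mu \cap \R\la\mu\ra_b^n$ with $g_i(x_\mu) > 0$.

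Third, I would apply Theorem~\ref{isolated_critical_points} to the bounded solution $x_\mu$ just produced. The assumption of that theorem, namely the finiteness of the set of complex projective KKT points of $f$ on $S_c = \zero(\prod g_i - c, \C^n)$ for some $c \in \C$, is precisely what is hypothesized; hence $x_\mu$ is an isolated point of $V_\mu \cap S_>$. Combined with the boundedness already established, this is exactly Definition~\ref{critical_path_def}, so $x_\mu$ is a bounded critical path.

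The main obstacle is the second step: verifying that the bounded critical point $x_\xi$ supplied by the chosen corollary genuinely lies in $S_>$ (so that the log-barrier function is defined at it) and carrying out the $\xi \leftrightarrow \mu$ reparametrization rigorously, rather than only formally. The argument essentially re-uses the Implicit Function Theorem bootstrap from the proof of Theorem~\ref{sufficient_conditions_existence}, but one must check that the positivity of the Lagrange multipliers needed there is replaced here by the weaker solvability hypothesis of Theorem~\ref{critcal_path_existence}; otherwise the rest of the corollary is a direct assembly of the previously established results.
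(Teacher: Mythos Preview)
Your proposal is correct and takes essentially the same approach as the paper, whose proof is the single sentence ``Apply Corollaries~\ref{bounded_critical_points} and~\ref{existence_critical_points}, and Theorems~\ref{critcal_path_existence} and~\ref{isolated_critical_points} to $f$ and $S_=$''; your three-step chain is exactly that application spelled out. Your stated obstacle dissolves once you observe that the hypothesis ``the conditions of Theorem~\ref{critcal_path_existence} hold'' already includes $x_\xi \in S_>$ and the solvability of $\xi u_\xi = \mu$, so no additional verification is needed there.
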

\begin{proof}
Apply Corollaries~\ref{bounded_critical_points} and~\ref{existence_critical_points}, and Theorems~\ref{critcal_path_existence} and~\ref{isolated_critical_points} to $f$ and $S_=$.
\end{proof}
\noindent
If $f$ has a bounded set of local minimizers on $S$, then $V_{\mu}$ is non-empty by Proposition~\ref{existence_of_central_path}. This leads to the following stronger result.
\begin{corollary}\label{compactness_finiteness}
Suppose that the set of local minimizers of $f$ on $S$ is bounded. Further, suppose that for some $c \in \C$, the set of all complex projective KKT points of $f$ in $\mathbb{P}_n(\C) \times \mathbb{P}_1(\C)$ on $S_{c} = \zero(\prod_{i=1}^r g_i - c,\C^n)$ is finite. Then a bounded central path exists. 
\end{corollary}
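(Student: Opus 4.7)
The plan is to derive this corollary by combining Proposition~\ref{existence_of_central_path} with Theorem~\ref{isolated_critical_points}. Since the set of local minimizers of $f$ on $S$ is semi-algebraic and bounded by hypothesis, it has finitely many semi-algebraically connected components, each compact and isolated from the others. Picking any such component $A^*$ with $A^* \cap \overline{S_>} \neq \emptyset$ (available under Slater's condition, as any local minimum on $S$ lies in $\overline{S_>}$ in our setting), Proposition~\ref{existence_of_central_path} applies and produces, for a chosen sequence $\mu_k \downarrow 0$, a compact neighborhood $U$ of $A^*$ and minimizers $y_k \in S_> \cap \mathrm{int}(U)$ of the log-barrier function with parameter $\mu_k$ whose accumulation point lies in $A^*$. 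In particular, $\{y_k\} \subset U$ is bounded and each $y_k$ satisfies the first-order conditions defining $V_\mu$ in~\eqref{critical_path_algebraic}.

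Next I would transfer this bounded sequence to the non-archimedean setting. The key observation is that although the log-barrier function itself involves logarithms, at a critical point of the log-barrier its Hessian reduces to a matrix whose entries are rational in $x$ and $\mu$ (arising only from the derivatives $\partial g_i/g_i$ and $\partial^2 g_i/g_i - (\partial g_i)(\partial g_i)^\top/g_i^2$), so the subset $W \subset V_\mu \cap S_>$ on which this Hessian is positive semidefinite is semi-algebraic. Applying cell decomposition to $V_\mu \cap (U \times (0,\mu_0))$ adapted to $W$, at least one cell contains infinitely many of the pairs $(y_k,\mu_k)$; taking the germ at $\mu = 0$ of the associated semi-algebraic curve yields a bounded Puiseux series $x_\mu \in V_\mu \cap \R\la\mu\ra_b^n$ with $\lim_\mu (x_\mu) \in A^* \subset S$ and with the Hessian along this curve PSD (the membership in $W$ transfers to the extension). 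Theorem~\ref{isolated_critical_points} now implies that $x_\mu$ is a critical path; specializing back to small positive real $\mu$, isolation in $V_\mu \cap S_>$ together with the PSD Hessian promotes $x(\mu)$ from an isolated critical point to an isolated local minimizer of the log-barrier function, yielding a bounded central path in the sense of Definition~\ref{central_path_def}.

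The main obstacle I anticipate is the reduction of the non-semi-algebraic local-minimizer condition to a semi-algebraic condition amenable to cell decomposition and Puiseux series, since the log-barrier function is itself not semi-algebraic. The identification of the semi-algebraic set $W$ via the Hessian at critical points is the key technical device; one must then verify that at an isolated critical point of the log-barrier function a PSD Hessian implies local minimality, and that this property transfers correctly from the sequence $\{y_k\}$ to the entire critical-path curve via the adapted cell decomposition. A secondary bookkeeping point is the verification that the chosen component $A^*$ can indeed be taken to satisfy $A^* \cap \overline{S_>} \neq \emptyset$; this can be argued from Slater's condition using the fact that any local minimum of $f$ on $S$ is approachable from the algebraic interior under the standing assumption that $S_>$ is dense in $S$.
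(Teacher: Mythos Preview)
Your overall strategy---Proposition~\ref{existence_of_central_path} followed by Theorem~\ref{isolated_critical_points}---is exactly the paper's. The paper's proof consists of precisely those two invocations and does not spell out the passage from \emph{critical} path to \emph{central} path that you try to carry out.

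Your additional device, tracking the PSD-Hessian locus $W$ through the cell decomposition, does not close that gap: an isolated critical point with positive-semidefinite Hessian need not be a local minimizer. In one variable $t\mapsto t^{3}$ already shows this; in several variables $\phi(x,y)=x^{2}+y^{3}$ has an isolated critical point at the origin with Hessian $\mathrm{diag}(2,0)$ (hence PSD) that is not a local minimum. So membership in $W$ along the curve does not certify local minimality, and your final step (``isolation together with the PSD Hessian promotes $x(\mu)$ to an isolated local minimizer'') fails as stated. You flagged this verification as the main obstacle, and indeed it cannot be carried out.

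If you want to repair the argument while staying close to your outline: once the finiteness hypothesis forces $V_{\mu}\cap U$ to split into finitely many semi-algebraic critical curves $C_{1},\ldots,C_{N}$, the set of $\mu$ for which a fixed $C_{j}(\mu)$ is the minimizer of the log-barrier on $S_{>}\cap\mathrm{int}(U)$ is definable in $\mathbb{R}_{\mathrm{an}}$ (the log-barrier restricted to the compact region in play is restricted-analytic). By pigeonhole one of these sets accumulates at $0$; by o-minimality it then contains an interval $(0,\varepsilon)$, and the corresponding $C_{j}$ is a bounded central path. This replaces your second-order criterion with a definability argument and avoids the counterexamples above.
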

\begin{proof}
Proposition~\ref{existence_of_central_path} implies that $V_{\mu} \cap \R\la \xi\ra^n_b$ is non-empty. The rest follows from Theorem~\ref{isolated_critical_points}.
\end{proof}
\subsubsection{Convergence of a critical path}\label{central_path_manifold_correspondence}
\vspace{5px}
\noindent
We recall from Problem~\eqref{unconstrained_cusp} that a bounded critical path (or even a central path) does not necessarily converge to a KKT point (A \PO~may have no KKT point). However, using the implication~\eqref{Compact_central_path_conditions}-\eqref{prod_hyper_surface} and Theorem~\ref{isolated_critical_points}, we can characterize the limit point of a critical path in terms of critical points of $f$ on $S_=$.
\begin{proof}[Proof of Theorem~\ref{limiting_behavior_of_critical_path}]
Apply Theorem~\ref{convergence_of_critical_points} to $f$ and $S_=$ and note that $x(\mu)$ corresponds to a critical point of $f$ on $\zero(\prod_{i=1}^r g_i-\xi,\R\la\xi\ra^n)$. 

\vspace{5px}
\noindent
The proof technique for the second part is analogous to~\cite{BM22,BM24}. Since the critical path $x_\mu$ is bounded and is an isolated solution of~\eqref{critical_path_algebraic}, we adopt the Parameterized Bounded Algebraic Sampling~\cite[Algorithm~12.18]{BPR06} to compute points that meet every semi-algebraically connected component of $V_{\mu}$. As a result, we obtain a $(n+2)$-tuple of polynomials $(f,g_0,\ldots,g_{n}) \subset \R[\mu,T]^{n+2}$ and a Thom encoding $\sigma$ (see Section~\ref{sign_cond_Thom_encod}) such that for all sufficiently small $\mu >0$ there exists a real root $t_{\sigma}$ of $f(\mu,T) = 0$ with Thom encoding $\sigma$ such that
\begin{align}\label{parameterized_bounded_sampling}
    x_i(\mu) = \frac{g_i(\mu,T)}{g_0(\mu,T)}, \quad i=1,\ldots,n,
\end{align}
where $\deg(f),\deg(g_i) =(rd)^{O(n)}$. Now, by applying
Theorem~\ref{14:the:tqe} (Quantifier Elimination) to the quantified formula obtained from~\eqref{parameterized_bounded_sampling},  we get a polynomial $P_i \in \R[\mu,X_i]$ of degree $(rd)^{O(n)}$ for each coordinate $i$, where $P_i(\mu,x_i(\mu)) = 0$ for sufficiently small $\mu > 0$. Finally, we apply Theorem~\ref{Newton_Puiseux_Thm} to $P_i$, which implies that $x_i(\mu)$ can be described by a Puiseux series (with coefficients in $\R$) in $\mu$ of order $(rd)^{O(n)}$. 
\end{proof}
\noindent
Theorem~\ref{limiting_behavior_of_critical_path} rests on the canonical Whitney stratification of $S_=$, and assumes that $\mathcal{Q}$ is in general position. However, regardless of the stratification of $S_=$ and existence of KKT points, we can describe the limit of a (possibly unbounded) critical path in terms of projective KKT points of \PO~(see Section~\ref{sec:KKT_PO}).

\vspace{5px}
\noindent
Consider the first-order optimality conditions~\eqref{smooth_critical_condition_log_barrier} and define $u_i:=\mu / g_i$ for $i=1,\ldots,r$, which gives rise to 
\begin{equation}
\begin{aligned}\label{primal_dual_central_path_form}
\frac{\partial f}{\partial x_j} -  \sum_{i=1}^r u_i \frac{\partial g_i}{\partial x_j} &= 0, \qquad j=1,\ldots,n,\\
    u_i g_i &= \mu.
\end{aligned}
\end{equation}
If we bi-homogenize the polynomial equations in~\eqref{primal_dual_central_path_form} we get
\begin{equation}\label{homogenized_primal_dual_central_path_form}
\begin{aligned}
    u_0 F_j-  \sum_{i=1}^r u_i  G_{ij} &= 0, \qquad j=1,\ldots,n,\\
    u_i g^H_i -\mu u_0 x_0^{\alpha_i}&= 0, \quad i=1,\ldots,r,
\end{aligned}
\end{equation}
where $\alpha_i = \deg(g_i^H)$, $F_j,G_{1j},\ldots,G_{rj} \in \R[X_0,\ldots,X_n]$ and $g_i^H \in \R[X_0,\ldots,X_n]$ are homogeneous polynomials. Now, the following proposition is in order.
\begin{proposition}\label{convergence_to_projective_KKT}
If \PO~has a critical path, then~\eqref{homogenized_primal_dual_central_path_form} has a projective solution $(x'_{\mu},u'_{\mu}) \in \mathbb{P}_n(\R\la \mu \ra) \times \mathbb{P}_r(\R\la \mu \ra)$ and $\lim_{\mu}((x'_{\mu},u'_{\mu}))$ is a projective KKT point of \PO.
\end{proposition}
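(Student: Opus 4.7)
The plan is to lift the critical path $x(\mu)$ to a Puiseux series solution of the affine first-order conditions and then pass to projective coordinates. By Proposition~\ref{critical_path_semi-algebraic_proof}, each coordinate of $x(\mu)$ is a semi-algebraic function of $\mu$ near $0$ and can thus be identified with an element of $\R\la\mu\ra$, yielding $x_\mu \in \R\la\mu\ra^n$. Setting $u_{i,\mu} := \mu/g_i(x_\mu) \in \R\la\mu\ra$ for $i = 1,\ldots,r$ (which is well-defined because $g_i(x_\mu) > 0$ along the path), the pair $(x_\mu, u_\mu)$ satisfies the affine system~\eqref{primal_dual_central_path_form}, so the projective representatives
$$x'_\mu = (1 : x_{1,\mu} : \cdots : x_{n,\mu}), \qquad u'_\mu = (1 : u_{1,\mu} : \cdots : u_{r,\mu})$$
give a solution of the bi-homogeneous system~\eqref{homogenized_primal_dual_central_path_form} in $\PP_n(\R\la\mu\ra) \times \PP_r(\R\la\mu\ra)$.

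To take $\lim_\mu$, I would choose bounded representatives by rescaling. Let
$$m_x := \min\{0, \order(x_{1,\mu}), \ldots, \order(x_{n,\mu})\}, \qquad m_u := \min\{0, \order(u_{1,\mu}), \ldots, \order(u_{r,\mu})\},$$
and replace $x'_\mu$ by its $\mu^{-m_x}$-scaled representative and $u'_\mu$ by its $\mu^{-m_u}$-scaled representative. By construction, every coordinate of the new representatives lies in $\R\la\mu\ra_b$ and at least one coordinate in each factor has order $0$; hence $\bar{x} := \lim_\mu (x'_\mu) \in \PP_n(\R)$ and $\bar{u} := \lim_\mu (u'_\mu) \in \PP_r(\R)$ are well-defined projective points.

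Since~\eqref{homogenized_primal_dual_central_path_form} is bi-homogeneous in the blocks $(X_0,\ldots,X_n)$ and $(U_0,\ldots,U_r)$, it is still satisfied by the rescaled representatives. Each equation is a polynomial in these (now bounded) coordinates and in $\mu$, so applying the ring homomorphism $\lim_\mu$ termwise produces a valid identity in $\R$: the perturbation term $\mu u_0 x_0^{\alpha_i}$ vanishes in the limit, and what remains is precisely the defining system~\eqref{Projective_KKT_conditions_PO} of projective KKT points of \PO\ evaluated at $(\bar{x}, \bar{u})$. The only delicate point is making sure the limit is a genuine projective point rather than $(0:\cdots:0)$; this is exactly what the choice of $m_x, m_u$ arranges, since at least one coordinate in each factor has order $0$ and therefore a nonzero image under $\lim_\mu$.
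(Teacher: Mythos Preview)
Your proof is correct and follows essentially the same approach as the paper: form the affine pair $(x_\mu,u_\mu)$ with $u_{i,\mu}=\mu/g_i(x_\mu)$, lift to projective coordinates, rescale each block by the appropriate power of $\mu$ to obtain bounded representatives, and apply $\lim_\mu$. Your exponents $m_x,m_u$ coincide with the paper's $-\alpha,-\beta$, and you are somewhat more explicit than the paper in checking that the rescaling yields a genuine projective point (some coordinate of order $0$) and that the perturbation term $\mu u_0 x_0^{\alpha_i}$ disappears under $\lim_\mu$.
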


\begin{proof}
Let $x_\mu$ be a critical path and $(u_\mu)_i = \mu/g_i(x_\mu)$ for $i=1,\ldots,r$. Then it is easy to see that 
\begin{align*}
\Big(\big(1:(x_\mu)_1:\ldots:(x_\mu)_n\big),\big(1:(u_\mu)_1:\ldots:(u_\mu)_r\big)\Big)
\end{align*} 
is a solution of~\eqref{homogenized_primal_dual_central_path_form}. If we multiply 
\begin{align*}
    \big(1:(x_\mu)_1:\ldots:(x_\mu)_n\big)
\end{align*}
by $\mu^{\alpha}$ where $\alpha = \max\{-\min_{i\in\{1,\ldots,n\}}\{\order((x_\mu)_i)\},0\}$ and multiply
\begin{align*}
\big(1:(u_\mu)_1:\ldots:(u_\mu)_r\big)
\end{align*}
by $\mu^{\beta}$ where $\beta =\max\{-\min_{i\in\{1,\ldots,r\}}\{\order((u_\mu)_i)\},0\}$, we obtain a solution   
\begin{align*}
\Big(\big((x'_\mu)_0:(x'_\mu)_1:\ldots:(x'_\mu)_n\big),\big((u'_\mu)_0:(u'_\mu)_1:\ldots:(u'_\mu)_r\big)\Big)
\end{align*}
where $(x'_\mu)_i$ for $i=0,\ldots,n$ and $(u'_\mu)_i$ for $i=0,\ldots,r$ are bounded. By the definition of $\lim_{\mu}$, $\lim_{\mu} ((x'_\mu,u'_\mu))$ is a projective solution of~\eqref{Projective_KKT_conditions_PO}.
\end{proof}
\begin{remark}
The analysis based on projective KKT points in Proposition~\ref{convergence_to_projective_KKT} extends the classical KKT-based framework in \NO~ (e.g., in~\cite{WO2002} or~\cite{FGW02}), which relies on the boundedness of both the critical path and its associated Lagrange multipliers.
\end{remark}

\hide{
\begin{proof}[Proof of Theorem~\ref{isolated_critical_points}]
 Let $F=f$ and $G=\prod_{i=1}^r g_i$. Apply Propositions~\ref{constrained_generic_isolated_critical_points} and~\ref{constrained_generic_non-degenerate_critical_points} and let $E$ be the finite set of $\varepsilon$ such that $F+\varepsilon G$ does not have isolated (non-degenerate) critical points. Since for every fixed $\varepsilon \in E$, $\varepsilon \prod_{i=1}^r g_i(x(\mu)) - \mu = 0$ has finitely many roots. Then for sufficiently small $\mu$, there must be an isolated or non-degenerate $x(\mu)$. The semi-algebraicity of the path is immediate. If $x(\mu)$ are non-degenerate, then by the Implicit Function Theorem, the critical path must be analytic as well.     
\end{proof}
}

\hide{
\begin{proposition}\label{curve_selection_convergence}
Suppose that the log-barrier function has a critical point for all sufficiently small $\mu$ and the set 
\begin{align}\label{cumulative_set_of_critical_points}
 \{(x(\mu),\mu) \mid \mu > 0, \  x(\mu) \ \text{is a critical point of the log-barrier function}\}
 \end{align}
has an accumulation point $(\bar{x},0)$. Then there exists a Nash mapping $\zeta:[0,1] \to \mathbb{R}^n$ such that $\zeta(\mu)$ is a critical point of the log-barrier function for all $\mu > 0$ and $\zeta(0) = \bar{x} \in S_= \cap S$.   
\end{proposition}
}

\hide{
\begin{proof}[Proof of Proposition~\ref{curve_selection_convergence}]
It is easy to see that the set~\eqref{cumulative_set_of_critical_points} is a semi-algebraic subset of $\mathbb{R}^{n+1}$. Then by the Nash Curve Selection Lemma~\cite[Proposition~8.1.13]{BCR98}, there exists a Nash mapping $\gamma: (0,1) \to \mathbb{R}^{n+1}$ given by $\gamma(t)=(x(t),\mu(t))$ such that $\mu(t) > 0$, $x(t)$ is a critical point of the log-barrier function at some $\mu > 0$, and $\gamma(0) = (\bar{x},0)$. Since $\mu=\mu(t)$ is semi-algebraic and $\mu(t) > 0$ on $(0,1)$, by Proposition~\ref{inverse_semi-alg_func}, it has a Nash inverse $t = t(\mu)$ on $(0,t')$ for sufficiently small $t' > 0$. Consequently, $x(t(\mu))$ is a Nash function, and since $\mu^{-1}(0) = 0$ we must have $\lim_{\mu \downarrow 0} t(\mu) = 0$, i.e., $\lim_{\mu \downarrow 0} x(t(\mu)) = \bar{x}$. Further, by~\eqref{critical_path_algebraic} we have
\begin{align*}
\frac{\partial f(\bar{x})}{\partial x_j} \prod_{i=1}^r g_i(\bar{x})   &= 0, & j&=1,\ldots,n,\\ 
g_{i}(\bar{x}) &\ge 0, & i &= 1,\ldots,r, 
\end{align*}
implying that $\bar{x} \in  S_=$.
\end{proof} 
}
  
\subsubsection{Smoothness of critical paths at $\mu = 0$}
If any of the conditions of Theorem~\ref{sufficient_conditions_existence} fails, a critical path (when it exists) is not guaranteed to be $\mathcal{C}^{\infty}$-smooth at $\mu =0$. Nevertheless, it is still possible to recover the smoothness using a reparametrization. 

\begin{proof}[Proof of Theorem~\ref{analytic_reparam_semi-algebraic}]
Consider the Puiseux expansion of $x_i(\mu)$ in the proof of Theorem~\ref{limiting_behavior_of_critical_path}, which has a ramification index $(rd)^{O(n)}$. Letting $\rho$ be the least common multiple of all ramification indices over all coordinates, the result follows.        
\end{proof}

\subsection{Proofs for a definable critical path}\label{proof_of_o-minimal}
We extend the existence, convergence, and smoothness of critical paths to \NO~problems 
\begin{align}\label{definable_optim}
\inf_x\{f(x) \mid g_i(x) \
\ge 0, \ \ i=1,\ldots,r\},
\end{align}
\noindent
where $f,g_i$ are definable functions in an o-minimal structure $\mathcal{S}(\mathbb{R})$. As the definition of a critical path (Definition~\ref{critical_path_def}) relies on differentials of $f,g_i$, we assume that $f$ and each $g_i$ are $\mathcal{C}^k$-smooth for some $k \in \mathbb{Z}_+$.

\vspace{5px}
\noindent
First, we prove that in this setting, a critical path is a definable function in $\mathcal{S}(\mathbb{R})$. 
\begin{proposition}\label{critical_path_definable_proof}
Let $\mathcal{S}(\mathbb{R})$ be an o-minimal structure, and suppose that $f,g_i \in \mathcal{S}(\mathbb{R})$ are $\mathcal{C}^1$-smooth definable functions. Then a critical path is a $\mathcal{C}^k$-smooth definable function in $\mathcal{S}(\mathbb{R})$ for some $k \in \mathbb{Z}_+$. If $\mathcal{S}(\mathbb{R}) = \mathbb{R}_{\mathrm{an,exp}}$, then the critical path is analytic. 
\end{proposition}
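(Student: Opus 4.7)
The plan is to mirror the structure of the proof of Proposition~\ref{critical_path_semi-algebraic_proof}, substituting semi-algebraic tools with their o-minimal analogs. Since $f$ and each $g_i$ are $\mathcal{C}^1$-smooth and definable in $\mathcal{S}(\mathbb{R})$, their partial derivatives $\partial f/\partial x_j$ and $\partial g_i/\partial x_j$ are definable (definability of derivatives of $\mathcal{C}^1$ definable maps is standard in o-minimal geometry). Consequently, the set
\[
D := \Bigg\{(\mu,x) \in \mathbb{R}^{n+1} \mid \frac{\partial f}{\partial x_j}(x) \prod_{i=1}^r g_i(x) - \mu \sum_{k=1}^r \frac{\partial g_k}{\partial x_j}(x) \prod_{i \neq k} g_i(x)  = 0, \ \ j=1,\ldots,n\Bigg\}
\]
is definable in $\mathcal{S}(\mathbb{R})$, and the graph of any critical path is contained in $D$.

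Next, I would invoke the existence of a $\mathcal{C}^k$-cell decomposition $(\mathcal{D}_1,\ldots,\mathcal{D}_{n+1})$ of $\mathbb{R}^{n+1}$ adapted to $D$ (see the discussion following Definition~\ref{def:cad}, and~\cite[Theorem~6.6]{Michel2}). By Definition~\ref{critical_path_def}, $x(\mu)$ is isolated in $V_\mu \cap S_{>}$ for each fixed $\mu$, so the graph $\{(\mu,x(\mu))\}$ has empty interior in $\mathbb{R}^{n+1}$. Hence for sufficiently small $\mu>0$, the point $(\mu,x(\mu))$ must lie in a cell $C \in \mathcal{D}_{n+1}$ which is the graph of a $\mathcal{C}^k$-smooth definable function of $\mu$ (cases (a)--(c) in Definition~\ref{def:cad} would give cells of dimension at least $2$, contradicting isolation). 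This yields the desired $\mathcal{C}^k$-smoothness and definability of $x(\mu)$ on an interval $(0,\mu'')$ with $\mu''>0$ sufficiently small, paralleling the reasoning of Proposition~\ref{critical_path_semi-algebraic_proof}.

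For the analytic part, when $\mathcal{S}(\mathbb{R}) = \mathbb{R}_{\mathrm{an,exp}}$, I would invoke the analytic cell decomposition available in this structure: every one-variable definable function in $\mathbb{R}_{\mathrm{an,exp}}$ admits a finite partition of its domain into intervals on each of which it coincides with a (globally) analytic function. Since the critical path $\mu \mapsto x(\mu)$ is definable in $\mathbb{R}_{\mathrm{an,exp}}$ by the previous step and is defined on $(0,\mu'')$, there exists $0 < \mu''' \le \mu''$ such that $x(\mu)$ coincides on $(0,\mu''')$ with an analytic function, which establishes analyticity of the critical path for sufficiently small $\mu > 0$.

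The main obstacle will be the analytic statement in $\mathbb{R}_{\mathrm{an,exp}}$: while $\mathcal{C}^k$-smoothness for arbitrary $k$ follows directly from o-minimal cell decomposition, upgrading this to genuine analyticity requires a structural result specific to $\mathbb{R}_{\mathrm{an,exp}}$ (piecewise analyticity of one-variable definable functions). I would cite the relevant results of van den Dries--Miller and van den Dries--Macintyre--Marker on the analytic structure of $\mathbb{R}_{\mathrm{an,exp}}$. The routine steps --- definability of $D$, the cell decomposition, and the isolation argument excluding open cells --- should follow the template of Proposition~\ref{critical_path_semi-algebraic_proof} essentially verbatim.
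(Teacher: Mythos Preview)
Your proposal is correct and takes essentially the same approach as the paper: the paper also reduces to the argument of Proposition~\ref{critical_path_semi-algebraic_proof} with ``semi-algebraic'' replaced by ``definable'', and then invokes the analytic cell decomposition in $\mathbb{R}_{\mathrm{an,exp}}$ (the paper cites \cite[Theorem~8.8]{Dries7}, which is the van den Dries--Miller result you allude to). The only cosmetic difference is that you obtain definability and $\mathcal{C}^k$-smoothness in one pass via a $\mathcal{C}^k$-cell decomposition of $\mathbb{R}^{n+1}$, whereas the paper first establishes definability and then applies a one-variable $\mathcal{C}^k$-cell decomposition to $x(\mu)$; both routes are equivalent.
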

\begin{proof}
The proof for the first part is analogous to Proposition~\ref{critical_path_semi-algebraic_proof}, when ``semi-algebraic" is replaced by ``definable". Since $x(\mu)$ is a definable function, there exists a $\mathcal{C}^k$-cell decomposition of $\mathbb{R}$, for some positive integer $k$, such that the restriction of $x(\mu)$ to each interval is  $\mathcal{C}^k$-smooth~\cite[Theorem~6.7]{Michel2}, implying that $x(\mu)$ is $\mathcal{C}^k$-smooth for all sufficiently small $\mu>0$.

\vspace{5px}
\noindent
The proof of analyticity for $\mathbb{R}_{\mathrm{an,exp}}$ follows from~\cite[Theorem~8.8]{Dries7}. 
\end{proof}

\subsubsection{Existence of a critical path}
The analogs of Lemma~\ref{equivalent_systems} and Theorems~\ref{Morse_on_smooth_hypersurface}-\ref{Morse_on_transversal} are still valid using the Definable Implicit Function Theorem~\cite[Page~113]{Dries2}. 

\begin{proof}[Proof of Theorem~\ref{sufficient_conditions_existence_definable}]
Replace ``semi-algebraic" by ``definable" in the proof of Theorem~\ref{sufficient_conditions_existence} and then apply the Definable Implicit Function Theorem.
\end{proof}

\subsubsection{Convergence of a critical path}
Analogous to the semi-algebraic case, if a critical path is bounded near $\mu = 0$, then it converges. 
\begin{proposition}\label{convergence_definable}
Let $\mathcal{S}(\mathbb{R})$ be an o-minimal structure, and suppose that $f,g_i \in \mathcal{S}(\mathbb{R})$ are $\mathcal{C}^1$-smooth definable functions. Then a critical path $x(\mu)$, uniformly bounded near $\mu = 0$, converges to some $\hat{x} \in S_= \cap S$. 
\end{proposition}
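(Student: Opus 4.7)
The plan is to establish the limit coordinate by coordinate via the Monotonicity Theorem, and then identify the limit point as a member of $S_= \cap S$ using the algebraic formulation~\eqref{critical_path_algebraic} of a critical path together with Assumption~\ref{non_trivial_problem}. Conceptually this is the definable analogue of the bounded-plus-semi-algebraic argument used implicitly in Theorem~\ref{limiting_behavior_of_critical_path}, but without any quantitative rate claim.

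First, by Proposition~\ref{critical_path_definable_proof}, the critical path $x(\mu)$ is definable in $\mathcal{S}(\mathbb{R})$, so each of its coordinates $x_j(\mu)$ is a one-variable definable function. Since the critical path is uniformly bounded near $\mu = 0$, each $x_j(\mu)$ is a bounded definable function on a right-neighborhood of $0$. Applying Theorem~\ref{Monoton_Thm} to $x_j$, for sufficiently small $\mu>0$ the coordinate $x_j(\mu)$ is either constant or strictly monotone and $\mathcal{C}^k$-smooth; in either case a bounded monotone function on a right-neighborhood of $0$ has a finite one-sided limit at $0$. Intersecting the finitely many resulting right-neighborhoods over $j = 1,\ldots,n$, I obtain $\hat{x} := \lim_{\mu \downarrow 0} x(\mu) \in \mathbb{R}^n$.

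Next I would show $\hat{x} \in S$: since $g_i(x(\mu)) > 0$ for every $\mu > 0$ (a critical path lies in $S_>$ by Definition~\ref{critical_path_def}), continuity of $g_i$ gives $g_i(\hat{x}) \geq 0$, hence $\hat{x} \in S$. To place $\hat{x}$ on the algebraic boundary $S_=$, I would invoke the defining equation~\eqref{critical_path_algebraic}, which reads, for each $j$,
\begin{align*}
\frac{\partial f}{\partial x_j}(x(\mu)) \prod_{i=1}^r g_i(x(\mu)) \;=\; \mu \sum_{k=1}^r \frac{\partial g_k}{\partial x_j}(x(\mu)) \prod_{i \neq k} g_i(x(\mu)).
\end{align*}
The right-hand side carries an explicit factor of $\mu$, and the $\mathcal{C}^1$-smoothness of $f,g_i$ together with the convergence $x(\mu) \to \hat{x}$ makes every factor on the right uniformly bounded near $\mu = 0$, so the right-hand side tends to $0$. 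Passing to the limit yields $\tfrac{\partial f}{\partial x_j}(\hat{x}) \prod_i g_i(\hat{x}) = 0$ for each $j$. If $\prod_i g_i(\hat{x})$ were strictly positive, then every partial derivative of $f$ at $\hat{x}$ would vanish, contradicting Assumption~\ref{non_trivial_problem} which asserts that $df$ does not vanish on $S$. Therefore $\prod_i g_i(\hat{x}) = 0$, and $\hat{x} \in S_= \cap S$.

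I do not anticipate a substantial obstacle: the only delicate point is that the Monotonicity Theorem is a single-variable statement, so uniformity across coordinates must be secured, which is automatic by finite intersection. Note that the argument does not use polynomial boundedness of $\mathcal{S}(\mathbb{R})$, so the qualitative convergence of Proposition~\ref{convergence_definable} holds in full o-minimal generality, in contrast to the quantitative bound in Theorem~\ref{limiting_behavior_of_definable_critical_path}, whose proof further requires the H\"older inequality and the Puiseux-type expansion of~\cite[Lemma~2.6]{Kur88}.
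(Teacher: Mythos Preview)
Your argument is correct and the core convergence step---applying the Monotonicity Theorem (Theorem~\ref{Monoton_Thm}) coordinatewise to the bounded definable function $x(\mu)$---is exactly what the paper does. The paper's proof in fact stops there, simply asserting ``the proof is complete'' once the limit $\hat{x}$ exists; it does not spell out why $\hat{x} \in S_= \cap S$. Your second half (closedness of $S$ plus passing to the limit in~\eqref{critical_path_algebraic} and invoking Assumption~\ref{non_trivial_problem}) fills in precisely what the paper leaves implicit, and does so correctly.
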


\begin{proof}
By Theorem~\ref{Monoton_Thm}, there exists a sufficiently small $\bar{\mu}>0$ such that each coordinate of $x(\mu)$ is either constant, continuous, or strictly monotone on $(0,\bar{\mu})$. Since each coordinate of $x(\mu)$ is bounded, then they have limit on interval $(0,\bar{\mu})$, and the proof is complete.  
\end{proof}

\vspace{5px}
\noindent
Now, we show that the analog of Theorem~\ref{limiting_behavior_of_critical_path} holds in the o-minimal setting, under the assumption that $f,g_i$ are $\mathcal{C}^1$-smooth definable functions. By the Definable Sard Theorem~\cite[Theorem~2.7]{Wilkie2}, 
\begin{align}\label{definable_non-singular}
\bigg\{x \in \mathbb{R}^n \mid \prod_{i=1}^r g_i(x) = \xi(\mu) \bigg\},
\end{align}
is a definable $\mathcal{C}^1$- manifold when $\mu>0$ is sufficiently small (recall that $\xi(\mu) \downarrow 0$ as $\mu \downarrow 0$). Therefore, $x(\mu)$ can be considered as a critical point of $f$ on ~\eqref{definable_non-singular} for all sufficiently small $\mu > 0$. First, we prove o-minimal versions of Propositions~\ref{inverse_semi-alg_func}, \ref{convergence_of_varieties}, and~\ref{convergence_of_tangent_spaces_extended}.
\begin{proposition}\label{inverse_definable_func}
Let $\mathcal{S}(\mathbb{R})$ be an o-minimal structure, and let $f:(0,a) \to \mathbb{R}$ be a definable function in $\mathcal{S}(\mathbb{R})$ such that $f$ is positive on $(0,a)$ and $\lim_{t \downarrow  0} (f(t)) = 0$. Then $f$ has a $\mathcal{C}^k$-smooth definable inverse on $(0,a')$ for some integer $k > 0$ and for some $0 < a'\le a$.    
\end{proposition}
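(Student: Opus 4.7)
The plan is to mirror the proof of Proposition~\ref{inverse_semi-alg_func}, replacing its semi-algebraic inputs with the corresponding o-minimal tools: the Monotonicity Theorem (Theorem~\ref{Monoton_Thm}) and the Definable Inverse Function Theorem~\cite[Page~113]{Dries2}. First I would apply Theorem~\ref{Monoton_Thm} to $f$ on $(0, a)$ to obtain a finite partition $0 = a_0 < a_1 < \cdots < a_{m+1} = a$ such that on each subinterval $f$ is either constant or $\mathcal{C}^k$-smooth and strictly monotone, for some integer $k > 0$. Restricting attention to the leftmost subinterval $(0, a_1)$, the hypotheses $f > 0$ and $\lim_{t \downarrow 0} f(t) = 0$ rule out the constant case, and they also rule out the strictly decreasing case (otherwise the limit would be bounded below by $f(a_1/2) > 0$). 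Hence $f$ is $\mathcal{C}^k$-smooth and strictly increasing on $(0, a_1)$.

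Next I would show that $f'(t) > 0$ on some $(0, a')$ with $0 < a' \le a_1$. Strict monotonicity of $f$ gives $f'(t) \ge 0$ on $(0, a_1)$, and the definable set $Z := \{ t \in (0, a_1) \mid f'(t) = 0 \}$ has empty interior (otherwise $f$ would be constant on an open subinterval, contradicting strict monotonicity). By o-minimality, every definable subset of $\mathbb{R}$ with empty interior is a finite union of points, so one can choose $a' > 0$ small enough that $(0, a') \cap Z = \emptyset$. Applying the Definable Inverse Function Theorem to $f|_{(0, a')}$ then produces a $\mathcal{C}^k$-smooth definable inverse on the image $f((0, a'))$, which, by continuity, strict monotonicity, and $\lim_{t \downarrow 0} f(t) = 0$, is an open interval of the form $(0, a'')$. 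Relabeling $a''$ as $a'$ yields the stated conclusion.

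The main technical obstacle is securing the pointwise positivity $f' > 0$ on a right-neighborhood of $0$, since a strictly monotone $\mathcal{C}^k$ function may admit isolated zeros of its derivative (e.g.\ $t \mapsto t^3$). In the semi-algebraic setting of Proposition~\ref{inverse_semi-alg_func} this is resolved because semi-algebraic subsets of $\mathbb{R}$ with empty interior are finite; the very same fact holds for definable subsets in an arbitrary o-minimal structure essentially by definition, so the argument transfers to the present setting without difficulty, and the rest of the proof is a direct translation of the semi-algebraic one.
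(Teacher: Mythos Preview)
Your proof is correct and follows essentially the same route as the paper: invoke the Monotonicity Theorem to get $\mathcal{C}^k$-smooth strict monotonicity on a right-neighborhood of $0$, rule out the constant case via $f>0$ and $\lim_{t\downarrow 0}f(t)=0$, and then apply the Definable Inverse Function Theorem. Your treatment of the step $f'>0$ near $0$ (via o-minimality of the zero set of $f'$) is in fact more careful than the paper's, which passes over this point rather quickly.
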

\begin{proof}
By Theorem~\ref{Monoton_Thm}, $f$ is $\mathcal{C}^k$-smooth for some integer $k > 0$ and strictly monotone for sufficiently small positive $t$, i.e., $f'(t) \neq 0$, since otherwise $\lim_{t \downarrow 0} (f(t)) >0$. Then by the Definable Inverse Function Theorem~\cite[Page~112]{Dries}, $f$ has a $\mathcal{C}^k$-smooth definable inverse $f^{-1}$ on $(0,a')$ for some $0 < a' \le a$. 
\end{proof}

\begin{proposition}\label{convergence_of_definable_zero_sets}
Let $\mathcal{S}(\mathbb{R})$ be an o-minimal structure, and let $V$ and $V_{\xi}$ be defined as 
\begin{align*}
V&=\zero(\mathcal{D},\mathbb{R}^n),\\
V_{\xi} &= \zero(\{f_1-\xi_1,\ldots,f_s-\xi_s\},\mathbb{R}^n), \quad \xi:=(\xi_1,\ldots,\xi_s) \in \mathbb{R}_+^s,
\end{align*}
where $\mathcal{D}:=\{f_1,\ldots,f_s\}$ is a family of $\mathcal{C}^1$-smooth definable functions in $\mathcal{S}(\mathbb{R})$, and $V$ is non-empty. Suppose that $\Sing(V) \subset \overline{V \setminus \Sing(V)}$. Then for every $\bar{x} \in V$, there exists a definable function $x:(0,a) \to \mathbb{R}^n$ with $\lim_{\xi \to 0} (x(\xi)) = \bar{x}$.
In particular, $V_\xi$ is non-empty.  
\end{proposition}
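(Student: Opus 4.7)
The plan is to mirror the proof of Proposition~\ref{convergence_of_varieties}, replacing each semi-algebraic tool with its o-minimal counterpart. The argument splits into two cases according to whether $\bar{x} \in V$ is a non-singular or singular zero of $\mathcal{D}$.

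For a non-singular point $\bar{x} \in V$: the condition $\Sing(V) \subset \overline{V \setminus \Sing(V)}$ forces $s \leq n$, and we may assume without loss of generality that the leading $s \times s$ submatrix of $J(\{f_1,\ldots,f_s\})(\bar{x})$ is invertible. Applying the Definable Implicit Function Theorem~\cite[Page~113]{Dries2} to the map $(\xi_1,\ldots,\xi_s) \mapsto (f_1(X)-\xi_1,\ldots,f_s(X)-\xi_s)$ with the last $n-s$ coordinates fixed at $\bar{x}_{s+1},\ldots,\bar{x}_n$, I obtain $\mathcal{C}^1$-smooth definable functions $\zeta_i: U \to \mathbb{R}$ on an open neighborhood $U \subset \mathbb{R}^s$ of the origin with $\zeta_i(0) = \bar{x}_i$ such that
\begin{align*}
x(\xi) := (\zeta_1(\xi),\ldots,\zeta_s(\xi),\bar{x}_{s+1},\ldots,\bar{x}_n) \in V_\xi
\end{align*}
for every $\xi \in U$. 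Restricting to the open ray $\{(t,\ldots,t) : 0<t<a\}$ and reparameterizing yields the required definable function $x:(0,a)\to\mathbb{R}^n$ with $\lim_{t\downarrow 0}x(t) = \bar{x}$.

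For a singular $\bar{x}\in V$: the hypothesis $\Sing(V) \subset \overline{V\setminus\Sing(V)}$ ensures that every open neighborhood of $\bar{x}$ meets $V\setminus \Sing(V)$, so by the non-singular case just established, $(\bar{x},\mathbf{0})$ lies in the closure of the definable set
\begin{align*}
T := \{(x,\xi) \in \mathbb{R}^n \times \mathbb{R}^s \mid f_i(x) = \xi_i,\ \xi_i > 0,\ i=1,\ldots,s\}.
\end{align*}
I then invoke the Definable Curve Selection Lemma~\cite[Chapter~6, (1.5)]{Dries} to produce a definable continuous mapping $\eta:[0,1]\to \mathbb{R}^n\times\mathbb{R}^s$, written $\eta(t) = (x(t),\xi_1(t),\ldots,\xi_s(t))$, with $\eta((0,1])\subset T$ and $\eta(0) = (\bar{x},\mathbf{0})$. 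Each $\xi_i(t)$ is a positive definable function on $(0,1]$ with $\lim_{t\downarrow 0}\xi_i(t) = 0$; fixing one such index $i_0$ and applying Proposition~\ref{inverse_definable_func} to $\xi_{i_0}$ yields a definable inverse $t=\tau(\xi_{i_0})$ on some $(0,a')$. Composing, $y(\xi_{i_0}) := x(\tau(\xi_{i_0}))$ is a definable function on $(0,a')$ with $y(\xi_{i_0}) \in V_{\xi(\tau(\xi_{i_0}))}$ and $\lim_{\xi_{i_0}\downarrow 0}y(\xi_{i_0}) = \bar{x}$, which is the desired conclusion.

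The main technical point to handle carefully is the interpretation of the multivariate limit $\lim_{\xi \to 0}$ in the o-minimal setting (where, unlike the Puiseux series framework used in Proposition~\ref{convergence_of_varieties}, we cannot treat the $\xi_i$ as formal infinitesimals of prescribed relative order). The Definable Curve Selection Lemma resolves this by producing one definable one-parameter family along which all components of $\xi$ tend to zero simultaneously, so that the statement of the proposition is meaningful once reparameterized by a single variable. Everything else is a routine translation of the semi-algebraic argument, replacing~\cite[Corollary~2.9.8]{BCR98} by the Definable Implicit Function Theorem, ~\cite[Theorem~3.19]{BPR06} by its o-minimal analog, and Proposition~\ref{inverse_semi-alg_func} by Proposition~\ref{inverse_definable_func}.
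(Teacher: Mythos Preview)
Your proposal is correct and follows essentially the same approach as the paper: the paper's proof simply instructs the reader to rerun the argument of Proposition~\ref{convergence_of_varieties} with the Definable Implicit Function Theorem in the non-singular case and the Definable Curve Selection Lemma together with Proposition~\ref{inverse_definable_func} in the singular case, which is precisely what you carry out. Your added remarks about restricting to a ray and about interpreting the multivariate limit via a single definable one-parameter family are reasonable clarifications that the paper leaves implicit.
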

\begin{proof}
 The proof is analogous to the proof of Proposition~\ref{convergence_of_varieties}. Replace ``semi-algebraic" by ``definable" and $x_\xi$ by $x(\xi)$, and then apply the Definable Implicit Function Theorem~\cite[Page~113]{Dries} for Part~\ref{smooth}. For Part~\ref{singular}, apply the Definable Curve Selection Lemma~\cite[Page~94]{Dries} and Proposition~\ref{inverse_definable_func}.
\end{proof}
\noindent
In order to establish the o-minimal analog of Proposition~\ref{convergence_of_tangent_spaces_extended}, we define the convergence of tangent spaces using the usual topology on Grassmannian $\mathrm{Gr}_{n-1}(\mathbb{R}^n)$ of linear subspaces of $\mathbb{R}^n$.
\begin{definition}[Convergence of tangent spaces]
Given the tangent space $T_{x(\xi)} V_{\xi}$, we define
\begin{align*}
 \lim_{\xi \to 0} (T_{x(\xi)} V_{\xi}) = T,   
\end{align*}
when for any sequence $\{\xi_k\} \to 0$, there exists an orthonormal basis $\{v_1^k,\ldots,v^k_{n-1}\}$ of $T_{x(\xi_k)} V_{\xi_k}$ such that $v_i^k \to v_i$ for each $i$, and $\{v_1,\ldots,v_{n-1}\}$ is an orthonormal basis of $T$.       
\end{definition}
\begin{proposition}\label{convergence_of_tangent_spaces_definable}
Let $\mathcal{S}(\mathbb{R})$ be an o-minimal structure, let  $V = \zero( \prod_{i=1}^s f_{i},\mathbb{R}^n)$, where $f_i$ are $\mathcal{C}^1$-smooth definable functions in $\mathcal{S}(\mathbb{R})$, and let $\{f_1,\ldots,f_s\}$ be in general position. Further, let $x:(0,a) \to \mathbb{R}^n$ be a bounded definable function with $x(\xi) \in V_{\xi} = \zero\big( \prod_{i=1}^s P_{i}-\xi,\mathbb{R}^n\big)$ for every $\xi \in (0,a)$, and $\lim_{\xi} (x_\xi) = \bar{x}$. 
Then we have
\begin{align*}
T_{\bar{x}}Z\subset \lim_{\xi \to 0} (T_{x(\xi)} V_{\xi}) = \bigg\{h \in \mathbb{R}^n \mid\sum_{i=1}^s  \bigg(\sum_{j=1}^n \frac{\partial P_i}{\partial x_j}(\bar{x})h_j\bigg)  = 0\bigg\},
\end{align*}
where $Z$ is the stratum containing $\bar{x}$, with respect to the canonical Whitney stratification of $V$.
\end{proposition}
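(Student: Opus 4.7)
The plan is to mirror the proof of Proposition~\ref{convergence_of_tangent_spaces_extended}, systematically replacing the semi-algebraic tools (Puiseux series orders, Semi-algebraic Implicit Function Theorem, Semi-algebraic Sard Theorem) by their o-minimal counterparts (Monotonicity Theorem, Definable Implicit Function Theorem, Definable Sard Theorem). First, I would apply the Definable Sard Theorem to the definable $\mathcal{C}^1$-function $\prod_{i=1}^s f_i$ to conclude that $V_\xi$ is a definable $\mathcal{C}^1$-submanifold of codimension one for all sufficiently small $\xi > 0$, and combine this with Proposition~\ref{convergence_of_definable_zero_sets} to obtain the explicit description
\[
T_{x(\xi)} V_\xi = \Big\{ h \in \mathbb{R}^n \ \Big|\ \sum_{i=1}^s \phi_i(\xi)\, \nabla f_i(x(\xi))^\top h = 0,\ \|h\|\le 1 \Big\},
\]
where $\phi_i(\xi) := \prod_{\ell \neq i} f_\ell(x(\xi))$. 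In the non-degenerate case $\nabla(\prod_i f_i)(\bar{x}) \neq 0$, the proof reduces to the definable analog of Proposition~\ref{convergence_of_tangent_spaces}, established by invoking the Definable Implicit Function Theorem.

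For the degenerate case, I may assume without loss of generality that $f_i(\bar{x}) = 0$ for every $i$, so that each $\phi_i(\xi) \to 0$. The central technical step is the normalization: set $\psi(\xi) := \max_i |\phi_i(\xi)|$, which is definable and strictly positive on $(0,a)$ (since $x(\xi) \in V_\xi$ forces $f_i(x(\xi)) \neq 0$ for every $i$). Each quotient $\phi_i(\xi)/\psi(\xi)$ is then a bounded definable function on $(0,a)$, and Theorem~\ref{Monoton_Thm} guarantees it has a limit $\alpha_i \in [-1,1]$ as $\xi \downarrow 0$. Furthermore, by definable cell decomposition, some distinguished index $i_0$ achieves the maximum on a one-sided neighborhood of $0$, so $|\alpha_{i_0}| = 1$ and the $\alpha_i$'s are not all zero.

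Dividing the defining equation of $T_{x(\xi)} V_\xi$ by $\psi(\xi)$ and passing to the limit, using continuity of $\nabla f_i$ at $\bar{x}$, I obtain
\[
\lim_{\xi \to 0} T_{x(\xi)} V_\xi = \Big\{ h \in \mathbb{R}^n \ \Big|\ \sum_{i=1}^s \alpha_i\, \nabla f_i(\bar{x})^\top h = 0 \Big\},
\]
which matches the displayed formula when the $\phi_i$ share a common rate of vanishing (the direct analog of the ``identical positive orders'' simplification made in the semi-algebraic proof). The desired inclusion $T_{\bar{x}} Z \subset \lim_{\xi \to 0} T_{x(\xi)} V_\xi$ then follows from general position: the vectors $\{\nabla f_i(\bar{x})\}_{i=1}^s$ are linearly independent, so $T_{\bar{x}} Z = \bigcap_{i=1}^s \ker \nabla f_i(\bar{x})$ sits inside the kernel of any nontrivial linear combination $\sum_i \alpha_i \nabla f_i(\bar{x})$.

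The main obstacle will be the normalization step. In the semi-algebraic case, each $\phi_i$ admits a Puiseux expansion, so one can pick a common exponent $\gamma$ and divide through by $\xi^\gamma$ to obtain finite, explicit limits. The o-minimal setting lacks such a scalar calculus of orders in general, so I must instead lean on definability and the Monotonicity Theorem to show that the rescaled ratios $\phi_i/\psi$ are definable, eventually monotone, and hence convergent, and use cell decomposition to guarantee that the chosen $\psi$ is realized by a single $\phi_{i_0}$ on a one-sided neighborhood of $0$.
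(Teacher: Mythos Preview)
Your proposal is correct and follows essentially the same route as the paper: both mirror the semi-algebraic argument of Proposition~\ref{convergence_of_tangent_spaces_extended}, invoking the Definable Sard Theorem for non-singularity of $V_\xi$, reducing to the case $f_i(\bar{x})=0$ for all $i$, normalizing the coefficients $\phi_i(\xi)=\prod_{\ell\neq i}f_\ell(x(\xi))$, and then deducing the inclusion $T_{\bar x}Z\subset\lim_\xi T_{x(\xi)}V_\xi$ from general position. Your normalization via $\psi(\xi)=\max_i|\phi_i(\xi)|$ together with the Monotonicity Theorem is in fact more careful than the paper's treatment, which simply declares that ``we can assume without loss of generality that the definable functions $\prod_{\ell\neq i}P_\ell(x(\xi))$ are non-zero and equal'' near $0$ and divides through; your version makes explicit why the rescaled limits exist and why at least one is nonzero, and it correctly isolates the extra hypothesis (common rate of vanishing) needed for the exact equality with unit coefficients displayed in the statement.
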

\begin{proof}
Replace semi-algebraic by definable and $x_\xi$ by $x(\xi)$ in the proof of Proposition~\ref{convergence_of_tangent_spaces_extended}, and then apply the Definable Sard Theorem. Then the tangent space of $V_{\xi}$ at $x(\xi)$ is given by
\begin{align*}
T_{x(\xi)} V_\xi = \bigg\{h \in \mathbb{R}^n \mid  \sum_{i=1}^s \prod_{\ell=1,\ell \neq i}^s P_{\ell}(x(\xi))\bigg(\sum_{j=1}^n    \frac{\partial P_{i}}{\partial x_j}(x(\xi))h_j\bigg) = 0\bigg\}.
\end{align*}  
Suppose, without loss of generality, that $P_i(\bar{x}) = 0$ for all $i=1,\ldots,s$. Since
\begin{align*}
\lim_{\xi \to 0} \bigg(\prod_{\ell=1, \ell \neq i}^s P_{\ell}(x(\xi))\bigg) = 0
\end{align*}
for $i = 1,\ldots,s$, we can assume without loss of generality that the definable functions $\prod_{\ell=1, \ell \neq i}^s P_{\ell}(x(\xi))$ are non-zero and equal when $\xi > 0$ is sufficiently small. Thus, dividing by $\prod_{\ell=1, \ell \neq i}^s P_{\ell}(x(\xi))$ we get
\begin{align*}
\lim_{\xi \to 0} (T_{x(\xi)} V_\xi) &= \lim_{\xi \to 0} \bigg(\bigg\{h \in \mathbb{R}^n \mid\sum_{i=1}^s  \bigg(\sum_{j=1}^n \frac{\partial P_i}{\partial x_j}(x(\xi))h_j\bigg)  = 0\bigg\}\bigg),\\
&=\bigg\{h \in \mathbb{R}^n \mid\sum_{i=1}^s  \bigg(\sum_{j=1}^n \frac{\partial P_i}{\partial x_j}(\bar{x})h_j\bigg)  = 0\bigg\},
\end{align*}
where the second equality follows from $J(\{P_1,\ldots,P_s\})(\bar{x})$ being full row rank and the Definable Implicit Function Theorem. The rest is analogous to the proof of Proposition~\ref{convergence_of_tangent_spaces_extended}.
\end{proof}
\noindent
Now, we prove Theorem~\ref{limiting_behavior_of_definable_critical_path}. For the quantitative part of the theorem, we need to assume that $\mathcal{S}(\mathbb{R})$ is a polynomially bounded o-minimal structure, and $f,g_i$ are definable functions in $\mathcal{S}(\mathbb{R})$. Further, we will need a result on the H\"older continuity of definable functions in $\mathcal{S}(\mathbb{R})$.

\begin{proposition}[H\"older inequality~\cite{Dries2}]\label{Holder_ineq}
Let $\mathcal{S}(\mathbb{R})$ be a polynomially bounded o-minimal structure, $A \subset \mathbb{R}$ be a compact set in $\mathcal{S}(\mathbb{R})$, and let $f:A \to \mathbb{R}$ be a continuous definable function in $\mathcal{S}(\mathbb{R})$. Then there exist $C,r > 0$ such that 
\begin{align*}
    |f(x) - f(y)| \le C |x-y|^r, \quad x,y \in A.
\end{align*}
\end{proposition}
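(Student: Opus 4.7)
The plan is to reduce the problem to analyzing the definable modulus of continuity of $f$ on $A$, and then invoke Miller's Growth Dichotomy theorem for polynomially bounded o-minimal structures~\cite{Miller94(b)}.

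First, I would introduce the modulus of continuity $\omega: [0, \mathrm{diam}(A)] \to \mathbb{R}$ defined by
\[
\omega(t) = \sup\{|f(x) - f(y)| : x, y \in A, \ |x - y| \le t\},
\]
which is a non-decreasing definable function (the supremum over a definable family is definable in any o-minimal structure) with $\omega(0) = 0$. A standard Heine--Cantor-type argument in the o-minimal setting (based on cell decomposition and the compactness of $A$) shows that $f$ is uniformly continuous on $A$, so that $\omega(t) \to 0$ as $t \to 0^+$.

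Next, since $\mathcal{S}(\mathbb{R})$ is polynomially bounded, the Growth Dichotomy implies that $\omega$ is either identically zero on some interval $(0,\delta)$, or $\omega(t) = c\,t^{r}(1 + o(1))$ as $t \to 0^+$ for some $c \neq 0$ and $r \in \mathbb{R}$. In the latter case, $\omega(t) \to 0$ forces $r > 0$, and there exist constants $\delta, C_1 > 0$ such that $\omega(t) \le C_1\,t^{r}$ for all $t \in [0,\delta]$. In the former (trivial) case, $f$ is locally constant on $A$, hence constant on each of the finitely many connected components of $A$ (obtained via cell decomposition); the minimum pairwise distance between distinct components is then positive, and the H\"older inequality follows immediately for any choice of $r > 0$.

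Finally, I would extend the near-zero estimate to all pairs in $A$. Since $A$ is compact and $f$ is continuous, there is $M > 0$ with $\sup_{A}|f| \le M$. For $|x-y| > \delta$, the trivial bound $|f(x) - f(y)| \le 2M \le (2M/\delta^{r})|x-y|^{r}$ applies, so setting $C = \max\{C_1,\ 2M/\delta^{r}\}$ yields
\[
|f(x) - f(y)| \le C\,|x-y|^{r}, \quad x, y \in A.
\]
The main obstacle, which is essentially bookkeeping, is establishing the definability and the $t \downarrow 0$ behavior of $\omega$ rigorously within the o-minimal framework, and handling the degenerate case where $\omega$ vanishes identically near $0$; both rely on cell decomposition and compactness arguments that must be carried out carefully to cover all boundary cases.
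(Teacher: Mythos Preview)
The paper does not give its own proof of this proposition: it is quoted as a known result from~\cite{Dries2} and used as a black box in the proof of Theorem~\ref{limiting_behavior_of_definable_critical_path}. So there is no ``paper's proof'' to compare against; I can only assess your argument on its own merits.

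Your approach is correct and is in fact the standard route to this statement. The modulus of continuity $\omega$ is definable (supremum over a definable family), non-decreasing, and tends to $0$ at $0$ by uniform continuity of $f$ on the compact set $A$; since $A\subset\mathbb{R}$, ordinary Heine--Cantor already gives this without any o-minimal machinery. Miller's growth dichotomy (stated in the paper as Proposition~\ref{growth_dichotomy} for $x\to\infty$, but transferred to $t\downarrow 0$ via $t\mapsto 1/t$ exactly as in the paper's proof of Theorem~\ref{thm:analyticity_definable_func}) then yields $\omega(t)\le C_1 t^r$ near $0$ with $r>0$, and the extension to all pairs via the trivial bound $2M/\delta^r$ is routine. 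The degenerate case $\omega\equiv 0$ on $(0,\delta)$ is handled correctly: since $A\subset\mathbb{R}$ is compact and definable it is a finite union of points and closed intervals, so $f$ is constant on each component and the distinct components are at positive distance. None of the ``obstacles'' you flag are genuine difficulties here.
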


\begin{proof}[Proof of Theorem~\ref{limiting_behavior_of_definable_critical_path}]
Replace Proposition~\ref{convergence_of_tangent_spaces_extended} by Proposition~\ref{convergence_of_tangent_spaces_definable} and $x_{\xi}$ by $x(\mu)$ in the proof of Theorem~\ref{convergence_of_critical_points}, and then apply the result to $f$ and $S_=$. 

\vspace{5px}
\noindent
For the quantitative part, we apply the H\"older inequality in Proposition~\ref{Holder_ineq} to the definable function 
\begin{align*}
 d(\mu):[0,\bar{\mu}] \to \mathbb{R}:\mu \mapsto \|x(\mu)-\bar{x}\|^2,   
\end{align*}
where $d(0) = 0$, and $\bar{\mu} > 0$ is small enough to ensure continuity of $d$. Then, by Proposition~\ref{Holder_ineq}, for all $\mu \in [0,\bar{\mu}]$ there exist $C,N > 0$ such that 
\begin{align*}
d(\mu) \le C \mu^{N}.
\end{align*}
When $\mathcal{S}(\mathbb{R}) = \mathbb{R}_{\mathrm{an}}$, the proof follows from Proposition~\ref{Puiseux_subanalytic} (see also the proof of Theorem~\ref{thm:analyticity_definable_func}).
\end{proof}

\subsubsection{Smoothness of a critical path at the limit point}
Finally, we establish the analog of Theorem~\ref{analytic_reparam_semi-algebraic} for~\eqref{definable_optim}, beginning with its analytic counterpart. Assuming that $f$ and $g_i$ are real globally analytic functions, we show, analogous to the semi-algebraic case, that a critical path can be locally described by a Puiseux series. To this end, we will introduce the following definition, notation, and technical results.
  
\begin{definition}[Weierstrass polynomial]
A real analytic function $W(x,y)$ in a neighborhood of $(\mathbf{0},0) \in \mathbb{R}^n \times \mathbb{R}$ is called a Weierstrass polynomial of degree $d$ if it can be described as
\begin{align*}
W(x,y) = y^d + a_{d-1}(x)y^{d-1}+\cdots+a_1(x)y+a_0(x),    
\end{align*}
where $a_i(x)$ is a real analytic function in a neighborhood of $\mathbf{0} \in \mathbb{R}^n$ and $a_i(\mathbf{0}) = 0$ for $i=0,\ldots,d-1$.
\end{definition}
\begin{notation}[Convergent power series]
We define $\mathbb{R}\{X_1,\ldots,X_n\}$ as the ring of convergent power series with real coefficients, and $\order(\cdot)$ as the order of a convent power series.
\end{notation}
\noindent
The \textit{Weierstrass Preparation Theorem}~\cite[Theorem~6.1.3]{KP02} allows a real analytic function to be locally expressed as the product of a Weierstrass polynomial and a non-vanishing real analytic function. This result will play a key role in the proof of Theorem~\ref{thm:analyticity_analytic_func}. 
\begin{theorem}[Weierstrass Preparation Theorem]\label{Real_Analytic_WPT}
Let $F \in \mathbb{R} \{X_1,\ldots,X_n,Y\}$ represent a real analytic function in a neighborhood of $(\mathbf{0},0) \in \mathbb{R}^n \times \mathbb{R}$ such that 
\begin{align*}
F(\mathbf{0},Y) \neq 0 \ \ \text{and} \ \ \order(F(\mathbf{0},Y)) = d.
\end{align*}
Then there exist a Weierstrass polynomial $W \in \mathbb{R} \{ X_1,\ldots,X_n\}[Y]$ and a non-vanishing real analytic function $G\in \mathbb{R} \{X_1,\ldots,X_n,Y\}$ in a neighborhood $U$ of $(\mathbf{0},0)$ such that $F = G W$ in $U$.
\end{theorem}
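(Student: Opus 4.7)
The plan is to prove the real-analytic Weierstrass Preparation Theorem by first establishing the complex-analytic version on a small polydisc and then descending to $\mathbb{R}$ via a conjugation-symmetry argument. Since $F$ has real Taylor coefficients that converge in some polydisc about the origin, I would begin by extending $F$ to a holomorphic function on an open neighborhood $\widetilde{U}$ of $(\mathbf{0}, 0) \in \mathbb{C}^n \times \mathbb{C}$. The hypothesis $\order(F(\mathbf{0}, Y)) = d$ implies $F(\mathbf{0}, Y) = Y^d u(Y)$ with $u(0) \neq 0$, so I can pick $r > 0$ so that $F(\mathbf{0}, Y) \neq 0$ on $\{|Y| = r\}$, and then by uniform continuity on the compact torus, shrink $\delta > 0$ so that $F(x, Y) \neq 0$ on $\{\|x\| \leq \delta,\ |Y| = r\}$.

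Next, I would apply the argument principle: the function
\begin{align*}
N(x) = \frac{1}{2\pi i} \oint_{|Y| = r} \frac{\partial_Y F(x, Y)}{F(x, Y)}\, dY
\end{align*}
is continuous and integer-valued on $\{\|x\| < \delta\}$, hence constantly equal to $N(\mathbf{0}) = d$. Letting $y_1(x), \ldots, y_d(x)$ denote the zeros of $Y \mapsto F(x, Y)$ in $\{|Y| < r\}$ (with multiplicity), I would define
\begin{align*}
W(x, Y) = \prod_{j=1}^d (Y - y_j(x)) = Y^d + \sum_{k=0}^{d-1} a_k(x)\, Y^k.
\end{align*}
By Newton's identities, the $a_k(x)$ are polynomial expressions in the power sums $p_k(x) = \sum_j y_j(x)^k$, which by residue calculus equal
\begin{align*}
p_k(x) = \frac{1}{2\pi i} \oint_{|Y|=r} Y^k \frac{\partial_Y F(x, Y)}{F(x, Y)}\, dY,
\end{align*}
and these are holomorphic in $x$ by differentiation under the integral. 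Because $y_j(\mathbf{0}) = 0$ for every $j$, we get $a_k(\mathbf{0}) = 0$, so $W$ is a Weierstrass polynomial of degree $d$. Setting $G(x, Y) = F(x, Y)/W(x, Y)$ on the complement of their common zeros, Riemann's removable-singularity theorem applied slice-by-slice in $Y$ yields a holomorphic extension of $G$ across that zero locus, once one verifies that the divisors of zeros of $F(x, \cdot)$ and $W(x, \cdot)$ on $\{|Y| < r\}$ coincide. Since both $F(\mathbf{0}, Y)$ and $W(\mathbf{0}, Y) = Y^d$ vanish to order exactly $d$, $G(\mathbf{0}, 0) \neq 0$, and $G$ is non-vanishing on a neighborhood $U$.

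To descend to $\mathbb{R}$, I would exploit the symmetry $\overline{F(\bar{x}, \bar{Y})} = F(x, Y)$ coming from the reality of the coefficients of $F$. For real $x$ this forces the zeros $y_j(x)$ to be either real or to occur in complex conjugate pairs, so each elementary symmetric function $a_k(x)$ is real-valued on real $x$; combined with holomorphicity, this yields $a_k \in \mathbb{R}\{X_1, \ldots, X_n\}$, hence $W \in \mathbb{R}\{X_1, \ldots, X_n\}[Y]$, and the same symmetry applied to $G$ gives $G \in \mathbb{R}\{X_1, \ldots, X_n, Y\}$. The main technical obstacle is verifying that the divisors of $F(x, \cdot)$ and $W(x, \cdot)$ agree as analytic divisors on $\{|Y| < r\}$, so that $G$ extends holomorphically rather than merely meromorphically; this amounts to the aforementioned count via the argument principle, after which everything reduces to standard Cauchy-integral estimates with a holomorphic parameter.
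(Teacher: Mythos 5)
Your argument is correct in substance, but note that the paper does not prove this statement at all: Theorem~\ref{Real_Analytic_WPT} is imported as a classical result, cited to~\cite[Theorem~6.1.3]{KP02}, and used as a black box in the proof of Theorem~\ref{thm:analyticity_analytic_func}. What you propose is the standard complex-analytic proof -- complexify $F$, fix $r$ and $\delta$ so that $F$ is zero-free on $\{\|x\|\le\delta,\ |Y|=r\}$, use the argument principle to see that the number of zeros in $\{|Y|<r\}$ is constantly $d$, build $W$ from the power sums of those zeros (holomorphic by the residue formula and Newton's identities), divide, and descend to $\mathbb{R}$ by the conjugation symmetry $\overline{F(\bar x,\bar Y)}=F(x,Y)$ -- which is essentially the textbook argument the citation points to, so there is no methodological divergence to compare. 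Two small points you should tighten if writing it out in full, neither of which is a genuine gap: (i) choose $r$ so that $Y=0$ is the \emph{only} zero of $F(\mathbf{0},\cdot)$ in the closed disc $\{|Y|\le r\}$, not merely so that $F(\mathbf{0},Y)\neq 0$ on the circle $|Y|=r$; this is what guarantees both $N(\mathbf{0})=d$ and $y_j(\mathbf{0})=0$ for all $j$, and it follows at once from $F(\mathbf{0},Y)=Y^d u(Y)$ with $u(0)\neq 0$ by shrinking $r$ so that $u$ has no zero in the closed disc; (ii) slice-by-slice removability in $Y$ only gives holomorphy of $G(x,\cdot)$ for each fixed $x$, and joint holomorphy (hence real analyticity after restriction) is cleanest via the Cauchy representation $G(x,Y)=\frac{1}{2\pi i}\oint_{|\zeta|=r}\frac{F(x,\zeta)}{W(x,\zeta)(\zeta-Y)}\,d\zeta$, which also makes the reality of the Taylor coefficients of $G$ transparent once $F$ and $W$ are known to have real coefficients.
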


\vspace{5px}
\noindent
Following the proof strategy of Theorem~\ref{analytic_reparam_semi-algebraic}, we will also need the \textit{{\L}ojasiewicz's version of the Tarski-Seidenberg Theorem}~\cite[Theorem~2.2]{BM88} to generate bi-variate real analytic functions involving only the variables $x_i$ and $\mu$. 

\begin{notation}
Let $\mathcal{A}$ be a ring of real-valued functions on a subset $E \subset \mathbb{R}^m$. We denote by $\mathcal{B}(\mathcal{A})$ the Boolean algebra of subsets of $E$ defined by $\{h > 0\}$ or $\{h = 0\}$  for all $h \in \mathcal{A}$. 
\end{notation}

\begin{theorem}[Theorem~2.2 in~\cite{BM88}]\label{Lojasiewicz_Tarski}
Let $D \in \mathcal{B}(\mathcal{A}[X_1,\ldots,X_k])$,
and let $\pi: E \times \mathbb{R}^{k} \to E$ be the projection map $\pi(x,t)=x$. Then $\pi(D) \in \mathcal{B}(\mathcal{A})$.   
\end{theorem}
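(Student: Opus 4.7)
The plan is to reduce, by induction on $k$, to the case $k=1$, and then to prove the univariate case by an explicit sign-condition analysis in the style of Tarski--Seidenberg. For the inductive step, observe that if $D \in \mathcal{B}(\mathcal{A}[X_1,\ldots,X_k])$, one can view $D$ as an element of $\mathcal{B}(\mathcal{A}'[X_k])$, where $\mathcal{A}' := \mathcal{A}[X_1,\ldots,X_{k-1}]$ is itself a ring of real-valued functions on $E \times \mathbb{R}^{k-1}$. Granting the case $k=1$ applied to $(\mathcal{A}',E \times \mathbb{R}^{k-1})$, projection off the last coordinate produces a set in $\mathcal{B}(\mathcal{A}')=\mathcal{B}(\mathcal{A}[X_1,\ldots,X_{k-1}])$, and iterating $k-1$ further times yields $\pi(D) \in \mathcal{B}(\mathcal{A})$.

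For the univariate case $k=1$, put $D$ into disjunctive normal form as a finite union
\[
D = \bigcup_{\alpha} \bigcap_{j=1}^{s_\alpha} \{(x,t) \in E \times \mathbb{R} : \mathrm{sign}\, P_{\alpha,j}(x,t) = \sigma_{\alpha,j}\},
\]
with $P_{\alpha,j} \in \mathcal{A}[X_1]$ and $\sigma_{\alpha,j} \in \{-1,0,+1\}$. Then $x \in \pi(D)$ iff for some $\alpha$ there exists $t \in \mathbb{R}$ realizing the sign vector $\sigma_\alpha$ on the univariate polynomials $P_{\alpha,j}(x,T) \in \mathbb{R}[T]$. It therefore suffices to show that for any finite family $\mathcal{F} = \{P_1,\ldots,P_s\} \subset \mathcal{A}[X_1]$ and any sign vector $\sigma$, the set
\[
R_{\mathcal{F},\sigma} := \{x \in E : \exists t \in \mathbb{R},\ \mathrm{sign}\, P_j(x,t) = \sigma_j \text{ for all } j=1,\ldots,s\}
\]
lies in $\mathcal{B}(\mathcal{A})$; taking a finite union over $\alpha$ then gives $\pi(D) \in \mathcal{B}(\mathcal{A})$.

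To prove that $R_{\mathcal{F},\sigma} \in \mathcal{B}(\mathcal{A})$, I would invoke the classical sign-determination machinery (see \cite[Chapter 2]{BPR06}): the list of realizable sign conditions of $\mathcal{F}$ at real values of $T$ is completely determined by the signs of a finite, effectively computable list of polynomial expressions in the coefficients of the $P_j$, namely the leading coefficients together with principal subresultant coefficients of derivatives and pairs from $\mathcal{F} \cup \{P'_j\}$. Since $\mathcal{A}$ is a ring containing these coefficients, each such subresultant coefficient is an element of $\mathcal{A}$, so realizability of $\sigma$ is expressible as a Boolean combination of sign conditions on elements of $\mathcal{A}$, establishing $R_{\mathcal{F},\sigma} \in \mathcal{B}(\mathcal{A})$.

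The main obstacle is that the degree in $T$ of each $P_j(x,T)$ can drop as $x$ varies across $E$, whereas the uniform sign-determination description assumes constant degrees. I would resolve this by partitioning $E$ into finitely many pieces according to the sign conditions on the leading coefficients of $P_1,\ldots,P_s$ in $T$; each such piece already lies in $\mathcal{B}(\mathcal{A})$ since the leading coefficients belong to $\mathcal{A}$. On each piece the polynomials have constant degree in $T$, so the subresultant-based description applies uniformly and expresses the restriction of $R_{\mathcal{F},\sigma}$ to that piece as an element of $\mathcal{B}(\mathcal{A})$. Taking the finite union over the pieces preserves membership in $\mathcal{B}(\mathcal{A})$, completing the proof.
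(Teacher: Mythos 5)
The paper itself contains no proof of Theorem~\ref{Lojasiewicz_Tarski}: it is imported verbatim from \cite{BM88}, so there is no internal argument to compare against, and your proposal must be judged on its own. It is correct, and it is essentially the classical proof (also the strategy of the cited source): reduce by induction to one variable, using that $\mathcal{A}[X_1,\ldots,X_k]=(\mathcal{A}[X_1,\ldots,X_{k-1}])[X_k]$ with $\mathcal{A}[X_1,\ldots,X_{k-1}]$ again a ring of real-valued functions on $E\times\mathbb{R}^{k-1}$, so projecting off one coordinate at a time is legitimate; then, in the univariate case, observe that realizability of a prescribed sign vector by a finite family of polynomials in one real variable is decided by sign conditions on finitely many integer-polynomial expressions in the coefficients (leading coefficients and subresultant/Sturm--Habicht data, plus the derivative-and-behavior-at-infinity analysis needed for strict sign conditions realized only on intervals between roots), all of which lie in $\mathcal{A}$ because $\mathcal{A}$ is a ring; your case split on vanishing of leading coefficients correctly handles the degree-drop issue. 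Bierstone--Milman run the univariate step with H\"ormander-style pseudo-remainders rather than subresultants, but that is a cosmetic difference: the essential point that makes the \L ojasiewicz generalization work --- every auxiliary quantity produced by the sign-determination machinery is an integer-coefficient polynomial in elements of $\mathcal{A}$, hence again in $\mathcal{A}$ --- is exactly the one you isolate, so invoking the parametric sign-determination results of \cite{BPR06} as a black box here is not circular.
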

\noindent
Now, the proof of Theorem~\ref{thm:analyticity_analytic_func} is in order. 

\begin{proof}[Proof of Theorem~\ref{thm:analyticity_analytic_func}]
Let $\bar{x} = \lim_{\mu \downarrow 0} (x(\mu))$, and assume without loss of generality $\bar{x} = \mathbf{0}$. Recall that the first-order optimality conditions for a critical path are given by
\begin{align}\label{critical_path_definable}
\frac{\partial f}{\partial x_j}(x) \prod_{i=1}^r g_i(x) - \mu \sum_{k=1}^r \frac{\partial g_k}{\partial x_j}(x) \prod_{i \neq k} g_i(x) = 0, \quad j=1,\ldots,n, 
\end{align}
where $f,g_i$ are real analytic functions in $\mathbb{R}^n$.

By Theorem~\ref{Real_Analytic_WPT}, there exist
a neighborhood $U_1$ of $\mathbf{0} \in \mathbb{R}^{n-1} \times \mathbb{R}$, 
and Weierstrass polynomials $h_j \in \mathcal{O}(U_1)[X_n], j=1,\ldots,n$
(where 
$\mathcal{O}(U_{1})$ is the ring of real analytic functions in $(x_1,\ldots,x_{n-1},\mu)$)
such that 
the solutions of the $j$-th equation in ~\eqref{critical_path_definable} restricted to $U_1 \times \mathbb{R}$ is described by $h_j=0$. 

We define  
\begin{align*}
    T = \{(x_1,\ldots,x_n,\mu) \in U_1 \times \mathbb{R} \mid h_j = 0, \quad j=1,\ldots,n\}.
\end{align*}

Now, let $\mathcal{A} = \mathcal{O}(U_1)$. Then, by Theorem~\ref{Lojasiewicz_Tarski}, there exists $h_{ij} \in \mathcal{O}(U_1)$ such that 
\begin{align*}
   \pi_n(T)=\bigcup_i  \bigcap_j  \{h_{ij} \substack{< \\ = } 0\},
\end{align*}
where $\pi_n: \mathbb{R}^{n+1} \rightarrow \mathbb{R}^n$ is the projection 
$(x_1,\ldots,x_n,\mu) \mapsto (x_1,\ldots,x_{n-1},\mu)$.
Repeating the 
elimination process 
$(n-1)$ times
and applying Theorem~\ref{Real_Analytic_WPT} 
there exists a neighborhood of $U_{n-1}$ of  $\mathbf{0} \in \mathbb{R} \times \mathbb{R}$, such that
$\pi_2\circ\cdots\circ \pi_n(T) \cap U_1$ is
described by Weierstrass polynomials in $\mathcal{O}(U_{n-1})[X_1]$,
where
$\pi_i:\R^{i+1} \rightarrow \R^i$, $(x_1,\ldots,x_i,\mu) \mapsto (x_1,\ldots,x_{i-1},\mu)$ is the projection map. The set $\pi_2\circ\cdots\circ \pi_n(T) \cap U_{n-1}$ is of dimension $1$ and 
contains the graph of
the first coordinate of the critical path. 

Finally, 
it follows from Theorem~\ref{Real_Analytic_WPT} and Theorem~\ref{Newton_Puiseux_Thm} that the
first coordinate
of the critical path can be expanded as a Puiseux series in $\mathbb{C} \langle\langle \mu \rangle \rangle$ with ramification index 
$q_1$
(because the critical path is bounded). 
Using the same argument (after reordering the coordinates) 
we obtain that for each $i, 1 \leq i \leq n$, the $i$-th coordinate
of the critical path can be expanded as a Puiseux series in $\mathbb{C} \langle\langle \mu \rangle\rangle$ with ramification index 
$q_i > 0$.
Now, let $q$ be the least common multiple of all $q_i$ for $i=1,\ldots,n$, and the proof is complete.     
\end{proof}

\begin{remark}\label{Extension_of_ Peterzil}
It follows from Theorem~\ref{thm:analyticity_analytic_func}, together with standard results from complex analysis, that the critical path described by a Puiseux series in Theorem~\ref{thm:analyticity_analytic_func} is analytic when $\mu>0$ is sufficiently small. This result goes beyond the $\mathcal{C}^k$-smoothness established in Proposition~\ref{critical_path_definable_proof}. Moreover, it aligns with with~\cite[Remark~1]{GP02}, where the authors prove that the central path of a convex SDO with analytic data is definable in $\mathbb{R}_{\mathrm{an}}$.  
\end{remark}

\vspace{5px}
\noindent
Now, we proceed to the proof of Theorem~\ref{thm:analyticity_definable_func}. To that end, we need to assume that $\mathcal{S}(\mathbb{R})$ is a polynomially bounded o-minimal structure, and $f,g_i$ are definable functions in $\mathcal{S}(\mathbb{R})$. Further, we will need a growth dichotomy result for definable functions in $\mathcal{S}(\mathbb{R})$ due to Miller~\cite[Page~258]{Miller94(b)}.
\hide{
\begin{notation}
Let $f:A \times \mathbb{R} \to \mathbb{R}$ with $A \subset \mathbb{R}$ be a definable function in $\mathcal{S}(\mathbb{R})$. Then $f$ is said to have a \textit{uniform expansion} on $A$ if there exist 
\begin{itemize}
\item $F \in \mathbb{R}\{X,Y\}$;
\item $r_0,r_1 > 0$, where $r_0,r_1 \in K$ and $K$ is a subfield of $\mathbb{R}$;
\item definable analytic functions $a:A \to (0,\infty)$, $b:A \to [-1,1]$, and $c:A \to [1,\infty)$, 
\end{itemize}
such that for each $x \in A$, $F(b(x),0) \neq 0$, and for all sufficiently small positive $t$, 
\begin{align*}
f(x,t) = a(x)t^{r_0}F\big(b(x),(c(x)t)^{r_1}\big).
\end{align*}
\end{notation}
\begin{proposition}[Expansion Theorem~\cite{Dries6}]\label{expansion_definable}
Let $A \subset \mathbb{R}$ and $f:A \times \mathbb{R} \to \mathbb{R}$ be a definable function in a polynomially bounded o-minimal structure $\mathcal{S}(\mathbb{R})$. Then $f$ has a piece-wise uniform expansion on $A$. 
\end{proposition}
}

\begin{proposition}[Growth dichotomy~\cite{Miller94(b)}]\label{growth_dichotomy}
Let $\mathcal{S}(\mathbb{R})$ be a polynomially bounded o-minimal structure, $f:\mathbb{R} \to \mathbb{R}$ be a definable function in $\mathcal{S}(\mathbb{R})$, and $f$ be ultimately non-zero. Then there exists $r \in \mathbb{R}$ such that $\lim_{x \to \infty} (f(x)/x^r)$ exists, and it is non-zero. 
\end{proposition}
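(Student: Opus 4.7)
My plan is to prove the growth dichotomy by combining the Monotonicity Theorem with a Dedekind-cut argument on rational exponents (which are always definable in any o-minimal expansion of $\mathbb{R}$), and then use polynomial boundedness to pin down the limit.

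\textbf{Step 1 (Normalization via monotonicity).} I would first apply the Monotonicity Theorem (Theorem~\ref{Monoton_Thm}) to obtain $a>0$ such that $f$ is $\mathcal{C}^1$, of constant sign, and either strictly monotone or constant on $(a,\infty)$. If $f$ is ultimately constant the claim holds with $r=0$, so I may assume $f$ is strictly monotone and, after replacing $f$ by $-f$ if necessary, positive on $(a,\infty)$.

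\textbf{Step 2 (Locating the exponent by a Dedekind cut).} For each $q\in\mathbb{Q}$, the map $x\mapsto f(x)/x^{q}$ is definable in $\mathcal{S}(\mathbb{R})$, so by the Monotonicity Theorem it is eventually monotone and its limit $L_q:=\lim_{x\to\infty} f(x)/x^{q}$ exists in $[0,\infty]$. Partition $\mathbb{Q}$ into
\[
A=\{q:L_q=0\},\quad B=\{q:L_q\in(0,\infty)\},\quad C=\{q:L_q=\infty\}.
\]
A direct computation shows $A$ is upward closed, $C$ is downward closed, and $|B|\le 1$ (for $q<q'$ in $B$ the identity $f(x)/x^{q'}=(f(x)/x^{q})\,x^{q-q'}$ forces $L_{q'}=0$). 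Polynomial boundedness applied to $f$ gives some $N\in\mathbb{N}$ with $f(x)<x^{N}$ eventually, so $N+1\in A$; polynomial boundedness applied to $1/f$ (which is definable since $f$ is ultimately non-zero) gives some $M$ with $f(x)>x^{-M}$ eventually, so $-M-1\in C$. If $B=\{q^{*}\}$ we take $r=q^{*}$ and are done; otherwise $A\cup C=\mathbb{Q}$ and we set $r_{0}:=\inf A=\sup C\in\mathbb{R}$.

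\textbf{Step 3 (Convergence at the critical exponent).} This is the main obstacle, because $x\mapsto x^{r_0}$ is typically not definable when $r_0$ lies outside the field of exponents of $\mathcal{S}(\mathbb{R})$, so o-minimality does not apply directly to $g(x):=f(x)/x^{r_{0}}$. Step~2 already gives the ``slowly varying'' bounds $x^{-\varepsilon}\le g(x)\le x^{\varepsilon}$ for every rational $\varepsilon>0$ and all sufficiently large $x$. To upgrade this to convergence I would work instead with the definable one-parameter family $\phi_{c}(x):=f(cx)/f(x)$ for $c>0$. Monotonicity gives $\lambda(c):=\lim_{x\to\infty}\phi_{c}(x)\in[0,\infty]$, and the two-sided bounds on $f$ from Step~2 force $\lambda(c)\in(0,\infty)$. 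The cocycle identity $\phi_{cd}(x)=\phi_{d}(cx)\phi_{c}(x)$ yields multiplicativity $\lambda(cd)=\lambda(c)\lambda(d)$; monotonicity of $\lambda$ (inherited from monotonicity of $f$) then pins down $\lambda(c)=c^{r_{0}}$ for all $c>0$. Consequently $f(cx)/f(x)\to c^{r_{0}}$ for every $c>0$, which, together with monotonicity of $f$ on $(a,\infty)$ and the bounds $x^{-\varepsilon}\le g(x)\le x^{\varepsilon}$, lets me conclude that $g$ is Cauchy at infinity along any geometric sequence and hence has a limit $c_{0}\in(0,\infty)$.

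\textbf{Where the difficulty lies.} Steps 1 and 2 are formal consequences of the Monotonicity Theorem and polynomial boundedness. The real work is Step~3: ruling out that $g(x)=f(x)/x^{r_{0}}$ oscillates like $2+\sin(\log\log x)$. The polynomial boundedness hypothesis is exactly what blocks such oscillations, because any such behaviour would produce a definable function violating the polynomial growth bound when combined with the identity $\phi_{c}(x)=g(cx)/g(x)\cdot c^{r_{0}}$. Making this into a clean proof ultimately relies on the multiplicative characterization of $\lambda$ above, which is the substitute available in the polynomially bounded setting for the expansion theorem that the full statement of Miller's growth dichotomy provides.
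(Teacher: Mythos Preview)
The paper does not prove this proposition; it is quoted from Miller and used as a black box in the proof of Theorem~\ref{thm:analyticity_definable_func}. So there is no argument in the paper to compare against, and your attempt stands or falls on its own.

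Steps~1 and~2 are fine. Step~3 has a genuine gap. From $\phi_c(x)\to c^{r_0}$ you only obtain that $g(x)=f(x)/x^{r_0}$ is slowly varying, i.e.\ $g(cx)/g(x)\to 1$ for every $c>0$. Slow variation together with the bounds $x^{-\varepsilon}\le g(x)\le x^{\varepsilon}$ and monotonicity of $f$ does \emph{not} force $g$ to converge: your own cautionary example $2+\sin(\log\log x)$ satisfies all of these conditions and has no limit. The assertion that ``$g$ is Cauchy at infinity along any geometric sequence'' needs a \emph{rate} of convergence $\phi_c(x)\to c^{r_0}$; merely knowing the ratios $g(c^{n+1})/g(c^n)\to 1$ does not make $\log g(c^n)$ Cauchy, since the increments need to be summable, not just null.

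The missing ingredient is the following lemma, which you never state or invoke: in a polynomially bounded structure, if $h$ is definable and $h(x)\to\infty$, then $h(x)>x^{\delta}$ eventually for some $\delta>0$ (apply polynomial boundedness to the definable compositional inverse $h^{-1}$). Applied to $1/|\phi_c(x)-c^{r_0}|$ this yields $|\phi_c(x)-c^{r_0}|=O(x^{-\delta})$, and then the telescoping sum $\sum_n\bigl(\log g(c^{n+1}x_0)-\log g(c^n x_0)\bigr)$ converges absolutely, giving a limit along each geometric sequence; a sandwich using monotonicity of $f$ then upgrades this to full convergence. The same lemma is also what is required to justify your claim that $\lambda(c)\in(0,\infty)$: the two-sided bounds on $f$ from Step~2 give only $\phi_c(x)\le C_\varepsilon x^{2\varepsilon}$ for every $\varepsilon>0$, which by itself is compatible with $\phi_c(x)\to\infty$; it is the lemma that rules this out. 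Your closing paragraph gestures at polynomial boundedness ``blocking oscillations'' but never identifies this inverse-function mechanism, and without it the argument does not close.
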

\noindent
We also use the following Puiseux-type expansion from~\cite{P84} for the globally sub-analytic functions in $\mathbb{R}_{\mathrm{an}}$  (see also~\cite[Lemma~2.6]{Kur88}).
\begin{proposition}[Lemma~2.6 in~\cite{Kur88}]\label{Puiseux_subanalytic}
Let $f:[0,\delta) \to \mathbb{R}$ be in $\mathbb{R}_{\mathrm{an}}$. Then there exist $\varepsilon > 0$, $q \in \mathbb{Z}_+$, and a real analytic function $h(x) = \sum_{i=0}^{\infty} \alpha_i x^i$ in a neighborhood of $0$ such that $f(x) = h(x^{1/q})$ for all $x \in [0,\varepsilon)$.
\end{proposition}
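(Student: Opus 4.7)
The plan is to parametrize the graph of $f$ near the origin by real analytic functions and then invert the parametrization after extracting a suitable root. Since $f$ lies in $\mathbb{R}_{\mathrm{an}}$, its graph $\Gamma_f := \{(x,f(x)) \mid x \in [0,\delta)\}$ is a one-dimensional globally sub-analytic subset of $\mathbb{R}^2$. By the cell decomposition theorem in $\mathbb{R}_{\mathrm{an}}$, after shrinking $\delta$ we may assume that $f$ is real analytic on $(0,\delta)$ and continuous on $[0,\delta)$.

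Next, I invoke a Puiseux-type parametrization of the branch of $\Gamma_f$ at $(0,f(0))$: there exist $\eta > 0$ and real analytic functions $\phi,\psi:(-\eta,\eta) \to \mathbb{R}$ with $\phi(0)=0$, $\psi(0)=f(0)$, $\phi$ strictly positive on $(0,\eta)$, such that the branch of $\Gamma_f$ at $(0,f(0))$ is the image of $[0,\eta)$ under $t \mapsto (\phi(t),\psi(t))$. The standard route is to apply Theorem~\ref{Real_Analytic_WPT} (Weierstrass preparation) to an analytic equation locally containing $\Gamma_f$, obtained from the definition of a globally sub-analytic set as the projection of a relatively compact semi-analytic set, and then to factor the resulting Weierstrass polynomial via Theorem~\ref{Newton_Puiseux_Thm} (Newton--Puiseux) over $\mathbb{C}\la\la X\ra\ra$.

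With $(\phi,\psi)$ in hand, write $\phi(t) = t^q\, u(t)$ for some $q \in \mathbb{Z}_+$ and a real analytic function $u$ with $u(0)>0$. The map $\tilde\phi(t) := t\, u(t)^{1/q}$ is then real analytic in a neighborhood of $0$ (since $u(0)>0$) and satisfies $\tilde\phi'(0) = u(0)^{1/q} \neq 0$. By the Real Analytic Inverse Function Theorem, $\tilde\phi$ admits a real analytic inverse $g$ near $0$. Since $\phi(t) = \tilde\phi(t)^q$, setting $x = \phi(t)$ yields $\tilde\phi(t) = x^{1/q}$ and hence $t = g(x^{1/q})$. Substituting into $\psi$ gives
\[
f(x) \;=\; \psi\bigl(g(x^{1/q})\bigr) \;=:\; h(x^{1/q})
\]
for $x \in [0,\varepsilon)$ with $\varepsilon>0$ sufficiently small, where $h := \psi \circ g$ is real analytic in a neighborhood of $0$ and can therefore be expanded as $\sum_{i \ge 0} \alpha_i x^i$.

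The main obstacle is producing the real analytic parametrization $(\phi,\psi)$ of the branch of $\Gamma_f$ at $(0,f(0))$, i.e., passing from the globally sub-analytic description of $f$ to a bona fide real analytic Puiseux branch in $\mathbb{R}$ rather than $\mathbb{C}$. Correctly handling the projection to the real locus, and ensuring that the branch producing the graph of $f$ on $[0,\varepsilon)$ is real analytic (as opposed to a conjugate pair of complex branches), is the essential technical point; the subsequent reduction via Weierstrass preparation and the inverse function theorem is routine.
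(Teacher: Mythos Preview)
The paper does not supply its own proof of this proposition: it is quoted verbatim as Lemma~2.6 of \cite{Kur88} (with a parallel reference to \cite{P84}) and is used as a black box in the proofs of Theorems~\ref{limiting_behavior_of_definable_critical_path} and~\ref{thm:analyticity_definable_func}. So there is nothing in the paper to compare your argument against.

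On the merits, your outline is the standard one and is sound once the parametrization $(\phi,\psi)$ is in hand; the algebra with $\phi(t)=t^q u(t)$, the analytic $q$-th root, and the inverse function theorem is correct. The real content, as you rightly flag, is producing a real analytic equation $F(x,y)=0$ (not identically zero) vanishing on $\Gamma_f$ near $(0,f(0))$, so that Weierstrass preparation and Newton--Puiseux apply. Your sentence ``obtained from the definition of a globally sub-analytic set as the projection of a relatively compact semi-analytic set'' does not by itself yield such an $F$: projections of semianalytic sets are only subanalytic, and a subanalytic set need not lie in the zero locus of a single analytic function. The missing ingredient is the (classical) fact that every subanalytic subset of $\mathbb{R}^2$ of dimension $\le 1$ is in fact semianalytic (this goes back to \L{}ojasiewicz; it also follows from Gabrielov's theorem together with a fiber-cutting argument). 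Once you invoke that, $\Gamma_f$ is locally cut out near the origin by finitely many analytic equalities and inequalities, hence contained in $\{F=0\}$ for some nonzero analytic $F$, and your Weierstrass/Newton--Puiseux route goes through; the realness of the relevant Puiseux branch is then forced because $\Gamma_f$ itself is real and one-dimensional over $[0,\varepsilon)$.
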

\begin{remark}
A Puiseux type expansion also exists for definable functions in $\mathbb{R}_{\mathrm{an}}^{\mathbb{R}}$~\cite[Proposition~4.5]{Miller94}: there exist $\ell \in \mathbb{N}$, a convergent power series $F \in \mathbb{R}\{X_1,\ldots,X_{\ell}\}$ with $F(\mathbf{0}) \neq 0$ and $r_0,\ldots,r_{\ell} \in \mathbb{R}$ with $r_1,\ldots,r_{\ell} >0$ such that $f(x) = x^{r_0} F(x^{r_1},\ldots,x^{r_{\ell}})$ for all sufficiently small positive $x$. In this case, however, $r_0,\ldots,r_{\ell}$ do not need to be rational. 
\end{remark}
\vspace{5px}
\noindent
Now, we use Propositions~\ref{growth_dichotomy}-\ref{Puiseux_subanalytic} to prove an o-minimal version of Theorem~\ref{thm:analyticity_analytic_func} and Theorem~\ref{analytic_reparam_semi-algebraic}.

\begin{proof}[Proof of Theorem~\ref{thm:analyticity_definable_func}]
Since the function $g:(0,\infty) \to \mathbb{R}:x \mapsto 1/x$ is definable in $\mathcal{S}(\mathbb{R})$, Proposition~\ref{growth_dichotomy} also implies the existence of an $r \in \mathbb{R}$ such that $\lim_{x \downarrow 0} (f(x)/x^r)$ exists, and it is non-zero. Applying this result to $x_i(\mu)$ we get   
\begin{align}\label{critical_path_asymptotic_behavior}
    x_i(\mu) = c_i\mu^{r_i} + o(\mu^{r_i}), \quad i=1,\ldots,n
\end{align}
for sufficiently small positive $\mu$, where $c_i\in\mathbb{R}\setminus \{0\}$ and $r_i \ge 0$ (because $x(\mu)$ is bounded).

\vspace{5px}
\noindent
By taking the derivative of~\eqref{critical_path_asymptotic_behavior} with  respect to $\mu$, it is easy to see that $x_i(\mu)$ is $\mathcal{C}^k$-smooth at $\mu = 0$ if $r_i - k \ge 0$. If $x_i(\mu)$ is not $\mathcal{C}^k$-smooth at $\mu = 0$, then given a reparametrization $\mu \mapsto \mu^{q_i}$ for some $q_i \in \mathbb{R}_+$, the composition $x_i(\mu^{q_i}$) is $\mathcal{C}^k$-smooth at $\mu = 0$ if $r_i q_i - k \ge 0$. Now, we only need to choose some $\rho \ge k / \min\{r_1,\ldots,r_n\}$.

\vspace{5px}
\noindent
To prove the second part, we apply Proposition~\ref{Puiseux_subanalytic} to each $x_i(\mu)$: for each $i$ there exists a $q_i \in \mathbb{N}$ such that $x_i(\mu^{q_i})$ is analytic at $\mu = 0$. Now, let $\rho$ be the least common multiplies of $q_i$ and the proof is complete.
\end{proof}

\section{Concluding remarks}
In this paper, we studied critical points of a polynomial $F \in \R[X_1,\ldots,X_n]$
on the algebraic set $V_{\xi} = \zero(\{P_1-\xi_1,\ldots,P_s-\xi_s\},\R\la \xi \ra^n)$, where $\{P_1,\ldots,P_s\} \subset \R[X_1,\ldots,X_n]$ is a finite set of polynomials. We proved different sets of conditions - based on homogenization $P_i^H$ of $P_i$ and generic properties of the projective zeros of $P_i^H$ - that guarantee existence (Corollaries~\ref{bounded_critical_points}-\ref{existence_critical_points} and Theorems~\ref{Morse_on_smooth_hypersurface}-\ref{Morse_on_transversal}), boundedness (Theorems~\ref{bounded_fibers},~\ref{bounded_connected_component},~\ref{Morse_on_smooth_hypersurface}-\ref{Morse_on_transversal}), finiteness (Theorems~\ref{constrained_real_isolated_critical_points}-\ref{Morse_on_transversal}), and non-degeneracy (Theorems~\ref{constrained_real_non-degenerate_critical_points}-\ref{Morse_on_smooth_hypersurface}) of critical points. Furthermore, we characterized (Theorem~\ref{convergence_of_critical_points}) the limit of critical points in terms of critical points of $F$ on $V = \zero(\mathcal{P},\R^n)$ with respect to its canonical Whitney stratification.  

\vspace{5px}
We applied our theoretical results to the log-barrier function and critical paths of~\PO, as a special case of the problem considered in the first part. This led to new conditions for the existence and convergence  (Theorems~\ref{critcal_path_existence}-\ref{sufficient_conditions_existence}) of critical paths. Additionally, we characterized the limit of a bounded critical path (Theorem~\ref{limiting_behavior_of_critical_path}), and using the Quantifier Elimination and the Newton-Puiseux theorems, we quantified the convergence rate of critical paths. The Newton-Puiseux Theorem also yields a reparametrization $\mu \mapsto \mu^{\rho}$, for some positive integer $\rho$ (Theorem~\ref{analytic_reparam_semi-algebraic}), under which a critical path is $\mathcal{C}^{\infty}$-smoothness at $\mu = 0$. 

\vspace{5px}
Finally, we established conditions for the existence and convergence (Theorem~\ref{sufficient_conditions_existence_definable}) of critical paths of \NO~problems involving definable sets and functions in an o-minimal structure, preserving the tameness properties of semi-algebraic structures. Analogous to~\PO, we characterized (Theorem~\ref{limiting_behavior_of_definable_critical_path}) the limit of a bounded critical path, and using the H\"older inequality, we quantified its convergence rate. Further, as an abstraction of the notion of reparametrization for a critical path, we proved (Theorem~\ref{thm:analyticity_definable_func})  that when the o-minimal structure is polynomially bounded, a bounded critical path can be reparametrized to establish $\mathcal{C}^k$-smoothness at $\mu = 0$ for any order $k > 0$. As a result of the Puiseux-type expansions for globally sub-analytic functions, we obtain a stronger result for $\mathbb{R}_{\mathrm{an}}$: a critical path admits an analytic reparametrization $\mu \mapsto \mu^{\rho}$ for some positive integer $\rho$.  

\vspace{5px}
We end this section with a few open problems.

\subsection{Existence and convergence in the presence of singularities}
In addition to their applications to critical paths, Corollaries~\ref{bounded_critical_points}-\ref{existence_critical_points} and Theorems~\ref{bounded_fibers}-\ref{convergence_of_critical_points} answer key questions in computational optimization and perturbation analysis of equality constrained \PO~problems. For instance, when a $g_i$ has a singular zero or $\mathcal{G}$ is not in general position,~\PO~may have no KKT solution. In such cases, one may want to slightly perturb the original problem to
\begin{align}\label{perturbed_PO}
\inf_x\{f(x) \mid g_i(x) = \xi_i, \quad i=1,\ldots,r\},
\end{align}
with $\xi$ being a sufficiently small positive value, to restore KKT points or quadratic convergence of the Newton's method. However, it is important to understand when the perturbed problem~\eqref{perturbed_PO} has a critical point, whether the critical points converge, and how to characterize the limit point. Toward this end, establishing weaker existence conditions (than those given in Corollaries~\ref{bounded_critical_points}-\ref{existence_critical_points} and Theorems~\ref{Morse_on_smooth_hypersurface}-\ref{Morse_on_transversal}) for the critical points would be highly desired. Moreover, it is worthwhile to investigate the behavior of critical points in the presence of singularities, specifically by attempting to prove or disprove Conjecture~\ref{tangent_space_inclusion_conjecture}, which concerns the characterization of their limiting behavior. In Theorem~\ref{convergence_of_critical_points} (and its o-minimal version), we proved that when $\mathcal{P}$ is in general position, the limits of critical points of $F$ on $V_{\xi}$ are critical points of $F$ on $V$ with respect to its canonical Whitney stratification. However, additional complications arise when $\mathcal{P}$ is not in general position, mostly due to a more complicated Whitney stratification of $V$, which renders the proof technique of Theorem~\ref{convergence_of_critical_points} inapplicable.

\subsection{Boundedness and finiteness of critical points for the o-minimal case}
We derived conditions for the existence and convergence of definable critical paths, and we characterized the limit point, using the o-minimal analogs of Theorems~\ref{Morse_on_transversal} and~\ref{convergence_of_critical_points}. However, o-minimal counterparts of Corollaries~\ref{bounded_critical_points}-\ref{existence_critical_points} and Theorems~\ref{bounded_fibers}-\ref{constrained_real_non-degenerate_critical_points} are not currently available, limiting the direct applicability of our approach in certain settings. Given an o-minimal structure $\mathcal{S}(\mathbb{R})$ (possibly polynomially bounded), it is of interest to establish conditions that guarantee existence, finiteness, and boundedness of critical points of a definable $f \in \mathcal{S}(\mathbb{R})$ on the definable set $\zero(\{g_1-\xi_1,\ldots,g_r-\xi_r\},\mathbb{R}^n)$, where $g_i \in \mathcal{S}(\mathbb{R})$. Furthermore, an o-minimal analog of the extension of Theorem~\ref{convergence_of_critical_points} to settings involving singularities is a compelling avenue for further investigation.
\subsection{Strict complementarity of projective KKT points}
Convergence of a critical path to a projective KKT point, as established in Proposition~\ref{convergence_to_projective_KKT}, raises a natural question about the $\mathcal{C}^{\infty}$-smoothness of the critical path at $\mu = 0$. It is well-known that the existence of a strictly complementary optimal solution is both necessary and sufficient for the analyticity of the central path of \SDO~at $\mu = 0$~\cite{GS98,H02}. It would be interesting to investigate whether the extension of the strict complementarity condition to projective KKT points, as discussed in Section~\ref{sec:KKT_PO}, yields at least a sufficient condition for the $\mathcal{C}^{\infty}$-smoothness of a critical path at $\mu = 0$. This conjecture is supported by Example~\ref{non_analytic_at_0}, in which the central path is evidently non-analytic at $\mu = 0$, and the limit point $(0,0)$ fails to satisfy the strict complementarity condition.

\section*{Acknowledgments} 
\noindent
The authors were supported by the NSF grant CCF-2128702 while working on the initial version of this paper.

\bibliographystyle{abbrv}
\bibliography{mybibfile}
\end{document}